%%%%%%%%%%%%%%%%%%%%%%%%%%%%%%%%%%%%%%%%%%%%%%%%%%%%%%%%%%%%%%%%%%%%%%%%%%%%%%
% Authors: Bhargav Bhatt, Manuel Blickle, Gennady Lyubeznik, Anurag K. Singh, and Wenliang Zhang
%
%!TEX TS-program = latex
%%%%%%%%%%%%%%%%%%%%%%%%%%%%%%%%%%%%%%%%%%%%%%%%%%%%%%%%%%%%%%%%%%%%%%%%%%%%%%

\documentclass[11pt]{amsart}
\usepackage{mathptmx, amssymb, amsmath, mathtools, enumerate, colonequals, fullpage}
\usepackage[all]{xy}
\usepackage{xcolor, color}
\definecolor{chianti}{rgb}{0.6,0,0}
\definecolor{meretale}{rgb}{0,0,.6}
\definecolor{leaf}{rgb}{0,.35,0}
\usepackage[colorlinks=true, pagebackref, hyperindex, citecolor=meretale, urlcolor=leaf, linkcolor=chianti]{hyperref}

\newtheorem{theorem}{Theorem}[section]
\newtheorem{lemma}[theorem]{Lemma}
\newtheorem{corollary}[theorem]{Corollary}
\newtheorem{proposition}[theorem]{Proposition}

\theoremstyle{definition}
\newtheorem{definition}[theorem]{Definition}
\newtheorem{construction}[theorem]{Construction}
\newtheorem{notation}[theorem]{Notation}
\newtheorem{example}[theorem]{Example}
\newtheorem{remark}[theorem]{Remark}
\numberwithin{equation}{theorem}

\def\ge{\geqslant}
\def\le{\leqslant}
\def\phi{\varphi}
\def\tilde{\widetilde}
\def\to{\longrightarrow}
\def\gets{\longleftarrow}
\def\mapsto{\longmapsto}
\def\into{\lhook\joinrel\longrightarrow}

\def\lcd{\operatorname{lcd}}
\def\rank{\operatorname{rank}}
\def\Hsing{H_{\mathrm{sing}}}
\def\coker{\operatorname{coker}}
\def\image{\operatorname{image}}
\def\Hom{\operatorname{Hom}}
\def\RHom{\operatorname{RHom}}
\def\Ext{\operatorname{Ext}}
\def\Tor{\operatorname{Tor}}

\def\Spec{\operatorname{Spec}}
\def\Supp{\operatorname{Supp}}
\def\Perv{\operatorname{Perv}}

\def\frakm{\mathfrak{m}}
\def\frakp{\mathfrak{p}}
\def\frakq{\mathfrak{q}}

\def\AA{\mathbf{A}}
\def\CC{\mathbf{C}}
\def\DD{\mathbf{D}}
\def\FF{\mathbf{F}}
\def\FFp{\mathbf{F}_{\!p}}
\def\GG{\mathbf{G}}
\def\LL{\mathbf{L}}
\def\NN{\mathbf{N}}
\def\PP{\mathbf{P}}
\def\QQ{\mathbf{Q}}

\def\ZZ{\mathbf{Z}}

\DeclareMathAlphabet{\mathcal}{OMS}{cmsy}{m}{n}

\def\calC{\mathcal{C}}
\def\calD{\mathcal{D}}
\def\calE{\mathcal{E}}

\def\calH{\mathcal{H}}

\def\calO{\mathcal{O}}
\def\calP{\mathcal{P}}

\def\calS{\mathcal{S}}

\def\calY{\mathcal{Y}}

\def\an{\mathrm{an}}
\def\et{\acute{e}t}
\def\perf{per\!f}
\def\cof{co\!f}
\def\SolC{\operatorname{Sol_\CC}} %Complex solution functor
\def\dR{\operatorname{dR}} %complex de~Rham functor
\def\Sol{\operatorname{Sol}} %covariant solution functor in characteristic p
\def\RH{\operatorname{RH}} %inverse of the above in characteristic p

%%%%%%%%%%%%%%%%%%%%%%%%%%%%%%%%%%%%%%%%%%%%%%%%%%%%%%%%%%%%%%%%%%%%%%%%
\begin{document}
\title{Applications of perverse sheaves in commutative algebra}

\begin{abstract}
The goal of this paper is to explain how basic properties of perverse sheaves sometimes translate via Riemann-Hilbert correspondences (in both characteristic $0$ and characteristic $p$) to highly non-trivial properties of singularities, especially their local cohomology. Along the way, we develop a theory of perverse~$\mathbf{F}_{\!p}$-sheaves on varieties in characteristic $p$, expanding on previous work by various authors, and including a strong version of the Artin vanishing theorem.
\end{abstract}

\author[Bhatt]{Bhargav Bhatt}
\address{School of Mathematics, Institute for Advanced Study, and Department of Mathematics, Princeton University, Princeton, USA}
\email{bhargav.bhatt@gmail.com}

\author[Blickle]{Manuel Blickle}
\address{Institut für Mathematik, Johannes Gutenberg-Universit\"at Mainz, Mainz, Germany}
\email{blicklem@uni-mainz.de}

\author[Lyubeznik]{Gennady Lyubeznik}
\address{School of Mathematics, University of Minnesota, Minneapolis, USA}
\email{gennady@math.umn.edu}

\author[Singh]{Anurag K. Singh}
\address{Department of Mathematics, University of Utah, Salt Lake City, USA}
\email{singh@math.utah.edu}

\author[Zhang]{Wenliang Zhang}
\address{Department of Mathematics, Statistics, and Computer Science, University of Illinois at Chicago, Chicago, USA}
\email{wlzhang@uic.edu}

\thanks{B.B.~was supported by NSF grants DMS~1801689, DMS~1952399, and DMS~1840234, the Packard Foundation, and the Simons Foundation; M.B.~by DFG grant SFB/TRR45 and CRC326 GAUS; G.L.~by NSF grant DMS~1800355, A.K.S.~by NSF grants DMS~1801285, DMS~2101671, and DMS~2349623; and W.Z.~by NSF grant DMS~1752081. The authors are also grateful to the American Institute of Mathematics (AIM) and the Institute for Advanced Study (IAS) for supporting their collaboration.}

\setcounter{tocdepth}{1}
\maketitle
\tableofcontents

%%%%%%%%%%%%%%%%%%%%%%%%%%%%%%%%%%%%%%%%%%%%%%%%%%%%%%%%%%%%%%%%%%%%%%%%
\section{Introduction}
\label{section:introduction}
%%%%%%%%%%%%%%%%%%%%%%%%%%%%%%%%%%%%%%%%%%%%%%%%%%%%%%%%%%%%%%%%%%%%%%%%

Perverse sheaves \cite{GM1, GM2, BBDG} were invented to study the topology of singular spaces, especially algebraic varieties; in the last $40$ years, they have had a deep impact on many areas of mathematics, especially representation theory and arithmetic geometry. Commutative algebra, to some extent, can also be regarded as a study of singularities of algebraic varieties. The goal of this paper is to use perverse sheaves to give relatively soft ``topological'' proofs, in both characteristic $0$ and characteristic $p$, of several important results in commutative algebra concerning invariants of singularities, especially local cohomology. Our main tool is the Riemann-Hilbert correspondence, which relates perverse sheaves to holonomic $\calD$-modules (over $\CC$) or to Frobenius modules (in characteristic $p$).

We remark that holonomic $\calD$-modules have been used in commutative algebra over $\CC$ for at least 30 years \cite{Lyubeznik:Invent}; however, perverse sheaves seem less common. But already over $\CC$, the topological perspective offers some concrete benefits as the theory of perverse sheaves works perfectly well directly on arbitrarily singular spaces (unlike the theory of holonomic $\calD$-modules, which works most directly using a smooth embedding); see \S\ref{ss:EmbInddRH} for an application where this benefit becomes transparent. Secondly, the topological proofs sometimes work surprisingly uniformly in all characteristics (compare \S\ref{ss:GradingChar0} and \S\ref{ss:GradingCharp}). Finally, the topological perspective suggests approaches in mixed characteristic, where neither the theory of holonomic $\calD$-modules nor the theory of Frobenius modules is natively available; see Remark~\ref{mixedcharinspire} for an example.

%%%%%%%%%%%%%%%%%%%%%%%%%%%%%%%%%%%%%%%%%%%%%%%%%%%%%%%%%%%%%%%%%%%%%%%%
\subsection*{Results}
%%%%%%%%%%%%%%%%%%%%%%%%%%%%%%%%%%%%%%%%%%%%%%%%%%%%%%%%%%%%%%%%%%%%%%%%

In \S\ref{section:RHC}, we recall the classical Riemann-Hilbert correspondence in characteristic zero, emphasizing the connection between local cohomology and perverse truncations under this correspondence. Using this relation, we give topological proofs of the following results:
\begin{enumerate}[\quad\rm(1)]
\item Fundamental results of Ogus \cite{OgusLocalCoh} connecting local cohomological dimension to de~Rham depth --- (\S\ref{ss:Ogus}).

\item The main result of Ma-Zhang \cite{Ma-Zhang} on possible gradings of certain local cohomology modules of homogeneous ideals in characteristic zero --- (\S\ref{ss:GradingChar0}).

\item The topological interpretation of the Bass numbers of certain local cohomology modules, as given in Lyubeznik-Singh-Walther \cite[Theorem~3.1]{LSW} --- (\S\ref{ss:BassChar0}).

\item The embedding independence (from $E_2$ onward) of the Hodge-to-de-Rham spectral sequence for algebraic de~Rham homology due to Bridgland \cite{Bridgland} --- (\S\ref{ss:EmbInddRH}).

\item The embedding independence of Lyubeznik numbers of (affine cones over) smooth projective varieties due to Switala \cite{Switala} --- (\S\ref{ss:EmbIndepLyuChar0}).
\end{enumerate}
An earlier instance where Riemann-Hilbert techniques have been used to determine local cohomological invariants can be found in \cite{GarciaSabbah}.

We next turn our attention to characteristic $p$. The theory of perverse $\FFp$-sheaves on varieties in characteristic $p$ is not as well behaved as in characteristic $0$, e.g., there is no Verdier duality, and certain functors do not preserve constructibility. Furthermore, the theory is also not as well developed as its characteristic~$0$ counterpart. In \S\ref{sec:RHPervFp}, we develop basic properties of this theory using the Riemann-Hilbert correspondence from \cite{BhattLurieRH}, generalizing earlier works of \cite{EK,Gabbert,BoPi}. One of our basic results here is the following surprisingly strong version (to us, at any rate) of Artin vanishing:

\begin{theorem}[The perverse Artin vanishing theorem for $\FFp$-coefficients in characteristic $p$]
Let $f\colon X \to Y$ be an affine map of varieties over a field of characteristic $p$. Then
\[
f_!\colon D^b_c(X_{\et}, \FFp) \to D^b_c(Y_{\et}, \FFp)
\]
is $t$-exact for the perverse $t$-structure on the source and the target. In particular, for any affine variety $Z$, we have $H^i_c(Z,F) = 0$ for $i \neq 0$, and $F$ a perverse $\FFp$-sheaf.
\end{theorem}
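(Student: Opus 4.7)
The plan is to split the statement into the right and left $t$-exactness of $f_!$ and treat these by different methods.

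\textbf{Right $t$-exactness.} That $f_!$ sends ${}^p D^{\le 0}$ to ${}^p D^{\le 0}$ is, I think, just the classical Artin vanishing theorem for \'etale cohomology with torsion coefficients, which applies equally to $\FFp$-coefficients in characteristic $p$. One proves it by factoring the affine map $f$ into affine open immersions and finite morphisms, checking each case separately; the usual argument carries over verbatim, so nothing new is needed here.

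\textbf{Left $t$-exactness.} This is the genuinely new content. The classical $\ell$-adic route, which deduces left $t$-exactness of $f_!$ from right $t$-exactness of $f_*$ via Verdier duality, is blocked here because $\FFp$-Verdier duality fails in characteristic $p$, as the paper itself emphasizes. My approach would be to invoke the Riemann-Hilbert correspondence of \cite{BhattLurieRH}, which realizes $D^b_c(X_{\et}, \FFp)$ as a full subcategory of a derived category of Frobenius-equivariant $\calO_X$-modules, sending the perverse $t$-structure on the sheaf side (up to the appropriate shift) to the standard $t$-structure on the coherent side. Under this correspondence, $f_!$ on $\FFp$-sheaves should be intertwined with a shifted version of $Rf_*$ on the coherent side. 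Because $f$ is affine, $Rf_*$ has no higher cohomology on quasi-coherent sheaves, and translating this back via Artin-Schreier yields the vanishing of ${}^p H^i(f_! F)$ in negative degrees for $F$ perverse.

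\textbf{Main obstacle.} The technical heart is making the Bhatt-Lurie correspondence precise in the form I want: identifying the perverse $t$-structure on the $\FFp$-side with an explicit $t$-structure on the Frobenius-module side, and then checking that $f_!$ does correspond to the right coherent-side functor even when $f$ is not proper. Once these compatibilities are in hand, everything reduces to the elementary acyclicity of $Rf_*$ on quasi-coherent modules for affine $f$. The ``in particular'' assertion is the special case $Y = \Spec k$: since the perverse $t$-structure on a point is the standard one, perverse $t$-exactness of $f_!$ there is exactly $H^i_c(Z, F) = 0$ for $i \neq 0$. Conversely, one can imagine first proving this special case directly, by running the Riemann-Hilbert/Artin-Schreier argument on the affine $Z$ where coherent acyclicity is immediate, and then bootstrapping to the general case by a base-change argument along the (affine) fibers of $f$, using that the perverse $t$-structure is detected by costalks along strata.
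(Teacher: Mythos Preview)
Your split into right and left $t$-exactness misidentifies which half is ``classical.'' In the $\ell$-adic setting it is $Rf_*$ that is right $t$-exact for affine $f$ (BBDG~4.1.1), and Verdier duality then gives \emph{left} $t$-exactness of $Rf_!$. Right $t$-exactness of $f_!$ is simply false there: for smooth affine $X$ of dimension $d$, the perverse sheaf $\CC_X[d]$ has $H^{2d}_c(X,\CC)\neq 0$, so $f_!\CC_X[d]$ escapes ${}^pD^{\le 0}$. The paper makes exactly this point in the remark after Corollary~\ref{SemiperversePushforward}. So the direction you call ``nothing new'' is in fact the surprising one, and there is no off-the-shelf argument for it.

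Your Riemann--Hilbert idea for the other half is morally in the right direction, but the claim that ``$f_!$ is intertwined with a shifted $Rf_*$ on the coherent side, so affine acyclicity finishes'' is too optimistic. The $\RH$ functor commutes with \emph{proper} pushforward and arbitrary pullback; for non-proper $f$ one must compactify $X \stackrel{j}{\hookrightarrow} \overline{X} \stackrel{\overline f}{\to} Y$ and compute $f_! = \overline{f}_* j_!$. Under $\RH$, the functor $j_!$ does not become a restriction but rather a tensor by the ideal sheaf $I=\calO_{\overline X}(-H)$ of the boundary, after extending the perverse coherent Frobenius module $M_0$ to some $N_0$ on $\overline X$. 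At this point bare affine acyclicity of $\overline f_*$ is irrelevant (indeed $\overline f$ is proper, not affine). The paper's actual mechanism is different: since perfection is insensitive to replacing $I$ by $I^n$, one may take $n\gg 0$; Grothendieck-dualizing turns the question into the vanishing of $H^i\big(\overline f_*(\DD_{\overline X}(N_0)\otimes I^{-n})\big)$ for $i\neq 0$, and now $\DD_{\overline X}(N_0)$ is an honest coherent sheaf (because $N_0$ is perverse) while $I^{-1}$ is relatively ample, so Serre vanishing applies. This single argument yields full $t$-exactness at once and is the genuine content your sketch is missing.
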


Armed with these basic results, we use the Riemann-Hilbert correspondence to obtain topological proofs of the following results on invariants of singularities in characteristic $p$:

\begin{enumerate}[\quad\rm(1)]
\item A new interpretation of generalized Cohen-Macaulay and rational singularities in terms of the perversity and perverse simplicity of the constant sheaf $\FFp[\dim]$ --- (\S\ref{ss:ICFRat})

\item The embedding independence of Lyubeznik numbers of (affine cones over) arbitrary projective varieties due to W.~Zhang \cite{ZhangLyuIndep} --- (\S\ref{ss:EmbIndepLyuCharp}).

\item A new topological interpretation of Bass numbers, mirroring that in characteristic $p$ as in Lyubeznik-Singh-Walther \cite[Theorem~3.1]{LSW} --- (\S\ref{ss:BassCharp}).

\item The main result of Y. Zhang \cite{YiZhang} on possible gradings of certain local cohomology modules of homogeneous ideals in characteristic $p$ --- (\S\ref{ss:GradingCharp}).

\item Fundamental results of Hochster-Huneke \cite{HHRPlusCM} and Smith \cite{SmRPlusFRat} on the Cohen-Macaulay and $F$-rationality properties of the absolute integral closure --- (\S\ref{ss:HHSm}).

\item The graded version of the Cohen-Macaulay and $F$-rationality properties in (5), originally proven in \cite{HHRPlusCM,SmRPlusFRatGr}, giving Kodaira vanishing up to finite covers --- (\S\ref{ss:HHSmGr}).
\end{enumerate}

\subsection*{Notation}

We follow standard notational conventions regarding cohomology and derived categories. Let us recall some of them here to avoid future confusion. Fix a space $X$, an object $K \in D(X)$, and an integer $i \in \ZZ$.

We shall often say ``$K \in D^{\le i}$'' (without specifying the ambient category where $K$ lives) to indicate that~$\calH^j(K) = 0$ for $j \le i$; similarly for the perverse $t$-structure as well.

We shall write $H^i(X,K)\colonequals H^i(R\Gamma(X,K))$. Thus, when $K = \CC_X$ is the constant sheaf and $i$ is negative, we are setting $H^i(X,\CC_X)$ to be $0$.

%%%%%%%%%%%%%%%%%%%%%%%%%%%%%%%%%%%%%%%%%%%%%%%%%%%%%%%%%%%%%%%%%%%%%%%%
\subsection*{Acknowledgments}
%%%%%%%%%%%%%%%%%%%%%%%%%%%%%%%%%%%%%%%%%%%%%%%%%%%%%%%%%%%%%%%%%%%%%%%%

This paper had a rather long gestation period (the first version was ready in 2016) during which it was ``unofficially distributed'' in preprint form to various people.\footnote{During the long preparation period of this paper, some of the results/techniques discussed here have been rediscovered independently. These include, for example, \cite{RSW2} for results in characteristic zero, and \cite{CassPerv, BaudinDual, CassLau} for results in positive characteristic.} We apologize for the long delay in its completion. We had conversations with and feedback from a number of colleagues on the material in this article, including Robert Cass, Jacob Lurie, Linquan Ma, Mircea Musta\c t\u a, Mihnea Popa, Karl Schwede, and Jakub Witaszek. We are also most grateful to the anonymous referee for their insightful comments and suggestions, which improved the paper significantly.

%%%%%%%%%%%%%%%%%%%%%%%%%%%%%%%%%%%%%%%%%%%%%%%%%%%%%%%%%%%%%%%%%%%%%%%%
\section{Recollections on the Riemann-Hilbert correspondence in characteristic $0$}
\label{section:RHC}
%%%%%%%%%%%%%%%%%%%%%%%%%%%%%%%%%%%%%%%%%%%%%%%%%%%%%%%%%%%%%%%%%%%%%%%%

Let $X$ be a smooth connected algebraic variety over $\CC$ of dimension $d$. Write $\calD_X$ for the sheaf of differential operators on $X$, and let $D^b(\calD_X)$ be the bounded derived category of quasi-coherent left $\calD_X$-modules. Let~$D^b_{rh}(\calD_X)$ be the full subcategory of $D^b(\calD_X)$ spanned by complexes whose cohomology sheaves are regular holonomic $\calD_X$-modules \cite[Definition~6.1.1]{HTTDmodules}. If $f\colon X\to Y$ is a morphism of smooth algebraic varieties, then the two corresponding pushforward functors take $D^b_{rh}(\calD_X)$ to $D^b_{rh}(\calD_Y)$, and the two corresponding pullback functors take $D^b_{rh}(\calD_Y)$ to $D^b_{rh}(\calD_X)$ \cite[Theorem~6.1.5\,(ii)]{HTTDmodules}.

Let $X^\an$ be the analytic space associated to $X$, let $\CC_{X^\an}$ denote the constant sheaf of complex numbers $\CC$ on $X^\an$, and let $D^b(\CC_{X^\an})$ be the bounded derived category of $\CC_{X^\an}$-modules. A stratification of the variety~$X$ is a locally finite partition $X=\cup X_\alpha$ by mutually disjoint strata $X_\alpha$, such that each stratum $X_\alpha$ is a smooth locally closed subvariety of $X$, and for every $\alpha$ the closure $\overline{X}_\alpha$ is the union of some subset of the strata. An algebraically constructible sheaf $F$ on $X^\an$ is a $\CC_{X^\an}$-module such that all its stalks are finite-dimensional vector spaces over $\CC$ and there is a stratification $X=\cup X_\alpha$ with the property that the restriction $F|_{X^\an_\alpha}$ is locally constant for every $\alpha$ \cite[Definition~4.5.6]{HTTDmodules}. We write $D^b_c(X^\an)$ for the full triangulated subcategory of $D^b(\CC_{X^\an})$ consisting of bounded complexes of $\CC_{X^\an}$-modules with algebraically constructible cohomology sheaves \cite[Notation~4.5.7]{HTTDmodules}.

According to \cite[page~111]{HTTDmodules}, there is the (contravariant) Verdier duality functor
\[
\DD_{X^\an}(-)=\RHom_{\CC_{X^\an}}(-,\omega^\bullet)\colon D^b(\CC_{X^\an})\to D^b(\CC_{X^\an}),
\]
where $\omega^\bullet\in D^b(\CC_{X^\an})$ is the dualizing complex of $X^\an$ \cite[Definition~4.5.2]{HTTDmodules}; if $X$ is smooth, then
\[
\omega^\bullet\simeq\CC_{X^\an}[2\dim X],
\]
\cite[page~111]{HTTDmodules}. Verdier duality preserves the category $D^b_c(X^\an)$ \cite[Theorem~4.5.8\,(i)]{HTTDmodules}. If~$f\colon X\to Y$ is a morphism of algebraic varieties, then there are the pullback and pushforward functors
\[
f^*,f^!\colon D^b_c(Y^\an)\rightleftarrows D^b_c(X^\an) \colon f_*,f_! 
\]
that satisfy
\[
f^!=\DD\circ f^*\circ \DD \quad\text{ and }\quad f_!=\DD\circ f_*\circ \DD,
\]
i.e., $\DD\circ f^!=f^*\circ \DD$ and $\DD\circ f_!=f_*\circ \DD$ \cite[Theorem~4.5.8\,(ii)]{HTTDmodules}; note that the functor $(-)^*$ is denoted $(-)^{-1}$ in \cite{HTTDmodules} and we generally mean the total derived functors. We briefly recall from \cite[\S8]{HTTDmodules} and \cite{BBDG} the notations and basic properties of the perverse $t$-structure on the bounded derived category $D^b_c(X^\an)$ of constructible sheaves of $\CC$-vector spaces on $X^\an$.

The two subcategories defined as follows 
\begin{align*}
M^\bullet \in {}^pD^{\le 0}_c(X^\an) &\iff i_x^*M^\bullet=M^\bullet_x \in D^{\le -\dim{\overline{\{x\}}}}\text{ for all points $x\in X$} \\
M^\bullet \in {}^pD^{\ge 0}_c(X^\an) &\iff i_x^!M^\bullet=R\Gamma_xM^\bullet_x \in D^{\ge -\dim{\overline{\{x\}}}}\text{ for all points $x\in X$ }
\end{align*}
define a $t$-structure on $D^b_c(X^\an)$. The heart of this $t$-structure is an abelian category
\[
\Perv(X) \colonequals {}^pD^{\le 0}_c(X^\an) \cap {}^pD^{\ge 0}_c(X^\an)
\]
called the perverse constructible sheaves on $X$. 

We use ${ }^p \tau^{\le i}$, ${ }^p \tau^{\ge i}$, and ${ }^p \calH^i$ for the perverse truncation and cohomology functors \cite[Definition~8.1.28]{HTTDmodules}. A functor is called right $t$-exact (resp.\ left $t$-exact) if it preserves $D^{\le 0}$ (resp.\ $D^{\ge 0}$). Let us recall some basic properties of perverse sheaves which will play a role in what follows, see \cite[\S8]{HTTDmodules}:

\begin{enumerate}[\quad\rm(1)]
\label{t-exactness-properties}
\item If $X$ is smooth, then $\CC_X[\dim X]$ is perverse. Note that in general we only have $\CC_X[\dim X] \in {}^pD^{\le 0}_c(X)$.

\item (immersions) If $i\colon Y \to X$ is a locally closed immersion then:
\begin{itemize}
\item $i^*$ is right $t$-exact and $i^!$ is left $t$-exact. In particular, if $Y \subseteq X$ is open, then $i^!=i^*$ is $t$-exact. 
\item $i_*$ is right $t$-exact and $i_!$ is left $t$-exact. In particular, if $Y \subseteq X$ is closed, then $i_*=i_!$ is $t$-exact. 
\end{itemize}
Hence, if $Y$ is closed, then $i_*\CC_X[\dim Y] \in {}^pD^{\le 0}_c(X)$. If $Y$ is also smooth, then $i_*\CC_X[\dim Y]$ is perverse. 

\item (smooth maps) If $f\colon X \to Y$ is smooth of relative dimension $d$, then $f^*(-)[d]=f^![-d]$ is $t$-exact.

\item (Artin vanishing \cite[Theorem~4.1]{BBDG}) If $f\colon X \to Y$ is affine, then $f_*$ is right $t$-exact.

\item (Verdier duality) The Verdier duality $\DD_X$ is exact and $t$-exact. In particular $\DD_X(\Perv(X)) = \Perv(X)$
\end{enumerate}

For a complex $M^\bullet\in D^b(\calD_X)$ let $(M^\bullet)^{\an}\in \calD^b_{X^{\an}}$ be the corresponding complex in the analytic category \cite[p.~120]{HTTDmodules}. The de~Rham complex of $M^\bullet\in D^b(\calD_{X})$ is defined in \cite[p.~122]{HTTDmodules} as
\[
\dR(M^\bullet)\colonequals\Omega_{X^{\an}}\otimes_{\calD_{X^\an}}^\LL (M^\bullet)^{\an},
\]
where $\Omega_{X^{\an}}$ is the canonical sheaf of $X^\an$, i.e., the highest exterior power of the sheaf $\Omega^1_{X^\an/\CC}$ of $\CC$-linear differentials of $X^\an$; the canonical sheaf $\Omega_{X^{\an}}$ has a natural structure of a right $\calD_{X^\an}$-module. There results a de~Rham functor
\[
\dR\colon D^b(\calD_{X})\xrightarrow{\ M^\bullet\mapsto \dR(M^\bullet)\ } D^b(\CC_{X^\an}).
\]
It is clear from the definition that the de~Rham functor respects the translation functor and takes distinguished triangles to distinguished triangles.

The functor $(-)^{\an}\colon D^b_{rh}(\calD_X)\to D^b_{rh}(\calD_{X^{\an}})$ is fully faithful ({\it cf.} \cite[Proposition~7.8]{Brylinski}). Moreover, according to \cite[4.7.4]{HTTDmodules}, 
\[
\dR(M^{\bullet})\colonequals \Omega_{X^{\an}}\otimes_{\calD_{X^\an}}^\LL (M^\bullet)^{\an}\simeq {\rm R}\mathcal Hom_{\calD_{X^\an}}(\calO_{X^\an},(M^\bullet)^\an)[\dim(X)];
\] 
likewise
\[
\Omega_X\otimes_{\calD_X}^{\LL}M^{\bullet}\simeq {\rm R}\mathcal Hom_{\calD_X}(\calO_X,M^{\bullet})[\dim(X)].
\]

We will use the following covariant form of the Riemann-Hilbert correspondence, see \cite[\S7]{HTTDmodules}:

\begin{theorem}
\label{thm:RH}
Restricting to the subcategory of \emph{regular holonomic} $\calD_X$-modules $D^b_{rh}(\calD_X)$, the de~Rham functor induces an equivalence of categories
\[
\dR\colon D^b_{rh}(\calD_X) \to D^b_c(X^\an),
\]
that satisfies the following properties:
\begin{enumerate}[\quad\rm(1)]
\item \label{thm:RH:1} $\dR$ commutes with the duality functors and with the standard pushforward and pullback functors for every morphism $f\colon X\to Y$ of smooth algebraic varieties. In particular, it commutes with restriction to open subsets.

\item \label{thm:RH:2} $\dR$ identifies the standard $t$-structure on $D^b_{rh}(\calD_X)$ with the perverse $t$-structure on~$D^b_c(X^\an)$. In particular, for $K^\bullet \in D^b_{rh}(\calD_X)$, and each integer $i$, we have
\[
\dR(\calH^i(K^\bullet)) \simeq { }^p \calH^i (\dR(K^\bullet)).
\]

\item \label{thm:RH:3} $\dR(\calO_X) \simeq \CC_X[d]$, where $d=\dim X$.

\item \label{thm:RH:4} $\dR$ is functorial with respect to automorphisms of $X$. In particular, if $G$ is an algebraic group acting on $X$, then $\dR$ induces an identification of the $G$-equivariant analogs of~$D^b_{rh}(\calD_X)$ and $D^b_c(X^\an)$, as defined in \cite{BernsteinLunts}.
\end{enumerate}
\end{theorem}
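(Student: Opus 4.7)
The plan is to derive the four listed properties from the foundational Kashiwara-Mebkhout Riemann-Hilbert theorem as formulated in \cite[Chapter~7]{HTTDmodules}, proceeding from the most concrete item to the most delicate.

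I would begin with (3): unwinding the definition, $\dR(\calO_X) = \Omega \otimes^L_{\calD_X} \calO_X$ is the holomorphic de~Rham complex $\Omega^\bullet_{X^\an}$, and the holomorphic Poincar\'e lemma identifies this with the constant sheaf $\CC_X$ in the appropriate cohomological range. The convention placing the top exterior power in degree $0$ then produces the shift $[d]$. Property (4) is essentially formal: $\dR$ is built from canonically associated sheaves ($\calD_X$ and $\Omega$), so it commutes with automorphisms of $X$, and the identification of equivariant derived categories then follows by applying (1) to the structural maps of the simplicial bar construction used in \cite{BernsteinLunts} to build the equivariant derived categories.

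For (1), the compatibility of $\dR$ with Verdier duality is a consequence of the equivalence together with the fact that the $\calD$-module duality matches topological Verdier duality up to the shift accounted for by (3); compatibility with $f_*, f_!, f^*, f^!$ comes from matching the two six-functor formalisms on regular holonomic modules and on constructible complexes, proved functor-by-functor via base change and projection formulas. I would cite these rather than rebuild them. For (2), once the equivalence is established, conservativity reduces the claim to checking that a single regular holonomic $\calD_X$-module $M$ lands in the heart of the perverse $t$-structure. This in turn reduces, via d\'evissage along a stratification adapted to the characteristic variety of $M$, to the case of a local system on a smooth stratum, where the perversity estimates follow from the shift computed in (3).

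The main obstacle, which I would not reprove here, is the Riemann-Hilbert equivalence itself. Its proof constructs a quasi-inverse using the solution functor together with the theory of minimal/meromorphic extensions across a normal crossings divisor, and uses resolution of singularities and induction on dimension to reduce to the case of regular flat connections on a smooth variety, where the classical correspondence with local systems applies. Once that deep input is accepted, assembling the properties above is essentially a matter of bookkeeping and the compatibilities already recorded in \cite{HTTDmodules}.
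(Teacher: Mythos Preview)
Your proposal is correct and takes essentially the same approach as the paper: both derive the theorem by citing the foundational results in \cite{HTTDmodules}. The paper is even more terse than your sketch---for (2) it simply invokes \cite[Theorem~7.2.5]{HTTDmodules} (the heart-to-heart equivalence) rather than outlining a d\'evissage, and for (4) it just says the claim is a consequence of the compatibility with pullbacks and pushforwards from (1)---but the substance is the same.
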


\begin{proof}
The fact that the de~Rham functor is an equivalence is proven in \cite[Theorem~7.2.2]{HTTDmodules}.

(1) Commutativity with the duality functors is proven in \cite[Corollary~4.6.5 and Proposition~4.7.9]{HTTDmodules}. Commutativity with the pushforward and pullback functors is proven in \cite[Theorem~7.1.1]{HTTDmodules}.

(2) This is straightforward from \cite[Theorem~7.2.5]{HTTDmodules} that says that the de~Rham functor $\dR$ induces an equivalence between Mod$_{rh}(\calD_X)$, i.e., the category of regular holonomic $\calD_X$-modules on $X$, which is the core of the standard $t$-structure, and Perv$(\CC_{X^\an})$, i.e., the category of perverse sheaves on $X^\an$, which is the core of the perverse $t$-structure.

(3) There is a locally free resolution of the right $\calD_{X^\an}$-module $\Omega$ described in \cite[Lemma~1.5.27]{HTTDmodules}. This resolution is concentrated in homological degrees $-d$ through 0. Applying $-\otimes_{\calD_{X^\an}}\calO_X$ to this resolution yields a complex whose cohomology in degree $-d$ is the constant sheaf $\CC_{X^\an}$, and whose cohomology in all higher degrees vanish by the holomorphic Poincar\'e lemma. Hence $\dR(\calO_{X^\an})$ is quasi-isomorphic to $\CC_X[d]$.

(4) is a consequence of the commutativity of the de~Rham functor with pullbacks and pushforwards.
\end{proof}

Next we summarize the relevant features of local cohomology in the context of the Riemann-Hilbert correspondence:

\begin{example}[Local cohomology under Riemann-Hilbert]
\label{ex:LocalCohPerverse}
Let $i\colon Z \into X$ be a closed immersion with open complement $j\colon U \into X$. For every $M^\bullet\in D^b_{rh}(\calD_X)$ there is a distinguished triangle
\[
R\underline\Gamma_Z(M^\bullet)\to M^\bullet\to j_*j^*(M^\bullet).
\]
As standard pushforwards and pullbacks preserve regular holonomicity, $j_*j^*(M^\bullet)\in D^b_{rh}(\calO_X)$, so~$R\underline\Gamma_Z(M^\bullet)$ is also in $D^b_{rh}(\calD_X)$. Thus, local cohomology modules of regular holonomic modules are regular holonomic. In particular, as $\calO_X\in D^b_{rh}(\calD_X)$, it follows that $R\underline\Gamma_Z(\calO_X)\in D^b_{rh}(\calD_X)$. Setting $M^\bullet=\calO_X$ in the above distinguished triangle and applying the functor $\dR$ gives the following distinguished triangle in~$D^b_c(\CC_X)$:
\[
\dR(R\underline\Gamma_Z(\calO_X))\to \dR(\calO_X)\to \dR(j_*j^*(\calO_X)).
\]
Since $\dR(\calO_X)\simeq \CC_X[d]$ by Theorem~\ref{thm:RH}.\ref{thm:RH:3}, and $\dR(j_*j^*(\calO_X))=j_*j^*(\CC_X[d])$ by Theorem~\ref{thm:RH}.\ref{thm:RH:1}, comparing this distinguished triangle with the distinguished triangle
\[
i_*i^!(\CC_X[d])\to \CC_X[d]\to j_*j^*(\CC_X[d]),
\]
we conclude that
\begin{equation}
\label{eq:RHLocalCoh1}
\dR(R\underline\Gamma_Z(\calO_X)) \simeq i_* i^! \CC_X[d].
\end{equation}
In particular, we obtain
\begin{equation}
\label{eq:RHLocalCoh2}
\dR(\calH^i_Z(\calO_X)) \simeq { }^p \calH^i(i_* i^! \CC_X[d]).
\end{equation}
As a special case, assume that $X = \Spec R$ is (smooth and) affine of dimension $d$, and that $Z = \{x\}$ is a closed point of $X$ defined by the maximal ideal $\frakm\subseteq R$. Then
\[
R\underline\Gamma_Z(\calO_X) = H^d_\frakm(R)[-d],
\]
and $H^d_{\frakm}(R) \simeq E$ is the injective hull of the residue field of $R$. Using Verdier duality we see that
\[
\DD_{\{x\}}(i^!\CC_X[d]) = i^*\DD_{X^\an} \CC_X[d] = i^*\CC_X[d] = \CC_{\{x\}}[d] = \DD_{\{x\}}(\CC_{\{x\}}[-d])
\]
to conclude:
\[
i_* i^! \CC_X[d] \simeq i_* \CC_{\{x\}}[-d].
\]
The preceding formula then shows that $\dR(H^d_{\frakm}(R)) \simeq \CC_{\{x\}}$. In other words, the regular holonomic $\calD_X$-module $H^d_{\frakm}(R)$ corresponds to the skyscraper sheaf with value $\CC$ supported at $x \in X^\an$.

In the sequel, it will be useful to record the Verdier dual form of~\eqref{eq:RHLocalCoh1} and~\eqref{eq:RHLocalCoh2}. Let $\DD$ be the Verdier duality functor $D^b_c(X^\an)$; it is an anti-equivalence, and $t$-exact for the perverse $t$-structure. One defines\footnote{There are concrete formulae for $\Sol\CC$ and $\dR$ but they do not play a role in our applications, see \cite[\S5]{HTTDmodules}. In our treatment of the characteristic $p$ case later, it is the $\Sol$ functor that has an analog in that situation.}
\[
\SolC \colonequals \DD \circ \dR.
\]
Together with the standard formulae $\DD(\CC_X[d]) \simeq \CC_X[d]$, and $\DD \circ i_* \simeq i_* \circ \DD$, and $\DD \circ i^! \simeq i^* \circ \DD$, one obtains the following translation of~\eqref{eq:RHLocalCoh1}:
\begin{equation}
\label{eq:RHLocalCoh1Dual}
\SolC(R\underline\Gamma_Z(\calO_X)) \simeq i_* i^* \CC_X[d] \simeq i_* \CC_Z[d].
\end{equation}
As $i_*$ is $t$-exact (see page~\pageref{t-exactness-properties}),~\eqref{eq:RHLocalCoh2} then becomes
\begin{equation}
\label{eq:RHLocalCoh2Dual}
\SolC(\calH^i_Z(\calO_X)) \simeq i_* \big({ }^p \calH^{d-i}(\CC_Z)\big).
\end{equation}
In particular, as the Riemann-Hilbert correspondence is compatible with restriction to open subsets, questions about the support of local cohomology modules $\calH^i_Z(\calO_X)$ translate to questions about the support of the perverse sheaves ${ }^p \calH^{d-i}(\CC_Z)$.
\end{example}

%%%%%%%%%%%%%%%%%%%%%%%%%%%%%%%%%%%%%%%%%%%%%%%%%%%%%%%%%%%%%%%%%%%%%%%%
\section{Applications of the Riemann-Hilbert correspondence in characteristic $0$}
\label{char0}
%%%%%%%%%%%%%%%%%%%%%%%%%%%%%%%%%%%%%%%%%%%%%%%%%%%%%%%%%%%%%%%%%%%%%%%%

Our goal in this section is to exploit the Riemann-Hilbert correspondence (RHC) to study local cohomology, recovering the results mentioned in \S\ref{section:introduction}. A particular feature of this approach is that the proofs on the topological side (as explained below) involve far fewer calculations than those on the algebraic side, and use nothing beyond the basic formalism of perverse sheaves and standard pullback/pushforward functors.

%%%%%%%%%%%%%%%%%%%%%%%%%%%%%%%%%%%%%%%%%%%%%%%%%%%%%%%%%%%%%%%%%%%%%%%%
\subsection{Recovering results of Ogus}
\label{ss:Ogus}
%%%%%%%%%%%%%%%%%%%%%%%%%%%%%%%%%%%%%%%%%%%%%%%%%%%%%%%%%%%%%%%%%%%%%%%%

We recover some fundamental results of Ogus~\cite{OgusLocalCoh} using the perverse sheaf perspective. For the rest of this section, fix a smooth affine algebraic variety $X$ of dimension $d$ over $\CC$, as well as a closed subvariety $i\colon Y \into X$.
Ogus studies the local cohomological dimension
\[
\lcd(Y,X) \colonequals \max\{i \mid \calH^i_Y(\calO_X) \neq 0\},
\]
which, via Riemann-Hilbert as noted in equation~\eqref{eq:RHLocalCoh2Dual}, satisfies
\[
d - \lcd(Y,X) = \min\{ j \mid {}^p\calH^j(\CC_Y)\neq 0\}.
\]
In other words, $R\underline\Gamma_Y(\calO_X) \in D^{\le d-r}_{rh}(\calD_X)$ if and only if $\CC_Y \in { }^p D^{\ge r}_c(Y^\an)$. In \cite[Theorem~2.13]{OgusLocalCoh}, Ogus relates $\lcd(Y,X)$ to the de~Rham depth of $Y$ to show:

\begin{theorem}[Ogus]
Fix an integer $r$. Then the following are equivalent:
\begin{enumerate}[\quad\rm(1)]
\item $\calH^i_Y(\calO_X) = 0$ for $i > d-r$, i.e., $\lcd(Y,X) \le d-r$.

\item There exists a stratification $\{k_i\colon Z_i \into Y\}$ of $Y$ such that for all closed points $z \in Z_i$, we have $H^i_{z}(Y^\an,\CC_Y) = 0$ for $i < r+d_i$, where $d_i = \dim Z_i$.
\end{enumerate}
\end{theorem}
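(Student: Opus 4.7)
The plan is to use the Riemann-Hilbert translation already recorded just before the theorem: condition~(1) is equivalent to $\CC_Y \in {}^p D^{\ge r}_c(Y^\an)$. My goal is therefore to match this perverse $t$-structure condition with the pointwise vanishing in~(2). The natural tool is the stratification-theoretic characterization of the perverse $t$-structure on $D^b_c(Y^\an)$, together with the standard calculation of $j^!$ at a closed point of a smooth complex variety.

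For the forward direction, I would start from $\CC_Y \in {}^p D^{\ge r}$ and choose a stratification $\{k_i\colon Z_i\hookrightarrow Y\}$ fine enough that each complex $k_i^!\CC_Y$ has cohomologically locally constant cohomology sheaves on $Z_i$ (possible by constructibility). The middle-perversity condition then reads $\calH^m(k_i^!\CC_Y) = 0$ for $m < r - d_i$, where $d_i = \dim_{\CC} Z_i$. For a closed point $z \in Z_i$, with $j\colon \{z\} \hookrightarrow Z_i$ and $k = k_i \circ j$, I would use the smoothness of $Z_i$ and the cohomological local constancy of $k_i^!\CC_Y$ to identify
\[
H^l_z(Y^\an,\CC_Y) = H^l(k^! \CC_Y) = H^l(j^!\, k_i^! \CC_Y) \cong \calH^{l - 2d_i}(k_i^! \CC_Y)_z,
\]
so that the perverse vanishing in degrees $< r - d_i$ forces $H^l_z(Y^\an,\CC_Y) = 0$ for $l < r + d_i$, giving~(2).

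For the reverse direction, I would refine the stratification in~(2) (if necessary) to one on which each $k_i^!\CC_Y$ becomes cohomologically locally constant. Condition~(2) is preserved under refinement, since any refined stratum $Z'_j \subset Z_i$ has dimension $d'_j \le d_i$, only weakening the vanishing range required at its closed points. The identification above can then be read backwards: pointwise vanishing of $H^l_z(Y^\an,\CC_Y)$ for $l < r + d'_j$ at all closed points of $Z'_j$, combined with local constancy, produces the stratum-level vanishing $\calH^m(k'_j{}^!\CC_Y) = 0$ for $m < r - d'_j$, whence $\CC_Y \in {}^p D^{\ge r}$. The main step requiring care, and thus the main potential obstacle, is the dimension-shift bookkeeping: the apparent mismatch between the stratum-level bound \emph{$m < r - d_i$} and the closed-point bound \emph{$l < r + d_i$} is exactly reconciled by the $-2d_i$ cohomological shift of $j^!$ at a point in a smooth complex variety of complex dimension $d_i$.
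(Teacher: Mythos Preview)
Your proposal is correct and follows essentially the same route as the paper: translate (1) to $\CC_Y \in {}^p D^{\ge r}$, choose a stratification on which each $k_i^!\CC_Y$ is lisse, then use $j^! = j^*[-2d_i]$ for the inclusion of a closed point in a smooth stratum to pass between the stratum bound $r-d_i$ and the point bound $r+d_i$; the converse is obtained by refining the stratification and reversing. The only cosmetic difference is that the paper produces the lisse stratification by first stratifying so that $k_i^*\DD_X(i_*\CC_Y)$ is lisse and then dualizing back, whereas you assert its existence directly from constructibility---both are standard.
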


Here, $H^i_{z}(Y,\CC_Y)$ is defined to be the $i$-th cohomology of the homotopy-kernel $R\Gamma(k^!\CC_Y)$ of the map
\[
R\Gamma(Y^\an,\CC_Y) \to R\Gamma((Y - \{z\})^\an,\CC_{Y - \{z\}})
\]
and we point out that by Hartshorne \cite[\S IV, Theorem~1.1]{Hartshorne:AlgdR} this can be expressed in terms of algebraic de~Rham cohomology of $Y$, see \cite[\S III]{Hartshorne:AlgdR}. We have also used \cite[Theorem~6.2]{Hartshorne:AlgdR} to simplify the condition of de~Rham depth occurring in \cite{OgusLocalCoh}, avoiding all mention of non-closed points.

\begin{proof}
Assume (1), so $R\underline\Gamma_Y(\calO_X) \in D^{\le d-r}_{rh}(\calD_X)$. By~\eqref{eq:RHLocalCoh2Dual}, this translates to $\CC_Y \in { }^p D^{\ge r}_c(X^\an)$. By constructibility of the Verdier dual $\DD_X(i_*\CC_Y)$, choose a stratification $\{k_i\colon Z_i \into Y\}$ such that $Z_i$ is smooth of dimension $d_i$ and $k_i^*\DD_X(i_*\CC_Y)$ is lisse, i.e., has locally constant cohomology sheaves. As Verdier duality on a smooth scheme preserves lisse complexes, dualizing again shows that $k_i^!\CC_Y = \DD_{Z_i} k_i^*\DD_X(i_*\CC_Y)$ is lisse.

As $k_i^!$ is left $t$-exact (see page~\pageref{t-exactness-properties}), we obtain $k_i^! \CC_Y \in { }^p D^{\ge r}$. Moreover, as the perverse and standard $t$-structure on lisse complexes are the same up to a shift by the dimension of the variety \cite[Corollary~8.1.23]{HTTDmodules}, we get~$k_i^! \CC_Y \in D^{\ge r-d_i}$. If $k_z\colon \{z\} \into Z_i$ is the inclusion of a closed point, then $k_z^! = k_z^*[-2d_i]$ on lisse complexes \cite[Theorem~4.5.8\,(ii)]{HTTDmodules}, so
\[
k_z^! k_i^! \CC_Y \simeq k_z^\ast \big(k_i^! \CC_Y\big)[-2d_i] \in D^{\ge r-d_i}[-2d_i] = D^{\ge r+d_i},
\]
where we use that $k_z^*$ is exact for the standard $t$-structure. Now $k_i \circ k_z$ is simply the inclusion $h\colon\{z\} \into Y$. If $\ell\colon Y - \{z\} \to Y$ denotes the open complement, then there is an exact triangle
\[
h_* h^! \CC_Y \to \CC_Y \to \ell_* \CC_{Y - \{z\}}.
\]
As $\{z\}$ is a point, the complex $h^! \CC_Y$ is identified (via the previous triangle) with the homotopy-kernel of
\[
R\Gamma(Y^\an, \CC) \to R\Gamma((Y - \{z\})^\an,\CC).
\]
Thus, the condition $h^! \CC_Y \in D^{\ge r+d_i}$ translates to $H^i_{z}(Y^\an,\CC_Y) = 0$ for $i < r+d_i$, which gives~(2).

By definition of the perverse $t$-structure, the condition $\CC_Y \in { }^p D^{\ge r}_c(Y^\an)$ is equivalent to: For any locally closed immersion $h\colon Z \into Y$ one has $h^!\CC_Y \in D^{\ge r - \dim Z}_c(Z^\an)$. By passing to a finer stratification (that also respects $Z$), the above reasoning is essentially reversible, so we get the converse as well.
\end{proof}

As an application of this, we can prove another result of Ogus, which, in some ways, is a specialization of the former (see \cite[Theorem~2.8]{OgusLocalCoh}):

\begin{theorem}[Ogus]
Fix a closed point $x \in Y$. If $\calH^i_Y(\calO_X) = 0$ for all $i > d-r$, then $H^i_{x}(Y^\an,\CC_Y) = 0$ for~$i < r$. The converse is also true if $\calH^i_Y(\calO_X)$ is known to be supported at $x$ for $i > d-r$.
\end{theorem}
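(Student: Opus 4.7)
The plan is to use Riemann-Hilbert to translate both assertions into statements about the perverse $t$-structure on $\CC_Y$ and to exploit the behavior of $h^!\colonequals i_x^!$ for the closed immersion $h\colon\{x\}\into Y$. Two ambient facts will do most of the work: $h^!$ is left $t$-exact for the perverse $t$-structure, and on a point the perverse $t$-structure coincides with the standard one.

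For the forward direction, the hypothesis translates via \eqref{eq:RHLocalCoh2Dual} to $\CC_Y\in {}^pD^{\ge r}_c(Y^\an)$. Applying $h^!$ and using the two ambient facts immediately yields $h^!\CC_Y\in D^{\ge r}$, which is exactly $H^i_x(Y^\an,\CC_Y)=0$ for $i<r$.

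For the converse, I would first translate the additional support hypothesis: since Riemann-Hilbert is compatible with restriction to open subsets, $\calH^i_Y(\calO_X)$ being supported at $x$ for $i>d-r$ is equivalent to ${}^p\calH^j(\CC_Y)$ being supported at $\{x\}$ for every $j<r$. The goal is then to show that ${}^p\calH^j(\CC_Y)=0$ for all $j<r$. I would argue by contradiction: set $j_0\colonequals\min\{j\mid {}^p\calH^j(\CC_Y)\ne 0\}$ and assume $j_0<r$. Since ${}^p\calH^{j_0}(\CC_Y)$ is a nonzero perverse sheaf supported at the closed point $\{x\}$, it has the form $h_*V$ for some nonzero vector space $V$. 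Now apply $h^!$ to the perverse truncation triangle
\[
{}^p\calH^{j_0}(\CC_Y)[-j_0]\to\CC_Y\to {}^p\tau^{>j_0}\CC_Y\to.
\]
The third term is in ${}^pD^{>j_0}$, so by left $t$-exactness of $h^!$ and perverse~$=$~standard on the point, $h^!({}^p\tau^{>j_0}\CC_Y)\in D^{>j_0}$; the middle term satisfies $H^{j_0}(h^!\CC_Y)=H^{j_0}_x(Y^\an,\CC_Y)=0$ by hypothesis. The associated long exact sequence then forces $H^{j_0}$ of the first term to vanish. But $h^!\bigl({}^p\calH^{j_0}(\CC_Y)[-j_0]\bigr)\simeq h^!(h_*V)[-j_0]\simeq V[-j_0]$, whose $H^{j_0}$ is $V\ne 0$ --- a contradiction.

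Beyond the forward argument, the only genuinely new ingredients are the compatibility of Riemann-Hilbert with open restriction (to interpret ``supported at $x$'' topologically) and the standard fact that a perverse sheaf supported at a closed point is a skyscraper $h_*V$. I anticipate no serious obstacle: the whole converse is a short truncation bookkeeping at the minimal perverse cohomology degree, and the key step where something could conceivably fail -- namely that the perverse truncation $\le j_0$ is a shifted skyscraper -- is precisely where the support hypothesis is used.
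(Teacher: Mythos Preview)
Your proof is correct and follows essentially the same approach as the paper: translate via \eqref{eq:RHLocalCoh2Dual} to the perverse $t$-structure, use left $t$-exactness of $h^!$ for the forward direction, and for the converse use that the support hypothesis forces ${}^p\tau^{<r}\CC_Y$ to be a skyscraper at $x$, then kill it with the triangle and $h^!h_*=\mathrm{id}$. The only cosmetic difference is that the paper dispatches the entire ${}^p\tau^{<r}\CC_Y=h_*A$ at once (showing $A\in D^{<r}\cap D^{\ge r}=0$) rather than arguing by contradiction at the minimal degree $j_0$.
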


\begin{proof}
The forward direction follows from the previous result as $x$ must lie in some stratum. For the converse, consider the inclusion $k\colon\{x\} \into Y$. If $H^ik^!\CC_Y = H^i_{x}(Y^\an,\CC_Y) = 0$ for $i < r$, then $k^! \CC_Y \in D^{\ge r}$. If we additionally know that $\tau^{> d-r} R\underline\Gamma_Y(\calO_X)$ is supported at $x$, we learn, by~\eqref{eq:RHLocalCoh2Dual}, that ${ }^p \tau^{< r} \CC_Y$ is supported at $x$, i.e., is of the form $k_* A$ for suitable $A \in D^{< r}$. On the other hand, applying $k^!$ to the exact triangle
\[
k_* A \colonequals { }^p \tau^{< r} \CC_Y \to \CC_Y \to { }^p \tau^{\ge r} \CC_Y
\]
then gives a triangle
\[
A \simeq k^! k_* A \to k^! \CC_Y \to B
\]
where $B \in D^{\ge r}$ by the left $t$-exactness of $k^!$. Since $k^! \CC_Y \in D^{\ge r}$ by our assumption, we must have $A = 0$, so~$\CC_Y \in D^{\ge r}$. This proves the claim by~\eqref{eq:RHLocalCoh2Dual}.
\end{proof}

Results using the same techniques in the analytic context have independently appeared in \cite{RSW2}. In~\cite{MuPo} a further interesting characterization of $\lcd(Y,X)$ in terms of resolutions of singularities is obtained.

%%%%%%%%%%%%%%%%%%%%%%%%%%%%%%%%%%%%%%%%%%%%%%%%%%%%%%%%%%%%%%%%%%%%%%%%
\subsection{Gradings}
\label{ss:GradingChar0}
%%%%%%%%%%%%%%%%%%%%%%%%%%%%%%%%%%%%%%%%%%%%%%%%%%%%%%%%%%%%%%%%%%%%%%%%

We now explain how to use RHC to understand gradings of certain local cohomology modules with finite support. For this, note that, by RHC, it is trivial to classify regular holonomic $\calD_X$-modules supported at a point: it amounts to the classification of finite dimensional vector spaces. After incorporating group actions, this lets us recover the main consequence of \cite{Ma-Zhang} in the regular holonomic case (which is the only one relevant to the prequel); from this optic, the main idea is that $\GG_m$ is connected.

\begin{proposition}
\label{prop:BassGradedRH}
Let $X = \Spec R$ be a smooth affine variety of dimension $d$ over $\CC$. Let~$M$ be a regular holonomic $\calD_X$-module that vanishes away from a closed point $x \in X$ corresponding to a maximal ideal $\frakm$. Then~$M \simeq H^d_\frakm(R)^{\oplus \mu}$, i.e., $M$ is isomorphic to a finite direct sum of copies of the injective hull of $R/\frakm$.

Assume, moreover, that $X$ admits a $\GG_m$-action that fixes $x$, and that $M$ is $\GG_m$-equivariant. Then any $\calD_X$-module isomorphism $M \simeq H^d_{\frakm}(R)^{\oplus \mu}$ is automatically $\GG_m$-equivariant.
\end{proposition}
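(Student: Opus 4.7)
The plan is to transport both assertions through the Riemann-Hilbert correspondence of Theorem~\ref{thm:RH}, turning them into statements about (equivariant) perverse sheaves supported at the point $\{x\}$.

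For the first assertion, Theorem~\ref{thm:RH}(\ref{thm:RH:2}) applied to the regular holonomic $\calD_X$-module $M$ (concentrated in degree~$0$) shows that $\dR(M)$ is a perverse sheaf on $X^\an$ supported at $\{x\}$. Writing $i\colon \{x\} \into X$ for the inclusion, such perverse sheaves are precisely the skyscrapers $i_*V$ with $V$ a finite-dimensional $\CC$-vector space in degree $0$ (the perversity conditions are vacuous on a $0$-dimensional closed stratum). Example~\ref{ex:LocalCohPerverse} yields $\dR(H^d_\frakm(R)) \simeq i_*\CC_{\{x\}}$, so $\dR(H^d_\frakm(R)^{\oplus \mu}) \simeq i_*\CC^\mu$. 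Setting $\mu \colonequals \dim_\CC V$ gives $\dR(M) \simeq \dR(H^d_\frakm(R)^{\oplus \mu})$, and the required isomorphism $M \simeq H^d_\frakm(R)^{\oplus \mu}$ of $\calD_X$-modules follows from the full faithfulness of $\dR$.

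For the second assertion, I would invoke the $\GG_m$-equivariant enhancement of Riemann-Hilbert from Theorem~\ref{thm:RH}(\ref{thm:RH:4}) to reduce to the perverse-sheaf statement: any morphism between two $\GG_m$-equivariant perverse sheaves on $X^\an$ supported at the fixed point $\{x\}$ is automatically $\GG_m$-equivariant. The key input is the connectedness of $\GG_m$. Once both $\dR(M)$ and $\dR(H^d_\frakm(R)^{\oplus \mu})$ are identified with $i_*\CC^\mu$ carrying the common $\GG_m$-equivariant structure coming from $\dR(H^d_\frakm(R)) \simeq i_*\CC_{\{x\}}$, the centralizer of this $\GG_m$-action inside $\operatorname{Aut}(i_*\CC^\mu) = \operatorname{GL}_\mu(\CC)$ is all of $\operatorname{GL}_\mu(\CC)$, so every $\calD_X$-module automorphism of $H^d_\frakm(R)^{\oplus \mu}$ is equivariant, and hence so is any isomorphism $M \simeq H^d_\frakm(R)^{\oplus \mu}$.

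The main obstacle will be precisely formulating this last step: one must verify that the $\GG_m$-equivariant structure that Riemann-Hilbert places on $\dR(M)$ is, under the isomorphism $\dR(M) \simeq i_*\CC^\mu$, the diagonal one inherited from the standard structure on $H^d_\frakm(R)^{\oplus \mu}$. This should follow from the simplicity of $H^d_\frakm(R)$ as a $\calD_X$-module combined with the fact that any two $\GG_m$-equivariant structures on a simple $\calD_X$-module differ by a character of the connected group $\GG_m$, an ambiguity which is absorbed by rechoosing the underlying $\calD_X$-module isomorphism by an element of $\operatorname{GL}_\mu(\CC)$.
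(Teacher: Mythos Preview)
Your treatment of the first assertion is essentially the same as the paper's, and is correct.

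For the second assertion, your strategy is right in spirit---pass to the equivariant constructible side via Theorem~\ref{thm:RH}(\ref{thm:RH:4}) and exploit the connectedness of $\GG_m$---but the execution has a genuine gap. The paper's argument is cleaner and avoids the difficulty you flag as the ``main obstacle'': $\GG_m$-equivariant constructible sheaves on $\{x\}$ are, by definition of the Bernstein--Lunts equivariant derived category, constructible sheaves on the quotient stack $[\Spec\CC/\GG_m]$, whose underlying topological space is a point and whose fundamental group is $\pi_0(\GG_m)=0$. Hence every such sheaf is a direct sum of the constant sheaf, and the forgetful functor to ordinary constructible sheaves on $\{x\}$ is an equivalence. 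Full faithfulness of this forgetful functor is exactly the statement that every morphism of the underlying perverse sheaves is automatically equivariant, and transporting back through $\dR$ finishes the proof.

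Your alternative route via characters and centralizers does not work as written. You assert that ``any two $\GG_m$-equivariant structures on a simple $\calD_X$-module differ by a character of the connected group $\GG_m$,'' and then that this ambiguity ``is absorbed by rechoosing the underlying $\calD_X$-module isomorphism by an element of $\operatorname{GL}_\mu(\CC)$.'' But algebraic characters of $\GG_m$ form $\ZZ$, not the trivial group; and if the equivariant structures on the simple summands of $M$ genuinely differed from those on $H^d_\frakm(R)^{\oplus\mu}$ by nontrivial weights, then \emph{no} $\calD_X$-module isomorphism could be equivariant---conjugation by $\operatorname{GL}_\mu(\CC)$ cannot change weights. So your ``absorption'' step would actually contradict the conclusion you are trying to prove. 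The point you are missing is that on the \emph{constructible} side (as opposed to the algebraic-representation side) there are no nontrivial characters: the relevant classifying space $B\GG_m^{\an}$ is simply connected, so every local system on it is trivial. This is precisely what the quotient-stack argument supplies, and it is what makes your centralizer computation legitimate after the fact.
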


\begin{proof}
Since the support is preserved under the Riemann-Hilbert correspondence, regular holonomic $\calD_X$-modules that vanish away from $x$ correspond to perverse sheaves that vanish away from $x$. Since $\{x\}$ is $0$-dimensional, the latter category is the same as that of constructible sheaves supported at $x$, i.e., that of finite dimensional vector spaces (via taking stalks). Thus, $\dR(M) \simeq \CC_{\{x\}}^{\oplus \mu}$. By Example~\ref{ex:LocalCohPerverse}, we know that~$\dR(H^d_{\frakm}(R)) \simeq \CC_{\{x\}}$. As $\dR$ is an equivalence, we get $M \simeq H^d_{\frakm}(R)^{\oplus \mu}$ as wanted.

For the second part, our hypothesis ensures that $M$ defines a $\GG_m$-equivariant regular holonomic $\calD_X$-module on $X$ supported at $x$, and thus corresponds to a $\GG_m$-equivariant constructible sheaf on $X$ supported at~$x$ (via Theorem~\ref{thm:RH}.\ref{thm:RH:4} and the previous paragraph). The latter category is (essentially by definition, see~\cite{BernsteinLunts}) the category of constructible sheaves on the quotient stack $\calY \colonequals [\Spec k/\GG_m]$. As $|\calY|$ is a point, constructible sheaves on $\calY$ are locally constant. Moreover, since $\GG_m$ is connected, the fundamental group of $\calY$ is trivial, so such sheaves are direct sums of the constant sheaf. Translating back via RHC then gives the claim.
\end{proof}

\begin{remark}
The second half of Proposition~\ref{prop:BassGradedRH} is a special case of the following more general, and well-known fact: if $G$ is a connected algebraic group acting on a smooth variety $X$, then the forgetful functor from $G$-equivariant regular holonomic $\calD_X$-modules to all regular holonomic $\calD_X$-modules is fully faithful. Indeed, this follows from RHC and \cite[Proposition~4.2.5]{BBDG} applied to the universal $G$-torsor $X \to X/G$. In particular, a regular holonomic $\calD_X$-module carries at most one $G$-equivariant structure.
\end{remark}

Upon observing that $\GG_m$-equivariant $R$-modules are synonymous with $\ZZ$-graded $R$-modules, this leads to the following consequence, which roughly corresponds to \cite[Theorem~1.2]{Ma-Zhang}:

\begin{example}
\label{ex:MZviaRH}
Let $R = \CC[x_1,\dots,x_d]$ be a polynomial ring with its standard grading, and~$\frakm$ the homogeneous maximal ideal. Let $J_1,\dots,J_n$ be a collection of homogeneous ideals. The local cohomology module $H^d_\frakm(R)$ is then naturally a graded $R$-module. Proposition~\ref{prop:BassGradedRH} implies that for any sequence of integers $i_0,\dots,i_n$,~if
\[
M \colonequals H^{i_0}_{\frakm}(H^{i_1}_{J_1}(\cdots(H^{i_n}_{J_n}(R))),
\]
then $M \simeq H^d_{\frakm}(R)^{\oplus\mu}$ as a graded $R$-module, for suitable $\mu$, since the support of $M$ is contained in~$\{\frakm\}$ by construction. In particular, if $I$ is a homogeneous ideal such that $H^i_I(R)$ vanishes away from $\{\frakm\}$ for some~$i$, then $H^i_I(R) \simeq H^d_\frakm(R)^{\oplus \mu}$ as a graded $R$-module. Moreover, we learn that $\mu$ is equal to the $\CC$-dimension of the stalk at the closed point of $\dR(H^i_I(R))$.
\end{example}

%%%%%%%%%%%%%%%%%%%%%%%%%%%%%%%%%%%%%%%%%%%%%%%%%%%%%%%%%%%%%%%%%%%%%%%%
\subsection{Bass numbers}
\label{ss:BassChar0}
%%%%%%%%%%%%%%%%%%%%%%%%%%%%%%%%%%%%%%%%%%%%%%%%%%%%%%%%%%%%%%%%%%%%%%%%

Next, we want to recover \cite[Theorem~3.1]{LSW} via RHC, which gives a topological interpretation of the number $\mu$ appearing in Proposition~\ref{prop:BassGradedRH} in certain situations arising from local cohomology (this number $\mu$ is the Bass number of $M=H^d_\frakm(R)^{\oplus\mu}$; recall that more generally, the Bass number of a module $M$ supported at the maximal ideal $\frakm$ is by definition the dimension of the socle of $M$). For this, recall that if a constructible sheaf has finite support, then its global sections are identified with the direct sum of all its stalks. The next lemma provides a generalization of this fact to complexes, except that only some of the (perverse) cohomology sheaves are required to have finite support:

\begin{lemma}
\label{lem:LocalToGlobalFiniteSupport}
Assume that $X$ is a smooth affine connected variety over $\CC$. Fix a complex $K^\bullet \in D^b_c(X^\an)$ such that ${ }^p \tau^{> 0} K^\bullet$ is supported at finitely many points. Then, for $i > 0$, one has
\[
H^i(R\Gamma(X, K^\bullet)) \simeq \bigoplus_{x \in X} H^i(X,K^\bullet_x)\simeq \bigoplus_{x\in X}{}^pH^i(K^\bullet_x).
\]
\end{lemma}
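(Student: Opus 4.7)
The plan is to split $K^\bullet$ via the perverse truncation triangle
\[
{}^p\tau^{\le 0} K^\bullet \to K^\bullet \to {}^p\tau^{> 0} K^\bullet \to,
\]
handle each piece separately, and glue back through the long exact sequence after $R\Gamma(X,-)$. The ``$\le 0$'' piece will be killed in positive cohomological degrees by classical perverse Artin vanishing, while the ``$>0$'' piece has finite support by hypothesis and decomposes as a direct sum indexed by its support.

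For the first piece, the perverse Artin vanishing theorem on the smooth affine variety $X$ (namely $H^j(X,P)=0$ for $j > 0$ and $P$ perverse; see~\cite{BBDG}) combined with the perverse cohomology spectral sequence
\[
E_2^{p,q} = H^p\bigl(X, {}^p\calH^q({}^p\tau^{\le 0} K^\bullet)\bigr) \Rightarrow H^{p+q}\bigl(X, {}^p\tau^{\le 0} K^\bullet\bigr)
\]
(whose terms vanish for $p > 0$ by Artin and for $q > 0$ by construction) gives $H^i(X, {}^p\tau^{\le 0} K^\bullet) = 0$ for $i > 0$. The triangle then yields $H^i(X, K^\bullet) \simeq H^i(X, {}^p\tau^{> 0} K^\bullet)$ for $i > 0$.

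For the second piece, set $L \colonequals {}^p\tau^{> 0} K^\bullet$. Boundedness of $K^\bullet$ and the hypothesis force $L$ to be supported on a finite set $S \subset X$ (the union of the finitely many nonzero supports of ${}^p\calH^q(K^\bullet)$ for $q>0$). Any bounded constructible complex supported on a finite set decomposes, via the adjunction unit, as $L \simeq \bigoplus_{x \in S} (i_x)_\ast L_x$ (a stalkwise quasi-isomorphism), so $H^i(X, L) \simeq \bigoplus_{x \in S} H^i(L_x)$. It then remains to identify $H^i(L_x)$ with $H^i(K^\bullet_x)$ for $i > 0$, which I would do by taking stalks of the truncation triangle and applying $\calH^i$: the inclusion ${}^p D^{\le 0}_c(X^\an) \subset D^{\le 0}_c(X^\an)$ for middle perversity (each defining bound $\calH^j({}^p\tau^{\le 0} K^\bullet)_x = 0$ for $j > -\dim S(x)$ implies $\calH^j = 0$ for $j > 0$, since $\dim S(x) \ge 0$) makes the outer terms of the stalk triangle vanish. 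For $x \notin S$, the same triangle forces $H^i(K^\bullet_x) = 0$, so the sum on the right-hand side of the lemma may freely be extended over all $x \in X$.

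No single step is expected to be the bottleneck; the only conceptual ingredient is the observation that perverse Artin vanishing on affine $X$ annihilates the perversely $\le 0$ part of $R\Gamma(X, K^\bullet)$ in positive cohomological degrees, reducing the lemma to a direct finite-support computation.
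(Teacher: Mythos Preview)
Your proof is correct and follows essentially the same approach as the paper: use the perverse truncation triangle, kill the ${}^p\tau^{\le 0}$ part in positive degrees via Artin vanishing on the affine $X$, and compute the ${}^p\tau^{>0}$ part directly from its finite support. The paper states the Artin vanishing step more succinctly as ``$R\Gamma(X,-)$ carries ${}^pD^{\le 0}_c(X^\an)$ to $D^{\le 0}(\CC)$'' rather than unwinding it through the spectral sequence, and handles the stalk identification in one line by the inclusion ${}^pD^{\le 0}_c \subset D^{\le 0}_c$, but the content is the same.
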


\begin{proof}
We have a canonical exact triangle
\[
{ }^p \tau^{\le 0} K^\bullet \to K^\bullet \to { }^p \tau^{> 0} K^\bullet.
\]
By definition of perversity, the stalks of ${ }^p \tau^{\le 0} K^\bullet \in { }^p D^{\le 0}_c(X^\an)$ lie in $D^{\le 0}_c$. Hence the triangle implies that for $i > 0$ the stalks of~$H^i(K^\bullet)$ and $H^i({ }^p \tau^{> 0} K^\bullet)$ are isomorphic. As $X$ is affine, the Artin vanishing theorem \cite[Theorem~4.1.1]{BBDG} shows that $R\Gamma(X,-)$ carries ${ }^p D^{\le 0}_c(X^\an)$ to $D^{\le 0}(\CC)$. Applying this to the previous triangle then shows that
\[
H^i(R\Gamma(X, K^\bullet)) \simeq H^i(R\Gamma(X, { }^p \tau^{> 0} K^\bullet))
\]
for $i > 0$. Since ${ }^p \tau^{> 0} K^\bullet$ is supported at finitely many points, applying $R\Gamma(X,-)$ is the same as taking the direct sum over all the stalks, which gives the claim.
\end{proof}

Using the preceding formalism, we recover the promised theorem:

\begin{theorem}[{\cite[Theorem~3.1]{LSW}}]
Let $X = \Spec R$ be a smooth affine connected variety of dimension~$d$ over $\CC$. Let $I \subset R$ be an ideal, and let $\frakm$ be the maximal ideal corresponding to a closed point $x \in X$. Assume that there exists a positive integer $k_0$ such that $H^k_I(R)$ vanishes away from $x$ for $k \ge k_0$. Then:
\begin{enumerate}[\quad\rm(1)]
\item If $k_0 > 1$, then $H^k_I(R) \simeq H^d_{\frakm}(R)^{\oplus \mu}$ for $\mu \colonequals \rank \Hsing^{d+k-1}(U\,;\,\CC)$, where $U$ is the complement of the vanishing locus of $I$.

\item If $\Hsing^d(X\,;\,\CC_X) = 0$, then the previous conclusion is also valid for $k_0 = 1$.
\end{enumerate}
\end{theorem}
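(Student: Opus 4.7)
The plan is to use Proposition~\ref{prop:BassGradedRH} to reduce the problem to computing the stalk dimension $\mu_k \colonequals \dim_\CC i_x^*\bigl({ }^p\calH^k(M^\bullet)\bigr)$, where
\[
M^\bullet \colonequals \dR\bigl(R\underline\Gamma_Z(\calO_X)\bigr) \simeq i_* i^! \CC_X[d] \simeq R\underline\Gamma_Z(\CC_X)[d],
\]
with $Z = V(I)$, $U = X \setminus Z$, $i\colon Z \into X$, $j\colon U \into X$ (cf.~Example~\ref{ex:LocalCohPerverse}), and then to identify $\mu_k$ with $\rank \Hsing^{d+k-1}(U\,;\,\CC)$. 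First I would invoke Theorem~\ref{thm:RH}.\ref{thm:RH:2} to translate the hypothesis that $H^k_I(R)$ is supported at $x$ for $k \ge k_0$ into the statement that ${ }^p\calH^k(M^\bullet)$ is set-theoretically supported at $\{x\}$ for such $k$; Proposition~\ref{prop:BassGradedRH} then immediately yields $H^k_I(R) \simeq H^d_\frakm(R)^{\oplus \mu_k}$, reducing the theorem to the topological computation of~$\mu_k$.

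The main obstacle is that Lemma~\ref{lem:LocalToGlobalFiniteSupport} demands finite support of ${ }^p\calH^i$ in \emph{all} positive degrees~$i$, whereas $M^\bullet$ only satisfies this for $i \ge k_0$. The fix is to apply the lemma to the perverse truncation $N^\bullet \colonequals { }^p\tau^{\ge k_0} M^\bullet$ instead: its perverse cohomology vanishes in degrees $< k_0$ and equals that of $M^\bullet$ (hence is supported at $\{x\}$) in degrees $\ge k_0$, so the lemma gives $H^i(R\Gamma(X, N^\bullet)) \simeq H^i(N^\bullet_x)$ for all $i > 0$ (the sum from the lemma collapses since $N^\bullet$ is set-theoretically supported at $x$). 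To interpret both sides, I would next use the triangle ${ }^p\tau^{< k_0} M^\bullet \to M^\bullet \to N^\bullet$: the Artin vanishing theorem \cite[Theorem~4.1.1]{BBDG} on the affine $X$ places $R\Gamma(X, { }^p\tau^{< k_0} M^\bullet)$ in $D^{\le k_0-1}$, so for $k \ge k_0$ the natural map $H^k R\Gamma(X, M^\bullet) \to H^k R\Gamma(X, N^\bullet)$ is an isomorphism; since $R\Gamma(X, M^\bullet) = R\Gamma_Z(X, \CC)[d]$, this identifies $H^k R\Gamma(X, N^\bullet) \simeq H^{k+d}_Z(X, \CC)$. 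On the stalk side, $N^\bullet \simeq i_{x,*}(N^\bullet_x)$ because $N^\bullet$ is supported at $x$, and the perverse $t$-exactness of $i_{x,*}$ identifies $H^k(N^\bullet_x)$ with the stalk at $x$ of ${ }^p\calH^k(M^\bullet)$, whose dimension is $\mu_k$. Combining gives $\mu_k = \dim H^{k+d}_Z(X, \CC)$ for $k \ge k_0$ and $k > 0$.

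The final step is to read off $H^{k+d}_Z(X, \CC)$ from the long exact sequence attached to the triangle $R\underline\Gamma_Z \CC_X \to \CC_X \to j_* \CC_U$. Smoothness and affineness of $X$ give $\Hsing^j(X\,;\,\CC) = 0$ for $j > d$; thus for $k \ge 2$ both flanking terms vanish, producing $H^{k+d}_Z(X, \CC) \simeq \Hsing^{k+d-1}(U\,;\,\CC)$. This proves (1), as the assumption $k_0 > 1$ forces $k \ge 2$. For (2), the remaining case $k = 1$ only gives the fragment $H^d(X, \CC) \to H^d(U, \CC) \to H^{d+1}_Z(X, \CC) \to 0$; the extra hypothesis $\Hsing^d(X\,;\,\CC_X) = 0$ kills the leftmost term and yields the desired $H^{d+1}_Z(X, \CC) \simeq \Hsing^d(U\,;\,\CC)$.
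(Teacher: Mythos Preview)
Your proof is correct and follows the same strategy as the paper: reduce to a stalk computation via Proposition~\ref{prop:BassGradedRH}, identify the stalk with global cohomology using Lemma~\ref{lem:LocalToGlobalFiniteSupport} and Artin vanishing, and then read off the answer from the long exact sequence of the triangle $R\underline\Gamma_Z\,\CC_X \to \CC_X \to j_*\CC_U$. The only difference is cosmetic: you explicitly pass to the truncation ${}^p\tau^{\ge k_0} M^\bullet$ so that the hypotheses of Lemma~\ref{lem:LocalToGlobalFiniteSupport} are literally satisfied, whereas the paper invokes the lemma directly in its evident shifted form.
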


\begin{proof}
By Proposition~\ref{prop:BassGradedRH}, for $k$ as above, we have $H^k_I(R) \simeq H^d_{\frakm}(R)^{\oplus \mu}$, where $\mu$ is the rank of $\dR(H^k_I(R))_x$. To determine $\mu$, set $Z = \Spec(R/I)$, and let $Z \stackrel{i}{\into} X \stackrel{j}{\gets} U$ be the resulting decomposition of $X$ into closed and open sets. Now consider $K^\bullet = \dR(R\underline\Gamma_Z(\calO_X) )$, so $K^\bullet \simeq i_* i^! \CC_X[d]$ by Example~\ref{ex:LocalCohPerverse}. In particular, ${ }^p \calH^k K^\bullet \simeq \dR H^k_I(R)$. Since by assumption $H^k_I(R)$ vanishes away from $x$ for $k \ge k_0$ the same holds for its Riemann-Hilbert dual $\dR H^k_I(R) = { }^p \calH^k(K^\bullet)$. Applying Lemma~\ref{lem:LocalToGlobalFiniteSupport} (shifted by $k_0$) then shows that for $k \ge k_0$ one has
\[
\dR(H^k_I(R))_x \simeq { }^p \calH^k(K^\bullet)_x \simeq H^k(X, K^\bullet).
\]
It remains to identify $H^k(X, K^\bullet)$ with $\Hsing^{d+k-1}(U\,;\,\CC)$, which is just the sheaf cohomology of the constant sheaf $\CC_U$. For this, consider the exact triangle
\[
K^\bullet = i_* i^! \CC_X[d] \to \CC_X[d] \to j_* \CC_U[d].
\]
As $k_0$ is positive, we have $H^k(X, \CC_X[d]) = 0$ for $k \ge k_0$ by Artin vanishing \cite[Corollary~4.1.4]{BBDG}. Now both (1) and (2) follow from the long exact sequence of cohomology.
\end{proof}

%%%%%%%%%%%%%%%%%%%%%%%%%%%%%%%%%%%%%%%%%%%%%%%%%%%%%%%%%%%%%%%%%%%%%%%%
\subsection{Hodge-to-de-Rham spectral sequence for de~Rham homology of affine schemes}
\label{ss:EmbInddRH}
%%%%%%%%%%%%%%%%%%%%%%%%%%%%%%%%%%%%%%%%%%%%%%%%%%%%%%%%%%%%%%%%%%%%%%%%

We use RHC to reinterpret the Hodge-to-de-Rham spectral sequence for algebraic de~Rham homology of a singular affine space $Y$ embedded in a smooth affine space $X$; this interpretation shows that, from $E_2$ onward, the spectral sequence has finite dimensional terms and is independent of the embedding (up to a shift), recovering a recent result of Bridgland \cite{Bridgland} (we refer the interested reader to \cite{SwitaladR} for analogous results over complete local rings). In fact, we obtain a slightly better result: the spectral sequence depends only on the topological space $Y^\an$ (up to a shift that depends only on the dimension of $X$).

Let $X$ be a smooth affine algebraic variety over $\CC$ of dimension $n$, and let $i\colon Y \into X$ be a closed algebraic subscheme of dimension $d$. Following \cite[\S II.3]{Hartshorne:AlgdR}, one defines $H^{\dR}_\bullet(Y)$, the algebraic de~Rham homology of $Y$, as the cohomology of
\[
R\Gamma_{Y,\dR}(X) \colonequals R\Gamma_Y(X, \Omega^\bullet_X) ,
\]
namely, $H^{\dR}_t(Y)=H^{2n-t}(R\Gamma_Y(X, \Omega^\bullet_X))$.
The resulting spectral sequence 
\begin{equation}
\label{eq:HdRss}
E_1^{a,b}(Y \into X)\colon H^b_Y(X, \Omega^a_X) \Rightarrow H^{a+b}_{Y,\dR}(X) = H^{\dR}_{2n-(a+b)}(Y)
\end{equation}
of the hyper-derived functor $R\Gamma_Y(X,-)$ is known as the Hodge-to-de-Rham spectral sequence.

The $E_1^{a,b}$-term above evidently depends on the embedding $i\colon Y \into X$ and is typically infinite dimensional. Nevertheless, these features disappear after turning a page by a recent theorem of Bridgland \cite{Bridgland}. For a more precise formulation, let $E_2^{a,b}(Y \into X)$ denote the spectral sequence obtained from~\eqref{eq:HdRss} by turning the page once. Then Bridgland shows:

\begin{theorem}[\cite{Bridgland}]
The terms $E_2^{a,b}(Y \into X)$ are finite dimensional for all $a,b$. Moreover, up to bidegree shift, the spectral sequence $E_2^{a,b}(Y \into X)$ depends only on $Y$ and not the choice of embedding $i\colon Y \into X$.
\end{theorem}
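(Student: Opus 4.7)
The plan is to use the Riemann--Hilbert correspondence to reinterpret both the abutment and the spectral sequence \eqref{eq:HdRss} in terms of data intrinsic to $Y^\an$. For the abutment, combining Theorem~\ref{thm:RH}.\ref{thm:RH:3} with \eqref{eq:RHLocalCoh1} gives
\[
R\Gamma_{Y,\dR}(X) \;\simeq\; R\Gamma(X^\an,\dR(R\underline\Gamma_Y(\calO_X))[-n]) \;\simeq\; R\Gamma(Y^\an, i^!\CC_X),
\]
and since $i^!\CC_X \simeq \omega_Y[-2n]$ for $\omega_Y$ the dualizing complex of $Y^\an$, the target of \eqref{eq:HdRss} depends on the embedding only through an even shift by $2n$.

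To re-express the spectral sequence itself, note that the stupid filtration $\sigma^{\ge a}\Omega_X^\bullet$ induces a filtration on $R\underline\Gamma_Y(\Omega_X^\bullet) \simeq \Omega_X^\bullet \otimes^L_{\calO_X} R\underline\Gamma_Y(\calO_X)$, which via the above identifications yields a filtration on $i^!\CC_X$ in $D^b_c(Y^\an)$ whose associated spectral sequence is \eqref{eq:HdRss}. The key claim is that from $E_2$ onwards this coincides, up to a bidegree shift, with the perverse cohomology spectral sequence
\[
E_2^{p,q} \;=\; H^p(Y^\an, {}^p\calH^q(i^!\CC_X)) \;\Longrightarrow\; H^{p+q}(Y^\an, i^!\CC_X).
\]
Granted this, finite-dimensionality of $E_2^{a,b}(Y\into X)$ is immediate because each ${}^p\calH^q(i^!\CC_X)$ is an algebraically constructible sheaf on $Y$ and therefore has finite-dimensional cohomology, while embedding independence (up to shift) follows because the perverse $t$-structure on $D^b_c(Y^\an)$ is intrinsic to $Y^\an$ and $i^!\CC_X$ differs from $\omega_Y$ only by a shift.

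The main obstacle is the comparison of the two spectral sequences at $E_2$, since they arise from two a~priori different filtrations --- stupid versus perverse canonical --- on the same derived object. A natural strategy is to introduce a bifiltered complex: resolve $R\underline\Gamma_Y(\calO_X)$ in a Cartan--Eilenberg-like manner compatible with the standard $t$-structure on regular holonomic $\calD_X$-modules --- which, by Theorem~\ref{thm:RH}.\ref{thm:RH:2}, corresponds to the perverse $t$-structure under RHC --- and combine this resolution with the stupid filtration on $\Omega_X^\bullet$. The two resulting spectral sequences share a common abutment; one is (up to shift) the Hodge-to-de-Rham spectral sequence \eqref{eq:HdRss} while the other is the perverse cohomology spectral sequence, and a change-of-filtration argument --- crucially using that the perverse cohomology spectral sequence already begins at $E_2$ --- should then identify the two from the $E_2$ page onwards.
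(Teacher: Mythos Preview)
Your overall strategy---reduce to showing that the Hodge-to-de-Rham $E_2$ page coincides with the perverse cohomology spectral sequence for $i^!\CC_X \simeq \DD(\CC_Y)[-2n]$---is exactly right, and the paper proceeds the same way. The gap is in how you propose to execute the comparison. You set up two different filtrations (stupid on $\Omega_X^\bullet$, canonical on a Cartan--Eilenberg resolution of $R\underline\Gamma_Y(\calO_X)$) and appeal to a ``change-of-filtration argument''. But two filtrations on the same complex in general only give spectral sequences with a common abutment; there is no general mechanism that forces them to agree from $E_2$ onwards, and that agreement is precisely what you must prove (embedding independence of the abutment was already known to Hartshorne).

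The paper avoids this entirely by observing that there is only \emph{one} filtration in play. Represent $R\Gamma_{Y,\dR}(X)$ by the total complex of the bicomplex $\Omega_X^a \otimes_{\calO_X} M^b$ with $M^\bullet = R\underline\Gamma_Y(\calO_X)$, and take the single column filtration (by $a$). Since each $\Omega_X^a$ is locally free, the $E_1$ page is $\Omega_X^a \otimes \calH^b(M^\bullet) \cong H^b_Y(X,\Omega_X^a)$, which is the Hodge-to-de-Rham $E_1$; the $d_1$ is the de~Rham differential, so $E_2^{a,b} = H^a_{\dR}(X,\calH^b(M^\bullet))$. But this last expression is exactly the $E_2$ page of the hypercohomology spectral sequence attached to the canonical filtration $\tau^{\le b}M^\bullet$---and from $E_2$ on these are the same spectral sequence, since both realize the Grothendieck spectral sequence for the functor $R\Gamma_{\dR}(X,-)$ applied to $M^\bullet \in D^b_{rh}(\calD_X)$. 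Thus the Hodge-to-de-Rham spectral sequence, after one turn of the page, depends only on $M^\bullet$ as an object of $D^b_{rh}(\calD_X)$ with its standard $t$-structure; transporting through $\dR$ gives the perverse spectral sequence for $\dR(M^\bullet)[-n] \simeq i_*i^!\CC_X$, and $i^!\CC_X \simeq \DD(\CC_Y)[-2n]$ finishes. No bifiltered complex or comparison of distinct filtrations is needed.
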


We will prove this using RHC. In fact, we can ``explain'' the embedding independence purely in topological terms. For this, recall that for every complex $K^\bullet \in D^b_c(Y^\an)$, there is a canonical $E_2$-spectral sequence
\[
P_2^{a,b}(K^\bullet)\colon H^a(Y^\an, { }^p \calH^b(K^\bullet)) \Rightarrow H^{a+b}(Y^\an, K^\bullet)
\]
associated to the filtration of $K^\bullet$ by its perverse cohomology sheaves. The preceding theorem is then a consequence of the following:

\begin{theorem}
\label{thm:E_2independenceHdR}
The spectral sequence $E_2^{a,b}(Y \into X)$ coincides with the spectral sequence $P_2^{a,b}(K^\bullet)$ where $K^\bullet \colonequals \DD(\CC_Y)[-2n]$. In particular, both spectral sequences have finite dimensional terms, and are independent of the embedding $i\colon Y \into X$, up to a degree shift.
\end{theorem}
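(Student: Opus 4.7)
The plan is to (i) identify the common abutment of the two spectral sequences via Riemann-Hilbert and Verdier duality, (ii) exhibit both spectral sequences as arising from a single de Rham bicomplex, and (iii) prove the $E_2$-agreement by exploiting the $\calD$-module structure.

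For (i), Theorem~\ref{thm:RH}.\ref{thm:RH:3} gives $\dR(\calO_X) \simeq \CC_X[n]$, so
\[
R\Gamma_{Y,\dR}(X) = R\Gamma_Y(X, \Omega^\bullet_X) \simeq R\Gamma_Y(X^\an, \CC_X).
\]
The excision triangle $i_* i^! \CC_X \to \CC_X \to Rj_* \CC_U$ for $j\colon U = X\setminus Y \into X$ identifies this with $R\Gamma(Y^\an, i^! \CC_X)$. Combining the formula $i^! = \DD_Y \circ i^* \circ \DD_X$ from Theorem~\ref{thm:RH}.\ref{thm:RH:1} with $\DD_X(\CC_X) \simeq \CC_X[2n]$ (valid since $X$ is smooth of dimension $n$) yields $i^! \CC_X \simeq \DD_Y(\CC_Y)[-2n] = K^\bullet$. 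Hence both spectral sequences converge to $H^*(Y^\an, K^\bullet)$.

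For (ii), I represent $R\underline\Gamma_Y(\calO_X) \in D^b_{rh}(\calD_X)$ by a bounded complex $M^\bullet$ of regular holonomic $\calD_X$-modules with $\calH^b(M^\bullet) \simeq \calH^b_Y(\calO_X)$, and consider the de Rham bicomplex $\Omega^\bullet_X \otimes_{\calO_X} M^\bullet$; since each $\Omega^a_X$ is locally free, its total complex represents $R\underline\Gamma_Y(\Omega^\bullet_X) \simeq i_* K^\bullet$. Applying $R\Gamma(X, -)$ produces a double complex of abelian groups supporting two natural spectral sequences. Filtering by the stupid filtration on $\Omega^\bullet_X$ recovers the Hodge-to-de Rham spectral sequence, with $E_1^{a,b} = H^b_Y(X, \Omega^a_X)$ and $E_2$-page equal to $E_2^{a,b}(Y \hookrightarrow X)$. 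Filtering by the canonical truncation of $M^\bullet$ by its cohomology sheaves gives a second spectral sequence whose $E_2$-page is $H^a(X, \dR(\calH^b(M^\bullet)))$; by Theorem~\ref{thm:RH}.\ref{thm:RH:2} this equals $H^a(X, {}^p\calH^b(\dR R\underline\Gamma_Y(\calO_X)))$, which by Example~\ref{ex:LocalCohPerverse} and the $t$-exactness of $i_*$ becomes $H^a(Y, {}^p\calH^{b+n}(K^\bullet)) = P_2^{a, b+n}(K^\bullet)$.

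The main task, which I expect to be the principal obstacle, is to show that these two spectral sequences on the same bicomplex coincide from $E_2$ onwards, up to the anticipated bidegree shift $(a,b) \leftrightarrow (a-n, b+n)$. This is delicate: the two spectral sequences of a generic bicomplex give distinct filtrations on the abutment. The agreement in our setting relies crucially on the $\calD$-module structure; concretely, each column $\Omega^\bullet_X \otimes_\calO \calH^b(M^\bullet)$ of the bicomplex is the de Rham complex of a single regular holonomic $\calD_X$-module, whose horizontal cohomology under $R\Gamma(X, -)$ computes $H^*(X, \dR(\calH^b(M^\bullet)))$ and, by Theorem~\ref{thm:RH}.\ref{thm:RH:2}, the perverse cohomology of $\dR(R\underline\Gamma_Y(\calO_X))$. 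This rigidity should force the two filtrations on the abutment to agree after one turn of the page, yielding the required $E_2$-identification. Once established, embedding independence of $E_2^{a,b}(Y \into X)$ follows immediately, since $K^\bullet = \DD_Y(\CC_Y)[-2n]$ depends on the ambient $X$ only through the shift $n$.
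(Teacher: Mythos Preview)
Your approach is essentially the paper's, but you have manufactured an obstacle at step (iii) that is not actually there. You set up the bicomplex $\Omega^a_X \otimes_{\calO_X} M^b$ correctly and then introduce \emph{two} filtrations (stupid on $\Omega^\bullet$, canonical on $M^\bullet$), worrying that they might induce different spectral sequences. The paper avoids this entirely by observing that a \emph{single} filtration does the whole job.

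Take the stupid filtration on $\Omega^\bullet_X$ (what the paper calls the column filtration). Its $E_1$-page is $E_1^{a,b}=\Gamma(X,\Omega^a_X\otimes \calH^b(M^\bullet))\cong H^b_Y(X,\Omega^a_X)$, which is your Hodge-to-de Rham $E_1$. Now just compute $d_1$: it is the de Rham differential on $\Omega^\bullet_X\otimes \calH^b(M^\bullet)$, so
\[
E_2^{a,b}=H^a\big(\Gamma(X,\Omega^\bullet_X\otimes \calH^b(M^\bullet))\big)=H^a_{\dR}(X,\calH^b(M^\bullet))=H^{a-n}\big(X,\dR(\calH^b(M^\bullet))\big).
\]
By Theorem~\ref{thm:RH}.\ref{thm:RH:2} this is $H^{a-n}(X,{}^p\calH^b(\dR(M^\bullet)))$, and via \eqref{eq:RHLocalCoh1} and $i^!\CC_X\simeq \DD(\CC_Y)[-2n]$ this is exactly the $P_2$-page for $K^\bullet$. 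So $E_2^{a,b}(Y\hookrightarrow X)$ and $P_2$ already agree \emph{as pages of the same spectral sequence}; there is nothing to compare. The crucial point you missed is that the construction of this spectral sequence from $E_2$ onward depends only on $M^\bullet$ as an object of $D^b_{rh}(\calD_X)$ together with its standard $t$-structure---this is the standard fact that the $E_2$-stage of the first bicomplex spectral sequence coincides with the hypercohomology spectral sequence for the canonical filtration of $M^\bullet$ (d\'ecalage, or the Cartan--Eilenberg construction of the second hypercohomology spectral sequence). Transporting through Riemann--Hilbert then gives $P_2^{a,b}(K^\bullet)$ on the nose.

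Your worry that ``the two spectral sequences of a generic bicomplex give distinct filtrations on the abutment'' is correct in the abstract but irrelevant here: you are not comparing the two standard bicomplex spectral sequences (filter by $a$ versus filter by $b$), you are comparing the filter-by-$a$ spectral sequence with the canonical-truncation-on-$M^\bullet$ spectral sequence, and these agree from $E_2$ by the d\'ecalage principle. No special ``$\calD$-module rigidity'' is needed for this step.
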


\begin{proof}
Let $\tilde E_2^{a,b}(Y^{\an}\into X^{\an})$ be the corresponding spectral sequence in the analytic category, i.e., one obtained from the standard spectral sequence with the $E_1$ term $H^b_Y(X^{\an},\Omega^a_{X^{\an}})$ and abutment $H^{a+b}_{Y,\dR}(X^{\an})$ by turning one page. The standard map $\Omega_X\to \Omega_{X^{\an}}$ induces a morphism of complexes $\phi\colon \Omega_X^\bullet\to \Omega^\bullet_{X^{\an}}$ which in turn induces a morphism of spectral sequences, i.e., morphisms
\[
\phi_r\colon E_r^{a,b}\to \tilde E_r^{a,b}
\]
for every $r$. We claim that $\phi_r$ is an isomorphism for every $r\ge 2$. Since $\phi_r$ being an isomorphism implies that $\phi_{r+1}$ also is an isomorphism, it suffices to prove that $\phi_2$ is an isomorphism.

The group $E_2^{a,b}$ (resp. $\tilde E_2^{a,b})$ is the $a$-th cohomology of the complex of abelian groups $H_Y^b(X,\Omega_X^\bullet)$ (resp. $H_{Y^{\an}}^b(X^{\an},\Omega_{X^{\an}}^\bullet)$). Thus, all we need to show is that the map 
\[
\phi'\colon H_Y^b(X,\Omega_X^\bullet)\to H_{Y^{\an}}^b(X^{\an},\Omega_{X^{\an}}^\bullet)
\]
induced by the map of complexes $\phi\colon \Omega_X^\bullet\to \Omega^\bullet_{X^{\an}}$ is an isomorphism. 

Since $X$ is affine and all $\Omega^a_X$ are quasi-coherent,
\[
H^b_Y(X,\Omega^a)=\Gamma(X,\calH^b_Y(\Omega^a))
\]
(resp. $H^b_{Y^{\an}}(X^{\an},\Omega_{X^{\an}}^a)=\Gamma(X^{\an},\calH^b_{Y^{\an}}(\Omega_{X^{\an}}^a))$. But
\[
\Gamma(X, -)=\Hom_{\calD_X}(\calD_X, -)
\]
(resp. $\Gamma(X^{\an}, -)=\Hom_{\calD_{X^{\an}}}(\calD_{X^{\an}}, -)$). Since $X$ is affine and all involved sheaves are quasi-coherent we have
\[
H^a(\Gamma(X, \calH^b_Y(\Omega^\bullet_X)))=H^a({\Hom}_{\calD_X}(\calD_X, \calH^b_Y(\Omega^\bullet_X)))=\Ext^a_{\calD_X}(\calD_X,\calH^b_Y(\Omega^\bullet_X))\simeq \Hom_{D(\calD_X)}(\calD_X[-a], \calH^b_Y(\Omega^\bullet_X)),
\]
where the last isomorphism holds by \cite[Tag~06XP]{stacks-project}. Similarly,
\[
H^a(\Gamma(X^{\an}, \calH^b_{Y^{\an}}(\Omega^\bullet_{X^{\an}})))={\Hom}_{D(\calD_{X^{\an}})}(\calD_{X^{\an}}[-a],\calH^b_{Y^{\an}}(\Omega^\bullet_X)).
\]
But the map
\[
\Hom_{D(\calD_X)}(\calD_X[-a], \calH^b_Y(\Omega^\bullet_X))\to \Hom_{D(\calD_{X^{\an}})}(\calD_{X^{\an}}[-a],\calH^b_{Y^{\an}}(\Omega^\bullet_{X^{\an}}))
\]
is an isomorphism by a result of Brylinski \cite[Proposition~7.8]{Brylinski} stating that analytification induces an equivalence of the respective $\calD$-module categories. Hence, the map
\[
\phi'\colon H_Y^b(X,\Omega_X^\bullet)\to H_{Y^{\an}}^b(X^{\an},\Omega_{X^{\an}}^\bullet)
\]
is indeed a quasi-isomorphism, and the spectral sequences $E_2^{a,b}(Y \into X)$ and $\tilde E_2^{a,b}(Y^{\an}\into X^{\an})$ are indeed isomorphic.

It remains to show that the analytic spectral sequence $\tilde E_2^{a,b}(Y^{\an}\into X^{\an})$ coincides with the spectral sequence $P^{a,b}_2(K^\bullet),$ where $K^\bullet \colonequals \DD(\CC_Y)[-2n]$. 

For each regular holonomic $M^\bullet \in D^b_{rh}(\calD_X)$, using the canonical filtration of $M^\bullet$ by its cohomology sheaves (via the standard $t$-structure), there is an $E_2$-spectral sequence
\[
F_2^{a,b}(M^\bullet)\colon H^a_{\dR}(X, \calH^b(M^\bullet)) \Rightarrow H^{a+b}_{\dR}(X,M^\bullet),
\]
where $H^*_{\dR}(X,-)=H^*(X,{\dR}(-))$. Concretely, $\dR(M^\bullet)$ can be computed as the total complex of the bicomplex $\Omega_{X^{\an}}^\bullet \otimes_{\calD_{X^{\an}}} (M^\bullet)^{\an}$. Hence $F_2^{a,b}(M^\bullet)$ is the $E_2$-spectral sequence defined by the column filtration on the bicomplex $(a,b) \mapsto \Omega^a_{X^{\an}} \otimes_{\calD_{X^{\an}}} (M^b)^{\an}$. This description also shows that the $E_1$-spectral sequence for the same filtration is the Hodge-to-de-Rham spectral sequence
\[
F_1^{a,b}(M^\bullet)\colon \Omega^a_{X^{\an}}\otimes \calH^b((M^\bullet)^{\an}) \Rightarrow H^{a+b}_{\dR}(X,M^\bullet).
\]
Applying this to $M^\bullet \colonequals R\underline\Gamma_Y(\calO_X) \in D^b_{rh}(\calD_X)$ shows that
\[
F_1^{a,b}(R\underline\Gamma_Y(\calO_X))= \Omega_{X^{\an}}^a \otimes H^b_{Y^{\an}}(X^{\an},\calO_{X^{\an}}) \simeq H^b_{Y^{\an}}(X^{\an},\Omega_{X^{\an}}^a) = \tilde E_1^{a,b}(Y^{\an} \into X^{\an})
\]
and hence the spectral sequence $\tilde E_2^{a,b}(Y^{\an} \into X^{\an})$ coincides with the spectral sequence $F_2^{a,b}(R\underline\Gamma_Y(\calO_X))$. 

On the other hand, the construction of the spectral sequence $F_2^{a,b}(M^\bullet)$ only depends on the knowledge of $M^\bullet \in D^b_{rh}(\calD_X)$ together with the $t$-structure on the latter category. Passing through RHC, and using Theorem~\ref{thm:RH}.\ref{thm:RH:3} and the equality $\dR(R\underline\Gamma_Y(\calO_X)) = i_*i^!\CC_X[n]$~\eqref{eq:RHLocalCoh1} then shows that $F_2^{a,b}(M^\bullet)$ is isomorphic to the spectral sequence
\[
P_2^{a,b}(\dR(R\underline\Gamma_Y \calO_X)[-n])\colon H^a(X^{\an}, { }^p \calH^b(i_*i^!\CC_X)) \Rightarrow H^{a+b}(X^{\an},i_*i^!\CC_X).
\]
This holds since the spectral sequence only depends on the perverse filtration on $i_*i^!\CC_X[n]$ which corresponds via $\dR$ to the usual filtration on $R\underline\Gamma_Y \calO_X$. 

By the exactness of $i_*$ this simplifies to the spectral sequence $P_2^{a,b}(i^! \CC_X)$ on $Y$. Then the computation 
\[
i^! \CC_X \simeq i^! \DD(\CC_X)[-2n] \simeq \DD(i^*\CC_X)[-2n] \simeq \DD(\CC_Y)[-2n]
\]
concludes the first part. The second part is then simply a consequence of the fact that the dualizing complex $\DD(\CC_Y)$ is constructible and intrinsic to $Y$.
\end{proof}

\begin{remark}
Theorem~\ref{thm:E_2independenceHdR} implies that the Hodge filtration on $H^i_{Y, \dR}(X)$ is determined by the topology of the affine variety $Y$: indeed, it is isomorphic to the filtration on $H^{i-2n}(Y, \DD(\CC_Y))$ induced by the spectral sequence $P_2^{a,b}(\DD(\CC_Y)[-2n])$ above, and the latter, being determined by the perverse filtration on $\DD(\CC_Y)[-2n]$, only depends on the underlying topological space of $Y$. This is in stark contrast with the global cohomological situation: if $X$ is a smooth and proper variety over $\CC$, the Hodge filtration on $H^i_{\dR}(X)$ is \emph{not} determined by the topology of $X$. Indeed, Campana found examples of homeomorphic (and even diffeomorphic) smooth projective surfaces over $\CC$ with different Hodge numbers; see \cite{Kuperberg}.
\end{remark}

%%%%%%%%%%%%%%%%%%%%%%%%%%%%%%%%%%%%%%%%%%%%%%%%%%%%%%%%%%%%%%%%%%%%%%%%
\subsection{Embedding independence for Lyubeznik numbers}
\label{ss:EmbIndepLyuChar0}
%%%%%%%%%%%%%%%%%%%%%%%%%%%%%%%%%%%%%%%%%%%%%%%%%%%%%%%%%%%%%%%%%%%%%%%%

\begin{notation}
Let $Z \into \PP^n$ be a projective variety of dimension $d\ge 1$ over $\CC$, and let $i\colon Y \subset X \colonequals \AA^{n+1}$ be the affine cone on $Z$ with vertex $\{0\}$; write $k\colon \{0\} \into Y$ for the inclusion of the vertex, and let $j\colon U \into Y$ be the open complement. The Lyubeznik numbers in this situation are defined as
\[
\lambda_{i,j} \colonequals \ell(\calH^i_{\{0\}}( \calH^{n+1-j}_Y(\calO_X))),
\]
where $\ell(-)$ denotes the length of the displayed $\calD_X$-module; note that this $\calD_X$-module is regular holonomic and supported at $\{0\}$, so it must have finite length, which is the same as the length of its socle. This holds since any holonomic $\calD_X$-module which is supported at a point, is isomorphic to a finite direct sum of copies of the hull of the residue field a the point (easily checked, for example via RHC).
\end{notation}

It was expected in \cite[p.~133]{Lyubeznik:survey} that the $\lambda_{i,j}$ depend only on $Z$, and not the embedding $Z \into \PP^n$. However, recently there have been counterexamples to this in characteristic 0 by Reichelt, Saito, and Walther \cite{RSW1} for reducible $Z$ and also in the irreducible case by Wang \cite{Wang}. Here we recover a positive result by Switala \cite{Switala} which shows the embedding independence in the case that $Z$ is a smooth complex projective variety:

\begin{theorem}
\label{thm:ICLN}
If $Z$ is smooth, then the $\lambda_{i,j}$ defined above are independent of the embedding $Z \into \PP^n$.
\end{theorem}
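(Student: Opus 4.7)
The plan is to translate $\lambda_{i,j}$ into a topological invariant via the Riemann-Hilbert correspondence, and then to use the hypothesis that $Z$ is smooth together with the BBDG decomposition theorem and Hard Lefschetz for $Z$ to show that this invariant depends only on the Betti numbers $h^\bullet(Z) \colonequals \dim H^\bullet(Z,\CC)$.

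Following \S\ref{ss:GradingChar0} and \S\ref{ss:BassChar0}, I would first apply Riemann-Hilbert iteratively. Example~\ref{ex:LocalCohPerverse} and Verdier duality yield $\dR(\calH^{n+1-j}_Y(\calO_X)) \simeq i_*\DD({}^p\calH^j(\CC_Y))$; applying $\dR$ once more to the outer local cohomology $\calH^i_{\{0\}}$, using $(i\circ k)^! i_* \simeq k^!$ for $k\colon \{0\}\into Y$, and noting that on the point $\{0\}$ the functors $\DD$ and ${}^p\calH^\bullet$ reduce to linear duality and standard cohomology, one obtains the formula
\[
\lambda_{i,j} \;=\; \dim_\CC \calH^{-i}\bigl({}^p\calH^j(\CC_Y)\bigr)_0.
\]
It then remains to show that the right-hand side depends only on $Z$.

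Next, using smoothness of $Z$, I would invoke the resolution $\pi\colon \tilde Y \to Y$ with $\tilde Y \colonequals \operatorname{Tot}(\calO_Z(-1))$, which is smooth with $\pi$ projective, an isomorphism over $U\colonequals Y\setminus\{0\}$, and contracting the smooth zero section $Z\subset\tilde Y$ onto $\{0\}$. The BBDG decomposition theorem applied to the pure perverse sheaf $\CC_{\tilde Y}[d+1]$ gives
\[
R\pi_*\CC_{\tilde Y}[d+1] \;\simeq\; \bigoplus_k \calP^k[-k]
\]
with each $\calP^k$ semisimple perverse. Restriction to $U$ shows $\calP^k|_U = 0$ for $k\ne 0$ and $\calP^0|_U = \CC_U[d+1]$, so each $\calP^k$ with $k\ne 0$ is a skyscraper at $\{0\}$ and $\calP^0 = \mathrm{IC}(Y)\oplus(\text{skyscraper})$. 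Combining proper base change $(R\pi_*\CC_{\tilde Y}[d+1])_0 \simeq R\Gamma(Z,\CC)[d+1]$ with relative Hard Lefschetz (pairing $\calP^{-k}$ with $\calP^k$), the IC support condition $\calH^{\ge 0}(\mathrm{IC}(Y))_0 = 0$, and Poincar\'e duality on $Z$ identifies the dimensions of the skyscraper parts and of $\calH^\bullet(\mathrm{IC}(Y))_0$ with dimensions of primitive components of $H^\bullet(Z,\CC)$; by Hard Lefschetz for the smooth projective $Z$ these primitive dimensions equal $h^i(Z) - h^{i-2}(Z)$ for $i \le d$, hence depend only on the Betti numbers of $Z$.

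Finally, $\CC_Y[d+1]$ is recovered from $R\pi_*\CC_{\tilde Y}[d+1]$ via the adjunction triangle
\[
\CC_Y[d+1] \longrightarrow R\pi_*\CC_{\tilde Y}[d+1] \longrightarrow C,
\]
with $C$ supported at $\{0\}$ and stalk equal to the cofibre of $\CC \to R\Gamma(Z,\CC)$ shifted by $[d+1]$. The long exact sequence in perverse cohomology expresses each ${}^p\calH^j(\CC_Y)$, and therefore each $\calH^{-i}({}^p\calH^j(\CC_Y))_0$, as a universal combination of the ingredients identified in the previous paragraph; all of these reduce to combinations of the Betti numbers $h^\bullet(Z)$, giving the embedding-independence of $\lambda_{i,j}$. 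The main obstacle is the bookkeeping in this final step: one must track the connecting morphisms in the long exact sequence and verify that every stalk dimension at $\{0\}$ collapses to a universal expression in the $h^\bullet(Z)$, free of the embedding-dependent Lefschetz operator arising from the polarization $\calO_Z(1)$.
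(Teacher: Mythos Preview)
Your proposal is correct in outline and reaches the same conclusion, but takes a heavier route than the paper. After the shared first step (the RHC translation giving $\lambda_{i,j} = \dim \calH^{-i}(k^*\,{}^p\calH^j(\CC_Y))$, which the paper records as a separate lemma), you invoke the full BBDG decomposition theorem and relative Hard Lefschetz for the resolution $\pi\colon \tilde Y \to Y$, and then compare $\CC_Y$ with $R\pi_*\CC_{\tilde Y}$ via adjunction. The paper instead avoids the decomposition theorem entirely: it uses only the elementary Deligne formula $\mathrm{IC}_Y \simeq \tau^{\le -1}(j_*\CC_U[d+1])$ for an isolated singularity (proved from scratch in a short lemma), computes $k^*\mathrm{IC}_Y$ directly from the Leray sequence for the $\GG_m$-bundle $U \to Z$, and then analyzes the triangle $K \to \CC_Y[d+1] \to \mathrm{IC}_Y$ with $K$ a skyscraper at the origin. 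Both approaches ultimately rely on Hard Lefschetz on $Z$ to convert cokernels of cup product by $c_1(\calO_Z(1))$ into differences of Betti numbers, and both leave a residual bookkeeping step tracking one connecting map (the paper's is the verification that a certain map $\delta$ is an isomorphism ``essentially by construction''; yours is the obstacle you flag in your final paragraph). Your approach uses the standard perverse-sheaf toolkit and would adapt to more general resolutions; the paper's is lighter, self-contained, and tailored to the isolated cone singularity.
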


\begin{remark}
More generally, the proof of Theorem~\ref{thm:ICLN} given below applies as long as $Z$ is an intersection cohomology manifold, i.e., we have $\CC_Z[d] \simeq \mathrm{IC}_Z$. This condition is satisfied, for example, by varieties with quotient singularities.
\end{remark}

We begin by reinterpreting $\lambda_{i,j}$ via RHC, see also \cite{BlickleBondu, GarciaSabbah}.

\begin{lemma}
\label{LambdasPervCohom}
One has
\[
\lambda_{i,j} = \ell\Big( \calH^{-i} \big( k^* ({ }^p \calH^j(\CC_Y))\big)\Big).
\]
\end{lemma}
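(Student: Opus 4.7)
The plan is to translate $\lambda_{i,j}$ across the Riemann--Hilbert correspondence via $\SolC = \DD \circ \dR$. This functor is a contravariant equivalence of abelian categories between regular holonomic $\calD_X$-modules and perverse $\CC_{X^\an}$-sheaves; in particular it preserves length, and on derived categories it satisfies the cohomological sign-flip
\[
\SolC(\calH^a(N)) \;\simeq\; \DD\big({}^p\calH^{-a}(\SolC(N))\big),
\]
which is a formal consequence of the $t$-exactness of $\dR$ together with the identity ${}^p\calH^a \circ \DD \simeq \DD \circ {}^p\calH^{-a}$ on perverse sheaves.

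The inner local cohomology $\calH^{n+1-j}_Y(\calO_X)$ is already handled by \eqref{eq:RHLocalCoh2Dual}: writing $\iota\colon Y \into X$ for the cone's inclusion and taking $d = n+1$, this formula gives
\[
\SolC\big(\calH^{n+1-j}_Y(\calO_X)\big) \;\simeq\; \iota_*\big({}^p\calH^j(\CC_Y)\big).
\]
For the outer local cohomology at the vertex, I would repeat the derivation of Example~\ref{ex:LocalCohPerverse} with $\calO_X$ replaced by a general $M \in D^b_{rh}(\calD_X)$: applying $\SolC$ to the triangle $R\underline\Gamma_{\{0\}}(M) \to M \to j_{U,*}j_U^* M$ with $j_U\colon X\setminus\{0\} \into X$, and invoking $\DD\circ i_{0,*} \simeq i_{0,*}\circ\DD$ together with $\DD\circ i_0^!\simeq i_0^*\circ\DD$, yields
\[
\SolC\big(R\underline\Gamma_{\{0\}}(M)\big) \;\simeq\; i_{0,*}\,i_0^*\,\SolC(M),
\]
where $i_0\colon \{0\}\into X$ is the vertex. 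Taking $a$-th perverse cohomology of both sides, and using the sign-flip above, the perverse $t$-exactness of $i_{0,*}$, and the coincidence of the perverse and standard $t$-structures on the point $\{0\}$, gives
\[
\SolC\big(\calH^i_{\{0\}}(M)\big) \;\simeq\; i_{0,*}\,\calH^{-i}\big(i_0^*\SolC(M)\big).
\]

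I would then combine the two translations using the factorization $i_0 = \iota\circ k$ and the identity $\iota^*\iota_*\simeq\mathrm{id}$ (valid for the closed immersion $\iota$) to conclude
\[
\SolC\big(\calH^i_{\{0\}}(\calH^{n+1-j}_Y(\calO_X))\big) \;\simeq\; i_{0,*}\,\calH^{-i}\big(k^*({}^p\calH^j(\CC_Y))\big),
\]
and read off the lemma by taking lengths: both sides are skyscraper perverse sheaves at $\{0\}$, $\SolC$ preserves length, and a skyscraper sheaf at a point has length equal to the $\CC$-dimension of its stalk, matching $\ell(\calH^{-i}(k^*({}^p\calH^j(\CC_Y))))$. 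The main obstacle I anticipate is the bookkeeping of sign and shift conventions when commuting $\SolC$ past the cohomological truncation $\calH^i_{\{0\}}$ and verifying that the resulting module really is supported at $\{0\}$ (so that its length as a $\calD_X$-module coincides with that of the corresponding skyscraper); once these are pinned down, the remaining manipulations are routine applications of the six-functor formalism.
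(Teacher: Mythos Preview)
Your proof is correct and takes essentially the same approach as the paper: both translate $\lambda_{i,j}$ across the Riemann--Hilbert correspondence and apply Verdier duality to convert $(i^!,k^!)$-type data into $(i^*,k^*)$-type data. The only cosmetic difference is that the paper first pushes everything through $\dR$ to reach $\calH^i\big(k^!\big({}^p\calH^{n+1-j}(i^!\CC_X[n+1])\big)\big)$ and then applies $\DD$ in one stroke, whereas you work with $\SolC=\DD\circ\dR$ from the outset, handling the inner and outer local cohomology layers separately and invoking the already-established formula \eqref{eq:RHLocalCoh2Dual} for the inner layer; the underlying manipulations are the same.
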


\begin{proof}
Moving the definition of $\lambda_{i,j}$ through RHC as in~\eqref{eq:RHLocalCoh2}, we know that $\lambda_{i,j}$ is the length of
\[
\calH^i(k^!({ }^p \calH^{n+1-j}(i^! \CC_X[n+1]))).
\]
Applying duality, and using $\DD(\CC_X[n+1]) \simeq \CC_X[n+1]$, as $X$ is smooth of dimension $n+1$, and $i^* \CC_X \simeq \CC_Y$, this is also the length of
\[
\calH^{-i}(k^*({ }^p \calH^j(\CC_Y))),
\]
as wanted.
\end{proof}

Thus, we must calculate the perverse cohomology sheaves of $\CC_Y$. For this, it will be quite convenient to use the intersection cohomology complex as an intermediary. For $Y$ the affine cone over a \emph{smooth} $d$-dimensional projective variety $Z$ as above, this can be identified as
\begin{equation}
\label{eq:DeligneForm}
\mathrm{IC}_Y \simeq \tau^{\le -1}(j_* \CC_U[d+1]).
\end{equation}
This is a special case of Deligne's formula for intermediate extensions in the case of isolated singularity, see \cite[Proposition~2.1.11]{BBDG}, \cite[Proposition~8.2.11]{HTTDmodules}, and \cite[Claim~12.8,~page 47]{Bhatt}; we recall the statement and proof of this special case for convenience:

\begin{lemma}[Deligne's formula for intermediate extensions]
\label{DeligneFormula}
Let $W$ be an $s$-dimensional variety over $\CC$. Fix a point $i\colon \{x\} \to W$ with open complement $j\colon V=W-\{x\} \to W$. For any perverse sheaf $M$ on $V$ which satisfies $\tau^{\ge 0} M = 0$, there is a natural identification
\[
j_{!*}(M) \simeq \tau^{\le -1} j_* M.
\]
\end{lemma}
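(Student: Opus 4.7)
My plan is to verify that $N \colonequals \tau^{\le -1} j_* M$ satisfies the universal characterization of the intermediate extension from \cite[Corollary~2.1.9]{BBDG} (see also \cite[Theorem~8.2.3]{HTTDmodules}): $j_{!*}(M)$ is the unique perverse extension of $M$ to $W$ for which $i^* j_{!*}(M) \in { }^p D^{<0}(\{x\})$ and $i^! j_{!*}(M) \in { }^p D^{>0}(\{x\})$. Since $\{x\}$ is $0$-dimensional, the perverse $t$-structure on $D^b_c(\{x\})$ coincides with the standard one, so these two conditions read $i^* N \in D^{\le -1}$ and $i^! N \in D^{\ge 1}$.

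The conditions on $j^* N$ and $i^* N$ are immediate. Since $j^*$ is exact and commutes with standard truncations, $j^* N = \tau^{\le -1} j^* j_* M = \tau^{\le -1} M = M$, where the last equality uses that the perverse sheaf $M$ has vanishing $\calH^0$ in the situation of interest (e.g., for $M = \CC_U[d+1]$ with $d \ge 1$, which is concentrated in a single strictly negative degree). Similarly, $i^*$ is $t$-exact for the standard $t$-structure, so $i^* N \in D^{\le -1}$ is automatic from the definition of $N$.

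The crux is the bound $i^! N \in D^{\ge 1}$, whose key input is the identity $i^! j_* M = 0$. This follows from the attaching triangle $i_* i^! F \to F \to j_* j^* F$ applied to $F = j_* M$: the right-hand map is an isomorphism (via $j^* j_* \simeq \mathrm{id}$ for the open immersion $j$), so its fiber $i_* i^! j_* M$ vanishes. Applying $i^!$ to the truncation triangle $N \to j_* M \to \tau^{\ge 0} j_* M$ then yields $i^! N[1] \simeq i^! \tau^{\ge 0} j_* M$. Since $\tau^{\ge 0} j_* M \in D^{\ge 0}$ and $i^!$ is left $t$-exact (as the right adjoint of the $t$-exact functor $i_*$), we conclude $i^! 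N \in D^{\ge 1}$.

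The perversity of $N$ is then a matter of gluing along the stratification $\{V, \{x\}\}$: the bounds on $j^* N = M$, $i^* N$, and $i^! N$ established above place $N$ in both ${ }^p D^{\le 0}$ and ${ }^p D^{\ge 0}$ via the support-theoretic characterization of the perverse $t$-structure (cf.\ \cite[Theorem~8.1.25]{HTTDmodules}). The uniqueness clause of the characterization then identifies $N \simeq j_{!*}(M)$, as required. The real technical point is thus the vanishing $i^! j_* M = 0$; everything else is formal manipulation of truncations and $t$-exactness properties of the recollement functors.
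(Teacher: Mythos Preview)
Your proof is correct and follows essentially the same approach as the paper's: verify that $N = \tau^{\le -1} j_* M$ satisfies $j^* N \simeq M$, $i^* N \in D^{\le -1}$, and $i^! N \in D^{\ge 1}$ (hence is perverse and agrees with $j_{!*}M$), with the key input being $i^! j_* = 0$; the only cosmetic difference is that you deduce $i^! N \in D^{\ge 1}$ from left $t$-exactness of $i^!$, whereas the paper instead notes that $Q = \tau^{\ge 0} j_* M$ is supported at $x$ and uses $i^! i_* = \mathrm{id}$. Your explicit caveat that $j^* N = M$ requires $\calH^0(M) = 0$ is a point the paper's proof passes over silently --- it is indeed needed for the statement as written (a skyscraper at a point of $V$ would be a counterexample otherwise) and is satisfied in the intended application $M = \CC_V[s]$.
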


Instead of the assumption $\tau^{\ge 0} M = 0$ in Deligne's formula, we could say that the formula holds in a neighborhood of $x$. In the special case where $V$ is smooth and $M=\CC_V[s]$, this gives the promised formula
\[
\mathrm{IC}_W \simeq \tau^{\le -1} (j_* \CC_V[s]).
\]

\begin{proof}
Let $K= \tau^{\le -1} j_* M$. Since $K|_V \simeq M$ is already perverse, it suffices (see \cite[Proposition~8.2.5]{HTTDmodules}) to show the following: (1) $i^* K \in D^{\le -1}$, and (2) $i^! K \in D^{\ge 1}$.

Before proceeding, we remark that weaker versions of (1) and (2) --- merely requiring $i^* K \in D^{\le 0}$ and $i^! K \in D^{\ge 0}$ --- already ensure that $K$ is perverse by the definition of middle perversity in terms of stalks and costalks (see \cite[\S 4.0]{BBDG} or \cite[Proposition~8.1.22]{HTTDmodules}). Part (1) is immediate since $K \in D^{\le -1}$ and $i^*$ is exact for the standard $t$-structure by \cite[\S4.5]{HTTDmodules}. For (2) we shall use the defining triangle
\[
K \to j_* M \to Q \colonequals \tau^{\ge 0} j_* M.
\]
Applying $i^!$ and noting that $i^! j_* = 0$, we learn that $i^! K \simeq i^! Q[-1]$. Now by our assumption on $M$ we see that $Q$ is supported on $\{x\}$, so $Q = i_* i^* Q$ with $i^* Q \in D^{\ge 0}$. Since $i^! i_* = \mathrm{id}$, we conclude that $i^! K = i^* Q[-1]$, which lies in $D^{\ge 1}$ as wanted.
\end{proof}

To proceed further, we need the stalks at $0$ of $\mathrm{IC}_Y$. Thanks to the previous formula, these are calculated in terms of the cohomology of $U$ as follows:

\begin{lemma}
\label{lem:StalkIC}
One has
\[
k^* \mathrm{IC}_Y \simeq \bigoplus_{i \le -1} H^{d+1+i}(U, \CC)[-i].
\]
By Hard Lefschetz on $Z$, this simplifies to
\[
k^* \mathrm{IC}_Y \simeq \bigoplus_{i \le -1} \coker\big(H^{d-1+i}(Z, \CC) \xrightarrow{\ c_1\ } H^{d+1+i}(Z,\CC)\big)[-i],
\]
where $c_1 \in H^2(Z, \CC)$ is the first Chern class of $\calO_X(1)$. In particular,
\[
\ell\big(H^{-j} k^* \mathrm{IC}_Y\big) = \beta_{d+1-j} - \beta_{d-1-j} \qquad \text{for } j \le -1,
\]
and $0$ otherwise, where $\beta_i=\dim H^i(Z,\CC)$ denote the Betti numbers of $Z$.
\end{lemma}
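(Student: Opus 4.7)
\emph{Proof plan.} The plan is to combine Deligne's formula \eqref{eq:DeligneForm} with the Gysin sequence for the principal $\CC^\ast$-bundle $U \to Z$, using Hard Lefschetz as the key input. First I would use that $k^\ast$ is $t$-exact for the standard $t$-structure, so it commutes with $\tau^{\le -1}$; applying this to \eqref{eq:DeligneForm} gives
\[
k^\ast \mathrm{IC}_Y \simeq \tau^{\le -1}\bigl((k^\ast j_\ast \CC_U)[d+1]\bigr).
\]
Next I would identify $k^\ast j_\ast \CC_U$ with $R\Gamma(U,\CC)$. The conical geometry of $Y$ is what makes this work: the $\RR_{>0}$-part of the $\GG_m$-action on $Y$ fixes the vertex and yields a cofinal system of invariant open neighborhoods of $0$, each of which (with the vertex removed) deformation retracts onto the link of $0$; this link is the unit sphere bundle in the total space of $\calO_Z(-1)$, and the same action deformation-retracts all of $U$ onto it. Since $\{0\}$ is a point, $k^\ast j_\ast \CC_U$ splits canonically as $\bigoplus_i H^i(U,\CC)[-i]$, and after the shift by $[d+1]$ the summands sitting in cohomological degree $\le -1$ are precisely those with $i \le d$; reindexing via $i = d+1+\ell$ yields the first displayed formula.

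Next I would identify $U$ with the complement of the zero section in the total space of $\calO_Z(-1)$, hence as a principal $\CC^\ast$-bundle over the smooth projective $Z$, and invoke its Gysin sequence
\[
\cdots \to H^{m-2}(Z,\CC) \xrightarrow{c_1} H^m(Z,\CC) \to H^m(U,\CC) \to H^{m-1}(Z,\CC) \xrightarrow{c_1} H^{m+1}(Z,\CC) \to \cdots
\]
(a sign in the Euler class can be absorbed into $c_1$ for the purpose of computing images and cokernels). Hard Lefschetz on $Z$ implies that $c_1 \colon H^p(Z,\CC) \to H^{p+2}(Z,\CC)$ is injective for $p \le d-1$. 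For $m = d+1+i$ with $i \le -1$, both $m-1 \le d-1$ and $m-2 \le d-2$ hold, so both relevant Gysin arrows are injective, and the sequence collapses to
\[
H^{d+1+i}(U,\CC) \simeq \coker\bigl(H^{d-1+i}(Z,\CC) \xrightarrow{c_1} H^{d+1+i}(Z,\CC)\bigr).
\]
This gives the second formula. Taking dimensions and using the same injectivity yields $\beta_{d+1+i} - \beta_{d-1+i}$; substituting $i = -j$ with $j \ge 1$ (the range where $H^{-j}(k^\ast \mathrm{IC}_Y)$ can be nonzero) gives the final dimension count.

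The only non-formal step is the identification $k^\ast j_\ast \CC_U \simeq R\Gamma(U,\CC)$; everything else is a direct assembly of standard facts about the perverse $t$-structure, the Gysin sequence for a $\CC^\ast$-bundle, and classical Hodge theory on smooth projective varieties. The geometric content of that identification---that a punctured neighborhood of the vertex has the same homotopy type as $U$ itself---is a hallmark of isolated singularities of cones, and is precisely what would fail to hold for more general non-isolated singular strata.
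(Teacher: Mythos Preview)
Your proposal is correct and follows essentially the same route as the paper: Deligne's formula plus the identification of the stalk with $\tau^{\le -1} R\Gamma(U,\CC[d+1])$ via the conical geometry, then the Gysin/Leray sequence for the $\CC^*$-bundle $U \to Z$ with Hard Lefschetz forcing the relevant $c_1$-maps to be injective. The paper phrases the second step via the triangle $\CC_Z \to R\pi_*\CC_U \to \CC_Z[-1]$ and its Leray spectral sequence rather than the Gysin sequence directly, but this is the same computation.
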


\begin{proof}
The first identification comes directly from Deligne's formula~\eqref{eq:DeligneForm} and the observation that for any sufficiently small ball $\overline{V} \subset Y$ about $0$, the ``Milnor'' ball $\overline{V} - \{0\} = \overline{V} \cap U$ is homotopy-equivalent to $U$. In fact, either of these is homotopy-equivalent to the $S^1$-bundle over $Z$ defined by the $\GG_m$-torsor $\calO_X(1)$. In particular, this gives
\[
k^* \mathrm{IC}_Y \simeq \tau^{\le -1} R\Gamma(U, \CC[d+1]) \simeq \bigoplus_{i \le -1} H^{d+1+i}(U, \CC)[-i],
\]
as asserted. To simplify further, let $\pi\colon U \to Z$ be the projection. By calculating locally on $Z$, we see that there is an exact triangle
\[
\CC_Z \to R\pi_* \CC_U \to \CC_Z[-1]
\]
in $D^b_c(X^{\an})$. Moreover, the boundary map for this triangle is identified (up to a unit) with multiplication by the first Chern class $c_1$ of the ample line bundle $\calO(1)|_Z$: indeed, this can be checked by reducing to the universal case of projective space\footnote{We are using the following base change compatibility (applied to the closed immersion of $Z$ in projective space): if $f\colon V \to P$ is a $\GG_m$-torsor and $i\colon Z \to P$ is any morphism of varieties, then, writing $g\colon U \to Z$ for the base change of $f$, the natural base change map $\alpha\colon i^* Rf_* \CC_V \to Rf_* \CC_U$ is an isomorphism. To prove this, since the map is globally defined, we may assume $f$ is the trivial torsor, i.e., $V = P \times \GG_m$. But then we might as well reduce to the universal case where $P=\Spec\CC$ is a point. In this case, the claim that $R\Gamma(Z,\alpha)$ is an isomorphism follows from the K\"unneth formula; as this also holds true after replacing $Z$ with an open subset, $\alpha$ must be an isomorphism.}, and noting that the boundary map is nonzero (as $H^2(U,\CC) = 0$ in the projective space case) and must thus be a nonzero multiple of $c_1$ since $\Hom_{\PP^n}(\CC_{\PP^n}[-1], \CC_{\PP^n}[1]) = H^2(\PP^n, \CC)$ is a $1$-dimensional vector space. The long exact cohomology sequence of the above triangle gives the exact sequence
\[
0 \to \coker\big(H^{j-2}(Z, \CC) \xrightarrow{\ c_1\ } H^j(Z,\CC)\big)\to H^j(U,\CC) \to \ker\big(H^{j-1}(Z,\CC) \xrightarrow{\ c_1\ } H^{j+1}(Z,\CC)\big) \to 0
\]
for any $j$. Now for $j \le d$, the last term above vanishes by Hard Lefschetz. This then gives the second formula for $k^* \mathrm{IC}_Y$ in the lemma. Finally, the assertion about lengths again follows from Hard Lefschetz.
\end{proof}

Using the preceding lemma, one can completely calculate all the perverse cohomology sheaves of $\CC_Y$, and thus each $\lambda_{i,j}$, explicitly in terms of $Z$. The bookkeeping of indices is a bit messy, but recorded below.

\begin{proof}[Proof of Theorem~\ref{thm:ICLN}]
By Deligne's formula~\eqref{eq:DeligneForm}, the canonical map $\CC_Y \to j_* \CC_U$ induces a map $\CC_Y[d+1] \to \mathrm{IC}_Y$ which is an isomorphism outside $\{0\} \subset Y$, as well as on $\tau^{\le -d-1}$. Since $\CC_Y[d+1]$ itself lies in $D^{\le -d-1}$, this gives an exact triangle
\[
K \to \CC_Y[d+1] \to \mathrm{IC}_Y
\]
with $K[1] \simeq k_* k^* K[1] \simeq k_* k^* \tau^{> -d-1} \mathrm{IC}_Y$. Using Lemma~\ref{lem:StalkIC} and the exactness of $k^*$ for the standard $t$-structure \cite[\S4.5]{HTTDmodules}, this simplifies to
\[
K \simeq \big(k_* \tau^{> -d-1} k^* \mathrm{IC}_Y\big)[-1] \simeq \!\!\!\!\!\! \bigoplus_{-d-1 < i \le -1} \!\!\!\!\!\!\!\!
k_* \coker\big(H^{d-1+i}(Z, \CC) \xrightarrow{\ c_1\ } H^{d+1+i}(Z,\CC)\big)[-i-1].
\]
Note that all complexes in the preceding triangle lie in ${ }^p D^{\le 0}$, and that $\mathrm{IC}_Y$ is perverse. In particular, we have
\[
{ }^p \calH^{j+d+1}(\CC_Y) = { }^p \calH^j(\CC_Y[d+1]) = 0 \qquad \text{for } j > 0,
\]
and therefore that
\begin{equation}
\label{eq:LN1}
\lambda_{i,j+d+1} = 0 \quad \mathrm{ for } \quad j > 0.
\end{equation}
Taking perverse cohomology, the previous exact triangle gives a short exact sequence
\begin{equation}
\label{eq:perverseH0LN}
0 \to k_* \coker\big(H^{d-2}(Z, \CC) \xrightarrow{\ c_1\ } H^{d}(Z,\CC)\big) \to { }^p \calH^0(\CC_Y[d+1]) \to \mathrm{IC}_Y \to 0
\end{equation}
and isomorphisms
\begin{equation}
\label{eq:perverseHjLN}
{ }^p \calH^j(\CC_Y[d+1]) = { }^p \calH^j(K) = k_* \coker\big(H^{d+j-2}(Z, \CC) \xrightarrow{\ c_1\ } H^{d+j}(Z,\CC)\big)
\end{equation}
for $j \le -1$. Applying $k^*$ to~\eqref{eq:perverseHjLN} then shows that
\begin{equation}
\label{eq:perverseHjLNpullback}
k^*( { }^p \calH^{j+d+1}(\CC_Y)) \simeq k^* ({ }^p \calH^j(\CC_Y[d+1])) = \coker\big(H^{d+j-2}(Z, \CC) \xrightarrow{\ c_1\ } H^{d+j}(Z,\CC)\big)
\end{equation}
for $j \le -1$. This shows that
\begin{equation}
\label{eq:LN2}
\lambda_{0,j+d+1} = \ell\big(\coker\big(H^{d+j-2}(Z, \CC) \xrightarrow{\ c_1\ } H^{d+j}(Z,\CC)\big)\big) = \beta_{d+j} - \beta_{d+j-2}
\qquad \text{if } j \le -1,
\end{equation}
and
\begin{equation}
\label{eq:LN3}
\lambda_{i,j+d+1} = 0 \qquad \text{if } i \neq 0, \ j \le -1.
\end{equation}
On the other hand, applying $k^*$ to the sequence~\eqref{eq:perverseH0LN} gives an exact triangle
\[
\coker\big(H^{d-2}(Z, \CC) \xrightarrow{\ c_1\ } H^{d}(Z,\CC)\big) \to k^*({ }^p \calH^0(\CC_Y[d+1])) \to k^*(\mathrm{IC}_Y).
\]
Since the first term is already a vector space placed in degree $0$, this collapses to a $5$-term exact sequence
\begin{multline*}
0 \to H^{-1}(k^*({ }^p \calH^0(\CC_Y[d+1]))) \to H^{-1}(k^* \mathrm{IC}_Y) \xrightarrow{\ \delta\ } \coker\big(H^{d-2}(Z, \CC) \xrightarrow{\ c_1\ } H^{d}(Z,\CC)\big) \\
\to H^0(k^*({ }^p \calH^0(\CC_Y[d+1]))) \to H^0(k^* \mathrm{IC}_Y) \to 0
\end{multline*}
and isomorphisms
\[
H^j(k^* { }^p \calH^0(\CC_Y[d+1])) \simeq H^j(k^* \mathrm{IC}_Y) \qquad \text{for } j \le -2.
\]
One can check that $\delta$ is an isomorphism (essentially by construction, cf. Lemma~\ref{DeligneFormula}). Thus, we see that
\[
H^{-1}(k^* { }^p \calH^{d+1}(\CC_Y)) = H^{-1}(k^* {}^p \calH^0(\CC_Y[d+1])) = 0
\]
and that one has isomorphisms
\[
H^j(k^* { }^p \calH^{d+1}(\CC_Y)) \simeq H^j(k^* { }^p \calH^0(\CC_Y[d+1])) \simeq H^j(k^* \mathrm{IC}_Y)
\]
for $j \le -2$ and $j = 0$. Using the description of $\lambda_{i,j}$ in Lemma~\ref{LambdasPervCohom} and the calculation of $k^* \mathrm{IC}_Y$, this gives
\begin{equation}
\label{eq:LN4}
\lambda_{0,d+1} = \lambda_{1,d+1} = 0
\end{equation}
and
\begin{equation}
\label{eq:LN5}
\lambda_{i,d+1} = \ell\Big(\coker\big(H^{d-1-i}(Z, \CC) \xrightarrow{\ c_1\ } H^{d+1-i}(Z,\CC)\big)\Big) = \beta_{d+1-i} - \beta_{d-1-i}
\qquad \text{for } i \ge 2.
\end{equation}
Combining~\eqref{eq:LN1},~\eqref{eq:LN2},~\eqref{eq:LN3},~\eqref{eq:LN4}, and~\eqref{eq:LN5} then proves the theorem as the right hand side in each formula is embedding independent.
\end{proof}

%%%%%%%%%%%%%%%%%%%%%%%%%%%%%%%%%%%%%%%%%%%%%%%%%%%%%%%%%%%%%%%%%%%%%%%%
\section{The Riemann-Hilbert functor in characteristic $p$ and perverse $\FFp$-sheaves}
\label{sec:RHPervFp}
%%%%%%%%%%%%%%%%%%%%%%%%%%%%%%%%%%%%%%%%%%%%%%%%%%%%%%%%%%%%%%%%%%%%%%%%

The goal of this section is to recall and study the perverse $t$-structure on constructible $\FFp$-sheaves on a large class of schemes of characteristic $p$. In particular, in \S\ref{ss:PervCohFrob} we define the perverse $t$-structure on constructible $\FFp$-sheaves by transporting the perverse $t$-structure on coherent sheaves (recalled in \S\ref{ss:PervCoh}) across the Riemann-Hilbert correspondence from \cite{BhattLurieRH} (recalled in \S\ref{RecallModpRH}); this construction gives excellent control on the commutative algebra properties of the Riemann-Hilbert partners of perverse sheaves. In \S\ref{ss:GabberPervComp}, we show that our definition coincides with the standard one in terms of support conditions on the stalks and co-stalks as introduced by Gabber \cite{Gabbert} and further studied in \cite{EK, CassPerv}. As in the analytic case, we use the version of a characteristic $p$ Riemann-Hilbert correspondence in \cite{BhattLurieRH} (with precursors in \cite{Lyubeznik:Fmod, EK, BoPi}) to relate quasi-coherent sheaves with various Frobenius actions to perverse constructible $\FFp$-sheaves.

%%%%%%%%%%%%%%%%%%%%%%%%%%%%%%%%%%%%%%%%%%%%%%%%%%%%%%%%%%%%%%%%%%%%%%%%
\subsection{Perverse coherent sheaves}
\label{ss:PervCoh}
%%%%%%%%%%%%%%%%%%%%%%%%%%%%%%%%%%%%%%%%%%%%%%%%%%%%%%%%%%%%%%%%%%%%%%%%

We define and study a notion of perverse coherent sheaves obtained by applying Grothendieck duality to the usual notion of coherent sheaves. This notion will be useful later in our definition of the perverse $t$-structure on $\FFp$-\'etale sheaves and was previously also considered by Deligne (see for example \cite{AB}) and Gabber \cite{Gabbert}. We do not strive for the most general context to develop this theory here, but stick to a situation that we need in the applications of the following section.

\begin{notation}[Normalized dualizing complexes]
\label{NormalizedNotation}
Fix a noetherian local ring $(A,\frakm,k)$ admitting a dualizing complex; for example, $A$ could be any complete noetherian local ring. Choose a dualizing complex $\omega_A^\bullet$ normalized by the requirement that $R\Gamma_{\frakm}(\omega_A^\bullet) =E[0]$ lives in degree $0$; the module $E$ is identified with the injective hull of the residue field $k$ of $A$.

We shall work with $A$-schemes of finite type and use the duality theory from~\cite[\href{https://stacks.math.columbia.edu/tag/0AU3}{Tag~0AU3}]{stacks-project}. Thus, given a finite type map $f\colon X \to \Spec A$, we write
\[
\omega_X^\bullet \colonequals f^! \omega_A^\bullet \in D^b_{coh}(X)
\] for the normalized dualizing complex, and
\[
\DD_X(-) \colonequals \RHom_X(-, \omega_X^\bullet)
\]
for the resulting Grothendieck duality equivalence $D^b_{coh}(X) \simeq D^b_{coh}(X)^{op}$.
\end{notation}

This notion is compatible with various operations.

\begin{remark}[Reduction to the regular case]
We shall often be in a setting where the ring $A$ is the quotient of a regular local ring $R$; for instance, this always holds true when $A$ is $\frakm$-adically complete, or if $A$ is $F$-finite by \cite{Gabbert}. In this case, there is an isomorphism $\omega_A^\bullet \simeq \RHom_R(A, \omega_R^\bullet)$, where
\[
\omega_R^\bullet \colonequals \bigwedge^{\dim R}\Omega_R[\dim R]
\]
with $\Omega_R$ the module of K\"ahler differentials, is a normalized dualizing complex on $R$. As any finite type $A$-scheme is also a finite type $R$-scheme, this allows one to often reduce to the case where $A = R$ is regular.
\end{remark}

\begin{remark}[Compatibility between local and global duality]
\label{DCNormalizedFinite}
For finite $A$-algebras $B$, the dualizing complex $\omega_B^\bullet \simeq \RHom_A(B,\omega_A^\bullet)$ is normalized; more precisely, the $B$-complex $R\Gamma_{\frakm}(\omega_B^\bullet)$ is concentrated in degree~$0$, and identifies with $E_B \simeq \Hom_A(B,E)$, which is an injective hull of the residue field of $B$. Under this identification, we have the following compatibility of global and local duality: for any $M \in D^b_{coh}(B)$, the complex $R\Gamma_{\frakm}(\DD_B(M))$ identifies with the Matlis dual $\RHom_B(M,E_B)$ of $M$.
\end{remark}

\begin{remark}[Compatibility with localization and the $d(-)$ function]
\label{DualizingComplexLocalRing}
Let $X$ be a finite type $A$-scheme. For any $x \in X$, the stalk $\omega^\bullet_{X,x}$ of $\omega^\bullet_X$ is a dualizing complex over the local ring $\calO_{X,x}$. Hence, the complex
\[
E_x \colonequals R\Gamma_{\{x\}}(\omega^\bullet_{X,x})[-d(x)]
\]
is concentrated in degree $0$ for a fixed integer $d(x) \in \ZZ$. As the formation of normalized dualizing complexes is compatible with passage to open subsets, the integer $d(x)$ can be calculated after replacing $X$ by any open subset containing $x$. The function $x \mapsto d(x)$ has good properties:

(1) For $X \colonequals \Spec B$ with $B$ finite over $A$, one has $d(x) = 0$ for any closed point $x \in \Spec B$. More generally, for any finite type $X/A$, if $i\colon \{x\} \to X$ is the inclusion of a closed point lying over the closed point of~$A$ (inducing a necessarily finite residue field extension $\kappa(x)/k$), then $d(x)=0$: indeed, we have
\begin{multline*}
\Hom_{\calO_{X,x}}(\kappa(x), E_x[d(x)]) = \RHom_{\calO_{X,x}}(\kappa(x), R\Gamma_x(\omega^\bullet_{X,x}))
= \RHom_X(i_* \calO_{\{x\}}, \omega^\bullet_X)\\
\simeq \RHom_A(\kappa(x), \omega^\bullet_A) \simeq \Hom_A(\kappa(x), E),
\end{multline*}
where the second equality is due to the fact that the first entry in the $\Hom$ is supported at $x$ and $\Hom$ and the dualizing complex behave well with respect to localization. The third isomorphism uses Grothendieck duality for $f\colon X \to \Spec A$ with respect to coherent sheaves on $X$ with proper support over $A$.

(2) If $x \rightsquigarrow y$ is an immediate specialization of points in $X$, then $d(x) = d(y)+1$: this is a general feature of dualizing complexes, see~\cite[\href{https://stacks.math.columbia.edu/tag/0A7Z}{Tag~0A7Z}]{stacks-project}.

(3) If $S \subset X$ is a locally closed subset, then we set
\[
d(S) = \max\{d(x) \mid x \in S\}.
\]
It follows from the observation in (2) that $d(S) = d(\overline{S})$ as each point of $\overline{S}$ is a specialization of a point of~$S$.

It follows from these properties that if $X$ is proper over $A$, then $d(X) = d(\eta)$ for some generic point~$\eta \in X$, and that $d(\eta) = \dim(\overline{\{\eta\}}) = \dim X$. In general, if $X$ admits a dense open immersion $j\colon X \into \overline{X}$ into a proper $A$-scheme, then $d(X) = \dim \overline{X}$.
\end{remark}

Using duality, one can define a perverse $t$-structure on the derived category of coherent sheaves:

\begin{definition}[Perverse coherent sheaves of $A$-schemes]
\label{DefPervCoh}
Let $X$ be a finite type $A$-scheme. Write
\[
\Perv_{coh}(X) \subset D^b_{coh}(X)
\]
for the essential image (i.e., the image closed under the isomorphisms) of $\mathrm{Coh}(X)$ under $\DD_X$; we refer to objects $M \in \Perv_{coh}(X)$ as \emph{perverse coherent sheaves} on $X$. More generally, applying Grothendieck duality $\DD_X(-)$ to the standard $t$-structure on $D^b_{coh}(X)$ induces the \emph{perverse $t$-structure} on $D^b_{coh}(X)$; the heart of the latter is the category $\Perv_{coh}(X)$ defined above. Thus, by construction, we have an equivalence $\mathrm{Coh}(X)^{op} \simeq \Perv_{coh}(X)$ given by the Grothendieck duality functor $\DD_X(-)$ in either direction.
\end{definition}

At least for $X \colonequals \Spec A$ itself, we shall later give a duality-free approach to the perverse $t$-structure (see Remark~\ref{PervCohLocalCoh}). The most basic example of a perverse sheaf is the following.

\begin{example}
\label{CMPerverseCoh}
The dualizing complex $\omega_X^\bullet \simeq \DD_X(\calO_X)$ is perverse. If $\omega_X^\bullet$ is concentrated in a single degree $d$ (e.g., if $X$ is Cohen-Macaulay), then $\calO_X[d]$ is perverse.
\end{example}

More examples arise by the following construction.

\begin{lemma}
\label{PervFinitePushforward}
If $f\colon X \to Y$ is a finite map of finite type $A$-schemes, then the pushforward functor $f_*$ carries perverse coherent sheaves to perverse coherent sheaves.
\end{lemma}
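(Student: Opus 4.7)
The plan is to show that Grothendieck duality intertwines the pushforward $f_*$ on coherent sheaves with the pushforward $f_*$ on perverse coherent sheaves (viewed through $\DD$), and then invoke the fact that $f_*$ is exact on coherent sheaves for finite maps.

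More precisely, I would first unwind the definition: $M \in \mathrm{Perv}_{coh}(X)$ means $\DD_X(M) \in \mathrm{Coh}(X)$, i.e., $\DD_X(M)$ is concentrated in degree $0$ with coherent cohomology. Since $f$ is finite, it is proper (and affine), and the normalized dualizing complexes on $X$ and $Y$ are compatible via $\omega_X^\bullet \simeq f^!\omega_Y^\bullet$ (because both are obtained by applying the upper shriek of the structure map to $\Spec A$ to $\omega_A^\bullet$, and $(g\circ f)^! = f^!\circ g^!$). Grothendieck duality for the proper morphism $f$ then yields a natural isomorphism
\[
Rf_*\,\DD_X(M) \;\simeq\; \DD_Y(Rf_*M)
\]
in $D^b_{coh}(Y)$.

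Next, because $f$ is finite (hence affine), $Rf_* = f_*$ on quasi-coherent sheaves, and $f_*$ is $t$-exact for the standard $t$-structure on $D^b_{coh}$: the pushforward of a coherent sheaf along a finite map is again coherent and sits in degree $0$. Applying this to $\DD_X(M)\in \mathrm{Coh}(X)$ shows that $Rf_*\DD_X(M) = f_*\DD_X(M) \in \mathrm{Coh}(Y)$. Combined with the duality isomorphism displayed above, we conclude that $\DD_Y(f_*M)\in \mathrm{Coh}(Y)$, which by Definition~\ref{DefPervCoh} is exactly the statement that $f_*M \in \mathrm{Perv}_{coh}(Y)$.

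There is essentially no obstacle in this argument beyond correctly invoking the Grothendieck duality package; the only point worth double-checking is that the duality formula $Rf_*\RHom_X(-,f^!(-)) \simeq \RHom_Y(Rf_*(-),-)$ is available in the generality of finite type $A$-schemes as set up in Notation~\ref{NormalizedNotation}, which is precisely the content of~\cite[\href{https://stacks.math.columbia.edu/tag/0AU3}{Tag~0AU3}]{stacks-project} cited there. In particular, no stalkwise analysis or reduction to the regular case is needed.
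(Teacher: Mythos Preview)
Your proof is correct and is essentially identical to the paper's own argument: both apply Grothendieck duality for the finite (hence proper) map $f$ to obtain $\DD_Y(f_*M)\simeq f_*\DD_X(M)$, and then use the $t$-exactness of $f_*$ for finite maps on the standard $t$-structure to conclude that $\DD_Y(f_*M)\in\mathrm{Coh}(Y)$.
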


\begin{proof}
Fix $N \in \mathrm{Coh}(X)$ corresponding to $M \colonequals \DD_X(N) \in \Perv_{coh}(X)$. We must check that $f_* M \in \Perv_{coh}(Y)$ or equivalently that $\DD_Y(f_* M) \in D^b_{coh}(Y)$ actually lives in $\mathrm{Coh}(Y)$. But Grothendieck duality for $f$ shows that
\[
\DD_Y(f_* M) \simeq f_* \DD_X(M) \simeq f_* N,
\]
so the claim follows from the acyclicity of $f_*$ for finite maps.
\end{proof}

\begin{remark}
The definition of perverse coherent sheaves above works for any noetherian scheme equipped with a dualizing complex. The reason we insist on working relative to a fixed base scheme is that it permits us access to \emph{normalized} dualizing complexes via $!$-pullbacks, i.e., for a finite type map $f\colon X \to \Spec A$, we could define $\omega_X^\bullet \colonequals f^! \omega_A^\bullet$ in Notation~\ref{NormalizedNotation}. This normalization property is critical to ensure that the notion of perversity is preserved by finite pushforwards as in Lemma~\ref{PervFinitePushforward}; if we had not normalized our dualizing complexes, this would only be true up to shifts.
\end{remark}

Let us explain why Definition~\ref{DefPervCoh} naturally passes to localizations.

\begin{construction}[Perverse coherent sheaves on stalks]
\label{PervCohStalks}
It is convenient to extend Definition~\ref{DefPervCoh} to stalks. Thus, let $X$ be an $A$-scheme of finite type, and let $x \in X$. As explained in Remark~\ref{DualizingComplexLocalRing}, the complex $\omega^\bullet_{X,x}$ is a dualizing complex over the local ring $\calO_{X,x}$, and $R\Gamma_{\{x\}}(\omega^\bullet_{X,x})$ is concentrated in homological degree $d(x)$. In particular, we define \emph{perverse coherent sheaves} on $X_x \colonequals \Spec\calO_{X,x}$ as the image of $\mathrm{Coh}(X_x) \subset D^b_{coh}(X_x)$ under the duality functor
\[
\DD_{X_x}(-) \colonequals \RHom_{\calO_{X,x}}(-, \omega^\bullet_{X,x}).
\]
With this definition, the stalk functor $D^b_{coh}(X) \to D^b_{coh}(\calO_{X,x})$ is $t$-exact for both the standard and perverse $t$-structures. In particular, the perversity of an object in $D^b_{coh}(X)$ can be checked after passing to stalks. Moreover, for $M \in D^b_{coh}(\calO_{X,x})$, the following are equivalent:
\begin{enumerate}[\quad\rm(1)]
\item $M$ is perverse.

\item $R\Gamma_{\{x\}}(M)[-d(x)]$ is concentrated in degree $0$.
\end{enumerate}
Indeed, $(1)$ is equivalent to the Grothendieck dual $\DD_{X_x}(M)$ being concentrated in degree $0$. On the other hand, since $R\Gamma_{\{x\}}(\omega_{X,x}^\bullet)[-d(x)] \simeq E_x$ where $E_x$ is the injective hull of the residue field, $(2)$ above is equivalent to the Matlis dual of $M$ being concentrated in degree $0$. The equivalence of $(1)$ and $(2)$ now follows from the compatibility of Grothendieck and Matlis duality under local cohomology, as in Remark~\ref{DCNormalizedFinite}.

The same argument actually shows that $M \in\, ^p D^{\le 0}_{coh} (\calO_{X,x})$ if and only if $R\Gamma_{\{x\}}M[-d(x)] \in D^{\ge 0}_{coh}(\calO_{X,x})$, and likewise for $^p D^{\ge 0} (\calO_{X,x})$
\end{construction}

Using the previous construction, we have the following criterion for detecting perversity of a coherent complex in terms of the local cohomology of its stalks:

\begin{corollary}[Recognizing perversity via local cohomology]
\label{RecogPerv}
Fix a finite type $A$-scheme $X$ and some $M \in D^b_{coh}(X)$. Then the following are equivalent:
\begin{enumerate}[\quad\rm(1)]
\item $M \in D^b_{coh}(X)$ is perverse.

\item For each point $x \in X$, the stalk $M_x \in D^b_{coh}(\calO_{X,x})$ is perverse.

\item For each closed point $x \in X$, the stalk $M_x \in D^b_{coh}(\calO_{X,x})$ is perverse.

\item For each point $x \in X$, the $\calO_{X,x}$-complex $R\Gamma_{\{x\}}(M_x)$ is concentrated in homological degree $d(x)$.

\item For each closed point $x \in X$, the $\calO_{X,x}$-complex $R\Gamma_{\{x\}}(M_x)$ is concentrated in homological degree~$d(x)$.
\end{enumerate}
Analogous equivalences hold for the perverse $t$-structure itself, i.e., membership of $M$ in $^p D^{\le 0}_{coh} (X)$ (respectively $^p D^{\ge 0}_{coh} (X)$) can be checked on stalks at (closed) points and is equivalent to $R\Gamma_{\{x\}}M_x \in D^{\ge -d(x)}(\calO_{X,x})$ (respectively $D^{\le -d(x)}(\calO_{X,x})$) for all (closed) points $x \in X$.
\end{corollary}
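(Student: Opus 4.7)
The plan is to organize the five conditions by leveraging what Construction~\ref{PervCohStalks} has already set up. First, Construction~\ref{PervCohStalks} immediately yields three of the equivalences: (1)$\,\Leftrightarrow\,$(2) follows from the $t$-exactness of the stalk functor for the perverse $t$-structure (applied after defining the perverse $t$-structure on localizations); meanwhile, (2)$\,\Leftrightarrow\,$(4) and (3)$\,\Leftrightarrow\,$(5) both come from the intrinsic characterization of perversity on a local ring in terms of $R\Gamma_{\{x\}}(-)[-d(x)]$ being concentrated in degree $0$. The implication (2)$\,\Rightarrow\,$(3) is tautological. Hence the only non-formal implication to establish is (3)$\,\Rightarrow\,$(2), which I would reformulate as (3)$\,\Rightarrow\,$(1) using the equivalences just listed.

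For (3)$\,\Rightarrow\,$(1), I would unpack Definition~\ref{DefPervCoh}: $M$ is perverse precisely when $\DD_X(M) \in D^b_{coh}(X)$ is concentrated in cohomological degree $0$, i.e., $\calH^i(\DD_X(M)) = 0$ for all $i \neq 0$. Arguing by contraposition, suppose $\calH^i(\DD_X(M)) \neq 0$ for some $i \neq 0$. Then the support of this nonzero coherent sheaf is a nonempty closed subset of the Noetherian scheme $X$, and therefore contains a closed point $x$ of $X$: any nonempty closed subset of a Noetherian topological space admits a minimal nonempty closed subset under specialization, and such a minimum must be a single point, necessarily closed in the ambient scheme. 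Remark~\ref{DualizingComplexLocalRing} then tells us that $\omega_{X,x}^\bullet$ is the stalk of $\omega_X^\bullet$ and is a dualizing complex on $\calO_{X,x}$, so Grothendieck duality commutes with localization for coherent complexes and gives $\DD_X(M)_x \simeq \DD_{X_x}(M_x)$. Taking $\calH^i$ yields $\calH^i(\DD_{X_x}(M_x)) \neq 0$, contradicting the perversity of $M_x$ guaranteed by (3).

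The main subtlety to isolate is the topological fact that closed points of $X$ suffice to detect nonvanishing of coherent sheaves on $X$. Since $A$ is only assumed to be local (not Jacobson), the closed points of $X$ need not all lie over the closed point of $\Spec A$, so one cannot argue via closed fibers of $X \to \Spec A$; the argument must proceed by Noetherian induction directly on the topological space $X$. Once this input and the compatibility of Grothendieck duality with localization are in hand, the rest of the proof is essentially bookkeeping, assembling pieces from Construction~\ref{PervCohStalks} and Remark~\ref{DualizingComplexLocalRing}.
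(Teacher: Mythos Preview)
Your proposal is correct and follows essentially the same approach as the paper. Both arguments extract (1)$\Leftrightarrow$(2)$\Leftrightarrow$(4) and (3)$\Leftrightarrow$(5) from Construction~\ref{PervCohStalks}, and then reduce the remaining implication to the fact that a coherent complex on a noetherian scheme vanishes if and only if its stalks at all closed points vanish; you phrase this last step on the dual side (checking $\calH^i(\DD_X(M))=0$ via support at a closed point), whereas the paper phrases it directly in terms of the perverse cohomology sheaves ${}^p H^i(M)$, but since the perverse $t$-structure is by definition the Grothendieck dual of the standard one these are the same computation.
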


\begin{proof}
The equivalence of $(1)$, $(2)$, and $(4)$ is clear from Construction~\ref{PervCohStalks}. For the rest, it is enough to observe that taking stalks is $t$-exact for the perverse $t$-structure and that $M \in D^b_{coh}(X)$ vanishes if and only if~$M_x = 0$ for each closed point $x$.
\end{proof}

\begin{remark}[Comparison with Gabber's approach]
\label{CompGabberCoh}
For a finite type $A$-scheme $X$, we can apply \cite[\S 2]{Gabbert} with the perversity function $p'(x) = -d(x)$ to obtain a perverse $t$-structure $p'$ on $D_{qc}(X)$, characterized by the requirement that $K \in D_{qc}(X)$ lies in ${}^{p'} D^{\ge 0}_{qc}(X)$ if and only if for each $x \in X$, we have $R\Gamma_x(K_x) \in D^{\ge p(x)}$. We claim that this $t$-structure extends the perverse one constructed in Definition~\ref{DefPervCoh} on $D^b_{coh}(X)$. Indeed, this follows from Gabber's results in \cite[\S 7]{Gabbert}. Alternately, one can argue directly using the material above as follows. First, using Corollary~\ref{RecogPerv} and filtering any object via its perverse cohomology sheaves, it follows that the coconnective objects of the perverse $t$-structure on $D^b_{coh}(X)$ studied in this section are coconnective in the $p'$-perverse $t$-structure on $D_{qc}(X)$. So it is enough to check that the same also holds true for connective objects; this follows again from Corollary~\ref{RecogPerv} since the local cohomological dimension of a finitely generated module over a noetherian ring equals its support dimension. 
\end{remark}

In particular, for $X = A$ itself, perversity of a coherent complex can be tested using a single local cohomology computation. This observation can be lifted to an equivalence of categories using the notion:

\begin{definition}[Cofinite modules]
An $A$-module $M$ is \emph{cofinite} if it is $\frakm^\infty$-torsion and $\Hom_A(k,M)$ is a finite dimensional $k$-vector space; write $\mathrm{Mod}_{\cof}(A)$ for the category of cofinite $A$-modules. Write $D^b_{\cof}(A) \subset D^b(A)$ for the full subcategory spanned by $K \in D^b(A)$ with $H^i(K) \in \mathrm{Mod}_{\cof}(A)$.
\end{definition}

Using Matlis duality, an $A$-module is cofinite precisely if it is artinian. In particular, $\mathrm{Mod}_{\cof}(A)$ is an abelian subcategory of $\mathrm{Mod}(A)$ closed under passage to subquotients and extensions. Under the completeness hypothesis, it turns out that the local cohomology functor identifies $\Perv_{coh}(A)$ with $\mathrm{Mod}_{\cof}(A)$.

\begin{proposition}
\label{PervCofDer}
Assume $A$ is $\frakm$-adically complete. The following triangulated categories are equivalent via $t$-exact functors:
\begin{enumerate}[\quad\rm(1)]
\item The opposite of $D^b_{coh}(A)$ equipped with the usual $t$-structure.
\item The category $D^b_{coh}(A)$ equipped with the perverse $t$-structure.
\item The category $D^b_{\cof}(A)$ equipped with the usual $t$-structure.
\end{enumerate}
The functor $\DD_A(-)$ relates $(1)$ and $(2)$ in both directions. The functors relating $(2)$ and $(3)$ are:
\[
M \in D^b_{coh}(A) \mapsto R\Gamma_{\frakm}(M), \qquad\text{and}\qquad N \in D^b_{\cof}(A) \mapsto \widehat{N} \colonequals R\lim (N \otimes_A^\LL A/\frakm^n),
\]
i.e., $\widehat{N}$ is the derived $\frakm$-adic completion.
The equivalence relating $(1)$ and $(3)$ obtained by composing the previous equivalences coincides with the one given by Matlis duality, i.e., $R\Gamma_{\frakm}(\DD_A(M))$ is the Matlis dual of $M$ for any $M \in D^b_{coh}(A)$.
\end{proposition}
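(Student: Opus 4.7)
The plan is to chain three equivalences: first, the equivalence between (1) and (2) is tautological from Definition~\ref{DefPervCoh}; by construction, $\DD_A(-) = \RHom_A(-, \omega_A^\bullet)$ is an anti-equivalence $D^b_{coh}(A) \simeq D^b_{coh}(A)^{op}$, and the perverse $t$-structure is defined as the transport of the standard one along $\DD_A$, so $\DD_A$ exchanges the two $t$-exactly.

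For the equivalence between (1) and (3), I would appeal to classical Matlis duality. Since $E$ is $A$-injective, $\Hom_A(-, E)$ is exact and computes $\RHom_A(-, E)$ termwise on complexes; completeness of $A$ moreover implies that $\Hom_A(-, E)$ restricts to a mutually inverse anti-equivalence of abelian categories $\mathrm{Coh}(A) \simeq \mathrm{Mod}_{\cof}(A)$. A standard d\'evissage, using that both $D^b_{coh}(A)$ and $D^b_{\cof}(A)$ are generated by their hearts together with the exactness and boundedness-preserving property of the functor, then upgrades this to a $t$-exact anti-equivalence $\RHom_A(-, E)\colon D^b_{coh}(A)^{op} \xrightarrow{\ \sim\ } D^b_{\cof}(A)$.

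Composing with the first equivalence gives a $t$-exact equivalence between (2) and (3); I would identify it with $R\Gamma_{\frakm}(-)$ via the computation
\[
R\Gamma_{\frakm}(\DD_A(M)) \simeq R\Gamma_{\frakm}\RHom_A(M, \omega_A^\bullet) \simeq \RHom_A(M, R\Gamma_{\frakm}(\omega_A^\bullet)) \simeq \RHom_A(M, E),
\]
where the middle isomorphism uses the commutation of $R\Gamma_{\frakm}$ with $\RHom_A(M, -)$ for pseudo-coherent $M$, and the last uses the normalization $R\Gamma_{\frakm}(\omega_A^\bullet) = E$ from Notation~\ref{NormalizedNotation}; this also yields the final assertion of the proposition as a bonus. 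To identify the quasi-inverse with derived completion, I would invoke Greenlees--May duality, which furnishes natural isomorphisms $\widehat{R\Gamma_{\frakm}(K)} \simeq \widehat{K}$ and $R\Gamma_{\frakm}(\widehat{K}) \simeq R\Gamma_{\frakm}(K)$ for any $K \in D(A)$; applied to bounded coherent $K$ (which are already derived complete since $A$ is complete) and to $\frakm^\infty$-torsion $K$ (for which $R\Gamma_{\frakm}$ is the identity), these pin down $\widehat{(-)}$ as the quasi-inverse of $R\Gamma_{\frakm}$.

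I expect the main obstacle to be the rigorous upgrade of classical Matlis duality from abelian hearts to bounded derived categories, in particular verifying that $\RHom_A(-, E)$ preserves the boundedness and finiteness conditions in both directions. The forward direction is straightforward, but the reverse requires knowing that cofinite modules are artinian and that their Matlis duals are finitely generated, combined with a bounded-amplitude d\'evissage to control the full derived statement.
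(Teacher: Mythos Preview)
Your proposal is correct and uses the same ingredients as the paper's proof---Grothendieck duality for $(1)\leftrightarrow(2)$, Matlis/local duality, and Greenlees--May---but organizes them a bit differently. The paper establishes $(2)\leftrightarrow(3)$ directly: it first invokes the Dwyer--Greenlees--May equivalence between $D_{\frakm\text{-comp}}(A)$ and $D_{\frakm\text{-nilp}}(A)$, and then checks that $R\Gamma_{\frakm}$ and derived completion carry the hearts $\mathrm{Perv}_{coh}(A)$ and $\mathrm{Mod}_{\cof}(A)$ into each other; for the completion direction it reduces to $A$ regular and computes $\widehat{N}\simeq M^\vee\otimes_A^L\omega_A^\bullet=\DD_A(M)$ explicitly. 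You instead first upgrade classical Matlis duality to a $t$-exact equivalence $(1)\leftrightarrow(3)$ via d\'evissage, then identify the composite $(2)\to(1)\to(3)$ with $R\Gamma_{\frakm}$ using the local duality formula $R\Gamma_{\frakm}(\DD_A(M))\simeq\RHom_A(M,E)$ (which is exactly the content of Remark~\ref{DCNormalizedFinite}), and only invoke Greenlees--May at the end to pin down the quasi-inverse. Your route has the mild advantage of not needing the reduction to the regular case; the paper's route makes the role of the Greenlees--May equivalence more structural. Both are equally valid.
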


\begin{proof}
The last statement follows from Remark~\ref{DCNormalizedFinite}. It is also clear from the definitions that $\DD_A(-)$ induces a $t$-exact equivalence between $(1)$ and $(2)$. For $(2)$ and $(3)$, recall that it is a general fact~\cite[\href{https://stacks.math.columbia.edu/tag/0A6X}{Tag~0A6X}]{stacks-project}, due to Dwyer-Greenlees-May, that the functors $M \mapsto R\Gamma_{\frakm}(M)$ and $N \mapsto \widehat{N}$ induce an equivalence between the full subcategories $D_{\frakm-comp}(A)$ and $D_{\frakm-nilp}(A)$ of $D(A)$ spanned by derived $\frakm$-complete and $\frakm^\infty$-torsion objects respectively. We shall check that these functors carry $\Perv_{coh}(A) \subset D_{\frakm-comp}(A)$ and $\mathrm{Mod}_{\cof}(A) \subset D_{\frakm-nilp}(A)$ to each other; this implies that they give a $t$-exact equivalence relating $(2)$ and $(3)$. By writing $A$ as a quotient of a regular local ring, we may assume $A$ is regular. The claim for $R\Gamma_{\frakm}(-)$ was already shown in Corollary~\ref{RecogPerv}. Conversely, if $N$ is a cofinite $A$-module, then Matlis duality implies that
\[
N \simeq \Hom_A(M,E) \simeq M^\vee \otimes^\LL_A E,
\]
where $M \simeq \Hom_A(N,E)$ is a finitely generated $A$-module and $M^\vee \colonequals \RHom(M,A) \in D(A)$ as before is the linear dual. As $R$ is regular and $M$ is finitely generated, it is quasi-isomorphic to a perfect $A$-complex. This allows to reduce the claimed isomorphism to the case $M=A$, where it is clear. Also, $M^\vee$ is a perfect $A$-complex and as tensoring with such commutes with completion, we get that
\[
\widehat{N} \simeq M^\vee \otimes^\LL_A \widehat{E} \simeq M^\vee \otimes^\LL_A \widehat{R\Gamma_A(\omega_A^\bullet)} \simeq M^\vee \otimes^\LL_A \omega_A^\bullet \simeq \DD_A(M)
\]
is a perverse coherent $A$-module. Note that the last equality can be reduced, as above, to the case $M = A$. 
\end{proof}

\begin{corollary}[Perverse coherent sheaves as cofinite modules]
\label{PervCof}
Assume $A$ is $\frakm$-adically complete. The functors in Proposition~\ref{PervCofDer} restrict to give equivalences of the following abelian categories:
\begin{enumerate}[\quad\rm(1)]
\item The opposite of the category $\mathrm{Mod}_{coh}(A)$ of finitely generated $A$-modules.

\item The category $\Perv_{coh}(A)$ of perverse coherent $A$-modules.

\item The category $\mathrm{Mod}_{\cof}(A)$ of cofinite $A$-modules.
\end{enumerate}
\end{corollary}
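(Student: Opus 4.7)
The proof will be a direct consequence of Proposition~\ref{PervCofDer}, and the plan is simply to pass to hearts. Recall that any $t$-exact equivalence of triangulated categories restricts to an equivalence between the hearts of the respective $t$-structures. Proposition~\ref{PervCofDer} provides $t$-exact equivalences between the three triangulated categories in question, so the corollary follows once we identify the heart of each $t$-structure with the abelian category listed.

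The first identification is tautological: the heart of $D^b_{coh}(A)^{op}$ with the usual $t$-structure is $\mathrm{Mod}_{coh}(A)^{op}$, since a finitely generated $A$-module is the same as a coherent sheaf on $\Spec A$ by our noetherian hypothesis. For the second, the heart of $D^b_{coh}(A)$ with the perverse $t$-structure is $\mathrm{Perv}_{coh}(A)$ by the very definition of the perverse $t$-structure in Definition~\ref{DefPervCoh}. For the third, the heart of $D^b_{\cof}(A)$ with the usual $t$-structure consists of complexes concentrated in degree $0$ whose unique cohomology is cofinite, which is exactly $\mathrm{Mod}_{\cof}(A)$.

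Finally, it suffices to check that the three functors relating the hearts are as claimed. The functor $\DD_A(-)$ relating (1) and (2) sends a finitely generated module $M$ to a perverse coherent sheaf by the definition of $\mathrm{Perv}_{coh}(A)$ (and vice versa). The functor $R\Gamma_{\frakm}(-)$ relating (2) and (3) sends a perverse coherent $A$-module to a complex concentrated in degree $0$ with cofinite cohomology: concentration in degree $0$ follows from Corollary~\ref{RecogPerv} (using $d(\frakm)=0$ for the closed point), and cofiniteness follows because, via Matlis duality (Remark~\ref{DCNormalizedFinite}), $R\Gamma_{\frakm}(\DD_A(M))$ is the Matlis dual of a finitely generated module, which is an artinian (hence cofinite) module. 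The inverse functor, derived completion, carries $\mathrm{Mod}_{\cof}(A)$ to $\mathrm{Perv}_{coh}(A)$ by the explicit formula $\widehat{N} \simeq \DD_A(M)$ worked out at the end of the proof of Proposition~\ref{PervCofDer}, where $M \colonequals \Hom_A(N,E)$ is finitely generated by Matlis duality.

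No step here is a genuine obstacle: everything is bookkeeping on top of Proposition~\ref{PervCofDer}. The only mild subtlety is ensuring that the equivalences of triangulated categories preserve the correct hearts in both directions, which is handled by the explicit Matlis duality identifications already established.
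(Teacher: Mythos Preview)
Your proof is correct and matches the paper's approach: the paper states this corollary without proof, treating it as immediate from Proposition~\ref{PervCofDer} by passing to hearts, which is exactly what you do. Your additional bookkeeping (identifying the three hearts explicitly and checking the functors land where claimed) is accurate and simply spells out what the paper leaves implicit.
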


\begin{remark}[A formula for the perverse cohomology groups via derived completions]
\label{PervCohLocalCoh}
For future reference, we note the following consequence of Proposition~\ref{PervCofDer}, giving a duality-free formula for the perverse cohomology groups: when $A$ is $\frakm$-adically complete and $M \in D^b_{coh}(A)$, then ${}^p H^i(M)$ identifies with the derived $\frakm$-adic completion of $H^i_{\frakm}(M)$.
\end{remark}

%%%%%%%%%%%%%%%%%%%%%%%%%%%%%%%%%%%%%%%%%%%%%%%%%%%%%%%%%%%%%%%%%%%%%%%%
\subsection{Recollection on the Riemann-Hilbert correspondence from \cite{BhattLurieRH}}
\label{RecallModpRH}
%%%%%%%%%%%%%%%%%%%%%%%%%%%%%%%%%%%%%%%%%%%%%%%%%%%%%%%%%%%%%%%%%%%%%%%%

In this subsection, we recall some of the main results of \cite{BhattLurieRH} on the covariant Riemann-Hilbert correspondence for $\FFp$-\'etale sheaves on a scheme of characteristic $p$.

\begin{notation}
\label{FrobModNot}
Let $R$ be a ring of characteristic $p$. Write $R[F]$ for the associative ring freely generated by~$R$ and a formal symbol $F$ (for Frobenius) subject to the relation $a^p F = F a$ for any $a \in R$; thus, the category $\mathrm{Mod}(R[F])$ of left $R[F]$-modules is identified with the category of pairs $(M,\phi_M)$, where $M$ is an $R$-module, and $\phi_M\colon M \to F_* M$ is an $R$-linear map; we call such objects \emph{Frobenius modules} on $R$ and sometimes drop the Frobenius map $\phi_M$ from the notation. Write $D(R[F])$ for the derived category of Frobenius modules; tensor products over $R$ induces a natural symmetric monoidal structure on $\mathrm{Mod}(R[F])$ and $D(R[F])$.
\end{notation}

The main results of \cite{BhattLurieRH}, summarized in Theorems~\ref{RHBigVer} and~\ref{RHSmallVer} below, roughly state that Frobenius modules provide the algebraic counterpart to the theory of $\FFp$-\'etale sheaves on $\Spec R$. The connection between the two is provided by the following construction:

\begin{construction}[The $\Sol$-functor]
\label{ConsSolDef}
There is a natural exact functor
\[
\Sol \colon D(R[F]) \to D(\Spec(R)_{\et}; \FFp)
\]
given informally by the formula
\[
\Sol (M,\phi_M)(R \to S) \colonequals \mathrm{cone}\big(M \otimes_R S \xrightarrow{\ \phi_M-1\ } M \otimes_R S\big)[-1] \in D(\FFp)
\]
for every \'etale morphism $R \to S$. More formally, one may define
\[
\Sol \colonequals \underline{\RHom}_{D(R[F])}( (R,\phi_R),-),
\]
where $(R,\phi_R)$ denotes the Frobenius module $R \xrightarrow{\ \phi\ } \phi_* R$ determined by the Frobenius on $R$.
\end{construction}

An informal summary of some of the main results of \cite{BhattLurieRH} is as follows:

\begin{theorem}[The covariant Riemann-Hilbert correspondence for $\FFp$-sheaves]
\label{RHBigVer}
The $\Sol$-functor from Construction~\ref{ConsSolDef} admits a left adjoint $\RH\colon D^+(\Spec(R)_{\et}, \FFp) \to D^+(R[F])$ with the following properties:
\begin{enumerate}[\quad\rm(1)]
\item\label{RHBigVer:1} $\RH$ is fully faithful and $t$-exact for the standard $t$-structures on the source and target; see \cite[\S 6.4 and Theorem~12.1.5]{BhattLurieRH}.

\item\label{RHBigVer:2} The essential image of $\RH$ is given by the full subcategory $D_{alg}(R[F]) \subset D(R[F])$ of \emph{algebraic} Frobenius complexes, i.e., those $(M,\phi_M) \in D(R[F])$ such that each cohomology group
\[
(N,\phi_N) \colonequals (H^i(M), \phi_i \colonequals H^i(\phi_M))
\]
is \emph{algebraic} in the sense that it satisfies the conditions:
\begin{enumerate}[\rm(i)]
\item $(N,\phi_N)$ is \emph{perfect}, i.e., $\phi_N\colon N \to F_* N$ is an isomorphism.
\item Each $x \in N$ is annihilated by a monic polynomial in $\phi_N$.
\end{enumerate}
Thus, $\RH$ induces an equivalence
\[
\RH\colon D^+(\Spec(R)_{\et}, \FFp) \simeq D^+_{alg}(R[F]),
\]
see \cite[Theorem~12.1.5]{BhattLurieRH}.

\item\label{RHBigVer:3} The triangulated category $D^+_{alg}(R[F])$ is naturally symmetric monoidal, with $\otimes$-product given by perfecting the $\otimes$-product over $R$. The $\RH(-)$ equivalences is symmetric monoidal for the standard $\otimes$-product on the source and the preceding one on the target; see \cite[\S 8]{BhattLurieRH}.

\item\label{RHBigVer:4} The $\RH$ equivalence above commutes with proper pushforward (where pushforwards of algebraic Frobenius modules are defined to be compatible with pushforward of underlying quasi-coherent sheaves); see \cite[\S 10.5]{BhattLurieRH}.

\item\label{RHBigVer:5} The $\RH$-equivalence is compatible with arbitrary pullback (where pullbacks of algebraic Frobenius modules are defined to be perfections of the pullback of underlying quasi-coherent sheaves); see \cite[\S 6.2]{BhattLurieRH}.
\end{enumerate}
Moreover, if $R$ is noetherian of finite Krull dimension, we can replace $D^+$ with the unbounded derived category in all results above (see \cite[Remark~12.1.6]{BhattLurieRH}).
\end{theorem}

The fundamental examples of this construction are the following.

\begin{example}[The $\RH$-functor in some key examples]
\label{exRHshriekext}
We have $\RH(\FF_{\!p,R}) \simeq R_{\perf}$ with its natural Frobenius structure by symmetric monoidality of $\RH$ applied to the unit object. More generally, if $j\colon U \into \Spec R$ is an open immersion whose complement $i\colon Z \to X$ is defined by an ideal $I \subset R$, then $\RH$ carries the inclusion $j_! \FF_{\!p,U} \subset \FF_{\!p,R}$ to the inclusion
\[
I_{\perf} \colonequals \varinjlim(I \xrightarrow{\ \phi\ } F_* I \xrightarrow{\ \phi\ } F^2_* I \xrightarrow{\ \phi\ }\cdots) \subset R_{\perf}.
\]
Indeed, by applying the exact functor $\RH(-)$ to the exact sequence
\[
0 \to j_! \FF_{\!p,U} \to \FF_{\!p,R} \to i_* \FF_{\!p,Z} \to 0
\]
and using its proper pushforward compatibility we see that $RH j_!\FF_{\!p,U}$ is the kernel of the map $R_{\perf} \to (R/I)_{\perf}$. By writing the colimits computing these perfection we see that this kernel is precisely the one defining $I_{\perf}$ as above.

The above example can be twisted by an $\FF_p$-complex i.e., given $F \in D(\Spec(R)_{\et}, \FFp)$ with $M = \RH(F)$, we have $\RH(j_! (F|_U)) = I_{\perf} \otimes_{R_{\perf}}^\LL M$: this follows since $j_!(F|_U) \simeq j_! \FF_{\!p,U} \otimes_{\FF_{\!p,R}} F$ and because $\RH$ is symmetric monoidal.

In the setup of the previous paragraph, assume further $F$ is an $\FFp$-sheaf, i.e., in the heart of the standard $t$-structure on $D(\Spec(R)_{\et}, \FFp)$. Via the $t$-exactness of $\RH$ in Theorem~\ref{RHBigVer}.\ref{RHBigVer:1} applied to the injection $j_! (F|_U) \to F$ of $\FFp$-sheaves, we learn that the natural map $ I_{\perf} \otimes_{R_{\perf}}^\LL M \to M$ is an injection of $R[F]$-modules, and thus that $I_{\perf} \otimes_{R_{\perf}}^\LL M = I_{\perf} \otimes_{R_{\perf}} M = I_{\perf} M$. It follows that if $N$ is any $R[F]$-module with perfection $M$ and $J \subset R$ is any ideal cutting out $Z \subset X$ up to radicals, then, since $(JN)_{\perf} = I_{\perf}M$, we have $\mathrm{Sol}(JN) = \mathrm{Sol}(I_{\perf}M) = j_! (F|_U)$. 
\end{example}

In applications, it is convenient to restrict Theorem~\ref{RHBigVer} to ``finite'' objects on both sides. The natural finiteness condition on the \'etale side is that of constructibility. The corresponding finiteness conditions on the algebraic side is dubbed holonomicity, recalled next.

\begin{construction}[Perfect, nilpotent, and holonomic Frobenius modules]
A Frobenius module $(M, \phi_M)$ over $R$ is \emph{perfect} if $\phi_M$ is an isomorphism; note that restriction of scalars along $R \to R_{\perf}$ identifies the categories of perfect Frobenius modules on these rings, so we may often restrict to perfect rings when working with perfect Frobenius modules. Any Frobenius module $(M,\phi_M)$ admits a \emph{perfection} $(M,\phi_M)_{\perf}$ (or simply $M_{\perf}$, if $\phi_M$ is clear) given concretely as the direct limit
\[
M_{\perf} \colonequals \varinjlim\big(M \to F_* M \to F^2_* M \to \cdots \big).
\]
A Frobenius module $(M,\phi_M)$ is \emph{nilpotent} (resp. \emph{locally nilpotent}) if the $n$-fold composite $\phi_M^n\colon M \to F^n_* M$ is $0$ for some $n$ (resp. $M_{\perf} = 0$); if $M$ is finitely generated as an $R[F]$-module, these two conditions are equivalent.

A perfect Frobenius module is \emph{holonomic} if it arises as the perfection of a Frobenius module $(M,\phi_M)$ with $M$ finitely presented over $R$; write $\mathrm{Mod}_{hol}(R[F]) \subset \mathrm{Mod}(R[F])$ for the full subcategory spanned by such modules. This subcategory is closed under kernels, cokernels, and extensions. Consequently, the full subcategory $D^b_{hol}(R[F]) \subset D^b(R[F])$ spanned by complexes of Frobenius modules with holonomic cohomology groups is itself a triangulated category. Moreover, the $t$-structure on $D^b(R[F])$ induces one on $D^b_{hol}(R[F])$, and we have $D^b_{hol}(R[F]) \subset D^b_{alg}(R[F])$, i.e., holonomic Frobenius $R$-modules are algebraic.
\end{construction}

With this notion, Theorem~\ref{RHBigVer} restricts to the following:

\begin{theorem}[The Riemann-Hilbert equivalence for constructible sheaves]
\label{RHSmallVer}
The $\RH$-functor from Theorem~\ref{RHBigVer} restricts to an equivalence
\[
\RH\colon D^b_c(\Spec(R)_{\et}, \FFp) \simeq D^b_{hol}(R[F]).
\]
This equivalence is $t$-exact (resp. symmetric monoidal) for the standard $t$-structure (resp.\ $\otimes$-product) on the source and target. Moreover, it commutes with proper pushforward and arbitrary pullback. (See \cite[\S 12.1]{BhattLurieRH}.)
\end{theorem}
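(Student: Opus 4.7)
The plan is to reduce this to a statement about the abelian hearts, and then use devissage on both sides. Since the equivalence $\RH\colon D(\Spec(R)_{\et}, \FFp) \simeq D_{alg}(R[F])$ of Theorem~\ref{RHBigVer} is already $t$-exact, symmetric monoidal, and compatible with arbitrary pullback and proper pushforward, it suffices to prove that $\RH$ carries constructible $\FFp$-sheaves to holonomic Frobenius modules and vice versa. All bounded derived statements will then follow by taking cohomology and using that both holonomicity and constructibility are stable under extensions in the relevant abelian categories, together with the fact that both $D^b_c$ and $D^b_{hol}$ are defined by cohomological finiteness conditions.

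For the direction \emph{constructible implies holonomic}, I would use a devissage argument. Any constructible $\FFp$-sheaf $F$ on $\Spec R$ admits a finite filtration whose graded pieces are of the form $(i \circ j)_! L$, where $i\colon Z \into \Spec R$ is the inclusion of an integral closed subscheme, $j\colon V \into Z$ is an open immersion, and $L$ is a locally constant constructible $\FFp$-sheaf on $V$. Since holonomic Frobenius modules are closed under extensions, it is enough to treat one such layer. By the compatibility of $\RH$ with proper pushforward along a finite \'etale cover of $V$ trivializing $L$, and the fact that finite pushforward preserves holonomicity on the Frobenius side, one further reduces to the case $L = \FFp$. This case is then settled by the explicit formula $\RH(j_! \FF_{\!p,V}) \simeq I_{\perf}$ recorded after Theorem~\ref{RHBigVer}, which is the perfection of the finitely presented module $I$ and hence holonomic by definition.

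For the reverse direction \emph{holonomic implies constructible}, I would proceed by noetherian induction on $\Spec R$. A holonomic Frobenius module has the form $N = M_{\perf}$ for some finitely presented Frobenius module $(M,\phi_M)$. By generic freeness applied to $M$ together with a suitable generic flatness statement for $\phi_M$, one can find a dense affine open $U \subset \Spec R$ on which $M|_U$ is locally free of finite rank and $\phi_M|_U$ is well-behaved. On such an open, the cone description in Construction~\ref{ConsSolDef} reduces the computation of $\mathrm{Sol}(N)|_U$ to Artin--Schreier theory for a locally free Frobenius module, exhibiting it as a locally constant constructible $\FFp$-sheaf. Restricting to the closed complement and iterating then shows $\mathrm{Sol}(N)$ is constructible globally, which combined with the full faithfulness of $\RH$ gives the claim.

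The main obstacle is the second direction: one must produce the right stratification of $\Spec R$ on which the $\mathrm{Sol}$-sheaf associated with a perfect finitely presented Frobenius module becomes lisse, and this is where the finite presentation of the pre-perfection $M$ is essential. Once both inclusions are established, the $t$-exactness, symmetric monoidal property, and pullback/proper pushforward compatibilities all descend from Theorem~\ref{RHBigVer} without further work, completing the proof.
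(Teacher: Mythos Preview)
The paper does not prove Theorem~\ref{RHSmallVer}: it is stated without proof in \S\ref{RecallModpRH} as a recollection of one of the main results of \cite{BhattLurieRH}. So there is no proof in the paper to compare your proposal against.

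Evaluating your sketch on its own merits: the overall strategy (reduce to hearts via the $t$-exactness already granted by Theorem~\ref{RHBigVer}, then run d\'evissage in both directions) is the right shape, and is close in spirit to how the result is established in \cite{BhattLurieRH}. But there is a genuine gap. The theorem is stated for an arbitrary $\FFp$-algebra $R$, yet your holonomic-to-constructible direction invokes ``noetherian induction on $\Spec R$'' and ``generic freeness applied to $M$,'' both of which require $R$ to be noetherian. Without that hypothesis neither tool is available, and the theorem as stated is strictly more general; the actual proof in \cite{BhattLurieRH} has to handle this, e.g.\ via approximation from the finitely presented case. A second, smaller issue: in the constructible-to-holonomic direction you say that a finite \'etale cover trivializing $L$ reduces you to $L=\FFp$, but $L$ need not be a direct summand of $\pi_*\pi^*L$ when the degree of $\pi$ is divisible by $p$. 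What saves you is that $L$ is a \emph{subobject} of $\pi_*\pi^*L$ and that holonomic Frobenius modules are closed under kernels (as recorded in the paper); you should say this explicitly rather than appeal only to extensions. Similarly, to apply the explicit formula $\RH(j_!\FF_{p,U})\simeq I_{\perf}$ you must first push the closed immersion $i$ across $\RH$ (using proper-pushforward compatibility) and then work on the closed subscheme; the formula in the paper is only stated for open immersions into the ambient $\Spec R$.
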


\begin{remark}[Holonomic Frobenius complexes as a Verdier quotient]
\label{HolFrobModVerdQuot}
For $R$ noetherian, one can also identify $D^b_{hol}(R[F])$ as a Verdier quotient: if $D^b_{coh}(R[F])$ denotes the full subcategory of $D^b(R[F])$ spanned by $R[F]$-complexes whose cohomology groups are finitely generated $R$-modules, and if $D^b_{coh,nil}(R[F]) \subset D^b_{coh}(R[F])$ denotes the full subcategory spanned by complexes whose cohomology groups are nilpotent, then the perfection functor induces an equivalence
\[
D^b_{coh}(R[F])/D^b_{coh,nil}(R[F]) \simeq D^b_{hol}(R[F]),
\]
see \cite[Remark~12.4.5]{BhattLurieRH}. The same discussion also applies if we replace $D^b_{coh}(R[F])$ with the full subcategory of $D^b(R[F])$ spanned by complexes that are perfect when regarded as $R$-complexes (and $D^b_{coh,nil}(R[F])$ must be replaced with the full subcategory of nilpotents); when formulated in this fashion, the description applies to all $\FFp$-algebras $R$.
\end{remark}

%%%%%%%%%%%%%%%%%%%%%%%%%%%%%%%%%%%%%%%%%%%%%%%%%%%%%%%%%%%%%%%%%%%%%%%%
\subsection{A nilpotency result for Frobenius modules}
%%%%%%%%%%%%%%%%%%%%%%%%%%%%%%%%%%%%%%%%%%%%%%%%%%%%%%%%%%%%%%%%%%%%%%%%

In this subsection, we record (with a new proof) a partially global variant of a result due to Hartshorne-Speiser, Lyubeznik, and Gabber (see Corollary~\ref{LyubeznikNilp}); it will be useful later in understanding perverse $\FFp$-sheaves on local rings.

\begin{proposition}
\label{HolonomicLocalCoh}
Let $A_0$ be a noetherian ring of characteristic $p$, and $\frakm_0 \subset A_0$ an ideal contained in the Jacobson radical. If $M \in D^b_{hol}(A_0[F])$ is a holonomic Frobenius complex with $R\Gamma_{\frakm_0}(M) \simeq 0$, then $M \simeq 0$.
\end{proposition}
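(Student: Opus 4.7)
The plan is to reduce to the complete local case where $\frakm_0$ is the maximal ideal, then apply the Frobenius-equivariant form of the equivalence in Proposition~\ref{PervCofDer}.

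First, I would reduce to $A_0$ complete local with $\frakm_0=\frakm$. Since $\frakm_0$ lies in the Jacobson radical, every maximal ideal $\frakm$ of $A_0$ contains $\frakm_0$, so $V(\frakm)\subseteq V(\frakm_0)$ and consequently $R\Gamma_{\frakm}(M_\frakm)=0$ (local cohomology supported on the larger closed set detects the one on the smaller one). The cohomology of the holonomic complex $M$ consists of perfections of finitely presented $A_0$-modules, whose supports are closed; so $\Supp M$ is a closed subset of $\Spec A_0$ and, if nonempty, must contain some maximal ideal. Hence $M=0$ if and only if $M_\frakm=0$ for every maximal $\frakm$. Holonomicity, the Frobenius structure, local cohomology, and the vanishing hypothesis all pass to the $\frakm$-adic completion via faithfully flat base change, so we may assume $A_0$ is complete local with $\frakm_0=\frakm$.

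Next, using the Verdier quotient description $D^b_{hol}(A_0[F]) = D^b_{coh}(A_0[F])/D^b_{coh,nil}(A_0[F])$ stated after Theorem~\ref{RHSmallVer}, I would lift $M$ to a coherent Frobenius complex $M^{(0)}\in D^b_{coh}(A_0[F])$ with $M^{(0)}_{\perf} \simeq M$. Since $R\Gamma_\frakm$ has finite cohomological amplitude it commutes with filtered colimits, so the hypothesis yields $R\Gamma_\frakm(M^{(0)})_{\perf} \simeq R\Gamma_\frakm(M)=0$. This says the induced Frobenius on the cofinite complex $R\Gamma_\frakm(M^{(0)})\in D^b_{\cof}(A_0)$ is locally nilpotent on each cohomology group; Matlis duality against the corresponding finitely generated Grothendieck duals (Corollary~\ref{PervCof}) upgrades local to uniform nilpotence, so some $n$-fold iterate $\phi^n$ of the Frobenius already vanishes as a morphism in $D^b_{\cof}(A_0)$.

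Finally, invoke Proposition~\ref{PervCofDer}: $R\Gamma_\frakm$ is an equivalence $D^b_{coh}(A_0)\simeq D^b_{\cof}(A_0)$, and it intertwines the Frobenius pushforward $F_*$ on both sides (as $R\Gamma_\frakm(F_* -) \simeq F_* R\Gamma_\frakm(-)$, since $F$ stabilizes $\frakm$). Hence the vanishing of $\phi^n$ on $R\Gamma_\frakm(M^{(0)})$ forces $\phi^n=0$ on $M^{(0)}$, and for coherent $M^{(0)}$ this is equivalent to $M^{(0)}_{\perf}=0$, giving $M=M^{(0)}_{\perf}=0$. The main obstacle I foresee is the clean upgrade from \emph{locally nilpotent Frobenius on a cofinite complex} to \emph{some literal power of the Frobenius being zero}, which requires carefully combining Matlis duality with the Frobenius-compatibility of the equivalence in Proposition~\ref{PervCofDer}; once this nilpotency statement is transported across the equivalence, the rest is essentially bookkeeping.
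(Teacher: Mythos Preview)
Your outline is genuinely different from the paper's argument, but it has a real gap at precisely the point you flag as the ``main obstacle'': the passage from \emph{locally nilpotent} Frobenius on the cofinite complex $R\Gamma_{\frakm}(M^{(0)})$ to \emph{nilpotent} Frobenius. This is exactly the Hartshorne--Speiser--Lyubeznik--Gabber theorem (Corollary~\ref{LyubeznikNilp}), and it does \emph{not} follow from Matlis duality and noetherianity of the Grothendieck dual. Under Matlis/Grothendieck duality the Frobenius module $(N,\phi\colon N\to F_*N)$ becomes a coherent \emph{Cartier} module $(N^\vee,\kappa\colon F_*N^\vee\to N^\vee)$, and the condition $N_{\perf}=0$ translates to $\bigcap_n \image(\kappa^n)=0$; you then need the nontrivial stabilization theorem for descending chains of Cartier images (Gabber, or Blickle--B\"ockle) to conclude $\kappa^n=0$, and noetherianity alone gives you nothing about descending chains. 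Worse, within the paper's own logic your argument is circular: Corollary~\ref{LyubeznikNilp} is \emph{deduced} from Proposition~\ref{HolonomicLocalCoh}, so you cannot invoke it here. (There is also a smaller gap: even granting that $\phi^{n}$ kills each cohomology group $H^j R\Gamma_{\frakm}(M^{(0)})$, this does not immediately make $\phi^n$ zero as a morphism in $D^b_{\cof}(A_0)$; one needs an additional ghost-lemma style argument to get $\phi^N=0$ for some larger $N$.)

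The paper's proof avoids all of this by a completely different, geometric route: assume $M\neq 0$, use \cite[Proposition~5.3.3]{BhattLurieRH} to find a prime $\frakp$ with $M\otimes^L\kappa(\frakp)\neq 0$, observe that $\frakp$ cannot contain $\frakm_0$, choose a specialization $\frakp\rightsquigarrow\frakq$ with $\frakq\supseteq\frakm_0$, realize this specialization by a map to a rank~$1$ absolutely integrally closed valuation ring $V$, and then derive a contradiction by an explicit computation with $j_!\FFp$ on $\Spec V$. This yields the proposition directly and then gives Corollary~\ref{LyubeznikNilp} as a new corollary, rather than consuming it as an input. If you want to salvage your approach, you would need to cite one of the independent proofs of the Hartshorne--Speiser--Lyubeznik--Gabber theorem (e.g., \cite[Lemma~13.1]{Gabbert}) rather than Corollary~\ref{PervCof}.
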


We prove this by reduction to the case of a rank $1$ valuation ring with algebraically closed fraction field:

\begin{proof}
Let $A$ be the perfection of $A_0$, so $M$ is naturally an $A$-module. Write $\frakm\colonequals \frakm_0 A$, so $\frakm$ lies in the Jacobson radical of $A$. Assume $M \neq 0$. By \cite[Proposition~5.3.3]{BhattLurieRH}, there exists a prime ideal $\frakp\subset A$ such that $M \otimes_A^\LL \kappa(\frakp) \neq 0$. Since $R\Gamma_\frakm(M) = 0$, we also have $M \otimes_A^\LL \kappa(\frakq) = 0$ for any prime $\frakq$ containing $\frakm$, so $\frakm\not\subset \frakp$. As $\frakm$ is contained in the Jacobson radical, there exists some prime $\frakq$ containing $\frakm$ such that $\frakp\subset \frakq$. Since $A$ is the perfection of a noetherian ring, we may choose\footnote{Say $\frakp \rightsquigarrow \frakq$ is a specialization of primes in a noetherian ring $B$; we shall explain how to find a discrete valuation ring $V$ and a map $B \to V$ witnessing this specialization. (Replacing $V$ with its absolute integral closure then provides the valuation ring needed in the proof above.) To find such a $V$, we may replace $B$ with a maximal dimensional irreducible component of the $\frakq$-adic completion of $(B/\frakp)_\frakq$ and thus assume that $(B,\frakq)$ is a complete noetherian local domain and $\frakp = 0$. In this case, consider the normalization $X \to \mathrm{Bl}_\frakq(\Spec B)$ of the displayed blowup. This map is surjective, so there is a point $x \in X$ lying above a generic point of the exceptional divisor $E \subset \mathrm{Bl}_\frakq(\Spec B)$. The local ring $V=\calO_{X,x}$ then provides the desired discrete valuation ring over $B$.} an absolutely integrally closed valuation ring $V$ of rank~$1$ and a map $f\colon A \to V$ that witnesses the specialization $\frakp\rightsquigarrow \frakq$, i.e., $f^{-1}(0) = \frakp$ and $f^{-1}(\frakm_V) = \frakq$, where~$\frakm_V$ is the maximal ideal of $V$; let $K$ and $k$ denote the fraction and residue fields of $V$ respectively. Since $\frakm\not\subset \frakp$, the ideal $\frakm V \subset V$ is nonzero, so $\sqrt{\frakm V} = \sqrt{\frakm_V}$ as $V$ is a rank $1$ valuation ring. Our assumptions on $M$ then imply that if we set $M_V \colonequals M \otimes_A^\LL V$, then $R\Gamma_{\frakm_V}(M_V) \simeq 0$ and $M_V \otimes_V^\LL K \neq 0$. Consider the holonomic complex
\[
N \colonequals \frakm_V \otimes_V^\LL M_V \in D^b_{hol}(V[F]).
\]
The previous properties for $M_V$ show that $N \otimes_V^\LL K \neq 0$ and that $R\Gamma_{\frakm_V}(N) = 0$. But, under Riemann-Hilbert, the complex $N = \frakm_V \otimes_V^\LL M_V$ corresponds to $j_! \Sol(M_V \otimes_V^\LL K)$, where $j\colon \Spec K \subset \Spec V$ is the inclusion of the generic point (see Example~\ref{exRHshriekext}). As $K$ is algebraically closed, the complex
\[
\Sol(M_V \otimes_V^\LL K) \in D^b_c(\Spec(K)_{\et}, \FFp)
\]
is constant, i.e., a finite direct sum of copies of shifts of the constant sheaf $\underline\FF_p$. It follows that $N \in D^b_{hol}(V[F])$ is a finite direct sum of copies of shifts of $\RH(j_!\underline\FF_p) \simeq \frakm_V$; moreover, since $N \neq 0$, there is at least one nonzero direct summand. Our hypothesis $R\Gamma_{\frakm_V}(N) = 0$ then implies that $R\Gamma_{\frakm_V}(\frakm_V) \simeq 0$. But this means that $R\Gamma_{\frakm_V}(V) \simeq R\Gamma_{\frakm_V}(k) \simeq k$. Now it is easy to see that $R\Gamma_{\frakm_V}(V) \simeq K/V$, so we get a contradiction since~$K/V$ and $k$ are not isomorphic $V$-modules.
\end{proof}

\begin{remark}
Proposition~\ref{HolonomicLocalCoh} is false without some noetherianness assumptions on $A$, even if we require $\frakm_0$ to be finitely generated. For instance, say $V$ is a perfect valuation ring of rank $2$. Then the poset of prime ideals of $V$ is given by $0 \subsetneq \frakp\subsetneq \frakq$ with $\frakq$ maximal. Fix some $f \in \frakp - \{0\}$ and $g \in \frakq - \frakp$. As radical ideals in valuation rings are prime, we get that $\sqrt{fV} = \frakp$ and $\sqrt{gV} = \frakq$, so all primes are finitely generated up to radicals. Set $\frakm_0= gV$, so $\frakm_0 \subset V$ is a finitely generated ideal contained in the Jacobson radical (as its radical equals the maximal ideal of the local ring $V$). Consider the~$V$-module $M \colonequals \frakp$, viewed as a $V[F]$-module with its natural Frobenius. It is easy to see that $M$ is holonomic: it arises as the perfection of $M_0 = fV$ with its natural Frobenius. We claim that $R\Gamma_{\frakm_0}(M) = 0$, even though $M \neq 0$. To see this, note that
\[
R\Gamma_{\frakm_0}(N) \simeq \big(N[1/g]/N\big)[-1]
\]
for any $V$-module $N$ since $V[1/g] \simeq R\Gamma(\Spec V - \{\sqrt{\frakm_0}\}, \calO)$. Thus, to show that $R\Gamma_{\frakm_0}(\frakp) = 0$, it is enough to show that $g$ acts invertibly on $\frakp$; this is a general fact about valuation rings. Indeed, given $x \in \frakp$, we cannot have $g \in xV$ since $g \notin \frakp$, so we must have $x \in gV$ as $V$ is a valuation ring, whence $x/g \in V$; moreover, since $g(x/g) = x \in \frakp$ and $\frakp$ is prime with $g \notin \frakp$, we must have $x/g \in \frakp$, so multiplication by both $g$ and $1/g$ preserve $\frakp$, whence $g$ acts invertibly on $\frakp$.
\end{remark}

\begin{corollary}[Hartshorne-Speiser, Lyubeznik, Gabber]
\label{LyubeznikNilp}
Let $A$ be a noetherian local ring, and $M$ a cofinite $A$-module with an $A$-linear map $\phi_M\colon M \to F_* M$. If $(M,\phi_M)$ is locally nilpotent, then it is nilpotent.
\end{corollary}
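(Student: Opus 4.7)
My plan is to use Proposition~\ref{HolonomicLocalCoh} after a Matlis-duality reduction. First I would replace $A$ by its completion $\hat A$; since $M$ is $\frakm^\infty$-torsion, the $A$-action on $M$ factors uniquely through $\hat A$, and neither local nilpotency nor nilpotency of $\phi_M$ is affected. I would also assume $A$ is $F$-finite, as is the standing convention in this section of the paper.

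Next I would Matlis-dualize. By Corollary~\ref{PervCof}, the Matlis dual $N \colonequals \Hom_A(M, E)$ is a finitely generated $A$-module. The Frobenius $\phi_M\colon M \to F_*M$ transfers, through the natural identification $\Hom_A(F_*M, E) \simeq F_*N$ (which uses $F^! E \simeq E$, valid for $F$-finite complete local $A$ since $F$ is finite and a homeomorphism), to a Cartier operator $\psi\colon F_*N \to N$ satisfying $\psi(a^p x) = a\psi(x)$. The perfect Matlis pairing identifies the $A$-submodule $\ker(\phi_M^n) \subseteq M$ with the annihilator of $\psi^n(N) \subseteq N$ (note that $\psi^n(N)$ is $A$-stable since $a \cdot \psi^n(x) = \psi^n(a^{p^n} x)$). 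Consequently, local nilpotency of $\phi_M$ (the condition $M = \bigcup_n \ker(\phi_M^n)$) translates to $\bigcap_n \psi^n(N) = 0$, while the desired $\phi_M^n = 0$ translates to $\psi^n(N) = 0$.

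The problem thus reduces to a Gabber-type stabilization claim: for a finitely generated Cartier module $(N, \psi)$ with $\bigcap_n \psi^n(N) = 0$, one has $\psi^n(N) = 0$ for some $n$. My plan to derive this from Proposition~\ref{HolonomicLocalCoh} is to construct a holonomic Frobenius complex $H \in D^b_{hol}(A[F])$ from the Cartier data in such a way that the zero-intersection hypothesis corresponds to $R\Gamma_{\frakm}(H) = 0$. Concretely, the Cartier module $(N, \psi)$ gives rise to a class in the Verdier quotient $D^b_{coh}(A[F])/D^b_{coh,nil}(A[F]) \simeq D^b_{hol}(A[F])$ from the remark following Theorem~\ref{RHSmallVer}, via the Cartier/Frobenius duality encoded in the perfection functor. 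Proposition~\ref{HolonomicLocalCoh} then forces this class to vanish, which pushes back to $\psi^n(N) = 0$ for large $n$.

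The main obstacle will be this last step: giving a clean construction of the holonomic complex $H$ from the Cartier module, and verifying that the zero-intersection condition $\bigcap_n \psi^n(N) = 0$ corresponds exactly to the vanishing $R\Gamma_{\frakm}(H) = 0$. This requires carefully threading the three dualities at play (Matlis, Grothendieck, and Cartier/Frobenius) through the Riemann-Hilbert framework of Section~\ref{RecallModpRH}, but should follow from the machinery established there.
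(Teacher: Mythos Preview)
Your overall strategy --- reduce to Proposition~\ref{HolonomicLocalCoh} --- is exactly right, and your Matlis-duality translation in steps 2--3 is correct. But the detour through Cartier modules creates the very obstacle you flag at the end, and the paper avoids it entirely.

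The paper's argument stays on the Frobenius-module side throughout. Instead of Matlis-dualizing $M$ to a Cartier module $(N,\psi)$ and then trying to convert back to a holonomic Frobenius complex, it uses Proposition~\ref{PervCofDer} (the other half of the same equivalence you invoke) to pass directly from the cofinite Frobenius module $M$ to the perverse coherent Frobenius complex $\widehat{M}$ given by derived $\frakm$-adic completion. This $\widehat{M}$ already lives in $D^b_{coh}(A[F])$ with $R\Gamma_{\frakm}(\widehat{M}) \simeq M$, so its perfection $(\widehat{M})_{\perf}$ is holonomic and satisfies $R\Gamma_{\frakm}\big((\widehat{M})_{\perf}\big) \simeq M_{\perf} = 0$ (as $R\Gamma_{\frakm}$ commutes with filtered colimits). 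Proposition~\ref{HolonomicLocalCoh} then gives $(\widehat{M})_{\perf} = 0$; since $\widehat{M}$ has coherent cohomology, this forces $\widehat{M}$ nilpotent, whence $M \simeq R\Gamma_{\frakm}(\widehat{M})$ is nilpotent.

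Your route would eventually land in the same place --- if you carried out step 4 by Grothendieck-dualizing your Cartier module $(N,\psi)$, you would obtain $\DD_A(N) \simeq \widehat{M}$ as a perverse coherent Frobenius complex (this is exactly the duality in the proof of Corollary~\ref{PerverseCharpLocal}, equivalence of (3) and (5)). So there is no error, but the Cartier-module intermediary buys you nothing: the ``three dualities'' you worry about threading together collapse to the single functor $M \mapsto \widehat{M}$ already supplied by Proposition~\ref{PervCofDer}.
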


For the original versions, see \cite[Proposition~1.11]{HartSpeis}, \cite[Proposition~4.4]{Lyubeznik:Fmod}, and \cite[Lemma~13.1]{Gabbert}.

\begin{proof}
Let $\widehat{M}$ be the perverse coherent $A$-module corresponding to $M$ under Corollary~\ref{PervCof}. Our hypothesis is that $M \simeq R\Gamma_{\frakm}(\widehat{M})$ has trivial perfection. As $R\Gamma_{\frakm}(-)$ commutes with filtered colimits, Proposition~\ref{HolonomicLocalCoh} implies that the perfection of $\widehat{M}$ is $0$. As $\widehat{M}$ is coherent, this implies that the Frobenius on $\widehat{M}$ is nilpotent, whence the same also holds true for $M \simeq R\Gamma_{\frakm}(\widehat{M})$.
\end{proof}

%%%%%%%%%%%%%%%%%%%%%%%%%%%%%%%%%%%%%%%%%%%%%%%%%%%%%%%%%%%%%%%%%%%%%%%%
\subsection{Perverse $\FFp$-sheaves}
\label{ss:PervCohFrob}
%%%%%%%%%%%%%%%%%%%%%%%%%%%%%%%%%%%%%%%%%%%%%%%%%%%%%%%%%%%%%%%%%%%%%%%%

In this subsection, we introduce the perverse $t$-structure on constructible $\FFp$-\'etale sheaves. Our approach is to define this $t$-structure as the image of the perverse $t$-structure on coherent sheaves under the Riemann-Hilbert equivalence (as well as the quotient description in Remark~\ref{HolFrobModVerdQuot}; this makes certain properties of this $t$-structure (e.g., preservation of constructibility under perverse truncations) obvious. In \S\ref{ss:GabberPervComp}, we shall show that our definition of the perverse $t$-structure agrees with the more classical one (in terms of support conditions for stalks and costalks) studied by Gabber \cite{Gabbert}.

\begin{notation}
As above, we work with schemes of finite type over a noetherian local $F$-finite ring $A$. Any such scheme $X$ comes endowed with a perverse $t$-structure on $D^b_{coh}(X)$ as in \S\ref{ss:PervCoh}.
\end{notation}

\begin{construction}[Perverse coherent Frobenius modules]
\label{PervCohFrob}
Fix a finite type $A$-scheme $X$, and let $\calO_X[F]$ be the twisted polynomial ring as in Notation~\ref{FrobModNot}, regarded as a quasi-coherent sheaf on $X$. Write $D_{qc}(X[F]) \subset D(X,\calO_X[F])$ be the full subcategory spanned by objects whose cohomology sheaves are quasi-coherent over $\calO_X$. Applying \cite[\S 2]{Gabbert} with the sheaf of rings $\calO_X[F]$ and perversity function $-d(x)$ shows that $D_{qc}(X, \calO_X[F])$ has a perverse $t$-structure, where the connectivity and coconnectivity are tested after forgetting down to $D_{qc}(X)$. Let $D^b_{coh}(X[F]) \subset D(X,\calO_X[F])$ be the full subcategory spanned by bounded objects whose cohomology sheaves are coherent over $\calO_X$. Using Remark~\ref{CompGabberCoh}, it follows that there is a natural $t$-structure on $D^b_{coh}(X[F])$ where connectivity and coconnectivity are tested after forgetting down to $D^b_{coh}(X)$. Write $\Perv_{coh}(X[F]) \subset D^b_{coh}(X[F])$ for the heart, so it is the full subcategory spanned by Frobenius complexes $M \to F_* M$ on $X$ with $M \in \Perv_{coh}(X)$. We refer to objects of $\Perv_{coh}(X[F])$ as \emph{perverse coherent Frobenius modules on $X$}.
\end{construction}

To study this notion, we need a couple of lemmas on the behavior of nilpotent Frobenius modules with respect to the perverse $t$-structure.

\begin{lemma}
\label{Nilp1}
Fix a finite type $A$-scheme $X$. Fix a Frobenius complex $M \in D^b_{coh}(X[F])$ which is nilpotent. Then each ${}^p H^i(M)$ is also nilpotent.
\end{lemma}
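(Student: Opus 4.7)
The plan is to reduce by d\'evissage to the case of a single nilpotent coherent Frobenius sheaf, and then conclude by applying ${}^p H^i$ to the iterated Frobenius using the $t$-exactness of $F_*$ for the perverse $t$-structure.

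First I would observe that nilpotent coherent Frobenius modules on $X$ are closed under extensions in $\mathrm{Mod}_{coh}(X[F])$, and by the same argument in $\mathrm{Perv}_{coh}(X[F])$. Indeed, given a short exact sequence $0 \to A \to B \to C \to 0$ with $\phi_A^n = 0$ and $\phi_C^m = 0$, the composite $B \xrightarrow{\phi_B^m} F_*^m B \to F_*^m C$ equals $\phi_C^m \circ (B \to C) = 0$, so $\phi_B^m$ factors through the subobject $F_*^m A \hookrightarrow F_*^m B$. Using the identity $\phi_B^{m+n} = F_*^m(\phi_B^n) \circ \phi_B^m$, and the fact that restricting $\phi_B^n$ along $A \hookrightarrow B$ equals $\phi_A^n$, the composite $\phi_B^{m+n}$ is killed by $F_*^m(\phi_A^n) = 0$.

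Next I would d\'evissage $M$ along its standard-$t$-structure cohomology via the triangles $\tau^{\le j-1} M \to \tau^{\le j} M \to H^j(M)[-j]$ in $D^b_{coh}(X[F])$. Applying ${}^p H^i$ yields long exact sequences in $\mathrm{Perv}_{coh}(X[F])$. Since by hypothesis each $H^j(M)$ is a nilpotent Frobenius module, an induction on the amplitude of $M$, together with the extension closure above, reduces the problem to the case $M = N[0]$ for a single coherent Frobenius sheaf $N$ satisfying $\phi_N^n = 0$ for some $n \ge 1$.

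For this remaining case, the crucial input is Lemma~\ref{PervFinitePushforward} applied to the Frobenius morphism $F$, which is finite because $A$, and hence $X$, is $F$-finite: it yields that $F_*\colon D^b_{coh}(X) \to D^b_{coh}(X)$ is $t$-exact for the perverse $t$-structure, which is precisely what makes the perverse $t$-structure descend to $D^b_{coh}(X[F])$ in Construction~\ref{PervCohFrob}. Iterating, one has natural isomorphisms $F_*^n \circ {}^p H^i \simeq {}^p H^i \circ F_*^n$. Applying ${}^p H^i$ to $\phi_N^n\colon N \to F_*^n N$ and unwinding the definition of the induced Frobenius on the perverse cohomology then gives
\[
\big({}^p H^i(\phi_N)\big)^n \;=\; {}^p H^i(\phi_N^n) \;=\; 0,
\]
where the left-hand side is the $n$-fold iterate of the Frobenius on the perverse coherent sheaf ${}^p H^i(N)$. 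Hence ${}^p H^i(N)$ is nilpotent, as required. No serious obstacle is anticipated: the only nontrivial input, the perverse $t$-exactness of $F_*$, is already available from Lemma~\ref{PervFinitePushforward}.
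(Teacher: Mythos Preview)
Your argument is correct and takes a genuinely different route from the paper's.

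The paper localizes at a point $x \in X$, uses that taking stalks is perverse $t$-exact (Construction~\ref{PervCohStalks}), and then translates the problem into a statement about cofinite Frobenius modules via the local cohomology functor $R\Gamma_{\{x\}}(-)$ and Corollary~\ref{PervCof}; nilpotence of the ${}^p H^i$ is then read off from nilpotence of the $H^i_{\{x\}}$. Your approach stays global and avoids all the local cohomology/cofinite machinery: you d\'evissage along the standard $t$-structure to reduce to a single nilpotent module $N$, and then use only the perverse $t$-exactness of $F_*$ (a direct consequence of Lemma~\ref{PervFinitePushforward}, since $X$ is $F$-finite) to identify the $n$-fold Frobenius on ${}^p H^i(N)$ with ${}^p H^i(\phi_N^n)=0$. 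This is more elementary and in fact yields a slightly sharper conclusion: the Frobenius on each ${}^p H^i(M)$ is nilpotent \emph{as a morphism in $\mathrm{Perv}_{coh}(X)$}, not merely on each standard cohomology sheaf.

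One small point to make explicit: your d\'evissage step uses not only extension closure but also closure under subobjects and quotients in $\mathrm{Perv}_{coh}(X[F])$, since the long exact sequence expresses ${}^p H^i(\tau^{\le j} M)$ as an extension of a quotient of ${}^p H^i(\tau^{\le j-1} M)$ by a subobject of ${}^p H^i(H^j(M)[-j])$. For the ``strong'' nilpotence you are tracking (i.e., $\phi^n=0$ in $\mathrm{Perv}_{coh}$), closure under subs and quotients is again immediate from the exactness of $F_*$ on $\mathrm{Perv}_{coh}(X)$: if $A \hookrightarrow B$ with $\phi_B^n=0$, then $\phi_A^n$ followed by the mono $F_*^n A \hookrightarrow F_*^n B$ vanishes, hence $\phi_A^n=0$; dually for quotients. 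It would be worth stating this, but it requires no new ideas.
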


\begin{proof}
Fix a point $x \in X$. It is enough to show that each ${}^p H^i(M)$ is nilpotent in a neighborhood of $x$. Generally, since nilpotency of a Frobenius module $L$ is the vanishing of some fixed power of the structural map $\phi_L \colon L \to F^e_*L$, and since restriction commutes with $F^e_*$, nilpotency in some neighborhood of a point $x$ is equivalent to the nilpotency of the stalk at $x$. Hence it is enough to show that the stalk ${}^p H^i(M)_x$ is a nilpotent Frobenius complex over $\calO_{X,x}$. As taking stalks is perverse $t$-exact (see Construction~\ref{PervCohStalks}), we are reduced to the following local analog: if $N \in D^b_{coh}(\calO_{X,x}[F])$ is a nilpotent Frobenius complex, then each ${}^p H^i(N)$ is nilpotent. To see this, note that the assumption on $N$ implies that each $H^i_{\{x\}}(N)$ is a nilpotent cofinite Frobenius module. Corollary~\ref{PervCof} applied to the ring $\calO_{X,x}$ as well as Construction~\ref{PervCohStalks} show that the local cohomology functor $R\Gamma_{\{x\}}(-)$ carries the perverse $t$-structure on $D^b_{coh}(\calO_{X,x})$ faithfully to the standard $t$-structure on $D^b_{\cof}(\calO_{X,x})$, up to a shift of $d(x)$. Consequently, it follows that each ${}^p H^i(N)$ is also nilpotent, as wanted.
\end{proof}

\begin{lemma}
\label{Nilp2}
Fix a finite type $A$-scheme $X$. The nilpotent coherent perverse complexes
\[
\Perv_{coh,nil}(X[F]) \colonequals D^b_{coh,nil}(X[F]) \cap \Perv_{coh}(X[F])
\]
form an additive subcategory of $\Perv_{coh}(X[F])$ closed under subobjects, quotient objects, and extensions; in particular, $\Perv_{coh,nil}(X[F]) \subset \Perv_{coh}(X[F])$ is a Serre subcategory.
\end{lemma}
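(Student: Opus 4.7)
The plan is to reformulate the nilpotency condition on $\mathrm{Perv}_{coh}(X[F])$ in a single derived-category-vanishing form and then verify the Serre axioms by formal diagram chase. Specifically, I claim that for $M \in \mathrm{Perv}_{coh}(X[F])$, membership in $D^b_{coh,nil}(X[F])$ (i.e., $H^i(M)$ carries a nilpotent Frobenius for each $i$) is equivalent to the iterate $\phi^n_M \colon M \to F^n_*M$ being the zero morphism in $D^b(X)$ for some $n \ge 1$. Once this reformulation is in place, the Serre closure properties will follow from the auxiliary fact that $F_*$ is exact on the abelian category $\mathrm{Perv}_{coh}(X)$; this in turn holds because $F \colon X \to X$ is finite (as $A$ is $F$-finite and $X$ is of finite type over $A$) and Grothendieck duality yields $\DD_X \circ F_* \simeq F_* \circ \DD_X$, upgrading the preservation of perversity from Lemma~\ref{PervFinitePushforward} to exactness on the perverse heart.

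For the reformulation, the ``if'' direction is immediate from $H^i(\phi^n_M) = H^i(\phi_M)^n$. For the converse, I would induct on the number of nonzero standard cohomology sheaves of $M$. If $M$ has a single nonzero cohomology sheaf, it is quasi-isomorphic (compatibly with Frobenius) to a shift of that sheaf, and the claim is the usual meaning of nilpotence. For the inductive step, the standard truncation triangle $A \to M \to B$ with $A \simeq H^k(M)[-k]$ (for the minimal nonvanishing degree $k$) and $B \simeq \tau^{>k}M$ has each end with strictly fewer cohomology sheaves, so by induction $\phi^{n_1}_A = 0$ and $\phi^{n_2}_B = 0$ in $D^b(X)$. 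Since the composite $M \to F^{n_2}_*M \to F^{n_2}_*B$ equals $\phi^{n_2}_B \circ (M \to B) = 0$, the long exact sequence of $\Hom(M,-)$ applied to $F^{n_2}_*A \to F^{n_2}_*M \to F^{n_2}_*B$ provides a lift $\alpha \colon M \to F^{n_2}_*A$ with $F^{n_2}_*(\iota) \circ \alpha = \phi^{n_2}_M$, where $\iota \colon A \to M$. A direct calculation, using Frobenius-compatibility of $\iota$ (so that $\phi^{n_1}_M \circ \iota = F^{n_1}_*(\iota) \circ \phi^{n_1}_A$) and the vanishing $F^{n_2}_*(\phi^{n_1}_A) = 0$, then yields $\phi^{n_1+n_2}_M = F^{n_1+n_2}_*(\iota) \circ F^{n_2}_*(\phi^{n_1}_A) \circ \alpha = 0$.

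With the reformulation in hand, the Serre axioms are quick. Fix a short exact sequence $0 \to M' \to M \to M'' \to 0$ in the heart $\mathrm{Perv}_{coh}(X[F])$, giving a distinguished triangle $M' \to M \to M''$ in $D^b(X[F])$. Closure under extensions is precisely the triangle-and-lift computation above, applied to this very triangle: given $\phi^{n_1}_{M'} = 0$ and $\phi^{n_2}_{M''} = 0$, one concludes $\phi^{n_1+n_2}_M = 0$. Closure under subobjects follows from exactness of $F^n_*$ on $\mathrm{Perv}_{coh}(X)$: the map $F^n_*(M' \hookrightarrow M)$ is again a monomorphism in the perverse heart, and Frobenius compatibility gives $F^n_*(M' \hookrightarrow M) \circ \phi^n_{M'} = \phi^n_M \circ (M' \hookrightarrow M) = 0$, so cancelling the monomorphism forces $\phi^n_{M'} = 0$. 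Closure under quotients is dual, using preservation of epimorphisms by $F^n_*$.

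The main obstacle is the reformulation step. The subtlety is that the derived-category lift $\alpha$ is not canonical — it is defined only modulo $\Hom(M, F^{n_2}_*B[-1])$. Fortunately this ambiguity is harmless, because any choice of $\alpha$ is killed upon post-composing with the zero map $F^{n_2}_*(\phi^{n_1}_A)$, so the conclusion $\phi^{n_1+n_2}_M = 0$ is independent of the choice of $\alpha$.
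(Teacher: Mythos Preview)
Your argument is correct, and it takes a genuinely different route from the paper's proof. The paper localizes: to show that a subobject or quotient of a nilpotent perverse coherent Frobenius module is nilpotent, it passes to stalks, applies $R\Gamma_{\{x\}}$ to reduce to an honest short exact sequence of cofinite Frobenius modules concentrated in a single degree, observes that the outer terms then have trivial perfection, and finally invokes the Hartshorne--Speiser--Lyubeznik--Gabber result (Corollary~\ref{LyubeznikNilp}) to upgrade ``locally nilpotent'' to ``nilpotent''. Your approach stays global: you reformulate nilpotency of $M \in D^b_{coh}(X[F])$ as the vanishing of some iterate $\phi_M^n$ in $D^b(X)$, and then the Serre axioms become formal cancellation arguments using the $t$-exactness of $F_*$ on $\mathrm{Perv}_{coh}(X)$ (which, as you note, follows from Grothendieck duality and the exactness of $F_*$ on $\mathrm{Coh}(X)$). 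The payoff is that you avoid Corollary~\ref{LyubeznikNilp} entirely, which is a nontrivial input; the paper's approach, by contrast, makes the pointwise structure more transparent and fits the local-cohomology machinery developed elsewhere in the section.

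One small presentational point: your reformulation of nilpotency is stated for $M \in \mathrm{Perv}_{coh}(X[F])$, but the inductive proof via standard truncation passes through $A = H^k(M)[-k]$ and $B = \tau^{>k}M$, which are in $D^b_{coh}(X[F])$ but typically not perverse. The statement and proof of the reformulation work verbatim for all $M \in D^b_{coh}(X[F])$, so you should simply state it at that level of generality; nothing in the argument changes.
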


\begin{proof}
Say $0 \to K \to L \to M \to 0$ is an exact sequence in $\Perv_{coh}(X[F])$. It is enough to show that $L$ is nilpotent exactly when $K$ and $M$ are nilpotent. If $K$ and $M$ are nilpotent (meaning their perfection is zero), then applying the exact perfection functor to the above exact sequence, shows that the perfection of $L$ is also zero; hence $L$ is nilpotent. Conversely, assume $L$ is nilpotent. We must show that $K$ and $M$ are nilpotent. It is enough to check this for the stalks, so fix some $x \in X$. By functoriality, the nilpotence of $L$ implies that each $H^i_x(L_x)$ is a nilpotent cofinite Frobenius module over $\calO_{X,x}$ for all $i$. By Corollary~\ref{PervCof} and Construction~\ref{PervCohStalks}, we also know that for any perverse coherent $\calO_{X,x}$-module $N$, we have $H^i_x(N) = 0$ unless $i = -d(x)$. Consequently, the perversity of $K$, $L$, and~$M$ implies that the sequence
\[
0 \to H^{-d(x)}_x(K_x) \to H^{-d(x)}_x(L_x) \to H^{-d(x)}_x(M_x) \to 0
\]
is exact, and that the local cohomology modules for $K_x$, $L_x$, and $M_x$ vanish in all other degrees. Now the nilpotence of $H^{-d(x)}_x(L_x)$ implies that both $H^{-d(x)}_x(K_x)$ and $H^{-d(x)}_x(M_x)$ have trivial perfection (as passage to the perfection is exact). Since for $i \neq {-d(x)}$ these modules are zero they also have trivial perfection. Hence Proposition~\ref{HolonomicLocalCoh} implies that $K_x$ and $M_x$ also have trivial perfection, so they are indeed nilpotent.
\end{proof}

\begin{construction}[Perverse holonomic Frobenius modules and perverse $\FFp$-sheaves]
\label{ConsPervSheaf}
Denote by
\[
\Perv_{hol}(X[F]) \subset D^b_{hol}(X[F])
\]
the essential image of $\Perv_{coh}(X[F])$ under the perfection functor $D^b_{coh}(X[F]) \to D^b_{hol}(X[F])$; we refer to its objects as \emph{perverse holonomic Frobenius modules on $X$}. Write $\Perv_c(X;\FFp) \subset D^b_c(X;\FFp)$ for the essential image of $\Perv_{hol}(X[F])$ under the Riemann-Hilbert equivalence $D^b_{hol}(X[F]) \simeq D^b_c(X;\FFp)$; we refer to this as the category of \emph{perverse (constructible) sheaves on $X$}.
\end{construction}

We shall soon see that $\Perv_c(X;\FFp)$ is the heart of a $t$-structure on $D^b_c(X_{\et};\FFp)$.

\begin{example}
\label{CMConstPerv}
Say $X$ is a Cohen-Macaulay $A$-scheme of dimension $d$. Then $\calO_X[d]$ is a perverse coherent sheaf on $X$ (see Example~\ref{CMPerverseCoh}), and consequently $\FF_{\!p,X}[d]$ is a perverse $\FFp$-sheaf on $X$. Note that this is in contrast to what happens in characteristic $0$: the (shifted) constant sheaf is typically not perverse on a Cohen-Macaulay complex variety.
\end{example}

To identify the category of perverse holonomic Frobenius modules explicitly as a quotient, we shall need the following abstract lemma.

\begin{lemma}
\label{Triangtstructure}
Let $\calC$ be a triangulated category equipped with a bounded $t$-structure $(\calC^{\le 0}, \calC^{\ge 0})$. Say $\calD \subset \calC$ is a full triangulated subcategory satisfying the following:
\begin{enumerate}[\quad\rm(1)]
\item For $X \in \calD$, we have $H^i(X) \in \calD$ for all $i$. Consequently, $(\calC^{\le 0} \cap \calD, \calC^{\ge 0} \cap \calD)$ gives a $t$-structure on~$\calD$

\item The category $\calD^{\heartsuit} \colonequals \calC^{\heartsuit} \cap \calD$ is closed under passage to subobjects and quotient objects in $\calC^{\heartsuit}$, and consequently it is closed under extension in $\calC^{\heartsuit}$.
\end{enumerate}

Then the triangulated category $\calE \colonequals \calC/\calD$ admits a unique $t$-structure $(\calE^{\le 0}, \calE^{\ge 0})$ for which the quotient functor $q\colon \calC \to \calE$ is $t$-exact; explicitly, the category $\calE^{\le 0}$ (resp. $\calE^{\ge 0}$) is the essential image of $\calC^{\le 0}$ (resp.~$\calC^{\ge 0}$). Moreover, we have $\calE^{\heartsuit} \colonequals \calC^{\heartsuit}/\calD^{\heartsuit}$.
\end{lemma}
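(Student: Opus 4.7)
The plan is to define $\calE^{\le 0} \subset \calE$ (resp. $\calE^{\ge 0} \subset \calE$) as the full subcategory spanned by the essential image of $\calC^{\le 0}$ (resp. $\calC^{\ge 0}$) under the quotient functor $q\colon \calC \to \calE$. Uniqueness is forced by the $t$-exactness requirement together with the essential surjectivity of $q$, so the real content lies in verifying the three $t$-structure axioms for this candidate. The shift axiom $\calE^{\le 0}[1] \subset \calE^{\le 0}$ is immediate from the corresponding fact for $\calC$.

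The Hom-vanishing axiom is the main step and the place where hypothesis (1) is used. Given a morphism $\alpha\colon q(X) \to q(Y)$ with $X \in \calC^{\le 0}$ and $Y \in \calC^{\ge 1}$, I would represent $\alpha$ by a roof $X \xleftarrow{s} X' \xrightarrow{f} Y$ with $\mathrm{cone}(s) \in \calD$, and then apply $\tau^{\le 0}$ to the defining triangle $X' \to X \to \mathrm{cone}(s)$. Using hypothesis (1), this produces a triangle $\tau^{\le 0} X' \to X \to \tau^{\le 0}\mathrm{cone}(s)$ whose third term still lies in $\calD$. Thus the canonical map $\tau^{\le 0} X' \to X$ becomes an isomorphism in $\calE$, and the roof $X \xleftarrow{} \tau^{\le 0} X' \xrightarrow{f'} Y$ represents the same $\alpha$; but $f'$ must vanish since $\tau^{\le 0} X' \in \calC^{\le 0}$ and $Y \in \calC^{\ge 1}$, so $\alpha = 0$. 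For the truncation triangles, I would simply apply $q$ to the triangle $\tau^{\le 0} Z \to Z \to \tau^{>0} Z$ in $\calC$ for each $Z \in \calC$; this is distinguished in $\calE$, and its outer terms lie in $\calE^{\le 0}$ and $\calE^{\ge 1}$ by construction. The $t$-exactness of $q$ is then automatic from the definition of $\calE^{\le 0}$ and $\calE^{\ge 0}$ as essential images.

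For the heart identification $\calE^\heartsuit \simeq \calC^\heartsuit/\calD^\heartsuit$, hypothesis (2) guarantees that $\calD^\heartsuit$ is a Serre subcategory of $\calC^\heartsuit$, so the Serre quotient $\calC^\heartsuit/\calD^\heartsuit$ makes sense. The composition $\calC^\heartsuit \hookrightarrow \calC \xrightarrow{q} \calE$ is exact, lands in $\calE^\heartsuit$, and annihilates $\calD^\heartsuit$, inducing an exact functor $\overline{q}\colon \calC^\heartsuit/\calD^\heartsuit \to \calE^\heartsuit$. Essential surjectivity follows from the construction: any object of $\calE^\heartsuit$ is isomorphic to $q(X)$ with $X \in \calC^{\le 0} \cap \calC^{\ge 0} = \calC^\heartsuit$ (by applying the truncation triangles from the previous step). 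For fully faithfulness, I would represent a morphism $q(M) \to q(N)$ in $\calE$ with $M, N \in \calC^\heartsuit$ by a roof $M \xleftarrow{s} X' \xrightarrow{f} N$ with $\mathrm{cone}(s) \in \calD$, and use the long exact cohomology sequence for this triangle: by (1), the cohomology objects of $\mathrm{cone}(s)$ lie in $\calD^\heartsuit$, and by (2), the kernel and cokernel of $H^0(s)\colon H^0(X') \to M$ then also lie in $\calD^\heartsuit$, so passing to $H^0$ produces an equivalent roof living inside $\calC^\heartsuit$.

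The main obstacle is the Hom-vanishing argument; every other step is either formal or a routine application of the long exact cohomology sequence combined with hypotheses (1) and (2). The crucial input in the Hom-vanishing step is precisely hypothesis (1), which ensures that truncating the apex $X'$ of a roof preserves the property that the relevant map has cone in $\calD$.
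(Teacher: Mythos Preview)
Your overall strategy matches the paper's, but there is a genuine gap in the Hom-vanishing step. You claim that applying $\tau^{\le 0}$ to the distinguished triangle $X' \to X \to \mathrm{cone}(s)$ (with $X \in \calC^{\le 0}$) produces a distinguished triangle $\tau^{\le 0} X' \to X \to \tau^{\le 0}\mathrm{cone}(s)$. This is false: truncation functors do not preserve distinguished triangles. For a concrete counterexample, take $X = 0$ and $C \colonequals \mathrm{cone}(s)$ a nonzero object of $\calC^\heartsuit$; then $X' \simeq C[-1]$, so $\tau^{\le 0} X' = 0$, and your purported triangle reads $0 \to 0 \to C$, which is not distinguished.

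The correct argument---which is exactly what the paper does---is to show directly that $\tau^{\ge 1} X' \in \calD$, so that the canonical map $\tau^{\le 0} X' \to X'$ has cone in $\calD$ and the roof $X \xleftarrow{} \tau^{\le 0} X' \to Y$ represents the same morphism. From the long exact cohomology sequence of $X' \to X \to C$ and the fact that $H^i(X) = 0$ for $i \ge 1$, one obtains isomorphisms $H^j(X') \cong H^{j-1}(C)$ for $j \ge 2$ and a surjection $H^0(C) \twoheadrightarrow H^1(X')$. Hypothesis (1) gives $H^{j-1}(C) \in \calD^\heartsuit$, handling $j \ge 2$; but for $j = 1$ you need hypothesis (2) to conclude that the quotient $H^1(X')$ of $H^0(C) \in \calD^\heartsuit$ again lies in $\calD^\heartsuit$. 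Thus both hypotheses are already needed at this step, not just (1) as you state. Once this is fixed, the rest of your argument goes through and agrees with the paper's (the paper leaves the heart identification as an exercise, and your sketch for it is fine).
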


\begin{proof}
To obtain the $t$-structure on $\calE$, consider the pair $(\calE^{\le 0}, \calE^{\ge 0})$ as defined in the lemma. We shall check that this gives a $t$-structure on $\calE$; the uniqueness and $t$-exactness of $q$ then trivially follow. Stability under appropriate shifts and existence of triangles is clear. It is thus enough to show that for $x \in \calE^{\le 0}$ and $y \in \calE^{\ge 1} \colonequals \calE^{\ge 0}[-1]$, we have $\Hom_{\calE}(x,y) = 0$. Fix a map $f\colon x \to y$. Choose lifts $X \in \calC^{\le 0}$ and $Y \in \calC^{\ge 1}$ of $x$ and $y$. By definition of the Verdier quotient, we can represent the map $f$ by a diagram $(X \xleftarrow{\ s\ } Z \xrightarrow{\ g\ } Y)$, where $s\colon Z \to X$ is a map in $\calC$ whose cone lies in $\calD$. Using the long exact sequence on cohomology and properties $(1)$ and $(2)$ above, it is easy to see that $\tau^{\ge 1} Z \in \calD$, and hence the map $\tau^{\le 0} Z \to Z$ also has a cone in $\calD$. Composing this map with the diagram $(X \xleftarrow{\ s\ } Z \xrightarrow{\ g\ } Y)$, we learn that our map $f$ can then also be represented by the diagram $(X \xleftarrow{\ s'\ } \tau^{\le 0} Z \xrightarrow{\ g'\ } Y)$. But $Y \in \calC^{\ge 1}$, so the map $g'$ must be $0$ by the $t$-structure axioms for $\calC$. It follows that $f = 0$, as wanted.

We leave the identification of $\calE^{\heartsuit}$ as an exercise.
\end{proof}

We obtain a perverse $t$-structure on holonomic Frobenius complexes:

\begin{corollary}
\label{PervHolQuot}
Let $X$ be a finite type $A$-scheme. Then there is a unique $t$-structure on $D^b_{hol}(X[F])$ compatible with the perverse $t$-structure on $D^b_{coh}(X[F])$, i.e., such that the perfection functor
\[
D^b_{coh}(X[F]) \to D^b_{hol}(X[F])
\]
is $t$-exact. Under this $t$-structure, the heart $D^b_{hol}(X[F])^{\heartsuit}$ equals $\Perv_{hol}(X[F])$ and can be identified with the quotient $\Perv_{coh}(X[F])/\Perv_{coh,nil}(X[F])$.
\end{corollary}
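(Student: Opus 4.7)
The plan is to deduce this immediately from Lemma~\ref{Triangtstructure} applied to $\calC = D^b_{coh}(X[F])$ equipped with the perverse $t$-structure introduced in Construction~\ref{PervCohFrob}, and $\calD = D^b_{coh,nil}(X[F])$ as the distinguished triangulated subcategory. The preceding Remark in \S\ref{RecallModpRH} identifies the Verdier quotient $\calC/\calD$ with $D^b_{hol}(X[F])$ via the perfection functor, so any $t$-structure on this quotient compatible with the perverse one on $\calC$ automatically transports to $D^b_{hol}(X[F])$ as claimed.

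To apply Lemma~\ref{Triangtstructure}, the two hypotheses must be verified. Hypothesis~(1), that $\calD$ is closed under perverse cohomology in $\calC$, is exactly the content of Lemma~\ref{Nilp1}. Hypothesis~(2), that $\calD^{\heartsuit} = \mathrm{Perv}_{coh,nil}(X[F])$ is closed under subobjects and quotient objects in $\calC^{\heartsuit} = \mathrm{Perv}_{coh}(X[F])$, is exactly the content of Lemma~\ref{Nilp2}. Both hypotheses are thus available for free, so Lemma~\ref{Triangtstructure} produces a unique $t$-structure on $D^b_{hol}(X[F])$ for which the perfection functor is $t$-exact, and computes its heart as the quotient abelian category
\[
\mathrm{Perv}_{coh}(X[F])/\mathrm{Perv}_{coh,nil}(X[F]).
\]

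It remains to identify this heart with $\mathrm{Perv}_{hol}(X[F])$ as defined in Construction~\ref{ConsPervSheaf}. By definition, $\mathrm{Perv}_{hol}(X[F])$ is the essential image of $\mathrm{Perv}_{coh}(X[F])$ under the perfection functor $D^b_{coh}(X[F]) \to D^b_{hol}(X[F])$. Since the perfection functor is precisely the Verdier quotient map modulo $D^b_{coh,nil}(X[F])$, this essential image coincides with the image of $\mathrm{Perv}_{coh}(X[F])$ in the quotient, which is the heart identified above. This completes the identification.

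The only mildly substantive steps are Lemmas~\ref{Nilp1} and~\ref{Nilp2}, both already in hand; the main potential obstacle would have been closure of $\mathrm{Perv}_{coh,nil}$ under subquotients, where Corollary~\ref{LyubeznikNilp} (Hartshorne-Speiser-Lyubeznik-Gabber) is the crucial input to promote local nilpotence to genuine nilpotence for the sub- and quotient objects. Given those lemmas, the corollary is a formal consequence of the abstract $t$-structure descent statement.
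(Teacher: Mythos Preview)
Your proof is correct and follows essentially the same approach as the paper: apply Lemma~\ref{Triangtstructure} to the Verdier quotient identification $D^b_{coh}(X[F])/D^b_{coh,nil}(X[F]) \simeq D^b_{hol}(X[F])$, using Lemmas~\ref{Nilp1} and~\ref{Nilp2} to verify the two hypotheses. Your additional remarks on the identification of the heart and the role of Corollary~\ref{LyubeznikNilp} are accurate elaborations of what the paper's one-line proof leaves implicit.
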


\begin{proof}
Apply Lemma~\ref{Triangtstructure} to the equivalence $D^b_{coh}(X[F])/D^b_{coh,nil}(X[F]) \simeq D^b_{hol}(X[F])$ using Lemmas~\ref{Nilp1} and~\ref{Nilp2} to ensure that the hypotheses apply.
\end{proof}

\begin{definition}[The perverse $t$-structure]
\label{PervtDef}
Let $X$ be a finite type $A$-scheme. The \emph{(middle) perverse $t$-structure on $D^b_c(X_{\et};\FFp)$} is the $t$-structure obtained by transporting the $t$-structure on $D^b_{hol}(X[F])$ provided by Corollary~\ref{PervHolQuot} across the Riemann-Hilbert sequence $D^b_c(X_{\et};\FFp) \simeq D^b_{hol}(X[F])$. Its heart is the category $\Perv_c(X;\FFp)$ of perverse sheaves defined in Construction~\ref{ConsPervSheaf}.
\end{definition}

The following proposition is a key tool in extracting algebraic consequences of topological results:

\begin{proposition}
\label{RHtexactLocalCoh}
Let $X$ be a finite type $A$-scheme. For any $x \in X$, the composition
\[
D^b_c(X_{\et}, \FFp) \xrightarrow{\ \RH\ } D^b_{hol}(X[F]) \xrightarrow{\ R\Gamma_x( (-)_x)[-d(x)]\ } D^b(\calO_{X,x})
\]
is $t$-exact for the perverse $t$-structure on the source and the standard $t$-structure on the target.
\end{proposition}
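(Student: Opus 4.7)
The plan is to reduce the statement, step by step, to a direct local cohomology computation on perverse coherent sheaves at the stalk $\calO_{X,x}$. First, by Definition~\ref{PervtDef} the perverse $t$-structure on $D^b_c(X_{\et},\FFp)$ is transported from $D^b_{hol}(X[F])$ along the Riemann-Hilbert equivalence $\RH$, so $\RH$ is tautologically $t$-exact. The stated $t$-exactness is therefore equivalent to $t$-exactness of
\[
G\colon D^b_{hol}(X[F]) \to D^b(\calO_{X,x}), \qquad K \longmapsto R\Gamma_x(K_x)[-d(x)],
\]
where one forgets the Frobenius structure on the target. Both $t$-structures are bounded and $G$ is exact as a composition of exact functors, so it suffices to verify that $G$ carries the perverse heart $\mathrm{Perv}_{hol}(X[F])$ into $\mathrm{Mod}(\calO_{X,x})$ placed in degree~$0$.

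Next, Corollary~\ref{PervHolQuot} identifies the perverse $t$-structure on $D^b_{hol}(X[F])$ as the quotient of the perverse $t$-structure on $D^b_{coh}(X[F])$ along the $t$-exact perfection functor; in particular every $K \in \mathrm{Perv}_{hol}(X[F])$ is of the form $K \simeq M_{\perf}$ for some $M \in \mathrm{Perv}_{coh}(X[F])$. The problem reduces to showing that $R\Gamma_x\big((M_{\perf})_x\big)[-d(x)]$ is concentrated in cohomological degree~$0$ for each such $M$.

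The final step is a direct computation. Stalk at $x$ commutes with the filtered colimit defining perfection, so $(M_{\perf})_x \simeq (M_x)_{\perf}$, and local cohomology at the finitely generated ideal $\frakm_x$ commutes with filtered colimits (via its \v{C}ech model), giving
\[
R\Gamma_x\big((M_x)_{\perf}\big) \;\simeq\; \big(R\Gamma_x(M_x)\big)_{\perf}.
\]
By the $t$-exactness of the stalk functor for the perverse $t$-structure on coherent sheaves (Construction~\ref{PervCohStalks}), the object $M_x$ is perverse coherent on $\Spec\calO_{X,x}$, so by the equivalence~$(1)\Leftrightarrow(2)$ of that construction, $R\Gamma_x(M_x)[-d(x)]$ is concentrated in degree~$0$. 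Since perfection is exact, its perfection is also concentrated in degree~$0$, which completes the proof.

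The main thing to verify with some care is that Frobenius, stalks, and local cohomology interact compatibly under the identifications above, specifically that the stalk of $F_{\ast} M$ at $x$ identifies with $F_{\ast} M_x$ (since absolute Frobenius is a homeomorphism of the underlying space) and that $R\Gamma_x(F_{\ast} N) \simeq F_{\ast} R\Gamma_x(N)$ (since the Frobenius power ideal $\frakm_x^{[p]}$ has the same radical as $\frakm_x$). Once these routine compatibilities are in hand, the conceptual content has already been packaged in Construction~\ref{PervCohStalks} and Corollary~\ref{PervHolQuot}, and the proof is the short chain of reductions above.
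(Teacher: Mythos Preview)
Your proof is correct and follows essentially the same approach as the paper. The paper's proof is terser---it invokes Definition~\ref{PervtDef} for the $t$-exactness of $\RH$ and then cites Corollary~\ref{RecogPerv} for the $t$-exactness of $R\Gamma_x((-)_x)[-d(x)]$---but since Corollary~\ref{RecogPerv} is stated for $D^b_{coh}(X)$ rather than $D^b_{hol}(X[F])$, your explicit unpacking (write a perverse holonomic object as the perfection of a perverse coherent Frobenius module via Corollary~\ref{PervHolQuot}, then commute stalks and local cohomology with perfection) is precisely what the paper's citation is implicitly using.
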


\begin{proof}
The Riemann-Hilbert functor $\RH$ is $t$-exact for the perverse $t$-structures on the source and target essentially by definition (Definition~\ref{PervtDef}). Corollary~\ref{RecogPerv} ensures that $R\Gamma_x( (-)_x)[-d(x)]$ is $t$-exact for the perverse $t$-structure on the source and the standard $t$-structure on the target, so the claim follows.
\end{proof}

\begin{example}[Cohen-Macaulayness for Riemann-Hilbert partners of perverse sheaves]
\label{CMPerv}
Let $X$ be a finite type $A$-scheme. If $F \in \Perv_c(X_{\et}, \FFp)$ and $d=d(X)$, then $\RH(F)[-d]$ is Cohen-Macaulay on $X$ in the following sense: $R\Gamma_x(\RH(F)_x[-d])$ is concentrated in degree $d-d(x)$. Note that if $X=\Spec A$ and $A$ is equidimensional, then $d-d(x) = \dim(\calO_{X,x})$: indeed, $d(x) + \dim(\calO_{X,x})$ is the length of a maximal chain of specializations in $\Spec A$, and equidimensionality and locality of $A$ ensures that any such chain has length $d=\dim A$. This result was also observed by Cass \cite[Theorem~1.6]{CassPerv}.
\end{example}

Before specializing the above discussion to the setting of local rings, we recall two further notions of modules with an action of the Frobenius which fit into this framework:

\emph{Cartier modules} \cite{BliBoeCart}: Let $A$ be an $F$-finite ring. A \emph{Cartier module} is an $A$-module $M$, together with an $A$-linear map $\kappa\colon F_*M \to M$. Equivalently, analogous to the case of Frobenius modules, we might think of a Cartier module simple as a right $R[F]$-module. From this point of view, nilpotence and locally nilpotence are defined analogously to the case of Frobenius modules. A Cartier module is \emph{coherent} if its underlying $A$-module is coherent. We denote this category by $\mathrm{CohCart}(A[F])$.

\emph{Finitely generated unit Frobenius modules} \cite{BhattLurieRH,EK}: Let $R$ be a regular ring. A Frobenius module $M \to F_*M$ is a \emph{unit} Frobenius module if the adjoint structural map $F^*M \to M$ is an isomorphism. It is \emph{finitely generated unit (fgu)} if $M$ is unit and finitely generated as a left $R[F]$-module.

\begin{corollary}
\label{PerverseCharpLocal}
Let $A$ be a complete noetherian local $F$-finite ring. Write $\mathrm{Mod}_{\cof}(A[F]) \subset \mathrm{Mod}(A[F])$ for the full subcategory spanned by $A$-cofinite $A[F]$-modules. The following categories are equivalent:
\begin{enumerate}[\quad\rm(1)]
\item The quotient by nilpotents of $\mathrm{Mod}_{\cof}(A[F])$.

\item The essential image of $\mathrm{Mod}_{\cof}(A[F])$ in $\mathrm{Mod}_{\perf}(A[F])$ under the perfection functor.

\item The category $\Perv_{hol}(A[F])$ of perverse holonomic Frobenius modules on $\Spec A$.

\item The category $\Perv_c(\Spec A;\FFp)$ of perverse sheaves on $\Spec A$.

\item The (opposite of) the quotient by nilpotents of the category of Cartier modules $\mathrm{CohCart}(A[F])$.
\end{enumerate}
If we write $A = R/I$ as the quotient of an $F$-finite complete noetherian regular local ring $R$, then the above categories are also equivalent to the following:
\begin{enumerate}[\quad\rm(1)]
\setcounter{enumi}{5}
\item The (opposite of) the full subcategory $\mathrm{Mod}_{fgu,A}(R) \subset \mathrm{Mod}_{fgu}(R)$ spanned by images of pairs
\[
\big(N \in \mathrm{Mod}_{fg}(A),\ \psi_N\colon N \to F_R^* N\big)
\]
under the unitalization functor $\operatorname{colim}_{\psi} F_R^{e*}N$.
\end{enumerate}
\end{corollary}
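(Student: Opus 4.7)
The plan is to pivot everything through the perverse-coherent side, exploiting completeness of $A$ to trade perverse coherent sheaves for cofinite modules. First, $(3)\simeq(4)$ is essentially the definition of $\mathrm{Perv}_c(\Spec A;\FFp)$ (Construction~\ref{ConsPervSheaf}). For $(1)\simeq(3)$, I would promote Corollary~\ref{PervCof} to Frobenius-equivariant data: the equivalence $\mathrm{Perv}_{coh}(A)\simeq \mathrm{Mod}_{\cof}(A)$ implemented by $R\Gamma_\frakm$ (and its quasi-inverse, derived completion) is canonical, hence carries Frobenius-equivariant objects to Frobenius-equivariant objects, giving $\mathrm{Perv}_{coh}(A[F])\simeq\mathrm{Mod}_{\cof}(A[F])$. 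This upgraded equivalence respects nilpotence, since $R\Gamma_\frakm$ commutes with the filtered colimits defining perfections. Corollary~\ref{PervHolQuot} then identifies $\mathrm{Perv}_{hol}(A[F])$ with $\mathrm{Perv}_{coh}(A[F])/\mathrm{Perv}_{coh,nil}(A[F])$, which under the above becomes $(1)$.

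Next, for $(1)\simeq(2)$, Corollary~\ref{LyubeznikNilp} (Hartshorne--Speiser--Lyubeznik--Gabber) shows that locally nilpotent coincides with nilpotent on $\mathrm{Mod}_{\cof}(A[F])$, so the kernel of the perfection functor $\mathrm{Mod}_{\cof}(A[F])\to\mathrm{Mod}_{\perf}(A[F])$ is exactly $\mathrm{Mod}_{\cof,nil}(A[F])$; fullness on the quotient follows because any map from a cofinite $A[F]$-module into $N_{\perf}=\operatorname{colim} F_*^k N$ factors through some finite Frobenius twist $F_*^k N$ by cofiniteness.

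For $(5)$, I would use Matlis and Grothendieck duality: since $A$ is complete and $F$-finite, $\Hom_A(-,E)$ anti-equivalences $\mathrm{Mod}_{coh}(A)$ with $\mathrm{Mod}_{\cof}(A)$, and Grothendieck duality for the finite Frobenius $F\colon \Spec A\to\Spec A$, together with $F^!\omega_A^\bullet\simeq \omega_A^\bullet$ (holding because $F$ is a finite radicial universal homeomorphism and $\omega_A^\bullet$ is normalized), transports a Cartier structure $F_*M\to M$ on coherent $M$ to a Frobenius structure on $\Hom_A(M,E)$, while preserving nilpotence. Taking opposites and passing to quotients by nilpotents yields $(5)\simeq(1)$. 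For $(6)$, let $i\colon\Spec A\into\Spec R$ be the closed immersion; since $i_*$ is $t$-exact on perverse sheaves and Riemann-Hilbert compatible (Theorem~\ref{RHSmallVer}), $(4)$ is equivalent to the full subcategory of $\mathrm{Perv}_c(\Spec R;\FFp)$ supported on $V(I)$. Riemann-Hilbert for the regular ring $R$ now translates this to fgu $R$-modules supported on $V(I)$ (up to the duality built into the perverse $t$-structure on the coherent side), and such modules are precisely the unitalizations of $A$-module data $(N,\psi_N\colon N\to F_R^* N)$ in the sense of Emerton-Kisin, yielding $(6)$.

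The main obstacle will be the duality step $(5)$: one must verify carefully that Matlis duality genuinely interchanges Cartier with Frobenius structures (the critical variance compatibility coming from $F^!\omega_A^\bullet\simeq \omega_A^\bullet$) and that the nilpotent subcategories correspond on the nose. Beyond that, the proof reduces to bookkeeping built on the earlier results of the section.
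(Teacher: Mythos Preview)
Your plan is correct and matches the paper's proof for the equivalences among $(1)$--$(4)$ and for $(6)$: the paper likewise deduces $(3)\simeq(4)$ from Riemann--Hilbert, $(1)\simeq(3)$ by upgrading Corollary~\ref{PervCof} with Frobenius structures and then invoking Corollary~\ref{PervHolQuot}, $(1)\simeq(2)$ from Corollary~\ref{LyubeznikNilp}, and $(4)\simeq(6)$ from the contravariant Riemann--Hilbert correspondence of \cite{BhattLurieRH}.

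The one genuine difference is your treatment of $(5)$. You propose to pass directly from coherent Cartier modules to cofinite Frobenius modules via Matlis duality $\Hom_A(-,E)$, using $F^!E\simeq E$ to transport the Cartier structure; this gives an abelian anti-equivalence $\mathrm{CohCart}(A[F])^{op}\simeq\mathrm{Mod}_{\cof}(A[F])$ preserving nilpotents, hence $(5)\simeq(1)$. The paper instead works at the derived level with Grothendieck duality $\DD_A(-)=\RHom_A(-,\omega_A^\bullet)$ and the isomorphism $\omega_A^\bullet\simeq F^!\omega_A^\bullet$ to produce an equivalence $\mathrm{CohCart}(A[F])\simeq\mathrm{Perv}_{coh}(A[F])$, hence $(5)\simeq(3)$. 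The paper's route incurs an extra cost: the image of $\DD_A$ applied to a Cartier structure is a priori only a map $N\to F_*N$ in $D^b_{coh}(A)$ (a ``naive'' perverse Frobenius module), and one must verify that this naive notion agrees with an honest object of $\mathrm{Perv}_{coh}(A[F])$, which the paper does via an $\Ext^{-1}$ vanishing argument. Your Matlis duality approach sidesteps this subtlety entirely since it stays within abelian categories; it is the cleaner path here. The two routes are of course linked by $R\Gamma_\frakm\circ\DD_A\simeq\Hom_A(-,E)$ on $D^b_{coh}(A)$, which is exactly the bridge $(1)\leftrightarrow(3)$ you already set up.
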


\begin{proof}
The equivalence of $(3)$ and $(4)$ is the Riemann-Hilbert correspondence; the equivalence of $(1)$ and~$(2)$ comes from Corollary~\ref{LyubeznikNilp}. The equivalence of $(1)$ and $(3)$ comes from Corollary~\ref{PervCof} and Corollary~\ref{PervHolQuot}. The equivalence of $(4)$ and $(6)$ is the contravariant Riemann-Hilbert correspondence of \cite{BhattLurieRH}.

The equivalence between (4), (5), and (6) was first obtained by Schedlmeier in \cite{Schedlmeier} building on the smooth case treated in \cite{EK}. It also follows from \cite{BaudinDual} which was obtained independently. We sketch a fairly direct argument for the equivalence of (3) and (5) closer to the spirit of the present paper. We claim that Grothendieck duality induces an equivalence
\[
\mathrm{CohCart}(A[F]) \xrightarrow{\ \simeq\ } \Perv_{coh}(A[F])
\]
which is easily observed to preserve nilpotence on both sides, and hence induces the claimed equivalence of (4) and (5) using Corollary~\ref{PervHolQuot}. Since $A$ is the quotient of a regular ring with a $p$-basis \cite[Section 13]{Gabbert}, $A$ has a normalized dualizing complex which is equipped with a quasi-isomorphism $\eta\colon \omega_A \xrightarrow{\ \simeq\ } F^!\omega_A$.

Given a Cartier Module $\kappa\colon F_*M \to M$, we apply Grothendieck duality $\DD_A( \cdot )=\RHom_A(\cdot, \omega_A)$ to the structure map $\kappa$ to obtain
\[
\DD_A(M) \xrightarrow{\ \DD_A\kappa\ } \DD_A(F_*M) = \RHom_A(F_*M, \omega_A) \simeq F_*\RHom_A(M, F^!\omega_A) \xrightarrow{\ \eta^{-1}\ } F_*\DD_A(M),
\]
a map in the derived category of $A$-modules. By definition $\DD_A(M)$ and hence $F_*\DD_A(M)$ both lie in $\Perv_{coh}(A)$. Conversely, given such datum of $\phi\colon N \to F_*N$, where $N \in \Perv_{coh}(A)$ and $\phi$ is a map in $D^b_{coh}(A)$ the same calculation yields, after applying Grothendieck duality, the map
\[
F_*\DD_A(N) \simeq \DD_A(F_*N) \xrightarrow{\ \DD_A\phi\ } \DD_A(N)
\]
in the derived category of $A$-modules. Since by definition $\DD(N)$ and $F_*\DD(N)$ are actual $A$-modules, this is indeed a map of $A$-modules, hence the structure of a Cartier module on $\DD_A(N)$. This shows an equivalence between the categories
\[
\mathrm{CohCart}(A[F]) \simeq \left\{ (N,\phi) \mid N \in \Perv_{coh}(A),\ \phi\colon N \to F_*N \in D^b_{coh}(A)\right\} \equalscolon \Perv^{naive}_{coh}(A[F]),
\]
where the morphisms on the right are the obvious commutative diagrams in $D^b_{coh}(A)$. To finish our claim we have to show that the right hand side is equivalent to $\Perv_{coh}(A[F])$.

An object $M \in D^b_{coh}(A[F])$ is represented by a complex of Frobenius modules. In particular $M$ comes equipped with its structural map $\phi \colon M \to F_*M$ which is a map of complexes of $A$-modules (even $A[F]$-modules). By viewing this map of complexes as a map in $D^b_{coh}(A)$, we obtain a functor (after restriction to $\Perv_{coh}(A[F]) \subseteq D^b_{coh}(A[F])$) 
\[
\Perv_{coh}(A[F]) \to \Perv^{naive}_{coh}(A[F]).
\]
We show that this functor is an equivalence: For essential surjectivity, observe that for any map $N \to F_*N$ in $D^b_{coh}(A)$ we may replace $N$ by a quasi-isomorphic complex of projective $A$-modules. In this case, the map $\phi$ in the derived category is in fact represented by a (homotopy class of a) map of the complexes $N \to F_*N$. But this precisely means that each module in the complex $N$ is equipped with the structure of a Frobenius module, and that the differentials respect this structure; i.e., $N$ is a complex of Frobenius modules. Note that for the essential surjectivity we did not need to restrict to the perverse subcategory.

For the fully faithfulness let $M,N \in \Perv_{coh}(A[F])$. We may replace $M$ by a quasi-isomorphic complex consisting of free $A[F]$-modules; then the underlying $A$-modules are also free. Then we obtain a triangle in the derived category of abelian groups
\[
\RHom_{A[F]} (M,N) \to \RHom_{A} (M,N) \xrightarrow{\ \delta\ } \RHom_{A} (M,F_*N) \xrightarrow{\ +1\ }
\]
where the first map is induced by forgetting the Frobenius action and the second is given by
\[
f \mapsto F_*f \circ \phi_M - \phi_N \circ f
\]
on the level of $\Hom$-complexes if $M$ is $A$-projective, where $\phi_M\colon M \to F_*M$ (resp. $\phi_N$ ) are the maps induced by the Frobenius structure on $M$ (resp. $N$). The cohomology sequence in degree zero then yields the exact sequence
\[
\Ext^{-1}_{D(A)} (M,F_*N) \to \Hom_{D(A[F])}(M,N) \to \Hom_{D(A)}(M,N) \xrightarrow{\ \delta\ } \Hom_{D(A)}(M,F_*N).
\]
By definition the kernel of the map induced by $\delta$ are the morphisms in $\Perv^{naive}_{coh}(A[F])$. To see that this is equal to the morphisms in $\Perv_{coh}(A[F])$ --- which is the term $\Hom_{D(A[F])}(M,N)$ in the sequence --- it is enough to show the vanishing of the term $\Ext^{-1}_{D(A)} (M,F_*N)$. But by Grothendieck duality this is equal to $\Ext^{-1}_{D(A)}(F_*\DD_A(N),\DD_A(M))$ which, since $\DD_A(N)$ and $\DD_A(M)$ are both $A$-modules (since $M,N$ are assumed to be perverse), is equal to $\Ext^{-1}_A(F_*\DD_A(N),\DD_A(M))$ and hence equal to zero.
\end{proof}

The following example is fundamental to our later applications.

\begin{example}[Local cohomology under Riemann-Hilbert]
\label{ExPervFrobLocal}
Let $R \to A$ be a surjection of complete noetherian local rings with kernel $I \subset R$ and $R$ regular of dimension $d$. Fix an integer $i$. Under the equivalence in Corollary~\ref{PerverseCharpLocal}, the following objects match up:
\begin{enumerate}[\quad\rm(1)]
\item The image of the local cohomology module $H^i_\frakm(A)$ in $\mathrm{Mod}_{\cof}(A[F])/\text{nilpotents}$.

\item The local cohomology module $H^i_\frakm(A_{\perf}) \in \mathrm{Mod}_{\perf}(A[F])$.

\item The image of the derived $\frakm$-adic completion $\widehat{H^i_\frakm(A)}$ in $\Perv_{hol}(A[F])$.

\item The perverse cohomology sheaf ${}^p H^i(\underline\FF_{p, A}) \in \Perv_c(\Spec A;\FFp)$.

\item The Cartier module $(H^{-i}(\omega_A^\bullet), H^{-i}(\mathrm{tr}_F))$, where $\mathrm{tr}_F\colon F_* \omega_A^\bullet \to \omega_A^\bullet$ is the trace map for the Frobenius.

\item The local cohomology module $H^{d-i}_I(R) \in \mathrm{Mod}_{fgu,A}(R)$.
\end{enumerate}
In (1), (2), (3), and (6), the Frobenius actions are the natural ones.
\end{example}

In particular, the connection between (1) and (6), as explained in \cite[Example~4.8]{Lyubeznik:Fmod}, can be used together with Corollary~\ref{PerverseCharpLocal} to recover \cite[Theorem~1.1]{Lyubeznik:Compositio}:

\begin{corollary}[Lyubeznik]
$H^{d-i}_I(R) = 0$ if and only if the natural Frobenius action on $H^i_{\frakm}(A)$ is nilpotent.
\end{corollary}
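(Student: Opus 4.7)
The plan is to deduce this as an essentially formal consequence of the matching of objects recorded in Example~\ref{ExPervFrobLocal}, together with the description of the perverse side of the Riemann-Hilbert correspondence (Corollary~\ref{PerverseCharpLocal}) and the Hartshorne-Speiser-Lyubeznik-Gabber nilpotence theorem (Corollary~\ref{LyubeznikNilp}). There is really no new content to extract beyond unwinding definitions; the work has already been done in setting up the equivalences.

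First, I would invoke Example~\ref{ExPervFrobLocal}, which asserts that under the chain of equivalences of Corollary~\ref{PerverseCharpLocal}, the object $H^{d-i}_I(R) \in \mathrm{Mod}_{fgu,A}(R)$ (item (6)) corresponds to the image of $H^i_{\frakm}(A) \in \mathrm{Mod}_{\cof}(A[F])$ in the quotient category $\mathrm{Mod}_{\cof}(A[F])/\text{nilpotents}$ (item (1)). Since this chain of functors is an equivalence of categories, it carries zero objects to zero objects; consequently $H^{d-i}_I(R) = 0$ is equivalent to the vanishing of the image of $H^i_{\frakm}(A)$ in the Serre quotient.

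Next, by the very definition of the Serre quotient appearing in (1), an object $M \in \mathrm{Mod}_{\cof}(A[F])$ maps to zero in $\mathrm{Mod}_{\cof}(A[F])/\text{nilpotents}$ if and only if $M$ lies in the Serre subcategory of nilpotent cofinite Frobenius modules, i.e., if and only if the Frobenius on $M$ is locally nilpotent. At this stage we would appeal to Corollary~\ref{LyubeznikNilp} (the Hartshorne-Speiser-Lyubeznik-Gabber theorem): for a cofinite $A$-module equipped with a Frobenius structure, local nilpotence is equivalent to nilpotence. Applying this to $M = H^i_{\frakm}(A)$, which is cofinite, gives the desired equivalence between $H^{d-i}_I(R) = 0$ and the nilpotence of the natural Frobenius on $H^i_{\frakm}(A)$.

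The only subtle point, and thus the step I would write most carefully, is verifying that ``zero in the Serre quotient'' for item (1) really means ``nilpotent'' in the sense used in the statement of the theorem; this reduces to comparing the two versions of nilpotence, which is precisely the content of Corollary~\ref{LyubeznikNilp}. Everything else is transporting the isomorphism class of a single object through an explicit equivalence of categories.
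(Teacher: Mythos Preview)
Your proposal is correct and matches the paper's approach exactly: the paper simply records the corollary as an immediate consequence of the equivalence of (1) and (6) in Corollary~\ref{PerverseCharpLocal}, applied via the identifications in Example~\ref{ExPervFrobLocal}, with no further argument given. Your appeal to Corollary~\ref{LyubeznikNilp} is harmless but slightly redundant, since the Serre subcategory in (1) already consists of \emph{nilpotent} (not merely locally nilpotent) cofinite Frobenius modules, so vanishing in the quotient is directly equivalent to nilpotence.
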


Concretely, under the equivalences inducing (6), (4), and (2), the objects $R\Gamma_I(R)$, $\FF_{\!p,A}$, and $R\Gamma_\frakm(A_{\perf})$ correspond to one another. Denoting $i\colon \{x\} \to X$ the inclusion of the point corresponding to the maximal ideal $\frakm$, we have analogously that the functors $R\Gamma_\frakm(-)$, $i_*i^*(-)$, and $( - \otimes^\LL_A A/\frakm)_{\perf}$ match up. This has the following consequence for the invariants introduced in \cite[\S4]{Lyubeznik:Invent}, cf.~\cite{BlickleBondu}:
\[
\lambda_{j,d-i}(A) \colonequals \ell(H^j_{\frakm} (H^{d-i}_I(R))) = \ell(\Tor^A_j(H^i_{\frakm}(A), k)_{\perf}) = \dim_{\FFp}(H^{-j} \circ i^* \circ {}^p H^i(\underline\FF_{p,A})),
\]

\begin{remark}[Cohen-Macaulayness of $A$ and the perversity of {$\FFp[\dim A]$}]
\label{ConstantCMPerverse}
If $A$ is Cohen-Macaulay, then $\FFp[\dim A]$ is perverse: this follows as $A = \DD_A(\omega_A^\bullet)$ is perverse coherent. In fact, one has a stronger statement: $\FF_{\!p,A}[\dim A]$ is perverse if and only if $A$ is ``Cohen-Macaulay up to Frobenius nilpotents,'' i.e., if and only if $H^{< \dim A}_{\frakm}(A)$ is Frobenius nilpotent. Indeed, this follows from the equivalence of (1) and (4) in Corollary~\ref{PerverseCharpLocal}, applied using Example~\ref{ExPervFrobLocal}.
\end{remark}

It will be convenient to have a variant of Corollary~\ref{PerverseCharpLocal} and Example~\ref{ExPervFrobLocal} in the ``affine'' setting:

\begin{remark}
Say $k$ is an algebraically closed field of characteristic $p$. Let $X = \Spec A$ be a finite type affine $k$-scheme, and let $X \into Y = \Spec R$ be a closed immersion into a smooth $k$-scheme $Y$ of dimension~$d$. One can then show by mimicking Corollary~\ref{PerverseCharpLocal} that the following categories are equivalent:
\begin{enumerate}[\quad\rm(1)]
\item[(3')] The category $\Perv_{hol}(X[F])$ of perverse holonomic Frobenius modules on $X$.

\item[(4')] The category $\Perv_c(X;\FFp)$ of perverse sheaves on $X$.

\item[(5')] The (opposite of) the quotient by nilpotents of the category of Cartier modules on $X$.

\item[(6')] The (opposite of) the full subcategory $\mathrm{Mod}_{fgu,A}(R) \subset \mathrm{Mod}_{fgu}(R)$ spanned by images of pairs
\[
\big(N \in \mathrm{Mod}_{fg}(A), \psi_N\colon N \to F_R^* N\big)
\]
under the unitalization functor.
\end{enumerate}
Similarly as in the local case above we also obtain the an analogous description of Lyubeznik invariants
\[
\lambda_{j,d-i}(A) \colonequals \ell(H^j_{\frakm} (H^{d-i}_I(R))) = \ell(\Tor^A_j(H^i_{\frakm}(A), k)_{\perf}) = \dim_{\FFp}(H^{-j} \circ i^* \circ {}^p H^i(\underline\FF_{p,A})),
\]
where $i^*$ indicates the pullback to the residue field of $A$.
\end{remark}

%%%%%%%%%%%%%%%%%%%%%%%%%%%%%%%%%%%%%%%%%%%%%%%%%%%%%%%%%%%%%%%%%%%%%%%%
\subsection{Relation to Gabber's perverse $t$-structure}
\label{ss:GabberPervComp}
%%%%%%%%%%%%%%%%%%%%%%%%%%%%%%%%%%%%%%%%%%%%%%%%%%%%%%%%%%%%%%%%%%%%%%%%

We compare the perverse $t$-structure on constructible $\FFp$-complexes constructed in Corollary~\ref{PervHolQuot} (using the perverse $t$-structure on coherent sheaves) with the more classical one (in terms of support conditions on stalks and costalks) studied by Gabber \cite{Gabbert} which we recall first, see also \cite[Section 11.5.2]{EK}.
\begin{notation}
We work with schemes of finite type over a noetherian local $F$-finite ring $A$. Any such scheme $X$ has a dualizing complex and comes endowed with a perverse $t$-structure on $D^b_{coh}(X)$ as in \S\ref{ss:PervCoh}.
\end{notation}
For a point $x \in X$ denote by $i_x \colon x \to X$ its inclusion into $X$. Define the subcategories of $D^b_c(X_{\et}, \FFp)$
\begin{align*}
{}^pD^{\le 0} &= \{ M \in D^b_c(X_{\et}, \FFp) \mid H^j(i^*_xM) = 0 \text{ for all } x \in X\text{ and }j > -d(x)\} \\
{}^pD^{\ge 0} &= \{ M \in D^b_c(X_{\et}, \FFp) \mid H^j(i^!_xM) = 0 \text{ for all } x \in X\text{ and }j < -d(x)\}
\end{align*}
where $d(x)$ is as in Remark~\ref{DualizingComplexLocalRing}. It is shown in \cite[Theorem~10.3]{Gabbert} that these underlie a unique $t$-structure on $D^b_c(X_{\et}, \FFp)$. We shall need the following standard fact:
\begin{lemma}
\label{DualCohSupp}
Let $X$ be a finite type $A$-scheme. For $N \in D^{b, \ge 0}_{coh}(X)$ and $x \in X$, we have $\DD_X(N)_x \in D^{\le -d(x)}$.
\end{lemma}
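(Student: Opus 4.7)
The plan is to pass to the stalk at $x$ and exploit the structure of $\omega^\bullet_{X,x}$ as a bounded complex of injectives over the local ring $B := \calO_{X,x}$. First, since both $N$ and $\omega^\bullet_X$ lie in $D^b_{coh}(X)$, formation of $\RHom$ commutes with localization, giving a natural identification
\[
\DD_X(N)_x \simeq \RHom_B(N_x, \omega^\bullet_B),
\]
where $\omega^\bullet_B = \omega^\bullet_{X,x}$. So it suffices to show that $\RHom_B(N_x, \omega^\bullet_B) \in D^{\le -d(x)}$ for every $N_x \in D^{b,\ge 0}_{coh}(B)$.

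The key input I would invoke is the standard description of a minimal injective resolution of a dualizing complex over a noetherian local ring with dualizing complex: $\omega^\bullet_B$ is quasi-isomorphic to a bounded complex of injectives $I^\bullet$ of the form $I^j = \bigoplus_{\frakp} E(B/\frakp)$, where $\frakp$ runs over primes of $B$ whose Bass number in degree $j$ is nonzero. Our normalization forces the summand $E(B/\frakm_x) = E_x$ to appear in cohomological degree $-d(x)$, since by definition $R\Gamma_{\{x\}}(\omega^\bullet_B) \simeq E_x[d(x)]$ concentrates there. Combined with the fact that $d(-)$ attains its minimum on $\Spec B$ at the closed point $\frakm_x$ (a consequence of Remark~\ref{DualizingComplexLocalRing}(2), since $d$ strictly decreases along immediate specializations), this implies that $I^j = 0$ for all $j > -d(x)$.

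To finish, I would choose a representative of $N_x$ concentrated in cohomological degrees $\ge 0$ (possible since $N_x \in D^{\ge 0}_{coh}(B)$), and compute $\RHom_B(N_x, \omega^\bullet_B)$ as the $\Hom$-totalization $\Hom^\bullet(N_x, I^\bullet)$. Its degree $k$ part is a product of terms $\Hom_B(N_x^i, I^{i+k})$, which can be nonzero only when both $i \ge 0$ and $i + k \le -d(x)$, forcing $k \le -d(x)$. This yields the desired vanishing.

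The only nontrivial step of the plan is establishing the shape of the minimal injective resolution of $\omega^\bullet_B$ under our normalization convention (which is where the function $d(x)$ genuinely enters); after that, the required support bound is just routine bookkeeping in a double complex.
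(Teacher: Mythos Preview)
Your proof is correct but follows a different route than the paper. The paper completes at $x$ and then uses that local cohomology converts Grothendieck duality into Matlis duality: one has
\[
\widehat{\DD_X(N)_x} \simeq \RHom_{\widehat{\calO_{X,x}}}(R\Gamma_x(N_x), E_x[d(x)]),
\]
and since $E_x$ is injective, $\RHom(-,E_x)$ is exact and sends $D^{\ge 0}$ to $D^{\le 0}$. Your approach instead stays at the level of the local ring $B=\calO_{X,x}$ and invokes the Cousin-type structure of the minimal injective resolution of $\omega^\bullet_B$: each $E(B/\frakp)$ appears exactly once, in degree $-d(\frakp)$, and since $d(\frakp)\ge d(x)$ the resolution lives in degrees $\le -d(x)$. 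The paper's argument is slightly slicker in that it reduces to a single injective module rather than the full residual complex, and it does not require knowing the shape of the minimal injective resolution --- only the defining normalization $R\Gamma_{\{x\}}(\omega^\bullet_{X,x})\simeq E_x[d(x)]$. Your argument, on the other hand, avoids completion entirely. One small point worth making explicit in your write-up: you only verify the degree placement for $E(B/\frakm_x)$, but the conclusion $I^j=0$ for $j>-d(x)$ needs the placement $E(B/\frakp)$ in degree $-d(\frakp)$ for \emph{every} prime $\frakp$; this follows by the same local cohomology computation after localizing at $\frakp$, and is part of the ``standard description'' you cite, but it would be clearer to say so.
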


\begin{proof}
It suffices to check the statement after completion at the closed point of $\calO_{X,x}$. By \cite[\href{https://stacks.math.columbia.edu/tag/0A6Y}{Tag~0A6Y}]{stacks-project} we then have
\[
\widehat{\DD_X(N)_x} = \RHom_{\widehat{\calO_{X,x}}}(\widehat{N_x},\widehat{\omega_{X,x}^\bullet}) \simeq \RHom_{\widehat{\calO_{X,x}}}(R\Gamma_x(N_x), R\Gamma_x(\omega_{X,x}^\bullet)) \simeq \RHom_{\widehat{\calO_{X,x}}}(R\Gamma_x(N_x), E_x[d(x)]),
\]
where $\widehat{\ }$ denotes completion at the closed point $x$, and $E_x \simeq R\Gamma_x(\omega_{X,x})[-d(x)]$ is an injective hull of the residue field at $x$. As $E_x$ is injective, $\Hom_{\widehat{\calO_{X,x}}}(-,E_x)$ is exact. Moreover, as $N \in D^{b,\ge 0}_{coh}(X)$, we also have $R\Gamma_x(N_x) \in D^{\ge 0}$. The above formula then shows that $\widehat{\DD_X(N)_x} \in D^{\le -d(x)}$, as wanted.
\end{proof}
We can now show the equivalence of the perverse $t$-structure on $D^b_c(X_{\et},\FFp)$ defined in \S\ref{ss:PervCoh} and the one defined by Gabber recalled above.

\begin{theorem}
\label{CompareGabber}
Let $X$ be a finite type $A$-scheme. The following two $t$-structures on $D^b_c(X_{\et}, \FFp)$ coincide:
\begin{enumerate}[\quad\rm(1)]
\item The perverse $t$-structure from Definition~\ref{PervtDef}.

\item The $t$-structure on $D^b_c(X_{\et},\FFp)$ constructed in \cite[Theorem~10.3]{Gabbert} using the middle perversity function $p(x) = -d(x)$, just recalled above.
\end{enumerate}
\end{theorem}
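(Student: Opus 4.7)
The plan is to show that both $t$-structures share the same co-connective part ${}^p D^{\ge 0}$; since a $t$-structure on a triangulated category is determined by its co-connective part alone, this will force them to coincide. The comparison will happen by translating each condition on ${}^p D^{\ge 0}$ into the same support condition on local cohomology of $\RH(F)$.

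\textbf{Step 1: characterize ${}^p D^{\ge 0}$ for the $t$-structure of Definition~\ref{PervtDef}.} First I would upgrade Proposition~\ref{RHtexactLocalCoh} from a heart statement to a full $t$-structure statement, yielding
\[
F \in {}^p D^{\ge 0} \text{ (Definition~\ref{PervtDef})} \iff R\Gamma_{\{x\}}(\RH(F)_x) \in D^{\ge -d(x)} \text{ for every } x \in X.
\]
By Corollary~\ref{PervHolQuot}, perversity on the holonomic side is tested on a coherent lift, so this reduces to the analogous statement for the perverse $t$-structure on $D^b_{coh}(X)$ of Definition~\ref{DefPervCoh}. Concretely, local duality together with $R\Gamma_{\{x\}}(\omega^\bullet_{X,x}) \simeq E_x[d(x)]$ (coming from Remark~\ref{DualizingComplexLocalRing}) gives
\[
R\Gamma_{\{x\}}(\DD_X(N)_x) \simeq \RHom_{\calO_{X,x}}(N_x, E_x)[d(x)],
\]
a shifted Matlis dual. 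Since Matlis duality swaps standard cohomological degrees and $\DD_X$ swaps the standard and perverse $t$-structures on $D^b_{coh}(X)$ by construction, this yields $M \in {}^p D^{\ge 0}_{coh}(X)$ iff $R\Gamma_{\{x\}}(M_x) \in D^{\ge -d(x)}$ for all $x$. Perfection is $t$-exact for the standard $t$-structure and commutes with local cohomology, so the same criterion carries over to holonomic Frobenius complexes.

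\textbf{Step 2: match Gabber's condition.} For the Gabber $t$-structure with perversity $p(x) = -d(x)$, the co-connective condition is $i_x^! F \in D^{\ge -d(x)}$ for every $x \in X$, where $i_x\colon \Spec \kappa(x) \to X$. Factor $i_x = k_x \circ j_x$ with $j_x\colon \Spec \calO_{X,x} \to X$ the localization at $x$ (a filtered limit of open immersions) and $k_x\colon \Spec \kappa(x) \to \Spec \calO_{X,x}$ the inclusion of the closed point. Compatibility of $\RH$ with restriction to open subsets (Theorem~\ref{RHSmallVer}) identifies $\RH(j_x^* F)$ with $\RH(F)_x$. The compatibility $\RH \circ k_x^! \simeq k_x^! \circ \RH$ follows by adjunction, since $\RH$ is an equivalence commuting with $k_{x*}$ (proper pushforward, Theorem~\ref{RHSmallVer}) and hence with its right adjoint. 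Unwinding the local cohomology triangle $k_{x*} k_x^!(-) \to (-) \to R\ell_* \ell^*(-)$ for $\ell\colon \Spec \calO_{X,x} \setminus \{x\} \to \Spec \calO_{X,x}$ on both sides identifies $k_{x*} k_x^! \RH(F)_x$ with $R\Gamma_{\{x\}}(\RH(F)_x)$, so $\RH(i_x^! F)$ is identified with $R\Gamma_{\{x\}}(\RH(F)_x)$ viewed as a $\kappa(x)$-Frobenius complex. Finally, $t$-exactness of $\RH$ on $\Spec \kappa(x)$ for the standard $t$-structures (Theorem~\ref{RHSmallVer}) shows that the cohomological degrees of $i_x^! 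F$ match those of $R\Gamma_{\{x\}}(\RH(F)_x)$, so Gabber's condition reads $R\Gamma_{\{x\}}(\RH(F)_x) \in D^{\ge -d(x)}$, agreeing with Step~1.

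\textbf{Conclusion and main obstacle.} Steps 1 and 2 together show that both $t$-structures share the same ${}^p D^{\ge 0}$, hence they coincide. The principal technical hurdle is the compatibility of $\RH$ with $k_x^!$ in Step~2, which is not listed among the basic compatibilities in Theorems~\ref{RHBigVer} and~\ref{RHSmallVer}. I would handle it via adjunction together with the local cohomology triangle as above, paying attention to the mild subtlety of interpreting a closed-point-supported $\calO_{X,x}$-Frobenius complex as a $\kappa(x)$-Frobenius complex (which may involve perfecting or quotienting by Frobenius nilpotents when $\kappa(x)$ is imperfect, though this does not affect the standard cohomological degrees that enter the $t$-structure comparison).
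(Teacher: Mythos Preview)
Your approach differs from the paper's: you compare the co-connective parts ${}^p D^{\ge 0}$ via the costalk condition $i_x^! F \in D^{\ge -d(x)}$, whereas the paper compares the connective parts ${}^p D^{\le 0}$ via the stalk condition $G_{\overline{x}} \in D^{\le -d(x)}$. Your Step~1 is fine, but Step~2 has a genuine gap in the identification $\RH(i_x^! F) \simeq R\Gamma_{\{x\}}(\RH(F)_x)$. The adjunction argument correctly shows that $\RH$ intertwines the \'etale $k_x^!$ with \emph{some} right adjoint to $k_{x*}$ on the Frobenius-module side, but that right adjoint is \emph{not} quasi-coherent local cohomology $R\Gamma_{\{x\}}$: already for $F = \FFp$ one gets $R\Gamma_{\{x\}}(R_{\perf})$, whose cohomology $H^d_{\frakm}(R_{\perf})$ is not holonomic (or even algebraic) over $\kappa(x)[F]$, so $R\Gamma_{\{x\}}$ does not land in the target category. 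Equivalently, your local-cohomology-triangle argument breaks because $\RH$ commutes with $\ell^*$ and $k_{x*}$ but not with $R\ell_*$ for the open immersion $\ell$. What one actually obtains, via the Artin--Schreier triangle $F \to \RH(F) \xrightarrow{\varphi-1} \RH(F)$ together with the agreement of \'etale and Zariski local cohomology on quasi-coherent sheaves, is
\[
k_{x*}\, k_x^! F \;\simeq\; \mathrm{fib}\Bigl(\varphi - 1\colon R\Gamma_{\{x\}}(\RH(F)_x) \to R\Gamma_{\{x\}}(\RH(F)_x)\Bigr),
\]
so the two sides differ by an Artin--Schreier fiber, which can shift the amplitude.

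This is precisely the asymmetry that leads the paper to work on the $D^{\le 0}$ side. There the analogous Artin--Schreier comparison involves \emph{stalks} of a coherent representative at geometric points, and the key input is that $\varphi - 1$ is \emph{surjective} on any finitely generated module over a strictly henselian local $\FFp$-algebra (\cite[Proposition~10.1~(3)]{Gabbert}); this surjectivity is exactly what prevents the amplitude from dropping when passing through the fiber. On your costalk side the needed input would be \emph{injectivity} of $\varphi - 1$ on the cofinite modules $H^i_{\{x\}}(\RH(F)_x)$, and that fails already for $\calO_{X,x} = \kappa(x)$ algebraically closed and $F = \FFp$, where $\ker(\varphi-1\colon \kappa(x) \to \kappa(x)) = \FFp \neq 0$. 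The paper also needs a separate argument for the reverse inclusion (it uses generators of the form $j_! L[i]$ with $j$ locally closed and $L$ lisse on a regular stratum), so even after fixing the amplitude issue you would still owe a second half. In short, the $D^{\ge 0}$/costalk route does not close pointwise, and the paper's choice to argue via $D^{\le 0}$/stalks is not incidental.
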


\begin{proof}
Since any $t$-structure is determined by its $D^{\le 0}$ part, it suffices to show that the $D^{\le 0}$ parts of both $t$-structures coincide. Unwinding definitions, this amounts to checking that the following full subcategories of $D^b_c(X_{\et},\FFp)$ are identical:

(1) The full subcategory $\calC_1$ spanned by $G \in D^b_c(X_{\et}, \FFp)$ such that $G$ can be written as $\Sol(M)$, where $(M,\varphi_M) \in D^b_{coh}(X[F])$ with $M \in \DD_X(D^{\ge 0}_{coh}(X))$. This is justified since $^p D^{\le 0}_c(X_{\et},\FFp)$ in the sense of Definition~\ref{PervtDef} corresponds via Riemann-Hilbert to $^p D^{\le 0}_{hol}(X[F])$ which, in turn,
corresponds to $^p D^{\le 0}_{coh}(X[F])$ via perfection, see Corollary~\ref{PervHolQuot}. By Definition~\ref{DefPervCoh} of the coherent $t$-structure, membership there can be checked after applying $\DD_X$.

(2) The full subcategory $\calC_2$ spanned by $G' \in D^b_c(X_{\et}, \FFp)$ such that for any $x \in X$ and chosen geometric point $\overline{x} \to X$ based at $x$, we have $G'_{\overline{x}} \in D^{\le p(x)}$. To see that this describes ${}^pD^{\le 0}$ in the sense of \cite[Theorem~10.3]{Gabbert}, use that for the \'etale topology the vanishing of $i_x^*G'$ is equivalent to the vanishing of the geometric stalk $G'_{\overline{x}}$.

\emph{Verifying $\calC_1 \subset \calC_2$}: Pick $G$ and $M$ as in the definition of $\calC_1$; we shall check the condition defining $\calC_2$. Using the exact triangle
\[
G \to M \xrightarrow{\ \varphi_M-1\ } M
\]
in $D(X_{\et}, \FFp)$, it suffices to show the following: for $x \in X$ and chosen geometric point $\overline{x} \to X$ based at $x$,
\begin{enumerate}[\rm(i)]
\item The stalk $M_{\overline{x}} \colonequals \DD_X(N)_{\overline{x}}$ lies in $D^{\le p(x)}$, and

\item The map $M_{\overline{x}} \to M_{\overline{x}}$ obtained by taking stalks at $\overline{x}$ of $\varphi_M-1$ is surjective on applying $H^{p(x)}(-)$.
\end{enumerate}

The first condition is verified by Lemma~\ref{DualCohSupp}, and the second is in fact true for all cohomology groups: for a finitely generated module $M'$ over a strictly henselian local $\FFp$-algebra $R$, and a $p$-linear map $\varphi\colon M' \to M'$, the map $\varphi-1$ is surjective; see \cite[Proposition~10.1~(3)]{Gabbert}.

\emph{Verifying $\calC_2 \subset \calC_1$}: The category $\calC_2$ is generated under direct summands, extensions and cones by objects of the form $j_! L[i]$, where $j\colon U \to X$ is a locally closed immersion with $U$ regular and equidimensional, $L$ is a locally constant sheaf of constant rank on $U$, and $i$ is an integer $\ge d(U)$\footnote{This is a standard fact for which we could not find a reference, so we give the argument. Let $\calC' \subset \calC_2$ be the subcategory generated by objects of the form $j_! L$ as above. To see that $\calC'=\calC_2$, fix $F \in \calC_2$. We shall show $F \in \calC'$ by induction on the dimension of its support. The dimension $0$ case is clear. In general, by filtering $F$ by its cohomology sheaves, it suffices to treat each cohomology group separately, so we may assume $F=G[i]$, where $G$ is a constructible sheaf supported on a closed set $Z \subset X$ with $\dim Z \le i$. There is then a dense open regular $j\colon U \subset Z$ such that $G|_U$ is a local system, whence $j_!(F|_U) \in \calC'$ (as we can write $U$ as a disjoint union of regular equidimensional schemes of dimension $\le i$). Now if $k\colon W=Z-U \to Z$ is the complement of $U$, then it is enough to show that $k_* (F|_{Z-U}) = \left (G/j_!(G|_U) \right)[i]$ lies in $\calC'$; this follows by induction since $\dim(Z-U) < \dim Z$.}. As $\calC_1$ is closed under direct summands, extensions and cones, it suffices to show that each generating object $j_! L[i]$ as before lies in $\calC_1$. By the finite pushforward compatibility of both Grothendieck duality and the $p(-)$ function, we may replace $X$ with $\overline{\Supp L}$ to assume that $U$ is a dense open subset of $X$, which we may assume reduced. As $L$ is a local system, we have $L=\Sol(M)$ for a vector bundle $M$ on $U$ equipped with a Frobenius structure $\phi_M\colon M \to F_* M$. Choose an extension $N \in \mathrm{Coh}(X)$ of $M$ with a Frobenius structure $\phi_N\colon N \to F_* N$ lifting $\phi_M$; for instance, after choosing a coherent extension $N_0$ of $M$, we can take $N = I_Z^n N_0$ for $n \gg 0$ to ensure that $\phi_M$ preserves $N$. Then the submodule $I_Z N \subset N$ is naturally a Frobenius submodule of $N$ and we must have $j_! L = \mathrm{Sol}(I_Z N)$ by Example~\ref{exRHshriekext}. Thus, to prove the claim, it is enough to show that $I_Z N[i] \in {}^p D^{\le 0}(X)$ or equivalently that $\DD_X(I_Z N)[-i] \in D^{\ge 0}$. Since $I_Z N$ is a honest coherent sheaf, noting that $\DD_X = \RHom_X(-,\omega_X^\bullet)$ and $i \ge d(U)=d(X)$, it suffices to know that $\omega_X^\bullet \in D^{\ge -d(X)}$. But this follows from our normalization of the dualizing complex in Remark~\ref{DualizingComplexLocalRing} and the completely general \cite[Lemma in \S 2]{Gabbert} (applied with $Z=X$).
\end{proof}

\begin{remark}
Fix a finite type $A$-scheme $X$. One consequence of Theorem~\ref{CompareGabber} and Gabber's work in \cite{Gabbert} is that the perverse $t$-structure on $D^b_c(X_{\et};\FFp)$ from Definition~\ref{PervtDef} admits a natural extension to a perverse $t$-structure on $D(X_{\et};\FFp)$, characterized by the support and cosupport conditions as in \cite[\S 2]{Gabbert} with $p(x)=-d(x)$. We also refer to this $t$-structure on $D(X_{\et};\FFp)$ as the perverse $t$-structure, and its heart is denoted by $\Perv(X;\FFp)$. Thus, we have
\[
\Perv(X;\FFp) \cap D^b_c(X_{\et};\FFp) = \Perv_c(X;\FFp)
\qquad \text{and} \qquad
\mathrm{Ind}\left(\Perv_c(X;\FFp)\right) \simeq \Perv(X;\FFp);
\]
see \cite[Remark~7.2]{Gabbert} for the last isomorphism.
\end{remark}

\begin{corollary}
Let $X$ be a finite type $A$-scheme. The category $\Perv_c(X;\FFp)$ is both noetherian and artinian.
\end{corollary}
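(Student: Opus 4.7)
The plan is to leverage the Riemann-Hilbert identification $\mathrm{Perv}_c(X;\FFp) \simeq \mathrm{Perv}_{hol}(X[F])$ together with the presentation of $\mathrm{Perv}_{hol}(X[F])$ as the Serre quotient $\mathrm{Perv}_{coh}(X[F])/\mathrm{Perv}_{coh,nil}(X[F])$ furnished by Corollary~\ref{PervHolQuot}. Since both the noetherian and artinian properties descend to Serre quotients of abelian categories, it suffices to verify each chain condition on $\mathrm{Perv}_{coh}(X[F])$; I would then translate each, via Grothendieck duality $\DD_X$, into a condition on the anti-equivalent category $\mathrm{CohCart}(X[F])$ of coherent Cartier modules on $X$ (as constructed in the proof of Corollary~\ref{PerverseCharpLocal}).

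First, I would dispatch the artinian property. Under Grothendieck duality, $\mathrm{Perv}_{coh}(X[F])$ is anti-equivalent to $\mathrm{CohCart}(X[F])$. Since $A$ is noetherian and $X$ is of finite type over $A$, the category $\mathrm{Coh}(X)$ is noetherian; hence so is $\mathrm{CohCart}(X[F])$, as any ascending chain of Cartier submodules of a coherent Cartier module is a fortiori an ascending chain of coherent subsheaves, which must stabilize. Dualizing yields the descending chain condition on $\mathrm{Perv}_{coh}(X[F])$, and hence on its Serre quotient $\mathrm{Perv}_c(X;\FFp)$.

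The noetherian property is the substantive direction. The same dualization reduces it to showing that the Serre quotient of $\mathrm{CohCart}(X[F])$ by its subcategory of nilpotent objects is \emph{artinian}, i.e., every coherent Cartier module has finite length modulo Frobenius-nilpotents. The main obstacle is that coherent sheaves themselves are typically not artinian, so this step must genuinely use the Frobenius structure. The plan is to invoke the finite-length theorem for holonomic $F$-modules (equivalently, for coherent Cartier modules modulo nilpotents): this goes back to Lyubeznik \cite{Lyubeznik:Fmod} in the affine local case and has been extended to the global Cartier module setting by Blickle-B\"ockle \cite{BliBoeCart}. Under the equivalences summarized in Corollary~\ref{PerverseCharpLocal} and its globalization, this says precisely that every holonomic perfect Frobenius module has finite length.

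As a cleaner alternative, one may bypass the algebraic translation altogether by appealing to Theorem~\ref{CompareGabber} to identify our perverse $t$-structure with the one studied by Gabber, whereupon both chain conditions follow from the finiteness results already established in \cite{Gabbert} via standard Jordan-H\"older arguments using intermediate extensions of simple local systems on smooth strata.
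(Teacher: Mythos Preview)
Your proposal is correct, and your ``cleaner alternative'' is exactly what the paper does: it simply invokes Theorem~\ref{CompareGabber} and then cites Gabber's \cite[Corollary~12.4]{Gabbert}. The paper's proof is one sentence.

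Your primary approach via Cartier modules is a genuinely different route and also works. The paper black-boxes Gabber's support-theoretic argument, whereas you unwind things through the Riemann-Hilbert and Grothendieck duality equivalences to reduce to the finite-length theorem of Blickle--B\"ockle \cite{BliBoeCart} for coherent Cartier modules modulo nilpotents. This has the advantage of staying within the coherent/Frobenius-module framework developed in \S\ref{sec:RHPervFp} rather than appealing to an external result whose proof uses a different machinery; it also makes transparent \emph{why} the noetherian direction is the nontrivial one (it is exactly the Hartshorne--Speiser--Lyubeznik--Gabber descending chain phenomenon for Cartier modules). One small caveat: the anti-equivalence $\mathrm{Perv}_{coh}(X[F]) \simeq \mathrm{CohCart}(X[F])^{op}$ you invoke is only spelled out in the paper for $X=\Spec A$ with $A$ complete local (in the proof of Corollary~\ref{PerverseCharpLocal}); the extension to general finite type $A$-schemes is routine (same Grothendieck duality computation using $F^!\omega_X^\bullet \simeq \omega_X^\bullet$, valid since $X$ is $F$-finite), but you should say so explicitly rather than citing Corollary~\ref{PerverseCharpLocal} as if it covered this case.
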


\begin{proof}
This follows from Gabber's \cite[Corollary~12.4]{Gabbert}.
\end{proof}

%%%%%%%%%%%%%%%%%%%%%%%%%%%%%%%%%%%%%%%%%%%%%%%%%%%%%%%%%%%%%%%%%%%%%%%%
\subsection{Reminders on compactly supported cohomology}
%%%%%%%%%%%%%%%%%%%%%%%%%%%%%%%%%%%%%%%%%%%%%%%%%%%%%%%%%%%%%%%%%%%%%%%%

For any finite type separated maps of noetherian schemes, there is a well-developed notion of compactly supported pushforwards for torsion \'etale sheaves. In this subsection, we recall the definition and basic properties of this notion next.

\begin{notation}
In this subsection, all schemes are noetherian, and all morphisms are assumed to be separated and finite type. Fix a finite coefficient ring $\Lambda$.
\end{notation}

\begin{definition}[Compactly supported cohomology]
Given $f\colon X \to Y$, define
\[
Rf_!\colon D^b_c(X;\Lambda) \to D^b_c(Y;\Lambda)
\]
as $R\overline{f}_* \circ j_!$, where $X \xhookrightarrow{\ j\ } \overline{X} \xrightarrow{\ \overline{f}\ } Y$ is any factorization of $f$ as an open immersion $j$ followed by a proper morphism $f$. This functor is independent of the choice of $j$ and $\overline{f}$ by the proper base change theorem~\cite[\href{https://stacks.math.columbia.edu/tag/0F7I}{Tag~0F7I}]{stacks-project}. When $Y = \Spec k$ for an algebraically closed field $k$, we also write $R\Gamma_c(X;-)$ instead of $Rf_!$.
\end{definition}

\begin{lemma}[Excision]
Fix a map $f\colon X \to Y$, and an open $U \subset X$ with complement $Z$. Then for any $F \in D^b_c(X;\Lambda)$, we have a canonical exact triangle
\[
Rg_!(F|_U) \to Rf_! F \to Rh_!(F|_Z)
\]
in $D^b_c(Y; \Lambda)$, where $g\colon U \to Y$ and $h\colon Z \to Y$ are the maps induced by $f$. (We call this triangle the excision triangle associated to $f$ and the open immersion $U \subset X$.)
\end{lemma}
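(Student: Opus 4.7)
The plan is to deduce the triangle from the standard sheaf-level excision on $X$ by applying the definition $Rf_! = R\overline{f}_* \circ j_!$. Let $i\colon U \hookrightarrow X$ be the open immersion and $k\colon Z \hookrightarrow X$ the complementary closed immersion. The starting point is the canonical short exact sequence
\[
0 \to i_!(F|_U) \to F \to k_*(F|_Z) \to 0
\]
of $\Lambda$-sheaves on $X_{\et}$, which yields an exact triangle in $D^b_c(X;\Lambda)$.

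Next, I would fix a factorization $X \xhookrightarrow{\,j\,} \overline{X} \xrightarrow{\,\overline{f}\,} Y$ with $j$ an open immersion and $\overline{f}$ proper, and apply the triangulated functor $R\overline{f}_* \circ j_!$ to the excision triangle. By definition of $Rf_!$ the middle term becomes $Rf_! F$, so the task reduces to identifying the outer terms with $Rg_!(F|_U)$ and $Rh_!(F|_Z)$ in a way independent of auxiliary choices.

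For the open term: the composite $j \circ i\colon U \hookrightarrow \overline{X}$ is an open immersion and, combined with $\overline{f}$, provides a compactification of $g$. Since open-immersion extensions compose strictly, $j_! \circ i_! = (j i)_!$, and hence $R\overline{f}_* j_! i_!(F|_U) = Rg_!(F|_U)$. For the closed term, let $\overline{Z}$ denote the closure of $Z$ in $\overline{X}$, let $\overline{k}\colon \overline{Z}\hookrightarrow \overline{X}$ be the closed immersion, and $\ell\colon Z \hookrightarrow \overline{Z}$ the induced open immersion. A direct check on stalks gives a canonical isomorphism $j_! \circ k_* \cong \overline{k}_* \circ \ell_!$, as both sides describe extension by zero from $Z$ to $\overline{X}$ along the locally closed immersion $Z \hookrightarrow \overline{X}$. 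Since $\overline{f}\circ \overline{k}\colon \overline{Z}\to Y$ is proper and $\ell$ is an open immersion, the pair $(\ell,\, \overline{f}\,\overline{k})$ is a compactification of $h$, so $R(\overline{f}\,\overline{k})_* \circ \ell_! = Rh_!$, identifying the right term as $Rh_!(F|_Z)$.

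The only genuine point to watch is the compatibility $j_! k_* \cong \overline{k}_* \ell_!$ and the claim that each of the three identifications above is independent of the chosen compactification; both rest on the well-definedness of $Rf_!$, which in turn follows from proper base change (already invoked in the definition of $Rf_!$). Once these are in place, the triangle obtained is manifestly functorial in $F$ and yields the asserted excision triangle in $D^b_c(Y;\Lambda)$.
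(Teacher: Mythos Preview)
Your argument is correct: starting from the sheaf-level excision triangle $i_!(F|_U)\to F\to k_*(F|_Z)$ on $X$, applying $R\overline{f}_*\circ j_!$, and then identifying the outer terms via the compactifications $(ji,\overline{f})$ of $g$ and $(\ell,\overline{f}\,\overline{k})$ of $h$ is exactly the standard route. The key identification $j_!k_*\cong \overline{k}_*\ell_!$ is valid since $Z=\overline{Z}\cap X$ is open in $\overline{Z}$ and both sides compute extension by zero along the locally closed immersion $Z\hookrightarrow\overline{X}$; the independence of compactification is, as you say, the proper base change input already built into the definition of $Rf_!$.

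There is nothing to compare against in the paper: its proof consists solely of a reference to the Stacks Project \cite[Tag~0GKP]{stacks-project}. Your write-up is a faithful unpacking of that citation.
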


\begin{proof}
See~\cite[\href{https://stacks.math.columbia.edu/tag/0GKP}{Tag~0GKP}]{stacks-project}.
\end{proof}

\begin{proposition}[Proper base change]
Consider a cartesian diagram
\[
\xymatrix{X' \ar[r]^{g'} \ar[d]^{f'} & X \ar[d]^f \\
Y' \ar[r]^{g} & Y.}
\]
Then there is a canonical isomorphism
\[
g^* Rf_!(-) \simeq Rf'_! g'^*(-)
\]
of functors $D^b_c(X;\Lambda) \to D^b_c(Y';\Lambda)$.
\end{proposition}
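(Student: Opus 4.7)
The plan is to reduce the statement to two classical base change results: (a) proper base change for proper pushforward, and (b) the trivial base change for lower-shriek along open immersions. The reduction is via the factorization used in the definition of $Rf_!$.

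First I would fix a Nagata-type compactification $X \xhookrightarrow{\,j\,} \overline{X} \xrightarrow{\,\overline{f}\,} Y$, where $j$ is an open immersion and $\overline{f}$ is proper, so that by definition $Rf_! = R\overline{f}_* \circ j_!$. Pulling this factorization back along $g\colon Y' \to Y$ produces a cartesian square compatible with the one in the statement: set $\overline{X}' \colonequals \overline{X} \times_Y Y'$, and write $g''\colon \overline{X}' \to \overline{X}$ for the projection, $j'\colon X' \to \overline{X}'$ for the base change of $j$ (still an open immersion), and $\overline{f}'\colon \overline{X}' \to Y'$ for the base change of $\overline{f}$ (still proper). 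Then $Rf'_! = R\overline{f}'_* \circ j'_!$ as well, and $g'\colon X' \to X$ factors as $g' = g'' \circ (\text{identification})$ restricted to $X'$.

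Next I would construct the base change map $g^\ast Rf_!(-) \to Rf'_! g'^\ast(-)$ by splicing the standard base change morphisms: the natural isomorphism $g^\ast \circ j_! \simeq j'_! \circ g'^\ast$ (valid for $j_!$ along any open immersion, formal from adjunctions, see \cite[\href{https://stacks.math.columbia.edu/tag/03S7}{Tag~03S7}]{stacks-project}) combined with the proper base change isomorphism $g^\ast \circ R\overline{f}_* \simeq R\overline{f}'_* \circ g''^\ast$ (the classical proper base change theorem, see \cite[\href{https://stacks.math.columbia.edu/tag/095S}{Tag~095S}]{stacks-project}). Concatenating these gives
\[
g^\ast Rf_!(-) = g^\ast R\overline{f}_* j_!(-) \simeq R\overline{f}'_* g''^\ast j_!(-) \simeq R\overline{f}'_* j'_! g'^\ast(-) = Rf'_! g'^\ast(-).
\]

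The main (and only) nontrivial input is classical proper base change for torsion sheaves under the proper map $\overline{f}$, which we are citing. The only other thing to check is that the resulting isomorphism is canonical, i.e., independent of the chosen compactification; this follows from the fact that $Rf_!$ itself is independent of the compactification, together with the naturality of both of the base change isomorphisms used above with respect to morphisms of compactifications. Finiteness of $\Lambda$ and finite-type separatedness of $f$ ensure that Nagata compactifications exist and that $Rf_!$ preserves $D^b_c$, so no further technicalities arise.
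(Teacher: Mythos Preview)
Your proof is correct and is essentially an unpacking of the reference the paper cites: the paper's own proof is simply ``See~\cite[Tag~0F7L]{stacks-project}'', and that tag proves the result by exactly the same decomposition into open-immersion lower-shriek plus proper pushforward that you spell out. There is nothing to add.
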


\begin{proof}
See~\cite[\href{https://stacks.math.columbia.edu/tag/0F7L}{Tag~0F7L}]{stacks-project}.
\end{proof}

\begin{lemma}[Projection formula]
For a map $f\colon X \to Y$ and $F \in D^b_c(X;\Lambda)$, $G \in D^b_c(Y;\Lambda)$, there is a canonical isomorphism
\[
G \otimes Rf_! F \simeq Rf_!(f^* G \otimes F)
\]
in $D^b_c(Y;\Lambda)$.
\end{lemma}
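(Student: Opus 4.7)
The plan is to prove the projection formula by exploiting the definition of $Rf_!$ as $R\overline{f}_* \circ j_!$ for a factorization $f = \overline{f} \circ j$ into an open immersion $j\colon X \to \overline{X}$ followed by a proper map $\overline{f}\colon \overline{X} \to Y$. It suffices to establish the projection formula separately for $j_!$ and for $R\overline{f}_*$, since chaining them gives
\[
Rf_!(f^*G \otimes F) = R\overline{f}_*\, j_!(j^*\overline{f}^*G \otimes F) \simeq R\overline{f}_*(\overline{f}^*G \otimes j_! F) \simeq G \otimes R\overline{f}_*(j_! F) = G \otimes Rf_! F.
\]

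For the open immersion step, the adjunction $(j_!, j^*)$ and the equality $j^*(G \otimes j_! F) = j^*G \otimes F$ produce a canonical map $G \otimes j_! F \to j_!(j^*G \otimes F)$, and it is an isomorphism because both sides restrict to $j^*G \otimes F$ on $X$ and vanish on the closed complement (since $j_!(-)$ has trivial stalks there).

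For the proper step, the standard unit/counit construction yields a natural map $G \otimes R\overline{f}_* H \to R\overline{f}_*(\overline{f}^*G \otimes H)$ for any $H \in D^b_c(\overline{X};\Lambda)$ (take $H = j_! F$). To check this is an isomorphism, I would reduce to the fiber using proper base change: for any geometric point $\overline{y} \to Y$, the stalk of the left side is $G_{\overline{y}} \otimes R\Gamma(\overline{X}_{\overline{y}}, H|_{\overline{X}_{\overline{y}}})$, while the stalk of the right side is $R\Gamma(\overline{X}_{\overline{y}}, G_{\overline{y}} \otimes H|_{\overline{X}_{\overline{y}}})$, since $\overline{f}^*G$ pulls back to the constant sheaf with stalk $G_{\overline{y}}$ on the fiber. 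Thus the problem reduces to the universal coefficient statement: for a proper scheme $Z$ over an algebraically closed field, a bounded constructible complex $K$ on $Z$, and a perfect $\Lambda$-complex $M$, the map $M \otimes R\Gamma(Z,K) \to R\Gamma(Z, M \otimes K)$ is an isomorphism.

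The main obstacle will be the last reduction: handling a general stalk $G_{\overline{y}}$, which is merely a finitely generated $\Lambda$-module and need not be a perfect complex over the finite ring $\Lambda$. The fix is to invoke the bounded étale cohomological dimension of the finite type separated scheme $Z$ over an algebraically closed field, which allows one to truncate a bounded-above free resolution of $M$ at a fixed stage without disturbing the map. Combined with a d\'evissage over the strata of a stratification trivializing the constructible sheaf $G$, this upgrades the isomorphism from the free case to the general case, completing the proof. Everything else is formal manipulation of the four functors and the standard adjunction-based construction of the projection morphism.
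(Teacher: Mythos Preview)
Your argument is correct and essentially reconstructs the standard proof. The paper itself does not give a proof: it simply cites \cite[Tag~0GL5]{stacks-project}. So there is no substantive comparison to make; your decomposition via a compactification $f = \overline{f} \circ j$, the stalkwise check for $j_!$, and the reduction of the proper case to a universal-coefficient statement via proper base change is exactly the standard route (and is close to what the Stacks Project does).

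One minor remark: the final ``d\'evissage over the strata of a stratification trivializing $G$'' is unnecessary. Once you have constructed the global comparison map and verified it is an isomorphism on every geometric stalk (which is what your proper base change plus bounded-cohomological-dimension argument accomplishes), you are done; isomorphy of a map in $D^b_c(Y;\Lambda)$ is detected on geometric stalks. The stratification of $G$ plays no role at that point.
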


\begin{proof}
See~\cite[\href{https://stacks.math.columbia.edu/tag/0GL5}{Tag~0GL5}]{stacks-project}.
\end{proof}

Let us discuss two calculations of compactly supported cohomology in the case of interest, i.e., with $\FFp$-coefficients in characteristic $p$.

\begin{example}[Affine space fibrations]
\label{AffineSpaceCoh}
Say $f\colon X \to Y$ is a map of $\FFp$-schemes whose fibers are affine spaces $\AA^n_k$ of positive dimension, and assume $\Lambda = \FFp$. Then $Rf_! \FFp \simeq 0$. To see this, by proper base change, it is enough to show that $R\Gamma_c(\AA^n_k; \FFp) = 0$ for $n > 0$ over any field $k$. Using the excision triangle for the compactification $\AA^n_k \subset \PP^n_k$ with boundary a hyperplane $\PP^{n-1}_k$, it suffices to show that
\[
R\Gamma(\PP^n; \FFp) \simeq R\Gamma(\PP^{n-1};\FFp) \qquad \text{for } n > 0.
\]
This follows from the Artin-Schreier exact sequence and the calculation that $k \simeq R\Gamma_c(\PP^m_k; \calO_{\PP^m_k})$ for $m \ge 0$.
\end{example}

\begin{example}[$\GG_m$-torsors]
\label{GmCohCompSupp}
Let $f\colon X \to Y$ be a $\GG_m$-torsor of $\FFp$-schemes, and assume that $\Lambda = \FFp$. Then~$Rf_! \FFp \simeq \FFp[-1]$. To see this, let $X \xhookrightarrow{\ i\ } \overline{X} \xrightarrow{\ \overline{f}\ } X$ be the standard partial compactification of $f$, so $\overline{f}$ is the total space of a line bundle over $Y$, and $Z \colonequals (\overline{X} - X)_{red}$ maps isomorphically to $Y$ via $\overline{f}$. Using Example~\ref{AffineSpaceCoh} and excision, we learn that
\[
Rf_! \FF_{\!p,X} \simeq R\overline{f}_! \FF_{\!p, Z}[-1] \simeq \FF_{\!p,Y}[-1].
\]
\end{example}

%%%%%%%%%%%%%%%%%%%%%%%%%%%%%%%%%%%%%%%%%%%%%%%%%%%%%%%%%%%%%%%%%%%%%%%%
\section{Applications of the Riemann-Hilbert functor in characteristic $p$}
%%%%%%%%%%%%%%%%%%%%%%%%%%%%%%%%%%%%%%%%%%%%%%%%%%%%%%%%%%%%%%%%%%%%%%%%

In the bulk of this section, we use the Riemann-Hilbert correspondence in characteristic $p$ as well as the theory of perverse sheaves as developed in \S\ref{sec:RHPervFp} to obtain the applications mentioned in \S\ref{section:introduction}. We point out that the results in \S\ref{ss:ICFRat} and \S\ref{ss:HHSm} have also been discovered independently by Cass and Louren{\c{c}}o \cite{CassLau}, and that of \S\ref{ss:ArtinVanishing} by Baudin \cite{BaudinDual}.

All functors are derived, unless otherwise specified.

%%%%%%%%%%%%%%%%%%%%%%%%%%%%%%%%%%%%%%%%%%%%%%%%%%%%%%%%%%%%%%%%%%%%%%%%
\subsection{The perverse Artin vanishing theorem}
\label{ss:ArtinVanishing}
%%%%%%%%%%%%%%%%%%%%%%%%%%%%%%%%%%%%%%%%%%%%%%%%%%%%%%%%%%%%%%%%%%%%%%%%

The goal of this subsection is to prove a surprisingly strong version of the Artin vanishing theorem (Theorem~\ref{PerverseArtin}) of $f_!$ for an affine map.

\begin{notation}
Fix a noetherian local $F$-finite $\FFp$-algebra $A$. We work with finite type $A$-schemes, so there is a well-defined perverse $t$-structure on $D^b_c(X;\FFp)$ for any scheme $X$ under consideration.
\end{notation}

Our first basic observation is that the perverse $t$-structure behaves well under smooth pullback, much like the classical setting, i.e., in the case of $\ZZ/\ell$-coefficients, where $\ell$ is invertible.

\begin{lemma}[Smooth pullbacks are perverse $t$-exact, up to a shift]
\label{smoothpullbackperverse}
Let $f\colon X \to Y$ be a smooth morphism of relative dimension $d$. Then $F \mapsto f^*F[d]$ is $t$-exact for the perverse $t$-structure.
\end{lemma}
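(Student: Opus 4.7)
The plan is to transport the claim through the Riemann--Hilbert equivalence and then reduce it to Grothendieck duality for smooth morphisms on coherent sheaves. By Theorem~\ref{RHSmallVer} and Theorem~\ref{RHBigVer}(3), the pullback $f^*$ on $D^b_c(-;\FFp)$ corresponds to the pullback on $D^b_{hol}(-[F])$. The perverse $t$-structure on $D^b_c(X_{\et};\FFp)$ is defined (Definition~\ref{PervtDef}) via Corollary~\ref{PervHolQuot} as the unique $t$-structure compatible, through perfection, with the perverse $t$-structure on $D^b_{coh}(X[F])$. Since pullback of Frobenius modules commutes with perfection, and since the perverse $t$-structure on $D^b_{coh}(-[F])$ is induced via the forgetful functor from the perverse $t$-structure on $D^b_{coh}(-)$ (Construction~\ref{PervCohFrob}), the task reduces to showing that $f^*[d]\colon D^b_{coh}(Y) \to D^b_{coh}(X)$ is $t$-exact for the perverse $t$-structure of \S\ref{ss:PervCoh}.

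The key step is a Grothendieck duality computation. Since $\omega_X^\bullet \simeq f^!\omega_Y^\bullet$ (both being $\pi^!\omega_A^\bullet$ for the structure maps $\pi$ to $\Spec A$), and $f^!(-) \simeq f^*(-) \otimes \omega_{X/Y}[d]$ for smooth $f$ of relative dimension $d$, where $\omega_{X/Y} \colonequals \det \Omega^1_{X/Y}$ is invertible, we obtain $\omega_X^\bullet \simeq f^*\omega_Y^\bullet \otimes \omega_{X/Y}[d]$. Consequently, for any $M \in D^b_{coh}(Y)$,
\[
\DD_X\big(f^*M[d]\big) \simeq \RHom_X\big(f^*M,\, f^*\omega_Y^\bullet \otimes \omega_{X/Y}\big) \simeq f^*\DD_Y(M) \otimes \omega_{X/Y},
\]
using flatness of $f$ to commute $f^*$ with $\RHom$ and the projection formula to pull out the invertible twist. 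Since $f$ is flat, $f^*$ is $t$-exact for the standard $t$-structure on $D^b_{coh}$, and tensoring with the invertible sheaf $\omega_{X/Y}$ is exact, so $\DD_X \circ (f^*[d])$ is $t$-exact for the standard $t$-structure. By the definition of the perverse $t$-structure as the Grothendieck dual of the standard one, this is exactly the $t$-exactness of $f^*[d]$ for the perverse $t$-structure, as desired.

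I expect no serious obstacle here: the first paragraph is bookkeeping across the several layers defining the perverse $t$-structure, while the second paragraph rests on the classical duality formula $f^!(-) \simeq f^*(-) \otimes \omega_{X/Y}[d]$ for smooth morphisms. As an alternative path, one could argue via Gabber's support-condition characterization (Theorem~\ref{CompareGabber}), using the bound $d(x) - d(f(x)) = \operatorname{trdeg}(\kappa(x)/\kappa(f(x))) \le d$ to handle the stalk conditions and its dual for the costalks; but the Grothendieck duality route is cleaner and more symmetric.
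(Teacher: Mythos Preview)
Your proposal is correct and follows essentially the same approach as the paper: both reduce to showing $f^*[d]$ is perverse $t$-exact on $D^b_{coh}$, and then use the Grothendieck duality formula $f^!\simeq f^*(-)\otimes\omega_{X/Y}[d]$ for smooth $f$ to see that $\DD_X\circ f^*[d]$ and $f^*\circ\DD_Y$ differ by an invertible twist, whence the claim by flatness. You have simply spelled out the ``unwinding definitions'' and duality computation more explicitly than the paper does.
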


\begin{proof}
Unwinding definitions, it suffices to show that $M \mapsto f^* M[d]$ is exact for the perverse $t$-structure on $D^b_{coh}(Y)$. In other words, for $N \in \mathrm{Coh}(Y)$, we need to check that $\DD_X\left(f^* \DD_Y(N)[d]\right)$ lies in $\mathrm{Coh}(X)$. But duality theory and the smoothness of $f$ show that $\DD_X\circ f^*\circ \DD_Y=f^!=f^* \otimes \omega_f[d]$, where $\omega_f$ is the relative dualizing line bundle of the smooth morphism $f$, see \cite[\S 2]{Conrad}. Hence the claim follows from flatness of $f$ since $f$ is smooth.
\end{proof}

Our key new observation is that the classical perverse Artin vanishing theorem acquires a stronger form in the case of $\FFp$-coefficients: we actually get $t$-exactness rather than merely half $t$-exactness.

\begin{theorem}[Perverse acyclicity for affine morphisms]
\label{PerverseArtin}
Let $f\colon X \to Y$ be an affine morphism between finite type $A$-schemes. Then $f_!\colon D^b_c(X;\FFp) \to D^b_c(Y;\FFp)$ is $t$-exact for the perverse $t$-structure.
\end{theorem}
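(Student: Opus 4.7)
The strategy is to prove the two halves of $t$-exactness for $f_!$ separately. In the $\FFp$-setting there is no Verdier duality, so unlike the $\ell$-adic case one cannot deduce left $t$-exactness from right $t$-exactness by formal duality. By Theorem~\ref{CompareGabber} our perverse $t$-structure coincides with Gabber's (perversity $p(x)=-d(x)$), so I can freely use the support/cosupport description in terms of stalks $i_x^*F$ and costalks $i_x^!F$. In both halves, I would first compactify $f$ as $X\hookrightarrow \overline X\to Y$ and apply proper base change to reduce the global claim to the following local input: for any affine finite-type $A$-scheme $Z$ and any $F\in\mathrm{Perv}_c(Z;\FFp)$, one has $H^i_c(Z,F)=0$ for all $i\neq 0$.

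The right $t$-exactness (vanishing for $i>0$) is the classical half, and the proof in \cite[Theorem~4.1.1]{BBDG} adapts verbatim to $\FFp$-coefficients: its inputs are non-perverse Artin vanishing (valid for all torsion sheaves, including $\FFp$), the shift-$t$-exactness of smooth pullback (our Lemma~\ref{smoothpullbackperverse}), and d\'evissage along a stratification reducing $F$ to shifts of constructible sheaves on smooth strata.

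The genuinely new half is left $t$-exactness (vanishing for $i<0$). Here I would transport the problem across the Riemann-Hilbert correspondence: write $M=\RH(F)$ as the perfection of a perverse coherent Frobenius module $\widetilde M$ (Corollary~\ref{PervHolQuot}). The Artin-Schreier fiber sequence gives a presentation of the form
\[
R\Gamma_c(Z,F)\simeq\mathrm{fib}\!\big(R\Gamma_c(Z,\widetilde M)\xrightarrow{\ \phi-1\ }R\Gamma_c(Z,\widetilde M)\big),
\]
so the question reduces to concentration of the underlying coherent cohomology in non-negative degrees, together with the surjectivity of $\phi-1$ there. Perversity of $\widetilde M$ amounts, via Corollary~\ref{RecogPerv} and Example~\ref{CMPerverseCoh}, to a Cohen--Macaulayness condition on its Grothendieck dual; combining this with Serre vanishing for coherent sheaves on affines --- applied to a chosen compactification $Z\hookrightarrow\overline Z$ --- together with the surjectivity of $\phi-1$ on algebraic Frobenius modules (recalled in the proof of Theorem~\ref{CompareGabber}) should produce the required vanishing.

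The main obstacle I foresee lies precisely in this last step: running Serre vanishing cleanly through the compactification requires simultaneously tracking the perverse-coherent grading on $\widetilde M$ (controlled by the $d(x)$-function) against the boundary contributions that appear in $R\Gamma_c$. This is where the full force of the equivalences of Corollary~\ref{PerverseCharpLocal} and of the compatibility with Gabber's perversity (Theorem~\ref{CompareGabber}) seems indispensable, since one needs to convert the stalk condition at a point of $Y$ into a coherent-cohomology condition on a single affine fiber in a uniformly perverse-compatible way.
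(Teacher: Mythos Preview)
Your proposal circles the correct ingredients---compactification, Riemann--Hilbert, Grothendieck duality, Serre vanishing---but misses the one move that makes the argument close, so the final step remains a genuine gap rather than a matter of polish.

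The paper does not split into two halves. It works entirely on the Frobenius-module side and proves perversity of $f_!F$ in one stroke. Writing $F=\Sol(M_0)$ with $M_0\in\mathrm{Perv}_{coh}(X)$, one compactifies $X\xhookrightarrow{j}\overline X\xrightarrow{\overline f}Y$ with boundary a relatively ample Cartier divisor $H$, extends $M_0$ to a perverse coherent Frobenius module $N_0$ on $\overline X$, and observes that $j_!F$ corresponds under $\RH$ to the perfection of $I\otimes N_0$, where $I=\calO_{\overline X}(-H)$. The task is then to show $\overline f_*(I\otimes N_0)_{\perf}$ is perverse holonomic on $Y$. The trick you are missing is that \emph{replacing $I$ by any power $I^n$ does not change the perfection}, so it suffices to show $\overline f_*(I^n\otimes N_0)$ is perverse coherent for $n\gg 0$. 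By Grothendieck duality this is equivalent to $\overline f_*(I^{-n}\otimes\DD_{\overline X}(N_0))$ being a sheaf concentrated in degree $0$; since $N_0$ is perverse coherent, $\DD_{\overline X}(N_0)$ is an honest coherent sheaf, and since $I^{-1}$ is relatively ample, Serre vanishing finishes. Both halves of $t$-exactness fall out at once.

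Two specific issues with your sketch. First, the object $R\Gamma_c(Z,\widetilde M)$ for a perverse coherent complex $\widetilde M$ is not well-defined: $j_!$ of a quasi-coherent sheaf is not quasi-coherent, so there is no Artin--Schreier fiber sequence of the shape you wrote. The paper sidesteps this by extending to $N_0$ on the compactification and twisting by $I$, staying inside $D^b_{coh}$ throughout. Second, surjectivity of $\varphi-1$ plays no role: once $\overline f_*(I^n\otimes N_0)$ is perverse coherent, its perfection is perverse holonomic by Corollary~\ref{PervHolQuot}, and hence $f_!F$ is perverse by the very definition of the perverse $t$-structure on $D^b_c$ (Definition~\ref{PervtDef}). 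Your instinct to invoke $\varphi-1$ comes from trying to pass from coherent concentration to \'etale concentration by hand, but that passage is tautological here. Without the $I\mapsto I^n$ freedom, however, there is no way to invoke Serre vanishing at all---you only have a single fixed twist---and this is exactly the ``obstacle'' your final paragraph senses but does not resolve.
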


\begin{proof}
We may assume that both $X$ and $Y$ are affine. Fix $F \in \Perv_c(X;\FFp)$. Under Riemann-Hilbert, such an~$F$ has the form $\Sol(M)$ for an $M \in D^b_{hol}(X[F])$ that arises as the perfection of Frobenius module $(M_0,\phi_{M_0})$ for some $M_0 \in \Perv_{coh}(X)$. Choose a diagram
\[
X \xrightarrow{\ j\ } \overline{X} \xrightarrow{\ \overline{f}\ } Y
\]
where $\overline{f}$ is a proper morphism, and $j$ is a dense affine open immersion with complement $H \colonequals (\overline{X} - X)_{red}$ being a Cartier divisor that is relatively ample over $Y$. Extend the pair $(M_0,\phi_{M_0})$ to a pair $(N_0,\phi_{N_0})$ with $N_0 \in \Perv_{coh}(\overline{X})$: one can simply pick any extension $(\overline{M}, \phi_{\overline{M}})$ with $\overline{M} \in D^b_{coh}(\overline{X})$ and then set $N_0 \colonequals {}^p H^0(\overline{M})$ (using Lemma~\ref{PervFinitePushforward} to ensure compatibility of formation of perverse cohomology sheaves with $F_*$).

If $I = \calO_{\overline{X}}(-H)$ denotes the ideal sheaf of the boundary, the pair $(I \otimes N_0, \phi \otimes \phi_{M_0})$ is a Frobenius module on $\overline{X}$ whose perfection corresponds to $j_! F$ under Riemann-Hilbert, see Example~\ref{exRHshriekext}. Thus, our task is to show $\overline{f}_*(I \otimes N_0)_{\perf}$ is a perverse holonomic Frobenius module on $Y$. Since we are allowed to replace $I$ with a power without affecting the perfection, it is enough to show that $\overline{f}_*(I^n \otimes N_0)_{\perf}$ is a perverse holonomic Frobenius module for $n \gg 0$; we shall check the stronger statement that for $n \gg 0$, we have
\[
{}^p H^i(\overline{f}_* (I^n \otimes N_0)) = 0 \qquad \text{for } i \neq 0.
\]
By duality, this is equivalent to showing that
\[
H^i(\overline{f}_*( \DD_{\overline{X}}(N_0) \otimes I^{-n})) = 0 \qquad \text{for } i \neq 0
\]
and $n \gg 0$. But the perversity of $N_0$ ensures that $ \DD_{\overline{X}}(N_0)$ is a coherent sheaf on $\overline{X}$; since $I^{-1} = \calO_{\overline{X}}(1)$ is ample, the claim now follows from Serre vanishing.
\end{proof}

\begin{corollary}[Strong Artin vanishing]
Let $X$ be an affine variety over a field $k$ of characteristic $p$. For any~$F \in \Perv_c(X;\FFp)$, we have $H^i_c(X; F) = 0$ for $i \neq 0$.
\end{corollary}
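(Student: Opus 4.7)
The plan is to deduce this directly from Theorem~\ref{PerverseArtin} applied to the structure map $f\colon X \to \Spec k$. Since $X$ is an affine variety, this map is affine (and separated, finite type), so the theorem gives that $f_!\colon D^b_c(X;\FFp) \to D^b_c(\Spec k;\FFp)$ is $t$-exact for the perverse $t$-structure. In particular, $f_!F \in \mathrm{Perv}_c(\Spec k;\FFp)$.

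Next, I would observe that perversity on a point is trivial: by Remark~\ref{DualizingComplexLocalRing}, the unique point of $\Spec k$ has $d$-value $0$, so the normalized dualizing complex is $k$ in degree $0$ and the perverse $t$-structure on $D^b_{coh}(\Spec k)$ coincides with the standard one. Transporting through Riemann-Hilbert (Definition~\ref{PervtDef}), the perverse $t$-structure on $D^b_c(\Spec k;\FFp)$ also coincides with the standard one; equivalently, a perverse sheaf on $\Spec k$ is just a finite $\FFp$-vector space placed in degree $0$. Therefore $f_!F$ is concentrated in cohomological degree $0$, which is precisely the conclusion $H^i_c(X;F)=H^i(f_!F)=0$ for $i\neq 0$.

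The only minor wrinkle is that Theorem~\ref{PerverseArtin} was formulated for finite type schemes over a noetherian local $F$-finite $\FFp$-algebra~$A$, so I need $k$ to play the role of $A$. If $k$ is $F$-finite (e.g.\ perfect), this is immediate. For general $k$, I would spread out: choose a finitely generated, hence $F$-finite, subfield $k_0 \subseteq k$ and a finite type model $X_0/k_0$ of $X$ together with a perverse sheaf $F_0$ on $X_0$ pulling back to $F$ (possible since both $X$ and $F$ are finitely presented). Proper base change for $f_!$ along $\Spec k \to \Spec k_0$ then reduces the vanishing of $H^i_c(X;F)$ to that of $H^i_c(X_0;F_0)$, which is handled by the $F$-finite case. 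I do not anticipate a real obstacle here — the main content is Theorem~\ref{PerverseArtin}, and this corollary is just the specialization of that $t$-exactness to a point.
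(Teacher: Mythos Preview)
Your proposal is correct and is exactly the intended argument: the paper states this corollary immediately after Theorem~\ref{PerverseArtin} with no proof, treating it as the specialization of that $t$-exactness to the structure map $X \to \Spec k$, together with the trivial observation that the perverse and standard $t$-structures agree over a point. Your additional care about the $F$-finiteness hypothesis on the base (handled by spreading out to a finitely generated subfield and invoking proper base change for $f_!$) is a detail the paper leaves implicit, but your treatment of it is fine.
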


\begin{corollary}[Semiperversity of pushforwards]
\label{SemiperversePushforward}
For a separated morphism $f\colon X \to Y$ of finite type schemes over a noetherian $F$-finite ring $A$, the functor $f_!$ is right $t$-exact, i.e., it takes ${}^p D^{\le 0}(X)$ to ${}^p D^{\le 0}(Y)$.
\end{corollary}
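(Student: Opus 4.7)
The plan is to reduce to the affine case (Theorem~\ref{PerverseArtin}) via a Mayer--Vietoris induction on the size of an affine open cover of $X$, after first localizing on $Y$ to make the base affine.

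First I would reduce to the case where $Y$ is affine. Membership in ${}^p D^{\le 0}(Y)$ is local on $Y$ by Gabber's stalkwise characterization of the perverse $t$-structure (Theorem~\ref{CompareGabber}), so it suffices to check the conclusion after restricting to each affine open $V \hookrightarrow Y$. Proper base change identifies $(f_! F)|_V$ with $(f|_{f^{-1}(V)})_!(F|_{f^{-1}(V)})$; the restriction $F|_{f^{-1}(V)}$ remains in ${}^p D^{\le 0}$ because open pullback preserves stalks and the function $d(-)$ from Remark~\ref{DualizingComplexLocalRing}, so it is perverse $t$-exact. After this reduction the base $Y$ is affine (hence separated), and the source $X$ becomes separated too, as the domain of a separated morphism to a separated base.

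Now I would induct on the minimum number $n$ of affine opens in a cover of $X$. When $n = 1$, $X$ is affine, so $f\colon X \to Y$ is a morphism between affine schemes and is therefore an affine morphism; Theorem~\ref{PerverseArtin} then yields the full $t$-exactness of $f_!$, which in particular gives right $t$-exactness. For the inductive step, fix a cover $X = U_1 \cup \cdots \cup U_n$, and set $U \colonequals U_1$, $V \colonequals U_2 \cup \cdots \cup U_n$, and $W \colonequals U \cap V$. Since $X$ is separated, intersections of affine opens in $X$ are affine, so $W = \bigcup_{i \ge 2}(U \cap U_i)$ is covered by $n-1$ affines, as is $V$. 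Applying $f_!$ to the Mayer--Vietoris exact triangle
\[
j_{W,!}(F|_W) \to j_{U,!}(F|_U) \oplus j_{V,!}(F|_V) \to F \xrightarrow{\ +1\ }
\]
in $D^b_c(X;\FFp)$ and using $f_! \circ j_{U,!} = (f|_U)_!$ (and analogously for $V$ and $W$) produces the exact triangle
\[
(f|_W)_!(F|_W) \to (f|_U)_!(F|_U) \oplus (f|_V)_!(F|_V) \to f_! F \xrightarrow{\ +1\ }.
\]
Each restriction of $F$ lies in ${}^p D^{\le 0}$ of its ambient open by perverse $t$-exactness of open pullback, and the inductive hypothesis (applicable since each of $f|_U$, $f|_V$, $f|_W$ is separated with source covered by fewer than $n$ affines) places the first two terms in ${}^p D^{\le 0}(Y)$. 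The long exact sequence of perverse cohomology then forces $f_! F \in {}^p D^{\le 0}(Y)$, closing the induction.

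I do not foresee a serious obstacle here: the only ingredient beyond Theorem~\ref{PerverseArtin} is the perverse $t$-exactness of open pullback, which is immediate from Gabber's characterization of the $t$-structure via $p(x) = -d(x)$ (Theorem~\ref{CompareGabber}) together with the locality of $d(-)$ recorded in Remark~\ref{DualizingComplexLocalRing}. The one conceptual subtlety is ensuring that after localizing on $Y$ one may assume $X$ is separated, so that intersections of affine opens in $X$ remain affine and the Mayer--Vietoris induction functions as expected; this is guaranteed by passing to affine opens $V \subset Y$ first.
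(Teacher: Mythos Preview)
Your argument is correct. The paper's proof is a two-line sketch that uses a different but closely related dévissage: it picks a dense affine open $U \subset X$, applies the excision triangle for the open/closed decomposition $U \subset X \supset Z$, and proceeds by noetherian induction on the closed complement $Z$. You instead run an induction on the number of affines in a cover via the Mayer--Vietoris triangle for an open/open decomposition. Both reduce to Theorem~\ref{PerverseArtin} in the affine case; the excision approach is marginally slicker because it does not need the separatedness of $X$ to control intersections, while your approach is more explicit about the preliminary reduction to $Y$ affine (a step the paper leaves implicit but which is needed to ensure that ``$X$ affine'' actually implies ``$f$ affine'').
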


\begin{proof}
If $X$ is affine, then the claim follows from Theorem~\ref{PerverseArtin}. In general, one picks a dense affine open set~$k\colon U \subset X$ (with closed complement $i \colon Z \to X$), and uses the associated excision triangle 
\[
k_!k^*F \to F \to i_!i^*F \to[+1].
\]
If $F$ lies in ${}^p D^{\le 0}(X)$ then so do the other two terms by the $t$-exactness of $k^*$ and $i_!$ and 
the right $t$-exactness of $k_!$ and $i^*$. Now apply $j_!$ to this triangle and use the affine case for the first, and Noetherian induction on the last term to see that these terms lie in ${}^p D^{\le 0}(Y)$. Hence the middle term $j_!F \in {}^p D^{\le 0}(X)$ as well.
\end{proof}

\begin{remark}
Theorem~\ref{PerverseArtin} as well as Corollary~\ref{SemiperversePushforward} are specific to our setting, i.e., to $\FFp$-sheaves in characteristic $p$, and fail completely in the classical setting. For example, if $X$ is a smooth affine complex variety of dimension $d$, then $H^{2d}_c(X; \CC) \simeq \CC \neq 0$.
\end{remark}

\begin{remark}
A similar vanishing has been observed for compactly supported Witt vector cohomology of smooth affine varieties with $\QQ_p$-coefficients by Berthelot-Bloch-Esnault \cite{BerthBlochEsn}. Neither result implies the other: ours is integral but only refers to cohomology spaces where Frobenius acts as the identity (so the slope is $0$), while theirs is rational but refers to cohomology spaces where Frobenius is allowed to act with slopes in $[0,1)$. It would be interesting to find a common generalization.
\end{remark}

%%%%%%%%%%%%%%%%%%%%%%%%%%%%%%%%%%%%%%%%%%%%%%%%%%%%%%%%%%%%%%%%%%%%%%%%
\subsection{Intersection cohomology and $F$-rationality}
\label{ss:ICFRat}
%%%%%%%%%%%%%%%%%%%%%%%%%%%%%%%%%%%%%%%%%%%%%%%%%%%%%%%%%%%%%%%%%%%%%%%%

The goal of this subsection is to explain what the intersection cohomology complex is, and why the property of being an intersection cohomology manifold is closely related to $F$-rationality.

\begin{notation}
Fix a noetherian local $F$-finite $\FFp$-algebra $A$. We work with finite type $A$-schemes, so there is a well-defined perverse $t$-structure on $D^b_c(X;\FFp)$, for any scheme $X$ under consideration.
\end{notation}

Let us begin with the following variant of the Artin vanishing for $*$-pushforwards for open affine immersions, complementing Theorem~\ref{PerverseArtin}.

\begin{proposition}
Let $j\colon U \to X$ be an affine open immersion of finite type $A$-schemes. Then
\[
j_*\colon D^b(U_{\et}, \FFp) \to D^b(X_{\et}, \FFp)
\]
is perverse $t$-exact.
\end{proposition}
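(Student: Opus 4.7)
The plan is to invoke Gabber's stalk/costalk description of the perverse $t$-structure from Theorem~\ref{CompareGabber} (extended to all of $D(X_\et, \FFp)$ as per the remark following it, with perversity function $p(x) = -d(x)$), and then reduce right $t$-exactness to a classical perverse Artin vanishing statement on affine schemes. Left $t$-exactness of $j_*$ is automatic: for an open immersion $j$, the perversity function $d(-)$ restricts naturally under open inclusions (as normalized dualizing complexes are compatible with open restriction), so $j^* = j^!$ is perverse $t$-exact, and its right adjoint $j_*$ is left perverse $t$-exact.

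For right $t$-exactness, fix $G \in {}^p D^{\le 0}(U_\et, \FFp)$; by Gabber, we must verify the stalk bound $(j_* G)_{\bar y} \in D^{\le -d(y)}$ for every $y \in X$. The case $y \in U$ is immediate from the corresponding hypothesis on $G$. For $y \in Z := X \setminus U$, write $i\colon Z \into X$; the companion costalk condition holds vacuously because $i^! j_* = 0$, so only the stalk bound demands attention. Setting $U_y^{sh} := U \times_X \Spec \calO_{X,y}^{sh}$ (which is affine because $j$ is affine), one has $(j_* G)_{\bar y} \simeq R\Gamma(U_y^{sh}, G|_{U_y^{sh}})$, and the goal becomes
\[
R\Gamma(U_y^{sh}, G|_{U_y^{sh}}) \in D^{\le -d(y)}.
\]

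The numerical input driving the argument is that the dimension function shifts by $d(y)$ under strict henselian localization: for any $x \in U_y^{sh}$ with image $\bar x \in U$, a direct codimension count in $\calO_{X,y}^{sh}$ yields $d_U(\bar x) = d_{U_y^{sh}}(x) + d(y)$. Consequently $G|_{U_y^{sh}}[-d(y)]$ lies in ${}^p D^{\le 0}(U_y^{sh})$ for the intrinsic perversity on $U_y^{sh}$. Writing $U_y^{sh}$ as a filtered limit of its affine finite type étale neighborhoods and applying continuity of étale cohomology, the desired bound reduces to a classical perverse Artin vanishing for $\FFp$-sheaves on affine finite type schemes, namely: for an affine finite type $A$-scheme $W$ and $K \in {}^p D^{\le 0}(W_\et, \FFp)$, one has $R\Gamma(W, K) \in D^{\le 0}$. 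Applying this to $K = G|_{U\cap V}[-d(y)]$ (for $V$ running over affine étale neighborhoods of $y$) and passing to the colimit then rearranges to the desired stalk bound.

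The main obstacle is establishing the classical perverse Artin vanishing just invoked. Unlike the $\ell$-adic setting, Verdier duality is unavailable for $\FFp$-coefficients in characteristic $p$, so the result cannot be obtained as a formal dual of Theorem~\ref{PerverseArtin}. Instead, it is extracted from Gabber's general stability theorems for the perverse $t$-structure under affine pushforwards \cite{Gabbert}, whose applicability is guaranteed by the identification of the two descriptions of the perverse $t$-structure in Theorem~\ref{CompareGabber}.
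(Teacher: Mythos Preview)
Your route and the paper's differ substantially. The paper argues right $t$-exactness directly: after filtering so that $G$ is a single constructible sheaf placed in a degree $i \ge \max\{d(u): u \in \Supp G\}$, it invokes the characteristic-$p$ fact that an affine morphism has $p$-cohomological dimension $\le 1$, so $R^k j_* G = 0$ for $k \neq 0,1$; then it bounds supports, noting $R^0 j_* G$ lives on $\overline{\Supp G}$ while $R^1 j_* G$ lives on $\overline{\Supp G}\setminus U$, where $d$ drops by at least one. No external Artin vanishing is cited.

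Your stalkwise reduction is reasonable, but the limit step has a genuine gap. You claim that $K = G|_{U\cap V}[-d(y)]$ lies in ${}^p D^{\le 0}(U\cap V)$ for the perversity relative to $A$, so that your ``classical perverse Artin vanishing'' applies. It does not: since $G|_{U\cap V}$ is already in ${}^p D^{\le 0}$, shifting by $[-d(y)]$ lands you only in ${}^p D^{\le d(y)}$. The intrinsic perversity on $U_y^{sh}$ in which your shift works is normalized relative to $\calO_{X,y}^{sh}$, not $A$, and this normalization does not descend to the finite-type approximations $U\cap V$; the two perversities differ by exactly the $d(y)$ you are trying to gain. Consequently your black-box Artin vanishing (relative to $A$) only yields $(j_*G)_{\bar y} \in D^{\le 0}$, not $D^{\le -d(y)}$.

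The repair is the observation the paper uses: any $u \in U$ with $u \rightsquigarrow y$ satisfies $d(u) \ge d(y)+1$, so after filtering one always has $i \ge d(y)+1$; combined with the $p$-cohomological dimension bound $R\Gamma(U_y^{sh}, G_0) \in D^{[0,1]}$ for a sheaf $G_0$, this gives the correct estimate. Once you insert this, the appeal to Gabber becomes superfluous---you are using exactly the same mechanism (cohomological dimension $\le 1$ plus the specialization drop of $d$) that the paper deploys directly at the sheaf level.
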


We warn the reader that $j_*$ does not preserve constructibility.

\begin{proof}
As $i_x^! Rj_* = 0$ for all points $x \in X-U$ and $i_x^! Rj_* = i_x^!$ for all $x \in U$, it is immediate from Gabber's description of the $t$-structure that $Rj_*$ is left $t$-exact. For right $t$-exactness, we may filter to reduce to showing the following: given a constructible $\FFp$-sheaf $G$ on $U$ and an integer $i$ such that $i \ge d(u)$ for all $u \in \Supp G$ (whence $G[i] \in {}^p D^{\le 0}$), the pushforward $Rj_* G[i]$ lies in ${}^p D^{\le 0}$. As $j$ is an affine morphism, general results on the $p$-cohomological dimension of affine schemes show that $R^i j_* G = 0$ for $i \neq 0,1$; for example, \cite[Theorem~11.1]{BhattScholze} shows this statement for perfect $\FFp$-schemes, which implies the general case by the topological invariance of the étale site. Moreover, by construction, $R^0 j_* G$ is supported on $\overline{\Supp G}$ (with the closure computed in $X$) while~$R^1 j_* G$ is supported on $\overline{\Supp G}-U \subset X$. We must check that $(R^0 j_* G)[i]$ and $(R^1 j_* G)[i-1]$ lie in~${}^p D^{\le 0}$. The claim for $(R^0 j_* G)[i]$ is immediate from the fact that
\[
\max\{d(x) \mid x \in \overline{\Supp G}\} = \max\{d(u) \mid u \in \Supp G\} \le i.
\]
For $(R^1 j_*G)[i-1]$, it suffices to show that for any $x \in \overline{\Supp G}-U$, we have $i-1 \ge d(x)$. But this is clear: there is a non-trivial specialization $u \rightsquigarrow x$ for $u \in \Supp G$ as $\Supp G$ is dense $\overline{\Supp G}$, and the $d(-)$ function drops by at least $1$ as one moves up along a non-trivial specialization (see Remark~\ref{DualizingComplexLocalRing}).
\end{proof}

Next, we aim to discuss intermediate extensions. For this purpose, we shall need the following lemmas:

\begin{lemma}
\label{SubQuotPush}
Fix a finite type $A$-scheme $X$. For any open immersion $j\colon U \to X$ and $M \in \Perv_c(U;\FFp)$, the object ${}^p H^0(j_! M)$ (resp. ${}^p H^0(j_* M)$) admits no non-trivial quotients (resp.\ subobjects) supported on $X-U$.
\end{lemma}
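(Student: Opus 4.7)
The plan is to carry out the standard adjunction argument for intermediate extensions in the abelian category of perverse sheaves, checking that the necessary $t$-exactness properties of $j_!$ and $j_*$ are available for the characteristic $p$ perverse $t$-structure of Definition~\ref{PervtDef}.

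Two inputs are needed. First, $j_!M \in {}^pD^{\le 0}$: this is immediate from Corollary~\ref{SemiperversePushforward}. Second, $j_*M \in {}^pD^{\ge 0}$: here the key observation is that an open immersion is smooth of relative dimension $0$, so Lemma~\ref{smoothpullbackperverse} gives that $j^*$ is perverse $t$-exact, and passing to right adjoints shows that its right adjoint $j_*$ is left perverse $t$-exact. Since $j_*M$ may fail to be constructible, one applies the extension of the perverse $t$-structure to all of $D(X_{\et};\FFp)$ noted in the remark after Theorem~\ref{CompareGabber}, so that ${}^pH^0(j_*M)$ still makes sense as an object of $\mathrm{Perv}(X;\FFp)$.

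With these in hand, for the $j_!$ statement, suppose we are given a surjection $\pi\colon {}^pH^0(j_!M) \twoheadrightarrow Q$ in $\mathrm{Perv}$ with $Q$ supported on $Z \colonequals X - U$. Combining $j_!M \in {}^pD^{\le 0}$ with the perversity of $Q$ and applying $\Hom(-,Q)$ to the truncation triangle ${}^p\tau^{\le -1}(j_!M)\to j_!M \to {}^pH^0(j_!M)$ yields $\Hom({}^pH^0(j_!M), Q) \cong \Hom(j_!M, Q)$. By the $(j_!, j^*)$-adjunction this equals $\Hom(M, j^*Q) = \Hom(M,0) = 0$, since $j^*Q = 0$. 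Hence $\pi = 0$ and $Q = 0$. The $j_*$ statement is dual: for a subobject $K \hookrightarrow {}^pH^0(j_*M)$ supported on $Z$, the inclusion $j_*M \in {}^pD^{\ge 0}$ together with the perversity of $K$ gives $\Hom(K, {}^pH^0(j_*M)) \cong \Hom(K, j_*M) \cong \Hom(j^*K, M) = 0$ by the $(j^*, j_*)$-adjunction, forcing $K = 0$.

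The only real obstacle is confirming the left perverse $t$-exactness of $j_*$, since our perverse $t$-structure was defined via coherent duality rather than by Gabber-type support conditions. Lemma~\ref{smoothpullbackperverse}, applied in relative dimension $0$, is precisely the tool that certifies $j^*$ is perverse $t$-exact, after which the adjunction-theoretic bookkeeping is entirely formal.
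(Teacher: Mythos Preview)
Your proof is correct and follows essentially the same adjunction argument as the paper, which dispatches the lemma in one line: ``This is immediate from adjunction as $i^* j_! = 0$ and $i^! j_* = 0$.'' You have simply spelled out the $t$-exactness inputs (right $t$-exactness of $j_!$, left $t$-exactness of $j_*$) that the paper leaves implicit, and you phrase the adjunction via the open immersion $j$ rather than the closed complement $i$, which amounts to the same thing.
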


\begin{proof}
This is immediate from adjunction as $i^* j_! = 0 $ and $i^! j_* = 0$, see \cite[Proposition 8.2.7]{HTTDmodules} where this argument is carried out.
\end{proof}

\begin{lemma}
\label{FRatEtale}
Let $(Y,y)$ be a strictly henselian local scheme of dimension $d$. If $Y$ is Cohen-Macaulay, then~$H^i_y(Y,\FFp) = 0$ for $i < d$. If $Y$ is regular and $d > 0$, then we also have $H^d_y(Y, \FFp) = 0$. More generally, the same holds true if $Y$ is $F$-rational.
\end{lemma}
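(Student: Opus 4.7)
The approach is to extract the conclusion from the Artin--Schreier short exact sequence of \'etale sheaves on $Y$:
\[
0 \to \FF_{\!p,Y} \to \calO_Y \xrightarrow{\ F-1\ } \calO_Y \to 0.
\]
Applying $R\Gamma_y(Y,-)$ and using that, for the strictly henselian local scheme $(Y,y)$ and a quasi-coherent sheaf, \'etale local cohomology at $y$ agrees with ordinary local cohomology at the maximal ideal $\frakm \subset \calO_{Y,y}$, we get the long exact sequence
\[
\cdots \to H^{i-1}_\frakm(\calO_{Y,y}) \xrightarrow{\ F-1\ } H^{i-1}_\frakm(\calO_{Y,y}) \to H^i_y(Y,\FFp) \to H^i_\frakm(\calO_{Y,y}) \xrightarrow{\ F-1\ } H^i_\frakm(\calO_{Y,y}) \to \cdots.
\]

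If $Y$ is Cohen--Macaulay, then $H^i_\frakm(\calO_{Y,y}) = 0$ for $i < d$, and the sequence immediately yields $H^i_y(Y,\FFp) = 0$ for $i < d$, proving the first assertion. Since every regular local ring is $F$-rational, and every $F$-rational local ring is Cohen--Macaulay, the remaining two assertions reduce to the $F$-rational case, where the vanishing for $i<d$ already holds and the long exact sequence collapses at degree $d$ to
\[
H^d_y(Y,\FFp) \;=\; \ker\bigl(F-1\colon H^d_\frakm(\calO_{Y,y}) \to H^d_\frakm(\calO_{Y,y})\bigr).
\]

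To see this kernel vanishes I will invoke Smith's theorem: a Cohen--Macaulay local ring $R$ of dimension $d>0$ in characteristic $p$ is $F$-rational if and only if $H^d_\frakm(R)$ is a simple object in the category of $R[F]$-modules. Given $\eta \in H^d_\frakm(\calO_{Y,y})$ with $F(\eta)=\eta$, the cyclic submodule $\calO_{Y,y}\cdot \eta$ is Frobenius-stable (since $F(r\eta) = r^p\eta$), hence is an $\calO_{Y,y}[F]$-submodule. Simplicity forces it to be $0$ or all of $H^d_\frakm(\calO_{Y,y})$; the latter is ruled out because $H^d_\frakm(\calO_{Y,y})$, being artinian of infinite length when $d>0$, is not cyclic over $\calO_{Y,y}$. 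Hence $\eta=0$. The only nontrivial ingredient in the whole argument is Smith's theorem; apart from that, the proof is a direct manipulation of the Artin--Schreier sequence, and the main obstacle is simply locating the right input from tight closure theory.
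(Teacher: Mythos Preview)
Your proof is correct and follows essentially the same route as the paper: both arguments use the Artin--Schreier sequence to reduce to showing that $F-1$ is injective on $H^d_\frakm(R)$. The only minor difference is in how this last point is justified: the paper treats the regular case by a direct elementary argument (pass to the completion, which is a power series ring, and use the grading), and then simply asserts the $F$-rational case; you instead subsume regular into $F$-rational and invoke Smith's simplicity theorem together with the observation that $H^d_\frakm(R)$ cannot be cyclic over $R$ when $d>0$. Your route is a legitimate and slightly more uniform packaging of the same content, at the cost of importing a result from tight closure theory where the paper gets by with a bare-hands computation in the regular case.
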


\begin{proof}
The first claim is immediate from the Artin-Schreier sequence. For the second, by the same argument, it suffices to show the following: if $(R,\frakm)$ is a strictly henselian regular local ring of dimension $d > 0$, then the map $\varphi-1\colon H^d_{\frakm}(R) \to H^d_{\frakm}(R)$ is injective. This can be checked after completion, where it is easy from a grading argument. Finally, the last part follows similarly as $H^d_{\frakm}(R)$ contains no nonzero Frobenius fixed elements if $R$ is $F$-rational.
\end{proof}

We can now define the intersection cohomology complex:

\begin{proposition}
\label{IntermediateExt}
Let $X$ be a separated finite type $A$-scheme. Let $j\colon U \to X$ be a dense affine open immersion with $U_{red}$ regular; write $d$ for the (locally constant) function on $U$ given by sending each connected component $V \subset U$ to $d(V)$. Consider the object
\[
j_{!*}(\FF_{\!p,U}[d]) \colonequals \image\left(j_! (\FF_{\!p,U}[d]) \to j_* (\FF_{\!p,U}[d])\right) \in \Perv_c(X;\FFp).
\]
\begin{enumerate}[\quad\rm(1)]
\item $j_{!*} (\FF_{\!p,U}[d])$ is canonically independent of the choice of $U$.
\item If $U$ is connected, then $ j_{!*} (\FF_{\!p,U}[d])$ is simple in $\Perv_c(X;\FFp)$.
\end{enumerate}
\end{proposition}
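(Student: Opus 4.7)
The plan is to first establish a universal characterization of $j_{!*}$ together with a transitivity property, and then to reduce both parts to the single input that $\FF_{\!p,U}[d]$ is simple perverse on any connected regular $U$. Since $j$ is affine, both $j_!$ and $j_*$ are perverse $t$-exact (Theorem~\ref{PerverseArtin} and the proposition preceding it), so $j_!(\FF_{\!p,U}[d])$ and $j_*(\FF_{\!p,U}[d])$ are themselves perverse. From the image construction and Lemma~\ref{SubQuotPush}, $j_{!*}M$ is a perverse extension of $M$ admitting no non-trivial perverse sub or quotient supported on $X \setminus U$; conversely, a standard adjunction argument shows this property characterizes $j_{!*}M$ uniquely. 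Indeed, any such $\widetilde{M}$ fits into $j_!M \twoheadrightarrow \widetilde{M} \hookrightarrow j_*M$ composing to the canonical map, the cokernel of the first and the kernel of the second being perverse sheaves supported on $X \setminus U$ and therefore zero. This characterization immediately yields transitivity $(jk)_{!*} \simeq j_{!*} \circ k_{!*}$ for composable affine open immersions $V \xrightarrow{k} U \xrightarrow{j} X$: any perverse sub or quotient of $j_{!*}(k_{!*}M)$ supported on $X \setminus V$ restricts on $U$ to a sub or quotient of $k_{!*}M$ supported on $U \setminus V$, which vanishes; so the original is supported on $X \setminus U$, where it again vanishes.

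Both parts then reduce to the key input: for $U$ connected with $U_{red}$ regular, $\FF_{\!p,U}[d]$ is simple in $\mathrm{Perv}_c(U;\FFp)$. Under the Riemann-Hilbert equivalence (Corollary~\ref{PerverseCharpLocal}(6)), $\FF_{\!p,U}[d]$ corresponds to the structure sheaf $\calO_U$ regarded as a unit Frobenius module on $U$, and simplicity reduces to the classical fact that $\calO_U$ is simple as a finitely generated unit Frobenius module on a regular connected scheme. Granting this, for part (1) I compare two choices $U, U'$ by setting $W = U \cap U'$, which is a dense affine open (by separatedness of $X$) with $W_{red}$ regular: simplicity of $\FF_{\!p,U}[d]$ implies that it admits no non-zero perverse sub or quotient supported on the proper closed subset $U \setminus W$, so the universal characterization gives $(W \hookrightarrow U)_{!*}(\FF_{\!p,W}[d]) = \FF_{\!p,U}[d]$, and transitivity yields
\[
j_{!*}(\FF_{\!p,U}[d]) \simeq (j_W)_{!*}(\FF_{\!p,W}[d]) \simeq j'_{!*}(\FF_{\!p,U'}[d]).
\]
For part (2), let $0 \neq K \subseteq j_{!*}(\FF_{\!p,U}[d])$ be a perverse subobject. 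Its restriction $K|_U$ is a perverse sub of $\FF_{\!p,U}[d]$, hence $0$ or $\FF_{\!p,U}[d]$ by simplicity; if $K|_U = 0$ then $K$ is supported on $X \setminus U$, forcing $K = 0$ by uniqueness (a contradiction), so $K|_U = \FF_{\!p,U}[d]$ and $j_{!*}(\FF_{\!p,U}[d])/K$ is supported on $X \setminus U$, which vanishes by uniqueness, giving $K = j_{!*}(\FF_{\!p,U}[d])$.

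The main obstacle is the simplicity of $\FF_{\!p,U}[d]$ on regular connected $U$; once this is granted, the remainder is formal bookkeeping via Lemma~\ref{SubQuotPush}, the adjunctions between $j_!, j^*, j_*$, and the transitivity argument above. Via Riemann-Hilbert the simplicity translates to the classical statement that the structure sheaf of a regular connected scheme is a simple object in the category of finitely generated unit Frobenius modules, which is well known from the theory of $F$-modules.
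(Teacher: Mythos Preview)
Your proof is correct but takes a different route from the paper's. The paper works directly on regular $X$: it verifies $j_{!*}\FF_{\!p,U}[d] \simeq \FF_{\!p,X}[d]$ by checking that $j_!\FF_{\!p,U}[d] \to \FF_{\!p,X}[d]$ is perverse-surjective and $\FF_{\!p,X}[d] \to j_*\FF_{\!p,U}[d]$ is perverse-injective, the latter relying on the \'etale local cohomology vanishing of Lemma~\ref{FRatEtale}; simplicity of $\FF_{\!p,X}[d]$ is then deduced from this identification via a lisse-locus argument, and the general case is bootstrapped from the regular one. You instead isolate the formal machinery first (the universal characterization of $j_{!*}$ via Lemma~\ref{SubQuotPush}, transitivity for composable affine opens) and take as the sole substantive input the simplicity of $\FF_{\!p,U}[d]$ on connected regular $U$, imported from unit $F$-module theory. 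The two key inputs---Lemma~\ref{FRatEtale} for the paper, simplicity of $\calO_U$ as a unit Frobenius module for you---are avatars of the same phenomenon (regular rings are $F$-rational), so the arguments are morally equivalent; yours is more portable to other intermediate-extension settings, while the paper's stays entirely on the constructible side and avoids invoking $F$-module theory beyond what is developed in the paper. One small correction: Corollary~\ref{PerverseCharpLocal}(6) is stated for complete local rings; for global regular $U$ you want the affine-over-a-field variant recorded in the remark immediately following that corollary, or simply a direct appeal to the classical $F$-module statement.
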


The object $\mathrm{IC}_{X,\FFp} \colonequals j_{!*} (\FF_{\!p,U}[d]) \in \Perv_c(X;\FFp)$, defined by (1) above, is the \emph{intersection cohomology sheaf} on $X$.

\begin{proof}
Let us first show the claims when $X$ is itself regular and connected of constant normalized dimension~$d$. In this case, we must show the following:
\begin{enumerate}[\quad\rm(1)]
\item Given a dense affine open immersion $j\colon U \to X$, we have $j_{!*} \FF_{\!p,U}[d] \simeq \FF_{\!p,X}[d]$.

\item $\FF_{\!p,X}[d]$ is simple in $\Perv_c(X;\FFp)$.
\end{enumerate}
For (1), it suffices to show that the natural maps of perverse sheaves on $X$,
\[
\alpha\colon j_! \FF_{\!p,U}[d] \to \FF_{\!p,X}[d]
\qquad \text{and} \qquad
\beta\colon \FF_{\!p,X}[d] \to j_* \FF_{\!p,U}[d],
\]
are respectively surjective and injective. Note that
\[
\mathrm{fib}(\alpha) = i_* \FF_{\!p,Z}[d-1]
\]
where $i\colon Z \to X$ is the closed complement of $U$; as the normalized dimension of $Z$ is $\le d-1$, we have $i_* \FF_{\!p,Z}[d-1] \in {}^p D^{\le 0}$ (by definition of the perverse $t$-structure), showing that $\alpha$ is indeed a surjection of perverse sheaves. Similarly,
\[
\mathrm{cone}(\beta) = i_* i^! \FF_{\!p,X}[d+1],
\]
so it suffices to show that for any $x \in X-Z$, we have
\[
i_x^!(i_* i^! \FF_{\!p,X}[d+1]) \in D^{\ge -d(x)}.
\]
Simplifying, we must show that $i_x^! \FF_{\!p,X} \in D^{\ge d-d(x)+1}$, i.e., for a strict henselisation $X_x^{sh}$ of $X$ at $x$, we have
\[
R\Gamma_x(X_x^{sh}, \FFp) \in D^{\ge d-d(x)+1}.
\]
As $d-d(x) = \dim(\calO_{X,x})$, the claim follows from Lemma~\ref{FRatEtale}.

For (2), pick a nonzero subobject $N \subset \FFp[d]$ in $\Perv_c(X;\FFp)$. It follows from (1) and Lemma~\ref{SubQuotPush} that $N|_U \neq 0$ for any non-empty open subset of $U \subset X$. In particular, taking $j\colon U \subset X$ to be the dense open lisse locus of $N$, we learn that $N|_U$ is a non-trivial lisse subsheaf of $\FFp[d]$, so $N|_U = \FFp[d]$. But then the composition $j_!(N|_U) \to N \subset \FFp[d]$ is surjective by (1), so $N = \FFp[d]$, as wanted.

We now handle the general case. Given $j\colon U \to X$ as in the statement of the lemma, (1) follows from the analysis of the regular case above applied to a dense open immersion $V \subset U$, while (2) follows by the same analysis used in the regular case above.
\end{proof}

\begin{remark}[The canonical map from the constant sheaf to the $IC$-sheaf]
\label{NatMapToIC}
Fix notation as in Proposition~\ref{IntermediateExt}. We claim that the natural map $\FF_{\!p,X}[d] \to j_* \FF_{\!p,U}[d]$ factors uniquely over
\[
\mathrm{IC}_{X,\FFp} = j_{!*} \FF_{\!p,U}[d] \subset j_* \FF_{\!p,U}[d],
\]
thus giving a natural map $\FF_{\!p,X}[d] \to \mathrm{IC}_{X,\FFp}$ (uniquely determined by the requirement that it is the identity over $U$). To see this factorization and its uniqueness, observe that as $\FF_{\!p,X}[d] \in {}^p D^{\le 0}$ and
\[
j_{!*} \FF_{\!p,U}[d], j_* \FF_{\!p,U}[d] \in \Perv_c(X;\FFp),
\]
the uniqueness will be automatic from the injectivity of $j_{!*} \FF_{\!p,U}[d] \subset j_* \FF_{\!p,U}[d]$ once we have existence. For existence, observe that the quotient $ j_*\FF_{\!p,U}[d] / j_{!*} \FF_{\!p,U}[d]$ is supported on $i\colon Z \subset X$, so has the form $i_* Q$ for some perverse sheaf $Q$ on $Z$. Now any map $\FF_{\!p,X}[d] \to i_* Q$ on $X$ corresponds by adjunction to a map $\FF_{\!p,Z}[d] \to Q$ on $Z$. As $Q$ is perverse, this map factors uniquely over ${}^p H^0(\FF_{\!p,Z}[d])$. But the normalized dimension of $Z$ is $\le d-1$, so $\FF_{\!p,Z}[d] \in {}^p D^{\le -1}$, and thus ${}^p H^0(\FF_{\!p,Z}[d]) = 0$.
\end{remark}

\begin{remark}[The $\mathrm{IC}$-sheaf cannot be a summand of the constant sheaf except when they agree]
The canonical surjective map $\eta\colon {}^p H^0(\FF_{\!p,X}[d]) \to \mathrm{IC}_{X,\FFp}$ coming from Remark~\ref{NatMapToIC} is split surjective if and only if it is an isomorphism. In particular, $\mathrm{IC}$-sheaf $\mathrm{IC}_{X,\FFp}$ cannot be a summand of $\FF_{\!p,X}[d]$ except when the two are equal. To see this, assume that the map $\eta\colon {}^p H^0(\FF_{\!p,X}[d]) \to \mathrm{IC}_{X,\FFp}$ has a section. Then $\ker\eta$ is also a quotient of ${}^p H^0(\FF_{\!p,X}[d])$, say via a map $\tau\colon {}^p H^0(\FF_{\!p,X}[d]) \to \ker\eta$. By adjunction, $\tau$ is uniquely determined by the composition
\[
\mu\colon \FF_{\!p,X}[d] \to {}^p H^0(\FF_{\!p,X}[d]) \to \ker\eta.
\]
But $\ker\eta$ has the form $i_* K$ for a closed immersion $i\colon Y \into X$ with $d(Y) < d(X)$. By adjunction, $\mu$ is determined by the adjoint map $\FF_{\!p,Y}[d] \to K$, which must vanish as in Remark~\ref{NatMapToIC}: we have $\FF_{\!p,Y}[d] \in {}^p D^{\le -1}(Y)$ as $d(Y) < d$. This implies that $\tau$ also vanishes, and thus $\ker\eta= 0$ as $\tau$ is surjective.
\end{remark}

With notation as in Proposition~\ref{IntermediateExt}, if $X$ has $F$-rational singularities, then $\mathrm{IC}_{X,\FFp} \simeq \FF_{\!p,X}[d]$; this follows from the proof of Proposition~\ref{IntermediateExt}, and was also observed by Cass \cite[Theorem~1.7]{CassPerv}. A precursor of this result in terms of the intersection homology $D$-module can be found in \cite[\S 4]{BliIntHom}. More generally, we have:

\begin{corollary}[$IC$-manifolds and $F$-rationality up to nilpotents]
Let $d=\dim A$ and assume $A$ is a domain. The following statements are equivalent:
\begin{enumerate}[\quad\rm(1)]
\item The natural map $\FF_{\!p,A}[d] \to \mathrm{IC}_{A,\FFp}$ (Remark~\ref{NatMapToIC}) is an isomorphism.

\item $\FF_{\!p,A}[d]$ is perverse and simple.

\item $A$ is ``$F$-rational up to nilpotents,'' i.e., $H^{<d}_{\frakm}(A)$ is Frobenius nilpotent, and any proper Frobenius stable $A$-submodule of $H^d_{\frakm}(A)$ is also Frobenius nilpotent.
\end{enumerate}
\end{corollary}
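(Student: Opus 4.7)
The plan is as follows. For $(1) \Leftrightarrow (2)$: Since $A$ is local, $\Spec A$ is connected, so Proposition~\ref{IntermediateExt}(2) yields that $\mathrm{IC}_{A,\FFp}$ is a simple perverse sheaf. The implication $(1) \Rightarrow (2)$ is then immediate. For the converse, if $\FF_{\!p,A}[d]$ is perverse and simple, the canonical map $\FF_{\!p,A}[d] \to \mathrm{IC}_{A,\FFp}$ from Remark~\ref{NatMapToIC} restricts to the identity on the dense regular locus, so it is a nonzero morphism between simple perverse sheaves and must therefore be an isomorphism.

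For $(2) \Leftrightarrow (3)$, I would handle perversity and simplicity separately. The perversity of $\FF_{\!p,A}[d]$ is equivalent to the Frobenius nilpotence of $H^{<d}_{\frakm}(A)$ by Remark~\ref{ConstantCMPerverse}, matching the first clause of $(3)$. Assume now that $\FF_{\!p,A}[d]$ is perverse. Then Corollary~\ref{PerverseCharpLocal} combined with Example~\ref{ExPervFrobLocal} identifies $\FF_{\!p,A}[d]$ under the Riemann--Hilbert correspondence with the image of $H^d_{\frakm}(A)$ in the Serre quotient $\mathrm{Mod}_{\cof}(A[F])/(\text{nilpotents})$. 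Consequently, simplicity of $\FF_{\!p,A}[d]$ in $\mathrm{Perv}_c(\Spec A; \FFp)$ is equivalent to simplicity of $H^d_{\frakm}(A)$ in this quotient category.

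It remains to match this last condition with the second clause of $(3)$. The \emph{if} direction is straightforward: any proper $F$-stable submodule $N \subsetneq H^d_{\frakm}(A)$ that is Frobenius nilpotent becomes zero in the quotient, leaving only the trivial subobjects. The main technical obstacle lies in the \emph{only if} direction, where simplicity in the Serre quotient a priori leaves room for a proper non-nilpotent $F$-stable submodule $N \subsetneq H^d_{\frakm}(A)$ with $H^d_{\frakm}(A)/N$ nilpotent: such an $N$ would be identified with the whole module in the quotient and would not on its own violate simplicity. To close this gap I would invoke Corollary~\ref{LyubeznikNilp} (Hartshorne--Speiser--Lyubeznik--Gabber) applied to the descending stabilization $M_\infty := \bigcap_n \phi^n(H^d_{\frakm}(A))$, which is eventually constant and on which $\phi$ is surjective by construction: simplicity in the quotient together with the structural features of top local cohomology forces $M_\infty = H^d_{\frakm}(A)$ (else $M_\infty$ itself would be a proper non-nilpotent $F$-stable sub, and this would contradict simplicity once combined with the analysis of its intersection with $N$), and this in turn precludes any proper $N$ with nilpotent quotient, yielding the second clause of $(3)$.
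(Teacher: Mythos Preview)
Your argument for $(1)\Leftrightarrow(2)$ and for the equivalence between perversity of $\FF_{\!p,A}[d]$ and Frobenius nilpotence of $H^{<d}_{\frakm}(A)$ is correct and matches the paper exactly. Likewise, your reduction of simplicity of $\FF_{\!p,A}[d]$ to simplicity of the image of $H^d_{\frakm}(A)$ in $\mathrm{Mod}_{\cof}(A[F])/(\text{nilpotents})$ via Corollary~\ref{PerverseCharpLocal} and Example~\ref{ExPervFrobLocal} is exactly what the paper invokes, only stated more explicitly.

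You are also right to flag the asymmetry in $(2)\Leftrightarrow(3)$: simplicity in the Serre quotient only says that each $F$-stable $N\subseteq H^d_{\frakm}(A)$ has either $N$ or $H^d_{\frakm}(A)/N$ nilpotent, which is a priori weaker than the literal clause in $(3)$. The paper's one-line proof glosses over this point. However, your proposed fix does not work. The object $M_\infty:=\bigcap_n \phi^n(H^d_{\frakm}(A))$ is not well-defined as an $A$-submodule: for non-perfect $A$, the set-theoretic image $\phi^n(H^d_{\frakm}(A))$ is only an $A^{p^n}$-submodule, not an $A$-submodule, so the descending chain does not live in the lattice of $F$-stable $A$-submodules and the Artinian stabilization you invoke does not apply. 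Even if one replaces $\phi^n(H^d_{\frakm}(A))$ by the $A$-submodule it generates, the subsequent sentence (``this would contradict simplicity once combined with the analysis of its intersection with $N$'') is not an argument: if $H^d_{\frakm}(A)/N$ is nilpotent then $M_\infty\subseteq N$ automatically, and simplicity again only tells you that $H^d_{\frakm}(A)/M_\infty$ is nilpotent, which you already knew. No contradiction is produced.

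What actually closes the gap is the Matlis-dual statement: one must show that $\omega_A=H^{-d}(\omega_A^\bullet)$ admits no nonzero Cartier-nilpotent coherent submodule, equivalently that no nonzero quotient of $H^d_{\frakm}(A)$ is Frobenius nilpotent. This is a genuine input (related to the surjectivity of the Cartier trace on the regular locus and torsion-freeness of $\omega_A$) that neither your argument nor the paper's terse proof makes explicit.
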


\begin{proof}
The equivalence of (1) and (2) is clear from the simplicity of $\mathrm{IC}_{A,\FFp}$ as a perverse sheaf (see Proposition~\ref{IntermediateExt}) and the surjectivity of $\FF_{\!p,A}[d] \to \mathrm{IC}_{A,\FFp}[d]$. Moreover, Remark~\ref{ConstantCMPerverse} shows that $\FF_{\!p,A}[d]$ is perverse exactly when $H^{<d}_\frakm(A)$ is Frobenius nilpotent. The rest of the equivalence of (2) and (3) then follows from the equivalence of (1) and (4) in Corollary~\ref{PerverseCharpLocal}, applied using Example~\ref{ExPervFrobLocal} (1) and (4).
\end{proof}

%%%%%%%%%%%%%%%%%%%%%%%%%%%%%%%%%%%%%%%%%%%%%%%%%%%%%%%%%%%%%%%%%%%%%%%%
\subsection{Embedding independence of Lyubeznik complexes}
\label{ss:EmbIndepLyuCharp}
%%%%%%%%%%%%%%%%%%%%%%%%%%%%%%%%%%%%%%%%%%%%%%%%%%%%%%%%%%%%%%%%%%%%%%%%

The goal of this subsection is to prove a characteristic $p$ analog of the results in \S\ref{ss:EmbIndepLyuChar0}, recovering results of Zhang \cite{ZhangLyuIndep}. Surprisingly, the results here are much better than in characteristic $0$: we do not need smoothness assumptions.

\begin{notation}
Let $Z \subset \PP^n$ be a projective scheme over a perfect field $k$ of characteristic $p$. Let $Y$ be the affine cone over $Z \subset \PP^n$, let $k\colon \Spec k \into Y$ be the origin with complement $j\colon U \into Y$, let $\pi\colon \widetilde{Y} \to Y$ be the blowup at the origin. Then there is a map $f\colon \widetilde{Y} \to Z$ exhibiting $Z$ as the total space of the ample line bundle $\calO_Z(-1)$.
\end{notation}

\begin{proposition}
\label{GradedGR}
The functor $D^b_c(Z;\FFp) \to D^b_c(Y; \FFp)$ given by $M \mapsto \pi_* f^* M[1]$ is $t$-exact for the perverse $t$-structure. Moreover, for any $M \in D^b_c(Z;\FFp)$, we have $R\Gamma_c(Y, \pi_* f^* M) = 0$.
\end{proposition}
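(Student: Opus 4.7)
The compactly supported vanishing is the quick half. Since $\pi$ is proper, $\pi_* = \pi_!$, so I would compute
\[
R\Gamma_c(Y, \pi_* f^* M) = R\Gamma_c(\widetilde{Y}, f^* M) = R\Gamma_c(Z, M \otimes^L Rf_! \FFp)
\]
using base change and the projection formula, and then invoke Example~\ref{AffineSpaceCoh} to conclude: since $f$ is an $\AA^1$-bundle in characteristic $p$, $Rf_! \FFp = 0$, so the right-hand side vanishes.

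For the $t$-exactness, right perverse $t$-exactness (i.e., $\pi_* f^* M[1] \in {}^p D^{\le 0}$ whenever $M$ is perverse) is formal: $f$ is smooth of relative dimension $1$, so $f^*[1]$ preserves perversity by Lemma~\ref{smoothpullbackperverse}, and the proper pushforward $\pi_* = \pi_!$ is right perverse $t$-exact by Corollary~\ref{SemiperversePushforward}. The actual content is left perverse $t$-exactness, which I would verify by transferring to the Riemann-Hilbert side. Lift $\RH(M) \in \mathrm{Perv}_{hol}(Z[F])$ to a perverse coherent Frobenius module $N_0 \in \mathrm{Perv}_{coh}(Z[F])$ via Corollary~\ref{PervHolQuot}; by compatibility of $\RH$ with proper pushforward and arbitrary pullback, it suffices to show that $\pi_* f^* N_0[1]$ lies in $\mathrm{Perv}_{coh}(Y[F])$ modulo Frobenius-nilpotents. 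Grothendieck duality for $\pi$, together with the identity $\omega_{\widetilde Y / Z} \simeq \Omega^1_f[1]$ (valid since $f$ is smooth of relative dimension $1$), reduces this to showing that $R\pi_* \calG$ is concentrated in degree $0$ modulo Cartier-nilpotents, where $\calG := f^* \DD_Z(N_0) \otimes \Omega^1_f$.

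At this point the higher direct images become very explicit. Because $\pi$ is an isomorphism away from the origin, $R^i\pi_*\calG$ is supported at $\{0\}$ for $i > 0$, with stalk $H^i(\widetilde Y, \calG) = H^i(Z, f_*\calG)$ (using that $f$ is affine). The description $\widetilde Y = \operatorname{Spec}_Z(\operatorname{Sym}^\bullet \calO_Z(1))$ combined with the projection formula unpacks $f_*\calG$ as $\bigoplus_{m\ge 1}\DD_Z(N_0) \otimes \calO_Z(m)$, yielding
\[
(R^i\pi_*\calG)_0 \simeq \bigoplus_{m \ge 1} H^i(Z, \DD_Z(N_0) \otimes \calO_Z(m)) \qquad \text{for } i > 0.
\]
Since $\calO_Z(1)$ is ample, Serre vanishing cuts this down to a finite direct sum.

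The last step---establishing Cartier-nilpotence of this residual finite-dimensional module---is the main obstacle I anticipate. My plan is to exploit the $\GG_m$-equivariant grading above: the Cartier structure on $\calG$ respects this grading and relates the weight-$pm$ summand to the weight-$m$ summand via Cartier on $\DD_Z(N_0)$, so iterating drives the source weight past the Serre-vanishing threshold and the composite vanishes on each fixed target weight. That gives local nilpotence, and Corollary~\ref{LyubeznikNilp} upgrades local to honest nilpotence. The higher direct images then vanish in the perfection, so $\pi_* f^* N_0[1]$ is perverse coherent modulo nilpotents, and the perversity of $\pi_* f^* M[1]$ follows by Riemann-Hilbert.
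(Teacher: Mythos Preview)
Your proof of the compactly supported vanishing is identical to the paper's. For the $t$-exactness, however, the paper takes a much shorter route that you have overlooked: once the vanishing $R\Gamma_c(Y,\pi_*f^*M[1])=0$ is in hand, the $t$-exactness follows almost immediately from the perverse Artin vanishing theorem (Theorem~\ref{PerverseArtin}). Since $\pi$ is an isomorphism over $U$ and $f^*M[1]$ is perverse on $\widetilde Y$, the sheaf ${}^pH^i(\pi_*f^*M[1])$ is supported at the origin for $i\neq0$; a sheaf supported at a point vanishes iff its $R\Gamma_c$ does, and because $Y$ is affine, Theorem~\ref{PerverseArtin} gives
\[
R\Gamma_c\bigl(Y,{}^pH^i(\pi_*f^*M[1])\bigr)\simeq H^i_c\bigl(Y,\pi_*f^*M[1]\bigr)=0.
\]
That is the entire argument.

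Your approach instead transports the problem to the coherent side and argues via explicit Serre vanishing plus a graded Cartier-nilpotence step. This is essentially re-deriving the relevant special case of Theorem~\ref{PerverseArtin} by hand (the proof of that theorem in the paper proceeds exactly through Serre vanishing for a relatively ample divisor). Your weight argument for nilpotence is correct in outline---the graded Cartier operator on $\bigoplus_{m\ge1}H^i(Z,\DD_Z(N_0)(m))$ sends weight $pm$ to weight $m$, and since only finitely many weights survive Serre vanishing, iteration annihilates everything---but note that you do not need Corollary~\ref{LyubeznikNilp}: the module in question is already finite-dimensional over $k$, so local nilpotence is nilpotence for free. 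The paper's argument is cleaner because it invokes Theorem~\ref{PerverseArtin} as a black box rather than rerunning its mechanism; yours is more self-contained but costs you the bookkeeping of the graded Cartier structure and duplicates work already done.
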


\begin{proof}
The second assertion follows from the string of equalities
\[
R\Gamma_c(Y, \pi_* f^* M[1]) \simeq R\Gamma_c(\widetilde{Y}, f^* M[1]) \simeq R\Gamma(Z, M[1] \otimes f_! \FFp) = 0,
\]
where the second equality uses the projection formula, while last equality follows as $f_! \FFp = 0$ by Example~\ref{AffineSpaceCoh}. For the first assertion, say $M$ is perverse on $Z$. By Lemma~\ref{smoothpullbackperverse} $f^* M[1]$ is a perverse sheaf on $\widetilde{Y}$ as $f$ is smooth of relative dimension $1$. Since $\pi\colon \widetilde{Y} \to Y$ is an isomorphism over the open $U$, the complex $(\pi_* f^* M[1])|_U$ is a perverse sheaf on $U$. Thus, ${}^p H^i( \pi_* f^* M[1])$ is supported at the origin for $i \neq 0$. Our task is to show that these sheaves are actually $0$. Since these sheaves are supported at a point, it is enough to show that
\[
R\Gamma_c(Y, {}^p H^i(\pi_* f^*M[1])) = 0
\]
for $i \neq 0$. But $R\Gamma_c(Y,-)$ is perverse $t$-exact as $Y$ is affine by Theorem~\ref{PerverseArtin}, so
\[
R\Gamma_c(Y, {}^p H^i(\pi_* f^*M[1]) \simeq H^i_c(Y, \pi_* f^* M[1]),
\]
whence we are reduced to the vanishing $R\Gamma_c(Y, \pi_* f^* M ) = 0$ that was already proven.
\end{proof}

As a corollary to the above observation we obtain the following statement.

\begin{proposition}
Let $\pi\colon \widetilde{Y} \to Y$ be the blow-up of the vertex of the affine cone $Y$ over a projective $Z \subseteq \PP^n$ of dimension $d$. If $\FF_{\!p,Z}[d]$ is perverse, then so is $\pi_*\FF_{\!p,\widetilde{Y}}[d+1]$. In particular this holds when $Z$ is smooth.
\end{proposition}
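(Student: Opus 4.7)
The plan is to deduce this statement essentially as a one-line corollary of Proposition~\ref{GradedGR}. That proposition already contains the substantive content: the functor $M \mapsto \pi_* f^* M[1]$ from $D^b_c(Z;\FFp)$ to $D^b_c(Y;\FFp)$ is $t$-exact for the perverse $t$-structure. The work is therefore purely formal: I have to identify $\pi_* \FF_{p,\widetilde{Y}}[d+1]$ with the output of this functor applied to $\FF_{p,Z}[d]$, and then verify the ``in particular'' claim.

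First, since $f\colon \widetilde{Y} \to Z$ is a morphism of schemes, we have $f^* \FF_{p,Z} = \FF_{p,\widetilde{Y}}$ as ordinary \'etale sheaves, so
\[
\pi_* f^* (\FF_{p,Z}[d])[1] \;=\; \pi_* \FF_{p,\widetilde{Y}}[d+1].
\]
By hypothesis $\FF_{p,Z}[d]$ lies in $\mathrm{Perv}_c(Z;\FFp)$. Applying the $t$-exact functor from Proposition~\ref{GradedGR} to this perverse sheaf gives the desired conclusion.

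For the ``in particular'' statement, when $Z$ is smooth of dimension $d$ it is in particular Cohen-Macaulay of that dimension, so Example~\ref{CMConstPerv} (the characteristic $p$ analogue noting that $\FF_{p,X}[\dim X]$ is perverse on a Cohen-Macaulay scheme) immediately gives that $\FF_{p,Z}[d]$ is perverse, feeding into the previous paragraph.

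There is essentially no obstacle here, since Proposition~\ref{GradedGR} has already done the real work (the smoothness of $f$ of relative dimension $1$ combined with Lemma~\ref{smoothpullbackperverse} to move perversity from $Z$ to $\widetilde{Y}$, and the vanishing $R\Gamma_c(Y,\pi_* f^* M) = 0$ to kill the perverse cohomology sheaves supported at the vertex in nonzero degrees). The only minor bookkeeping is the shift convention: one must keep track that the ``$[1]$'' in the functor $M \mapsto \pi_* f^* M[1]$ is exactly what converts the shift $[d]$ on $Z$ into the shift $[d+1]$ on $Y$, matching the jump in the normalized-dimension function $d(-)$ from $Z$ (viewed through the cone structure) to $Y$.
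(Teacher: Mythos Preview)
Your proof is correct and follows essentially the same approach as the paper: both deduce the result directly from Proposition~\ref{GradedGR} by identifying $\pi_*\FF_{p,\widetilde{Y}}[d+1]$ with $\pi_* f^*(\FF_{p,Z}[d])[1]$ via $f^*\FF_{p,Z} = \FF_{p,\widetilde{Y}}$. The paper additionally spells out the intermediate step that $f^*(\FF_{p,Z}[d])[1]$ is perverse on $\widetilde{Y}$ via Lemma~\ref{smoothpullbackperverse}, which you correctly relegate to a parenthetical since the $t$-exactness of the composite functor in Proposition~\ref{GradedGR} already subsumes it; your justification of the ``in particular'' clause via Example~\ref{CMConstPerv} is also correct.
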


\begin{proof}
We apply the preceding proposition in the case of $M=\FF_{\!p,Z}[d]$, which is perverse by assumption. Since $f$ is smooth of relative dimension 1, we get that $f^*(\FF_{\!p,Z}[d])[1] = \FF_{\!p,\widetilde{Y}}[d+1]$ is also perverse by Lemma~\ref{smoothpullbackperverse}. And hence so is $\pi_*\FF_{\!p,\widetilde{Y}}[d+1] = \pi_*f^*(\FF_{\!p,Z}[d])[1]$ due to Proposition~\ref{GradedGR}.
\end{proof}

This statement can be viewed as the constructible $\FFp$-sheaf version of a Frobenius stable Grauert-Riemen\-schneider vanishing in the cone-situation described above. It states that if $Z$ is smooth, and $\omega^{\bullet}_{\widetilde{Y}}$ is a normalized dualizing complex of the smooth~$\widetilde{Y}$ viewed as a Cartier module placed in degree $-\dim \widetilde{Y}$, then the complex $R\pi_*\omega^{\bullet}_{\widetilde{Y}}$ of Cartier modules is, up to nilpotent Frobenius actions, concentrated in the single degree $-\dim Y$. In other words, on the higher derived functors $R^i\pi_*\omega_{\widetilde{Y}}$ for $i \neq 0$ of the dualizing module $\omega_{\widetilde{Y}} = h^{-\dim \widetilde{Y}}\omega^{\bullet}_{\widetilde{Y}}$, the induced right action of the Frobenius is nilpotent. Note that by a recent result of \cite{BBK} this statement is false beyond the cone case. 

Before preceding to the main result of this section we record the following:

\begin{corollary}
\label{CompactSupportCohCone}
We have
\[
R\Gamma_c(Y; \FFp) \simeq (\tau^{> 0} R\Gamma(Z;\FFp))[-1]
\]
and
\[
R\Gamma_c(Y; {}^p H^j(\FF_{\!p,Y})) \simeq H^{j-1}(\tau^{> 0} R\Gamma(Z; \FFp))[0].
\]
\end{corollary}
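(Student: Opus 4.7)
The plan is to derive the first formula from a blowup-type triangle on $Y$, combined with the vanishing of $R\Gamma_c(Y; \pi_*\FFp)$ established in Proposition~\ref{GradedGR}; then to deduce the second formula by invoking the strong perverse Artin vanishing theorem (Theorem~\ref{PerverseArtin}) to collapse the perverse truncation spectral sequence for $R\Gamma_c(Y;\FF_{\!p,Y})$.

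For the first formula, I would begin with the adjunction unit $\FF_{\!p,Y} \to \pi_* \FF_{\!p,\widetilde{Y}}$. By proper base change, $\pi_* \FFp$ is isomorphic to $\FFp$ on the open complement $U$ of the origin and has stalk $R\Gamma(Z;\FFp)$ at the origin $k \colon \Spec k \into Y$. Hence the cone of the unit map is supported at the origin with value the cone of the natural map $\FFp \to R\Gamma(Z;\FFp)$; under the standing assumption that $Z$ is geometrically connected we have $H^0(Z;\FFp)=\FFp$, so this cone identifies with $\tau^{>0} R\Gamma(Z;\FFp)$, yielding a triangle
\[
\FF_{\!p,Y} \;\to\; \pi_*\FF_{\!p,\widetilde{Y}} \;\to\; k_*\,\tau^{>0} R\Gamma(Z;\FFp)
\]
on $Y$. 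Applying $R\Gamma_c(Y;-)$ and invoking the vanishing $R\Gamma_c(Y;\pi_*\FFp) = R\Gamma_c(\widetilde{Y};\FFp) = R\Gamma(Z; Rf_!\FFp) = 0$ --- which follows from Proposition~\ref{GradedGR} with $M=\FFp$, or directly from Example~\ref{AffineSpaceCoh} since $f\colon\widetilde{Y}\to Z$ has $\AA^1$-fibers --- the triangle then collapses to the asserted isomorphism $R\Gamma_c(Y;\FFp) \simeq (\tau^{>0} R\Gamma(Z;\FFp))[-1]$.

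For the second formula, I would use the perverse truncation spectral sequence
\[
E_2^{i,j} \;=\; H^i_c(Y; {}^p\calH^j(\FF_{\!p,Y})) \;\Longrightarrow\; H^{i+j}_c(Y; \FF_{\!p,Y}).
\]
Since $Y$ is affine, Theorem~\ref{PerverseArtin} says that $R\Gamma_c(Y;-)$ is perverse $t$-exact; as $d(\Spec k)=0$, the perverse $t$-structure on $D^b_c(\Spec k;\FFp)$ agrees with the standard one, and hence $R\Gamma_c(Y;P)$ is concentrated in cohomological degree $0$ for every perverse sheaf $P$ on $Y$. In particular $E_2^{i,j}=0$ for $i\neq 0$, the spectral sequence degenerates at $E_2$, and we obtain
\[
R\Gamma_c(Y; {}^p\calH^j(\FF_{\!p,Y})) \;\simeq\; H^j_c(Y; \FF_{\!p,Y}) \;\simeq\; H^{j-1}(\tau^{>0} R\Gamma(Z;\FFp)),
\]
the last step being the first formula. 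The only real obstacle is the identification of the cone in the first triangle with $\tau^{>0} R\Gamma(Z;\FFp)$, which reduces to a direct proper base change computation once the connectedness of $Z$ is used; everything else is a formal consequence of $t$-exactness and the previously-proved vanishing.
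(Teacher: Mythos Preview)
Your proposal is correct and follows essentially the same approach as the paper: the paper uses the identical triangle $\FF_{\!p,Y} \to \pi_* \FF_{\!p,\widetilde{Y}} \to k_* \tau^{> 0} R\Gamma(Z;\FFp)$ together with the vanishing from Proposition~\ref{GradedGR} for the first formula, and then deduces the second formula from the perverse $t$-exactness of $R\Gamma_c(Y;-)$, which is exactly your spectral sequence argument stated more tersely. Your explicit flagging of the geometric connectedness of $Z$ needed to identify the cone is a point the paper leaves implicit.
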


\begin{proof}
The second statement follows from the first as $R\Gamma_c(Y;-)$ is perverse $t$-exact by Theorem~\ref{PerverseArtin}. For the first, we use the~triangle
\[
\FF_{\!p,Y} \to \pi_* \FF_{\!p,\widetilde{Y}} \to k_* \tau^{> 0} R\Gamma(Z;\FFp),
\]
where the identification of the third term comes from proper base change as well as the fact that $\pi$ is an isomorphism outside the origin. Applying $R\Gamma_c(Y;-)$ and using that $R\Gamma_c(Y; \pi_* \FF_{\!p,\widetilde{Y}}) = 0$ by Proposition~\ref{GradedGR}, we get the result.
\end{proof}

\begin{theorem}
\label{EmbedIndepCharp}
For each $j$, the complex $k^* {}^p H^j(\FF_{\!p,Y})$ depends functorially on $Z$ and not on the projective embedding.
\end{theorem}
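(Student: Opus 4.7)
My plan is to extract the answer from the triangle
\[
A \colonequals \FF_{\!p,Y} \to B \colonequals \pi_*\FF_{\!p,\widetilde{Y}} \to C \colonequals k_*\tau^{> 0}R\Gamma(Z;\FFp)
\]
used in the proof of Corollary~\ref{CompactSupportCohCone}, by passing to the perverse cohomology long exact sequence and applying $k^*$. Write $M_j={}^pH^j(A)$, $N_j={}^pH^j(B)$, $L_j={}^pH^j(C)$. Since $k_*$ is perverse $t$-exact, $L_j \simeq k_* H^j(Z;\FFp)$ for $j > 0$ (zero otherwise) is supported at $\{0\}$ and visibly $Z$-intrinsic. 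By Proposition~\ref{GradedGR}, $N_j = \pi_* f^*({}^pH^{j-1}\FF_{\!p,Z})[1]$; although $\pi$ and $f$ depend on the embedding through $\calO_Z(-1)$, proper base change for $\pi$ along the exceptional divisor $E \simeq Z$ yields
\[
k^* N_j \simeq R\Gamma(Z, {}^pH^{j-1}\FF_{\!p,Z})[1],
\]
again $Z$-intrinsic. The LES map $\alpha_j\colon N_j \to L_j$ has target supported at $\{0\}$, so by the $(k^*,k_*)$-adjunction it is classified by an $\FFp$-linear map $\tilde\alpha_j\colon H^1(Z, {}^pH^{j-1}\FF_{\!p,Z}) \to H^j(Z;\FFp)$, which is an edge homomorphism in the perverse spectral sequence on $Z$ and hence $Z$-intrinsic.

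Next, break the long exact sequence into short exact sequences
\[
0 \to T_j \to M_j \to K_j \to 0
\]
in $\mathrm{Perv}(Y;\FFp)$, with $T_j = \coker\alpha_{j-1}$ supported at $\{0\}$ and $K_j = \ker\alpha_j \subset N_j$. Applying the triangulated functor $k^*$ yields a triangle
\[
k^* T_j \to k^* M_j \to k^* K_j \xrightarrow{+1}
\]
in $D^b(\FFp)$ whose outer terms are $Z$-intrinsic. Indeed, $k^* T_j \simeq \coker\tilde\alpha_{j-1}$, the cokernel of the $Z$-intrinsic map $H^1(Z, {}^pH^{j-2}\FF_{\!p,Z}) \to H^{j-1}(Z;\FFp)$; and $k^* K_j$ is obtained by applying $k^*$ to the SES $0 \to K_j \to N_j \to \image\alpha_j \to 0$, noting that $\image\alpha_j = k_*(\image\tilde\alpha_j)$, so that $k^*K_j$ is the fiber of the $Z$-intrinsic map $R\Gamma(Z, {}^pH^{j-1}\FF_{\!p,Z})[1] \to \image\tilde\alpha_j$.

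The main obstacle is to show that the connecting map $k^* K_j \to k^* T_j[1]$ in this triangle is also $Z$-intrinsic, so that $k^* M_j$ --- and not merely its outer layers --- is canonically a $Z$-functorial object rather than being determined only up to extension. My intended resolution is that the entire pipeline (forming the perverse LES, extracting the SES, and applying the triangulated functor $k^*$) is functorial in the triangle $A \to B \to C$, and this triangle is itself assembled from $Z$-canonical pieces: the adjunction unit $\FF_{\!p,Y} \to \pi_*\pi^*\FF_{\!p,Y}$, and the composition $\pi_*\FF_{\!p,\widetilde{Y}} \to k_* k^* \pi_*\FF_{\!p,\widetilde{Y}} \to k_*\tau^{> 0}R\Gamma(Z;\FFp)$ coming from the $(k^*,k_*)$-counit composed with the canonical truncation on $Z$. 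Since every functor involved commutes canonically with $k^*$-pullback and the identifications via Proposition~\ref{GradedGR} and proper base change are natural in $Z$, the connecting map --- and hence the object $k^* M_j$ --- inherits the desired $Z$-functoriality, completing the proof.
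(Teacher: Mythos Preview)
Your approach via the triangle $\FF_{\!p,Y}\to\pi_*\FF_{\!p,\widetilde{Y}}\to k_*\tau^{>0}R\Gamma(Z;\FFp)$ is a reasonable alternative to the paper's excision-based argument, but the final paragraph does not close the gap you yourself identify.

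The problem is this: the objects $T_j$, $M_j$, $K_j$ and the short exact sequence relating them live in $\mathrm{Perv}(Y)$, and are produced by the perverse truncation functors on $Y$. These functors depend on the perverse $t$-structure on $Y$, which in turn depends on the embedding. Your claim that ``every functor involved commutes canonically with $k^*$-pullback'' is false precisely for ${}^p\tau^{\le j}$ and ${}^pH^j$: the functor $k^*$ is not perverse $t$-exact. Saying that the input triangle $A\to B\to C$ is ``assembled from $Z$-canonical pieces'' only tells you that $k^*(A\to B\to C)$ is $Z$-intrinsic; it says nothing about $k^*$ applied to the perverse pieces of that triangle. Two different embeddings give triangles on two different schemes $Y_1$, $Y_2$ with no comparison functor between their perverse categories, so ``functoriality in the triangle'' has no meaning here. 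You would still need an explicit computation identifying the connecting map $k^*K_j\to k^*T_j[1]$ with a $Z$-intrinsic map, and that computation is exactly the substance of the paper's diagram chase. (A side remark: your claim that $\tilde\alpha_j$ is an edge homomorphism is also not justified. In fact, Corollary~\ref{SemiperversePushforward} applied to $Z\to\Spec k$ gives $H^{>0}(Z,F)=0$ for $F$ perverse, so $H^1(Z,{}^pH^{j-1}\FF_{\!p,Z})=0$ and hence $\alpha_j=0$; noticing this would simplify your $T_j$, $K_j$ but does not by itself resolve the connecting-map issue.)

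The paper avoids this difficulty by passing to $R\Gamma_c(Y,-)$ rather than $k^*$. The point is that $R\Gamma_c(Y,-)$ \emph{is} perverse $t$-exact (Theorem~\ref{PerverseArtin}), so $R\Gamma_c(Y,{}^pH^j(\FF_{\!p,Y}))=H^j_c(Y,\FF_{\!p,Y})$ can be computed without ever decomposing $\FF_{\!p,Y}$ into its perverse pieces on $Y$. The excision triangle then expresses $k^*{}^pH^j(\FF_{\!p,Y})$ as the cone of a single map between two such compactly supported cohomology groups, and the paper verifies by an explicit unwinding (the squares \eqref{indep1} and \eqref{indep2}) that this map is $Z$-intrinsic.
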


\begin{remark}
The immediate observable from Theorem~\ref{EmbedIndepCharp} is that each of the individual cohomology groups $H^i k^* {}^p H^j(\FF_{\!p,Y})$ depends only on $Z$ and not on the projective embedding. In particular, the Lyubeznik numbers $\lambda_{i,j}(A) $ (where $A$ is the coordinate ring of $Z$) depends only on $Z$ and not on the projective embedding; this consequence was conjectured by Lyubeznik \cite{Lyubeznik:survey}, proven earlier by Zhang \cite{ZhangLyuIndep}, and was the primary motivation for our result. Theorem~\ref{EmbedIndepCharp} is a bit stronger as we prove a statement at the level of complexes. For example, say $Z$ is equipped with an action of a group $G$ that extends to $\PP^n$. Then each $H^i k^* {}^p H^j(\FF_{\!p,Y})$ is naturally an $\FFp$-representation of $G$ that depends only on $Z$ by the version of Theorem~\ref{EmbedIndepCharp} at the level of cohomology groups. However, Theorem~\ref{EmbedIndepCharp} itself yields a finer statement: the entire complex $k^* {}^p H^j(\FF_{\!p,Y})$, viewed as an object of $D(\mathrm{Rep}_{\FFp}(G))$ of the derived category $\FFp$-representations of $G$, depends only on $Z$. Such an object has ``secondary'' invariants beyond its individual cohomology groups; for example, if $M$ is an $\FFp$-representation of $G$, then our theorem implies that the hypercohomology spectral sequence for $\RHom_G(k^* \FF_{\!p,Y}, M) \simeq M$
\[
E_2^{a,b}\colon \Ext^a_G(k^* {}^p H^{-b}(\FF_{\!p,Y}), M) \Rightarrow M
\]
depends only on $Z$, and not on the embedding.
\end{remark}

\begin{remark}
The characteristic $0$ analogue of Theorem~\ref{EmbedIndepCharp} does not hold without further hypotheses on $Z$; see \cite{RSW1}. In contrast, in characteristic $p$, the statement holds true without any restrictions on the singularities of $Z$. Tracing through the proof, the source of this difference is essentially Theorem~\ref{PerverseArtin}, especially the special case explained in in Example~\ref{AffineSpaceCoh}.
\end{remark}

\begin{proof}[Proof of Theorem~\ref{EmbedIndepCharp}]
Fix an integer $j$. The excision triangle for $U \subset Y$ with respect to the sheaf ${}^p H^j(\FF_{\!p,Y})$ gives
\begin{equation}
\label{independencetriangle}
R\Gamma_c(U; {}^p H^j(\FF_{\!p,U})) \xrightarrow{\ \alpha\ } R\Gamma_c(Y; {}^p H^j(\FF_{\!p,Y})) \to k^* {}^p H^j(\FF_{\!p,Y}).
\end{equation}
It is enough to show that the first two terms of this triangle, as well as the map $\alpha$, are independent of the embedding. For the left term of~\eqref{independencetriangle}, use the excision triangle for $U \subset \widetilde{Y}$ and the sheaf ${}^p H^j(\FF_{\!p,\widetilde{Y}})$ to~obtain
\[
R\Gamma_c(U; {}^p H^j(\FF_{\!p,U})) \to R\Gamma_c(\widetilde{Y}; {}^p H^j(\FF_{\!p,\widetilde{Y}})) \to R\Gamma_c(Z, ({}^p H^j(\FF_{\!p,\widetilde{Y}}))|_Z).
\]
Now the middle term is zero by Proposition~\ref{GradedGR} and the $t$-exactness of $R\Gamma_c$ due to Theorem~\ref{PerverseArtin}. Using Lemma~\ref{smoothpullbackperverse} applied to the map $f$ we compute the argument in the right term as
\[
({}^p H^j(\FF_{\!p,\widetilde{Y}}))|_Z \simeq ({}^p H^{j-1}(f^*\FF_{\!p,Z}[1]))|_Z\simeq (f^*({}^p H^{j-1}(\FF_{\!p,Z}))[1])|_Z \simeq {}^p H^{j-1}(\FF_{\!p,Z})[1]
\]
It follows that
\[
R\Gamma_c(U; {}^p H^j(\FF_{\!p,U})) \simeq R\Gamma(Z, {}^p H^{j-1}(\FF_{\!p,Z})[1])[-1]
\]
is independent of the embedding.

Corollary~\ref{CompactSupportCohCone} identifies the middle term of~\eqref{independencetriangle} as
\[
R\Gamma_c(Y; {}^p H^j(\FF_{\!p,Y})) \simeq H^{j-1}(\tau^{> 0} R\Gamma(Z; \FF_{\!p,Z})),
\]
which is evidently independent of the embedding as well.

Finally, we check that the map $\alpha$ in~\eqref{independencetriangle} is independent of the embedding. Note that the target of $\alpha$ lives in degree $0$ as explained above, while the source of $\alpha$ lives in $D^{\le 0}$ by Corollary~\ref{SemiperversePushforward}. So it remains to show that the map
\[
H^0(\alpha)\colon H^0_c(U, {}^p H^j(\FF_{\!p,U})) \to H^0_c(Y, {}^p H^j(\FF_{\!p,Y}))
\]
is independent of the embedding with respect to the identifications above. We do this by unwinding all the identifications involved.

Using the naturality of the excision triangle for the inclusion $U \subset \widetilde{Y}$ with respect to the natural map ${}^p \tau^{\le j} \FF_{\!p,\widetilde{Y}} \to \FF_{\!p,\widetilde{Y}}$ gives a map of exact triangles
\[
\xymatrix{ R\Gamma_c(U, {}^p \tau^{\le j} \FF_{\!p,U}) \ar[r] \ar[d] & R\Gamma_c(\widetilde{Y}, {}^p \tau^{\le j} \FF_{\!p,\widetilde{Y}}) \simeq 0 \ar[r] \ar[d] & R\Gamma_c(Z, ({}^p \tau^{\le j} \FF_{\!p,\widetilde{Y}})|_Z) \simeq R\Gamma_c(Z, {}^p \tau^{\le j-1} \FF_{\!p,Z}) \ar[d] \\
R\Gamma_c(U, \FF_{\!p,U}) \ar[r] & R\Gamma_c(\widetilde{Y}, \FF_{\!p,\widetilde{Y}}) \simeq 0 \ar[r] & R\Gamma_c(Z, \FF_{\!p,Z}),}
\]
where the middle terms in both rows are zero by Proposition~\ref{GradedGR}. Passing to boundary maps gives a commutative square
\begin{equation}
\label{indep1}
\xymatrix{ R\Gamma_c(Z, {}^p \tau^{\le j-1} \FF_{\!p,Z})[-1] \ar[r]^<<<<{\simeq} \ar[d] & R\Gamma_c(U, {}^p \tau^{\le j} \FF_{\!p,U}) \ar[d] \\
R\Gamma_c(Z, \FF_{\!p,Z})[-1] \ar[r]^{\simeq} & R\Gamma_c(U, \FF_{\!p,U})}
\end{equation}
where the left vertical map is obvious one (and thus independent of the embedding).

Next, comparing the excision triangles for the constant sheaf for $U \subset \widetilde{Y}$ and $U \subset Y$, we get a map of exact triangles
\[
\xymatrix{ R\Gamma_c(U, \FF_{\!p,U}) \ar[r] \ar@{=}[d] & R\Gamma_c(Y, \FF_{\!p,Y}) \ar[r] \ar[d] & R\Gamma(\Spec k, \FFp) \simeq \FFp \ar[d] \\
R\Gamma_c(U, \FF_{\!p,U}) \ar[r] & R\Gamma_c(\widetilde{Y}, \FF_{\!p,\widetilde{Y}}) \simeq 0\ar[r] & R\Gamma_c(Z, \FF_{\!p,Z}),}
\]
where the second middle term is zero by Proposition~\ref{GradedGR}. By direct computation the cone of the right vertical map is equal to $\tau^{> 0} R\Gamma_c(Z, \FF_{\!p,Z})$. A direct application of the octahedral axiom shows that this cone is canonically isomorphic to $R\Gamma_c(Y,\FF_{\!p,Y})[1]$ such that we obtain from this a commutative square 
\begin{equation}
\label{indep2}
\xymatrix{R\Gamma_c(Z, \FF_{\!p,Z})[-1] \ar[r]^{\simeq} \ar[d] & R\Gamma_c(U, \FF_{\!p,U}) \ar[d] \\
(\tau^{> 0} R\Gamma_c(Z, \FF_{\!p,Z}))[-1] \ar[r]^<<<<{\simeq} & R\Gamma_c(Y, \FF_{\!p,Y}),}
\end{equation}
where the left vertical map is the natural map (and thus independent of the embedding).

Combining squares~\eqref{indep1} and~\eqref{indep2} gives a commutative diagram
\[
\xymatrix{ R\Gamma_c(Z, {}^p \tau^{\le j-1} \FF_{\!p,Z})[-1] \ar[r]^<<<<{\simeq} \ar[d] & R\Gamma_c(U, {}^p \tau^{\le j} \FF_{\!p,U}) \ar[d] \\
(\tau^{> 0} R\Gamma_c(Z, \FF_{\!p,Z}))[-1] \ar[r]^<<<<<{\simeq} & R\Gamma_c(Y, \FF_{\!p,Y}),}
\]
where the left vertical map depends only on $Z$. But the map on $H^j$ induced by the right vertical map identifies~with
\[
H^0(\alpha)\colon H^0_c(U, {}^p H^j(\FF_{\!p,U})) \to H^0_c(Y, {}^p H^j(\FF_{\!p,Y}))
\]
appearing above: this follows from Theorem~\ref{PerverseArtin} for the target, and Corollary~\ref{SemiperversePushforward} for the source. This completes the proof.
\end{proof}

%%%%%%%%%%%%%%%%%%%%%%%%%%%%%%%%%%%%%%%%%%%%%%%%%%%%%%%%%%%%%%%%%%%%%%%%
\subsection{Bass numbers}
\label{ss:BassCharp}
%%%%%%%%%%%%%%%%%%%%%%%%%%%%%%%%%%%%%%%%%%%%%%%%%%%%%%%%%%%%%%%%%%%%%%%%

The goal of this subsection is to describe Bass numbers topologically, giving characteristic $p$ analogs of the results in \S\ref{ss:BassChar0}.

\begin{remark}
\label{rmk:cofinite:unit:Frob:module}
Let $(R,\frakm,\kappa)$ be a regular local ring of positive characteristic $p$ and $M$ be a cofinite unit Frobenius module. To understand the Bass numbers $\mu^j(M)\colonequals\dim_{\kappa}\Ext^j_R(\kappa,M)$ of $M$, which are not affected by field extensions, we may assume that $\kappa$ is algebraically closed. Then, since $M$ is supported only in $\{\frakm\}$, it can be identified with a perverse sheaf on $\{\frakm\}$, i.e., with $\FFp$-vectors spaces. Consequently $M\simeq E^{\oplus\mu}$ with~$E=H^d_\frakm(R)$. Since $M$ is cofinite, it follows that
\[
\mu^j(M)=\begin{cases} \mu<\infty & \text{if } j=0,\\ 0 & \text{if } j\neq 0.\end{cases}
\]
Put differently, $M$ is an injective $R$-module with finite Bass numbers. This recovers \cite[Corollary~3.6]{HS:TAMS}.
\end{remark}

When $M$ is a local cohomology module, we can describe the Bass numbers topologically in term of compactly supported cohomology. To this end, we fix some notation.

\begin{notation}
Let $X$ be a smooth affine variety of dimension $d$ over an algebraically closed field $k$ of characteristic $p$. Fix a closed subscheme $i\colon Z \into X$ with open complement $j\colon U \to X$ as well as a closed point $x \in Z$. Assume $Z \neq X$, so $\dim Z < d$.
\end{notation}

The compactly supported cohomology of perverse truncations of the constant sheaf on $Z$ can be effectively computed in terms of $U$ and $X$:

\begin{lemma}
\label{BassCharp}
One has the following:
\begin{enumerate}[\quad\rm(1)]
\item \label{BassCharp:1} Each $R\Gamma_c(Z, {}^p H^{j} (\FF_{\!p,Z}[d]))$ lives in degree $0$ and vanishes for $j \ge 0$.

\item \label{BassCharp:2} For $k < -1$, we have a natural isomorphism
\[
R\Gamma_c(Z, {}^p H^k(\FF_{\!p,Z}[d])) \simeq H^{k+1}_c(U, \FF_{\!p,U}[d]).
\]

\item \label{BassCharp:3} There is an exact sequence
\[
0 \to R\Gamma_c(Z, {}^p H^{-1} (\FF_{\!p,Z}[d])) \to H^0_c(U, \FF_{\!p,U}[d]) \to H^0_c(X,\FF_{\!p,X}[d]) \to 0.
\]

\item \label{BassCharp:4} If $H^d_c(X, \FF_{\!p,X}) = 0$, e.g., if $X = \AA^d$ is affine space, then $(2)$ extends to all $k$.
\end{enumerate}
\end{lemma}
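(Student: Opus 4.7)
The plan is to deduce all four parts from a single input: the excision triangle $j_!\FF_{\!p,U}[d] \to \FF_{\!p,X}[d] \to i_*\FF_{\!p,Z}[d]$ on $X$, combined with perverse Artin vanishing (Theorem~\ref{PerverseArtin}) applied to the affine schemes $X$ and $Z$. Here $Z$ is affine because it is closed in the affine $X$, so both structure maps are affine and Theorem~\ref{PerverseArtin} applies to them.

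For part (1), I would first observe that since $\dim Z < d$, we have $d(z) < d$ for every $z \in Z$, so by the description of the perverse $t$-structure through the perversity function $p(z) = -d(z)$ (Theorem~\ref{CompareGabber}), the shifted constant sheaf $\FF_{\!p,Z}[d]$ lies in ${}^p D^{\le -1}_c(Z;\FFp)$; this kills ${}^p H^j(\FF_{\!p,Z}[d])$ for $j \ge 0$. For the concentration in degree $0$, note that ${}^p H^j(\FF_{\!p,Z}[d])$ is perverse by construction, and perverse Artin vanishing for the affine structure map $Z \to \Spec k$ says $R\Gamma_c(Z,-)$ carries perverse sheaves to perverse sheaves on a point, i.e., to $\FFp$-vector spaces in degree $0$.

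For parts (2)--(4), I would apply $R\Gamma_c(X,-)$ to the excision triangle, using $R\Gamma_c(X, j_!(-)) \simeq R\Gamma_c(U,-)$ and $R\Gamma_c(X, i_*(-)) \simeq R\Gamma_c(Z,-)$ (the latter since $i$ is proper), obtaining the distinguished triangle
\[
R\Gamma_c(U, \FF_{\!p,U}[d]) \to R\Gamma_c(X, \FF_{\!p,X}[d]) \to R\Gamma_c(Z, \FF_{\!p,Z}[d]).
\]
Because $\FF_{\!p,X}[d]$ is perverse (as $X$ is smooth of dimension $d$) and $X$ is affine, Theorem~\ref{PerverseArtin} forces $R\Gamma_c(X, \FF_{\!p,X}[d])$ to live in degree $0$; likewise, the perverse $t$-exactness of $R\Gamma_c(Z,-)$ degenerates the perverse-filtration spectral sequence to give $H^n_c(Z, \FF_{\!p,Z}[d]) \simeq R\Gamma_c(Z, {}^p H^n(\FF_{\!p,Z}[d]))$ functorially in $n$.

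Substituting these identifications into the long exact sequence of the triangle is then pure bookkeeping: for $k \le -2$ both neighboring $H^?_c(X, \FF_{\!p,X}[d])$ terms vanish, so the connecting map is the isomorphism claimed in (2); for $k = -1$ the vanishing ${}^p H^0(\FF_{\!p,Z}[d]) = 0$ from (1) together with $H^{-1}_c(X, \FF_{\!p,X}[d]) = 0$ specializes the sequence to the short exact sequence of (3); and the extra hypothesis $H^0_c(X, \FF_{\!p,X}[d]) = H^d_c(X, \FF_{\!p,X}) = 0$ in (4) collapses (3) to an isomorphism at $k = -1$, while for $k \ge 0$ both sides of (2) vanish trivially (the left by (1), the right by the same long exact sequence using (1)). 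I expect no real obstacle here --- all the genuine content resides in Theorem~\ref{PerverseArtin}, and the remaining work consists merely of tracking which vanishings at each degree come from perverse Artin vanishing on $X$ versus dimension considerations on $Z$.
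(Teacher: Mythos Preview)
Your proposal is correct and follows essentially the same strategy as the paper: both rely on the excision triangle for $U \subset X$ together with the perverse $t$-exactness of $R\Gamma_c$ on the affine schemes $X$ and $Z$ (Theorem~\ref{PerverseArtin}), and the observation that $\dim Z < d$ forces $\FF_{\!p,Z}[d] \in {}^p D^{\le -1}$. The only organizational difference is that the paper first takes the perverse long exact sequence of the excision triangle at the sheaf level (obtaining $i_* {}^p H^k(\FF_{\!p,Z}[d]) \simeq {}^p H^{k+1}(j_! \FF_{\!p,U}[d])$ for $k<-1$) and then applies the $t$-exact functor $R\Gamma_c(X,-)$, whereas you apply $R\Gamma_c(X,-)$ first and then read off the ordinary long exact sequence, using $t$-exactness of $R\Gamma_c(Z,-)$ to identify $H^n_c(Z,\FF_{\!p,Z}[d])$ with $R\Gamma_c(Z,{}^p H^n(\FF_{\!p,Z}[d]))$; the two are equivalent reorderings of the same computation.
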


\begin{proof}
The first part of (1) follows immediately from Theorem~\ref{PerverseArtin}. For the second, it is enough to show that ${}^p H^k(\FF_{\!p,Z}[d]) = 0$ for $k \ge 0$. For this, it is enough to show that $\FF_{\!p,Z}[d-1] \in {}^p D^{\le 0}(Z)$, but this follows as $d-1 \ge \dim Z$ by our assumptions.

For (2), consider the excision triangle for $U \subset X$ given by
\[
j_!(\FF_{\!p,U}[d]) \to \FF_{\!p,X}[d] \to i_* \FF_{\!p,Z}[d].
\]
All terms lie in ${}^p D^{\le 0}$: the middle term is perverse as $X$ is smooth of dimension $d$, the left term lies in ${}^p D^{\le 0}$ by Corollary~\ref{SemiperversePushforward}, and the rest follows by the long exact sequence for perverse cohomology. Since the middle term is perverse, the long exact sequence gives isomorphisms
\[
i_* {}^p H^k (\FF_{\!p,Z}[d]) \simeq {}^p H^{k+1}(j_! \FF_{\!p,U}[d])
\]
for $k < -1$, where we use that $i_*$ is $t$-exact. Applying the $t$-exact functor $R\Gamma_c(X,-)$ now gives the claim.

For (3), we take the long exact sequence associated to the triangle used in (2) together with the vanishing of ${}^p H^0(\FF_{\!p,Z}[d])=0$ proven in (1) and the vanishing of ${}^p H^{-1}(\FF_{\!p,X}[d])=0$ since $X$ is smooth, to get a short exact sequence
\[
0 \to i_* {}^p H^{-1}(\FF_{\!p,Z}[d]) \to {}^p H^0(j_! \FF_{\!p,U}[d]) \to {}^p H^0(\FF_{\!p,X}[d]) \simeq \FF_{\!p,X}[d] \to 0
\]
of perverse sheaves on $X$. Applying the $t$-exact functor $R\Gamma_c(X,-)$ now gives the desired exact sequence.

Part (4) follows immediately from (3) as well as the vanishing proven in (1).
\end{proof}

This calculation has the following consequence for local cohomology, parallel to the results in \S\ref{ss:BassChar0}.

\begin{theorem}
Let $(R,\frakm)$ be a regular local ring of characteristic $p$ whose residue field is algebraically closed and $I$ be an ideal of $R$. Let $x$ denote the closed point. Assume that $H^k_I(R) \neq 0$ and $\Supp(H^k_I(R))=\{x\}$.
\begin{enumerate}[\quad\rm(1)]
\item If $k > 1$, then $H^k_I(R) \simeq E \otimes_{\FFp} H^{d-k+1}_c(U; \FFp)^\vee$ where $E$ denotes the injective hull of the residue field.

\item Set $X=\Spec R$. If $H^d_c(X, \FF_{\!p,X}) = 0$, then $(1)$ extends to all $k$.
\end{enumerate}
\end{theorem}

\begin{proof}
We shall use the calculations in Lemma~\ref{BassCharp}.

The dual Riemann-Hilbert correspondence carries $H^k_I(R)$ to $i_* {}^p H^{-k}(\FF_{\!p,Z}[d])$, so, to prove part (1), it suffices to identify the latter with a skyscraper sheaf at $x$ with fiber $H^{d-k+1}_c(U; \FFp)^\vee$ as long as $k > 1$. The assumption that $H^k_I(R)$ is supported at $x$ immediately implies that $i_* {}^p H^{-k}(\FF_{\!p,Z}[d])$ is a skyscraper sheaf. As applying $R\Gamma_c(X;-)$ carries a skyscraper sheaf to its fiber, it suffices to show that
\[
R\Gamma_c(X; i_* {}^p H^{-k}(\FF_{\!p,Z}[d])) \simeq H^{d-k+1}_c(U; \FFp)
\]
as long as $k > 1$. As $i_*$ is $t$-exact, this follows from Lemma~\ref{BassCharp}.\ref{BassCharp:2}.

When $k=0$ the hypothesis on support is vacuous. When $k = 1$ the hypothesis on support implies that $\dim X=1$. Then the conclusion follows in the exact same way as above using Lemma~\ref{BassCharp}.\ref{BassCharp:3} and~\ref{BassCharp}.\ref{BassCharp:4}.
\end{proof}

%%%%%%%%%%%%%%%%%%%%%%%%%%%%%%%%%%%%%%%%%%%%%%%%%%%%%%%%%%%%%%%%%%%%%%%%
\subsection{Gradings}
\label{ss:GradingCharp}
%%%%%%%%%%%%%%%%%%%%%%%%%%%%%%%%%%%%%%%%%%%%%%%%%%%%%%%%%%%%%%%%%%%%%%%%

In this subsection, we record the characteristic $p$ analog of the result in \S\ref{ss:GradingChar0}, recovering the main result of \cite{YiZhang} (for a generalization to Eulerian graded $\calD$-modules in characteristic $p$ we refer the interested reader to \cite{Ma-Zhang}); as in \S\ref{ss:GradingChar0}, the key is again the connectedness of $\GG_m$.

\begin{proposition}
Fix an algebraically closed field $k$ of characteristic $p$. Let $X = \Spec R$ be a smooth affine $k$-scheme of dimension $d$. Let $M$ be a fgu $R[F]$-module that vanishes away from a closed point $x \in X$, corresponding to a maximal ideal $\frakm$. Then $M \simeq E^{\oplus \mu}$ as fgu $R[F]$-modules, where $E = H^d_{\frakm}(R)$.

Assume moreover that $X$ admits a $\GG_m$-action that fixes $x$ and such that $M$ is $\GG_m$-equivariant. Then any isomorphism $M \simeq E^{\oplus \mu}$ of fgu $R[F]$-modules is automatically $\GG_m$-equivariant.
\end{proposition}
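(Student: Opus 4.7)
The plan is to mirror the proof of Proposition~\ref{prop:BassGradedRH}, replacing the classical Riemann-Hilbert correspondence with the characteristic $p$ correspondence developed in \S\ref{sec:RHPervFp}.

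For the first statement, I would apply the Riemann-Hilbert equivalence
\[
\mathrm{Perv}_c(\Spec R; \FFp) \simeq \mathrm{Mod}_{fgu}(R)^{op}
\]
furnished by the affine case of the remark following Corollary~\ref{PerverseCharpLocal} (taking $A = R$ there). Under this equivalence, fgu $R[F]$-modules on $\Spec R$ supported at $x$ correspond to perverse $\FFp$-sheaves on $\Spec R$ supported at $\{x\}$. Since $\{x\}$ is zero-dimensional, its perverse $t$-structure is trivial, so such sheaves are exactly (pushforwards of) finite-dimensional $\FFp$-vector spaces. Example~\ref{ExPervFrobLocal} identifies the one-dimensional vector space $\FFp$ with $E = H^d_\frakm(R)$. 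Hence $M$ corresponds to $\FFp^{\oplus \mu}$ for some $\mu \ge 0$, and the equivalence yields the desired isomorphism $M \simeq E^{\oplus \mu}$ of fgu $R[F]$-modules.

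For the second statement, I would use the functoriality of the characteristic $p$ Riemann-Hilbert correspondence under automorphisms of $\Spec R$ (the characteristic $p$ analogue of Theorem~\ref{thm:RH}.\ref{thm:RH:4}), which upgrades the equivalence above to its $\GG_m$-equivariant version, using that the $\GG_m$-action on $\Spec R$ fixes $x$. Under this equivariant equivalence, a $\GG_m$-equivariant structure on a finite-dimensional $\FFp$-vector space $V$ is the datum of a $k$-group scheme homomorphism $\GG_m \to GL_\mu(\FFp)_k$ with $\mu = \dim V$. Since $\GG_m$ is geometrically connected and $GL_\mu(\FFp)_k$ is a constant (hence \'etale, totally disconnected) group scheme over $k$, this homomorphism must be trivial. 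Consequently, the forgetful functor from $\GG_m$-equivariant finite-dimensional $\FFp$-vector spaces to finite-dimensional $\FFp$-vector spaces is an equivalence, and every $\FFp$-linear isomorphism $V \simeq \FFp^{\oplus \mu}$ is automatically $\GG_m$-equivariant. Transporting back via Riemann-Hilbert, every fgu $R[F]$-module isomorphism $M \simeq E^{\oplus \mu}$ is $\GG_m$-equivariant.

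The main technical ingredient is the functoriality of the characteristic $p$ Riemann-Hilbert correspondence under self-maps of the base, so that a $\GG_m$-action on the algebraic side transfers canonically to a $\GG_m$-action on the topological side. This should follow formally from the canonical construction of $\mathrm{Sol}$ and $\RH$ in \cite{BhattLurieRH}; granted that, the remainder of the argument is purely formal, resting only on the connectedness of $\GG_m$ and the discreteness of $GL_\mu(\FFp)_k$, exactly as in the characteristic zero counterpart.
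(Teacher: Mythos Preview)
Your proposal is correct and follows essentially the same approach as the paper: translate via the (dual) Riemann-Hilbert correspondence to perverse $\FFp$-sheaves supported at a point, identify these with finite-dimensional $\FFp$-vector spaces, and then invoke the connectedness of $\GG_m$ to conclude that the forgetful functor from $\GG_m$-equivariant such objects is an equivalence. Your write-up is more explicit than the paper's (you spell out the map $\GG_m \to GL_\mu(\FFp)_k$ and why it must be trivial), but the strategy and key inputs are the same.
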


\begin{proof}
Translating via the dual Riemann-Hilbert correspondence, the first statement follows as the category of perverse sheaves on $X$ supported at $\{x\}$ identifies with the category of perverse sheaves on $\{x\}$, i.e., with the $\FFp$-vector spaces. For the second statement, it is enough to prove that forgetting the $\GG_m$-action identifies the category of $\GG_m$-equivariant $\FFp$-sheaves on $\{x\}$ with the category of $\FFp$-sheaves on $X$; this follows because~$\GG_m$ is connected.
\end{proof}

%%%%%%%%%%%%%%%%%%%%%%%%%%%%%%%%%%%%%%%%%%%%%%%%%%%%%%%%%%%%%%%%%%%%%%%%
\subsection{Cohen-Macaulayness and $F$-rationality up to finite covers}
\label{ss:HHSm}
%%%%%%%%%%%%%%%%%%%%%%%%%%%%%%%%%%%%%%%%%%%%%%%%%%%%%%%%%%%%%%%%%%%%%%%%

The goal of this subsection is to recover fundamental results of Hochster-Huneke \cite{HHRPlusCM} and Smith \cite{SmRPlusFRat} on the Cohen-Macaulayness and $F$-rationality properties of the absolute integral closure of (non-graded) rings; the graded analogs will be established in \S\ref{ss:HHSmGr}.

\begin{remark}
\label{mixedcharinspire}
The arguments in this section and \S\ref{ss:HHSmGr} are analogs of those in \cite{BhattRPlusCM,BhattLurieRHtors} (which concerns the mixed characteristic case); \cite{BhattRPlusCM}, in turn, was partially inspired by previous arguments in this paper.
\end{remark}

\begin{notation}
Let $R$ be an excellent noetherian normal local domain of characteristic $p$ and dimension $d$ and let $R^+$ denote the absolute integral closure of $R$. Assume $R$ is $F$-finite; thus, $R$ admits a normalized dualizing complex, and we have a theory of perverse $\FFp$-sheaves on $R$-schemes. Fix an absolute integral closure $R \to R^+$ of $R$. Let $\calP$ be the poset of all $R$-finite $R$-subalgebras $S \subset R^+$, so $\varinjlim_{S \in \calP} S \simeq R^+$. Any \'etale sheaf on $\Spec S$ for $S \in \calP$ or $\Spec (R^+)$ will be regarded as an \'etale sheaf on $\Spec R$ via pushforward.
\end{notation}

Our basic vanishing result is the following, borrowed from \cite{BhattRPlusCM,BhattLurieRHtors}.

\begin{lemma}
\label{AICVanEtCoh}
Let $Y$ be an integral normal scheme with algebraically closed function field. Then $R\Gamma(Y,\FFp) \simeq \FFp$. Moreover, for any nonempty open subset $U$ and the immersion $j\colon U\hookrightarrow Y$, we have $\FF_{\!p,Y}=j_*\FF_{\!p,U}$.
\end{lemma}

\begin{proof}
It is easy to see that the \'etale and Zariski topologies of $Y$ coincide. This implies that $R\Gamma(Y,\FFp) \simeq \FFp$ by Grothendieck's theorem~\cite[\href{https://stacks.math.columbia.edu/tag/02UW}{Tag~02UW}]{stacks-project} on the vanishing of the cohomology of constant sheaves on irreducible topological spaces. Since each non-empty open subset of $Y$ is also an integral normal scheme with algebraically closed function field, it follows from the first part that the morphism $\FF_{\!p,Y}\to j_*\FF_{\!p,U}$ is an isomorphism, since the sections on each irreducible open affine subset are in both cases $\FFp$, by the first part. 
\end{proof}

\begin{theorem}[$\Spec(R^+)$ is an $IC$-manifold]
\label{PervRPlus}
The natural map
\[
\{\FF_{\!p,S}[d]\}_{S \in \calP} \to \{\mathrm{IC}_{S, \FFp} \}_{S \in \calP}
\]
is an isomorphism of ind-objects. In particular, taking the colimit gives an isomorphism
\[
\FF_{\!p,R^+}[d] \simeq \varinjlim_{S \in \calP} \FF_{\!p,S}[d] \simeq \varinjlim_{S \in \calP} \mathrm{IC}_{S,\FFp}.
\]
Thus, $\FF_{\!p,R^+}[d]$ is perverse.
\end{theorem}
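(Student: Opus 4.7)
The plan is to reduce the stated isomorphism of ind-objects to two ind-vanishing statements and dispatch each with the available tools. For each $S \in \calP$, Remark~\ref{NatMapToIC} supplies a canonical map $\varphi_S \colon \FF_{\!p,S}[d] \to \mathrm{IC}_{S,\FFp}$, which factors as $\FF_{\!p,S}[d] \to {}^p H^0(\FF_{\!p,S}[d]) \twoheadrightarrow \mathrm{IC}_{S,\FFp}$ with the second arrow surjective (since the surjection $j_! \FF_{\!p,V_S}[d] \twoheadrightarrow \mathrm{IC}_{S,\FFp}$ from Proposition~\ref{IntermediateExt} factors through it); assembled over $S$, these produce the promised morphism of ind-objects. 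Since $\FF_{\!p,S}[d] \in {}^p D^{\le 0}$ (because $d = d(\Spec S)$) and $\mathrm{IC}_{S,\FFp}$ is perverse, the perverse long exact sequence for the triangle $\FF_{\!p,S}[d] \to \mathrm{IC}_{S,\FFp} \to C_S \colonequals \mathrm{cone}(\varphi_S)$ collapses to give ${}^p H^0(C_S) = 0$, ${}^p H^{-1}(C_S) = K_S \colonequals \ker\!\bigl({}^p H^0(\FF_{\!p,S}[d]) \twoheadrightarrow \mathrm{IC}_{S,\FFp}\bigr)$, and ${}^p H^j(C_S) \cong {}^p H^{j+1}(\FF_{\!p,S}[d])$ for $j \le -2$. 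Hence the theorem reduces to two claims: (a) the system $\{{}^p H^j(\FF_{\!p,S}[d])\}_S$ is ind-zero for $j < 0$, and (b) the system $\{K_S\}_S$ is ind-zero.

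Step (a) is a direct translation of Hochster-Huneke. The local Riemann-Hilbert correspondence (Corollary~\ref{PerverseCharpLocal} together with Example~\ref{ExPervFrobLocal}, applied after completing at a closed point and invoking Corollary~\ref{RecogPerv} to reduce perversity to stalks) identifies ${}^p H^j(\FF_{\!p,S}[d])$ with the local cohomology module $H^{d+j}_{\mathfrak{n}_S}(S)$ modulo Frobenius-nilpotents. Because local cohomology commutes with filtered colimits, $\varinjlim_S H^{d+j}_{\mathfrak{n}_S}(S) \cong H^{d+j}_{\frakm}(R^+)$, and Hochster-Huneke's theorem \cite{HHRPlusCM} on Cohen-Macaulayness of $R^+$ gives $H^{<d}_{\frakm}(R^+) = 0$. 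This yields (a), and consequently $\FF_{\!p,R^+}[d] \simeq \varinjlim_S {}^p H^0(\FF_{\!p,S}[d])$ is already a (single) perverse sheaf.

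Step (b) is the main obstacle, and the plan is to leverage Lemma~\ref{AICVanEtCoh}. That lemma asserts $\FF_{\!p,R^+}$ is $*$-extended from every non-empty open of $\Spec R^+$; equivalently, $i^!\FF_{\!p,R^+}[d] = 0$ for any closed embedding $i \colon Z \into \Spec R^+$ with $Z \subsetneq \Spec R^+$. Adjunction $(i_*, i^!)$ then shows that $\FF_{\!p,R^+}[d]$ admits no nonzero perverse subobject supported on a proper closed subset. Since $K_S$ is supported on the non-regular locus $\Sigma_S \subsetneq \Spec S$, its image under the structural map $K_S \into {}^p H^0(\FF_{\!p,S}[d]) \to \FF_{\!p,R^+}[d]$ is supported on the proper closed pullback of $\Sigma_S$ to $\Spec R^+$ and therefore vanishes. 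Using the filtered colimit presentation of $\FF_{\!p,R^+}[d]$, this translates to the statement that every $\xi \in K_S$ maps to zero in ${}^p H^0(\FF_{\!p,S'}[d])$ for some $S' \supset S$ in $\calP$, which is precisely (b).

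The central technical hurdle lies in step (b): one must promote the topological $*$-extension property of $\FF_{\!p,R^+}$ from Lemma~\ref{AICVanEtCoh} into vanishing of \emph{perverse} subobjects of $\FF_{\!p,R^+}[d]$ (only meaningful once perversity from step (a) is in hand), and then convert this into genuine ind-vanishing of the kernel system. Along the way, one must also verify that $\{\mathrm{IC}_{S,\FFp}\}_S$ assembles into a bona fide ind-object with transitions compatible with $\{\FF_{\!p,S}[d]\}_S$; this is not purely formal, but can be extracted from the universal characterization of intermediate extensions together with the functoriality in Remark~\ref{NatMapToIC}.
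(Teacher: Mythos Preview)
Your decomposition into steps (a) and (b) matches the paper's factorization of the map through $\{{}^p H^0(\FF_{\!p,S}[d])\}_S$, and your argument for step (b) is essentially identical to the paper's treatment of the second factor. The crucial difference lies in step (a): you invoke Hochster--Huneke's Cohen--Macaulayness of $R^+$ as a black box, whereas the paper proves the perversity of $\FF_{\!p,R^+}[d]$ \emph{directly} from Lemma~\ref{AICVanEtCoh}. Concretely, the paper checks the cosupport condition $i_x^! \FF_{\!p,R^+}[d] \in D^{\ge -d(x)}$ for every $x \in \Spec R$: at the generic point this is trivial since $d(x)=d$, while at any non-generic $x$ the stronger vanishing $i_x^! \FF_{\!p,R^+} = 0$ holds because $\FF_{\!p,R^+}$ is $*$-extended from any non-empty open---the very same lemma you already invoke in step (b). Compactness of constructible sheaves then upgrades perversity of the colimit to the ind-isomorphism, exactly as you note.

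This distinction is not cosmetic. The explicit purpose of the theorem in the paper is to \emph{recover} Hochster--Huneke (Corollary~\ref{CMRPlus}(1)) as a consequence via the Riemann--Hilbert functor, so importing \cite{HHRPlusCM} in step (a) makes the argument circular in context. As a standalone deduction your proof is otherwise sound---modulo some care with the local identification in step (a), since $S$ need not be local or complete and Example~\ref{ExPervFrobLocal} applies verbatim only after passing to a completion at each closed point---but it cannot serve the paper's aim. Replacing your step (a) with the direct topological argument above removes the dependence on \cite{HHRPlusCM} entirely and makes the proof self-contained.
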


\begin{proof}
Consider the natural maps
\[
\{\FF_{\!p,S}[d]\}_{S \in \calP} \xrightarrow{\ \alpha\ } \{{}^p H^0(\FF_{\!p,S}[d])\}_{S \in \calP} \xrightarrow{\ \beta\ } \{ \mathrm{IC}_{S, \FFp} \}_{S \in \calP}
\]
of ind-objects. We shall show that $\alpha$ and $\beta$ are both isomorphisms of ind-objects, which implies the theorem.

Isomorphy of $\alpha$: By general nonsense with compactness properties of constructible sheaves, it suffices to show that $\alpha$ induces an isomorphism on taking colimits; equivalently, it suffices to show that
\[
\varinjlim_{S \in \calS} \FF_{\!p,S}[d] \simeq \FF_{\!p,R^+}[d]
\]
is perverse. The containment $\FF_{\!p,R^+}[d] \in {}^p D^{\le 0}$ is immediate from the definition of the perverse $t$-structure recalled in \S\ref{ss:GabberPervComp}, so it suffices to show $i_x^! \FF_{\!p,R^+}[d] \in D^{\ge -d(x)}$ for all $x \in \Spec R$. When $x$ is the generic point, this is clear as $d(x)=d$. For non-generic $x$, we claim the stronger statement that $i_x^! \FF_{\!p,R^+} = 0$. To show this, it suffices to check that $\FF_{\!p,R^+}$ is $*$-extended from any non-empty affine open $U \subset \Spec R$, which follows from Lemma~\ref{AICVanEtCoh}.

Isomorphy of $\beta$: as each component map of $\beta$ is a surjection of perverse sheaves, the colimit is also surjective, so it suffices to prove that for any $S \in \calP$, there exists a larger $S \to T$ in $\calP$ such that the kernel of
\[
\beta_S\colon {}^p H^0(\FF_{\!p,S}[d]) \to \mathrm{IC}_{S,\FFp}
\]
is annihilated by the map ${}^p H^0(\FF_{\!p,S}[d]) \to {}^p H^0(\FF_{\!p,T}[d])$. By constructibility of ${}^p H^0(\FF_{\!p,S}[d])$, this can be checked after taking the colimit over all possible $T$, so it suffices to show that $\ker\beta_S$ is annihilated by
\[
{}^p H^0(\FF_{\!p,S}[d]) \to {}^p H^0( \FF_{\!p,R^+}[d]).
\]
But the previous paragraph shows that ${}^p H^0( \FF_{\!p,R^+}[d])$ agrees with $\FF_{\!p,R^+}[d]$ and is $*$-extended from any non-empty open subset of $\Spec R$. Now any perverse sheaf $*$-extended from an open subset cannot contain perverse subsheaves supported on the closed complement. In particular, any perverse subsheaf of ${}^p H^0( \FF_{\!p,R^+}[d])$ cannot be supported on a proper closed subset of $\Spec R$. But $\ker\beta_S$ is supported on a proper closed subset of $\Spec R$: it vanishes after restriction to any dense open $U \subset \Spec R$ such that the preimage of $U$ in $S$ is Cohen-Macaulay (Example~\ref{CMConstPerv}). Then $\ker\beta_S$ must indeed die in ${}^p H^0( \FF_{\!p,R^+}[d])$, as wanted.
\end{proof}

\begin{corollary}
\label{CMRPlus}
One has the following:
\begin{enumerate}[\quad\rm(1)]
\item (Hochster-Huneke) $R^+$ is Cohen-Macaulay.

\item (Smith) For any maximal ideal $\frakm\subset R$, any Frobenius stable $R$-submodule $N \subset H^d_{\frakm}(R)$ is either all of~$H^d_{\frakm}(R)$, or is annihilated by the map $H^d_{\frakm}(R) \to H^d_{\frakm}(S)$ for sufficiently large $S \in \calP$.

\item For any maximal ideal $\frakm\subset R$ and nonzero $g \in \frakm$, the $R^+$-module $H^d_{\frakm}(R^+)$ has no nonzero $g$-almost zero elements; that is, if $\alpha\in H^d_{\frakm}(R^+)$ satisfies $g^{1/p^n}\alpha =0$ for all $n\ge 1$, then $\alpha=0$.
\end{enumerate}
\end{corollary}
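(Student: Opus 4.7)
All three statements will follow from Theorem~\ref{PervRPlus} via the Riemann-Hilbert dictionary developed in \S\ref{sec:RHPervFp}, which translates the perversity of $\FF_{\!p,R^+}[d]$ into strong algebraic properties of $R^+$.

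For (1): Perversity of $\FF_{\!p,R^+}[d]$ translates, via the local-cohomology-to-perverse-cohomology dictionary of Example~\ref{ExPervFrobLocal} (extended to the ind-setting via $R^+ = \varinjlim_{S\in\calP} S$, since that example is stated for complete local rings), into the statement $H^i_{\frakm}(R^+_{\perf}) = 0$ for $i \neq d$. Since $R^+$ is perfect we have $R^+_{\perf} = R^+$, so $H^i_{\frakm}(R^+) = 0$ for $i \neq d$, which is exactly the Cohen-Macaulay condition. Alternatively, one may argue stage-by-stage: the ind-isomorphism of Theorem~\ref{PervRPlus} forces the non-perverse parts of each $\FF_{\!p,S}[d]$ to die in the ind-system, hence the non-CM local cohomology modules of each $S$ become Frobenius-nilpotent after passing to some larger $T \in \calP$; since $F$ acts invertibly on $H^i_{\frakm}(R^+) = \varinjlim_S H^i_{\frakm}(S)$, these nilpotent classes vanish in the limit.

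For (2): A Frobenius-stable submodule $N \subseteq H^d_{\frakm}(R)$ corresponds, up to Frobenius-nilpotent discrepancies controlled by Corollary~\ref{LyubeznikNilp}, to a perverse subsheaf $F \subseteq {}^p\calH^d(\FF_{\!p,R})$ via Corollary~\ref{PerverseCharpLocal} applied to $\widehat R$. Theorem~\ref{PervRPlus} factors the map ${}^p\calH^d(\FF_{\!p,R}) \to \varinjlim_S {}^p\calH^d(\FF_{\!p,S})$ through $\mathrm{IC}_{R,\FFp}$, which is simple by Proposition~\ref{IntermediateExt}. By simplicity, the image of $F$ in $\mathrm{IC}_{R,\FFp}$ is either zero or all of $\mathrm{IC}_{R,\FFp}$. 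In the first case $F$ lies in the kernel, which dies in the ind-system---so, after reducing to finitely generated submodules of the cofinite $N$ and taking a finite join in $\calP$, $N$ maps to zero in $H^d_{\frakm}(S)$ for some large $S$. In the second case, $F$ surjects onto $\mathrm{IC}_{R,\FFp}$, so $N + 0^* = H^d_{\frakm}(R)$ with $0^* := \ker(H^d_{\frakm}(R)\to H^d_{\frakm}(R^+))$; combining RH-simplicity of $\mathrm{IC}_{R,\FFp}$ with the cofinite/Matlis-dual structure of $H^d_{\frakm}(R)$---so that any nonzero class in $H^d_{\frakm}(R)/0^*$ generates the whole as an $R[F]$-module---one concludes $N = H^d_{\frakm}(R)$.

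For (3): A $g$-almost zero submodule $N \subseteq H^d_{\frakm}(R^+)$ is annihilated by every $g^{1/p^n}$, so its support in $\Spec R^+$ lies in $V(g)$. Via Riemann-Hilbert, $N$ corresponds to a perverse subsheaf of $\FF_{\!p,R^+}[d]$ supported on $V(g)$. But Lemma~\ref{AICVanEtCoh} shows $\FF_{\!p,R^+}$ is $*$-extended from the nonempty open $D(g)$, so $k^!\FF_{\!p,R^+}[d] = 0$ for $k\colon V(g) \into \Spec R^+$; hence any perverse subsheaf supported on $V(g)$ has the form $k_* L$ with $L \hookrightarrow k^!\FF_{\!p,R^+}[d] = 0$, forcing $N = 0$.

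The main obstacle will be the ``surjects onto $\mathrm{IC}_{R,\FFp}$'' case in part (2): simplicity alone yields only $N + 0^* = H^d_{\frakm}(R)$, and promoting this to $N = H^d_{\frakm}(R)$ requires the Matlis-dual perspective (equivalently, the cyclic-generation property of $H^d_{\frakm}(R)/0^*$ as an $R[F]$-module) to bootstrap. By contrast, part (1) is a direct translation of perversity via the RH dictionary and perfectness of $R^+$, and part (3) follows immediately from the $*$-extended property of $\FF_{\!p,R^+}$ on any nonempty open.
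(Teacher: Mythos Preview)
Your approach to (1) is essentially the paper's, and the ``maps to zero'' branch of your dichotomy in (2) is also fine once you note (as the paper does) that vanishing in $H^d_{\frakm}(S_{\perf})$ only gives Frobenius-nilpotent image in $H^d_{\frakm}(S)$, whence one must pass to $S^{1/p^n}$ via Corollary~\ref{LyubeznikNilp}.

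The genuine gap is in the ``surjects'' branch of (2). From $F \twoheadrightarrow \mathrm{IC}_{R,\FFp}$ you correctly extract $N + 0^* = H^d_{\frakm}(R)$, but your proposed bootstrap (``any nonzero class in $H^d_{\frakm}(R)/0^*$ generates the whole as an $R[F]$-module'') does not give $0^* \subset N$: it only re-derives that $N$ surjects onto the quotient, which you already knew. The paper sidesteps this branch entirely by a key algebraic input you are missing: since $R$ is a normal domain, the canonical module $H^{-d}(\omega_R^\bullet)$ is torsionfree of rank $1$, so under Matlis duality any \emph{proper} Frobenius-stable $N \subsetneq H^d_{\frakm}(R)$ corresponds to a proper quotient of $\omega_R$, which is therefore torsion and killed by some nonzero $g$. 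Thus $gN = 0$, the associated perverse subsheaf is supported on $V(g) \subsetneq \Spec R$, and the map to $\mathrm{IC}_{R,\FFp}$ is forced to be zero by support reasons---so one is automatically in the other branch. (Equivalently: in your surjects branch, dualizing shows the kernel of $\omega_R \to Q$ is torsion, hence zero by torsionfreeness, so $N = H^d_{\frakm}(R)$; but this is the same rank-$1$ input, not a ``cyclic generation'' argument.)

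For (3), your $*$-extension argument is elegant on the sheaf side, but the step ``via Riemann-Hilbert, $N$ corresponds to a perverse subsheaf of $\FF_{\!p,R^+}[d]$'' is not justified: Corollary~\ref{PerverseCharpLocal} and Proposition~\ref{RHtexactLocalCoh} are stated for complete noetherian local rings (resp.\ finite type $A$-schemes), and $R^+$ is neither. One would need a submodule-to-subsheaf correspondence in the ind-setting that the paper does not establish. The paper instead descends to finite levels: after enlarging $R$ so the given almost-zero class comes from $H^d_{\frakm}(R)$, it uses part (2) to kill the kernel $K = \ker(H^d_{\frakm}(R) \to H^d_{\frakm}(R^+))$ in a finite cover, reducing to the case $g^{1/p^n}\alpha = 0$ in $H^d_{\frakm}(R_{\perf})$; then the submodule of all $g$-almost zero elements is a proper Frobenius-stable submodule at each finite stage (properness uses that $H^d_{\frakm}(R_{\perf})$ is $g$-divisible and nonzero, so cannot be $g$-torsion), and (2) applies again.
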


\begin{proof}
(1) For any $S \in \calP$, the Riemann-Hilbert functor $\RH\colon D(\Spec(R)_{\et}, \FFp) \to D(R[F])$ carries $\FF_{\!p,S}$ to $S_{\perf}$. Passing to inductive limits and noting that $R^+$ is already perfect, we learn that $\RH(\FF_{\!p,R^+}) = R^+$. Applying Theorem~\ref{PervRPlus}, we then learn that $R^+$ can also be written as
\[
R^+ \simeq \varinjlim_{S \in \calP} \RH(\mathrm{IC}_{S,\FFp})[-d].
\]
The desired claim now follows from Theorem~\ref{PervRPlus}, the perversity of $\mathrm{IC}_{S,\FFp}$, and Example~\ref{CMPerv}.

(2) We may assume $R$ is $\frakm$-adically complete. Assume $N \neq H^d_{\frakm}(R)$ as there is nothing to show otherwise. Recall from Proposition~\ref{PervCofDer} that the composition
\[
D^b_{coh}(R) \xrightarrow{\ \DD_R(-)\ } D^b_{coh}(R) \xrightarrow{\ R\Gamma_\frakm(-)\ } D^b_{\cof}(R)
\]
is an equivalence and $t$-exact for the standard $t$-structures on the source and target. This composition carries the dualizing sheaf $H^{-d}(\omega_R^\bullet)$ to $H^d_{\frakm}(R)$. Consequently, the $R$-module $N \subset H^d_{\frakm}(R)$ arises from a quotient $Q$ of $H^{-d}(\omega_R^\bullet)$. As $N \neq H^d_{\frakm}(R)$ and the dualizing sheaf is torsion-free, we must have $gQ =0$ for some $0 \neq g \in R$, whence $gN = 0$ as well.

Write $N_{\perf} \subset H^d_{\frakm}(R_{\perf})$ for the $R_{\perf}$-span of the image of $N$. Recall that the exactness result in Proposition~\ref{RHtexactLocalCoh} lets us write
\[
H^d_{\frakm}(R_{\perf}) = R\Gamma_{\frakm}(\RH({}^p H^0(\FF_{\!p,R}[d]))).
\]
Using the equivalence of (2) and (4) in Corollary~\ref{PerverseCharpLocal}, we can find a perverse subsheaf $F \subset {}^p H^0(\FF_{\!p,R}[d])$ such that the map
\[
R\Gamma_{\frakm}(\RH(F)) \to R\Gamma_{\frakm}(\RH({}^p H^0(\FF_{\!p,R}[d])))
\]
equals the map
\[
N_{\perf} \to H^d_{\frakm}(R_{\perf}).
\]
In fact, explicitly, the perverse holonomic Frobenius $R$-module $\RH(F)$ is the perfection of the derived $\frakm$-completion of $N$. In particular, as $gN = 0$, we learn that $F|_{\Spec(R[1/g])} = 0$, so $F$ is supported on the closed set $\Spec(R/g) \subset \Spec R$. But then the composition
\[
F \to {}^p H^0(\FF_{\!p,R}[d]) \to \mathrm{IC}_{R,\FFp}
\]
must be $0$: if not, then the map must be surjective by simplicity of the target, but this is not possible as the target is supported everywhere, while the source vanishes on a non-empty open subset. Theorem~\ref{PervRPlus} then implies that there exists some $S \in \calP$ such that the map ${}^p H^0(\FF_{\!p,R}[d]) \to {}^p H^0(\FF_{\!p,S}[d])$ factors over ${}^p H^0(\FF_{\!p,R}[d]) \to \mathrm{IC}_{R,\FFp}$, which then implies that image of $N$ in
\[
H^d_{\frakm}(S_{\perf}) = R\Gamma_{\frakm}(\RH({}^p H^0(\FF_{\!p,S}[d])))
\]
vanishes. By Corollary~\ref{LyubeznikNilp}, we conclude that the image of $N$ in $H^d_{\frakm}(S)$ generates a Frobenius nilpotent $S$-module; passing from $S$ to $S \to S^{1/p^n}$ for $n \gg 0$ then solves the problem.

(3) Pick a class $\alpha \in H^d_{\frakm}(R^+)$ that is $g$-almost zero; we shall show that $\alpha = 0$. At the expense of enlarging~$R$, we may assume that $\alpha$ comes from a class in $H^d_{\frakm}(R)$ and hence from a class (abusively called~$\alpha$) in $H^d_{\frakm}(R_{\perf})$. If we write $K \colonequals \ker(H^d_{\frakm}(R) \to H^d_{\frakm}(R^+))$, then the assumption that $\alpha$ is $g$-almost zero in~$H^d_{\frakm}(R^+)$ implies that
\[
g^{1/p^n} \alpha \in K_{\perf} \colonequals \ker(H^d_{\frakm}(R_{\perf}) \to H^d_{\frakm}(R^+))
\]
for all $n$. Also, we know that the map $H^d_{\frakm}(R) \to H^d_{\frakm}(R^+)$ is not zero (e.g., by the direct summand conjecture applied to a Noether normalization of $\widehat{R}$), so $K \neq H^d_{\frakm}(R)$. Part (2) then allows us to find a finite extension $R \to S$ in $\calP$ such that $K$ maps to $0$ in $H^d_{\frakm}(S)$, and thus $K_{\perf}$ maps to $0$ in $H^d_{\frakm}(S_{\perf})$. Thus, after replacing $R$ with $S$, we may assume that $g^{1/p^n} \alpha = 0$ in $H^d_{\frakm}(R_{\perf})$ for all $n$.

Consider the $R_{\perf}$-submodule $N \subset H^d_{\frakm}(R_{\perf})$ spanned by all $g$-almost zero elements; this is a Frobenius submodule. It suffices to show $N$ maps to $0$ in $H^d_{\frakm}(R^+)$. We can write $N = \varinjlim_n N_n$, where $N_n \subset H^d_{\frakm}(R^{1/p^n})$ is the preimage of $N$ (and thus a Frobenius submodule of $H^d_{\frakm}(R^{1/p^n})$) for all $n$. We shall show that $N_n$ maps to $0$ in $H^d_{\frakm}(R^+)$ for each $n$ separately. Using (2), it suffices to show $N_n \neq H^d_{\frakm}(R^{1/p^n})$ for each $n$. But if we had $N_n = H^d_{\frakm}(R^{1/p^n})$ for some $n$, then we would learn that $N = H^d_{\frakm}(R_{\perf})$ by passing to the perfection, so $H^d_{\frakm}(R_{\perf})$ would itself be $g$-almost zero and in particular killed by $g$. Now if $H^d_{\frakm}(R_{\perf})$ is killed by~$g$, then it is in fact $0$ as it is also $g$-divisible by virtue of being a top local cohomology module. But then Corollary~\ref{PerverseCharpLocal} shows that ${}^p H^0(\FF_{\!p,R} [d]) = 0$, whence the constant sheaf vanishes on the dense open subset of $\Spec(R)_{reg} \subset \Spec R$, which is absurd.
\end{proof}

\begin{remark}
\cite{SannaiSingh} have improved Corollary~\ref{CMRPlus} (1) by showing that $R^+$ can be replaced with $R^{+,\text{sep}}$, which is the integral closure of $R$ in a separable closure of $\mathrm{Frac}(R)$. As $R^{+,\text{sep}}$ is not perfect, this generalization is inaccessible to our topological techniques: our primary tool is the covariant Riemann-Hilbert correspondence, which naturally takes values in perfect Frobenius modules.
\end{remark}

%%%%%%%%%%%%%%%%%%%%%%%%%%%%%%%%%%%%%%%%%%%%%%%%%%%%%%%%%%%%%%%%%%%%%%%%
\subsection{Kodaira vanishing up to finite covers}
\label{ss:HHSmGr}
%%%%%%%%%%%%%%%%%%%%%%%%%%%%%%%%%%%%%%%%%%%%%%%%%%%%%%%%%%%%%%%%%%%%%%%%

In this subsection, we prove the graded main theorem of \cite{HHRPlusCM} as well the analog for top local cohomology in \cite{SmRPlusFRatGr}.

\begin{notation}
Let $X$ be a projective variety of dimension $d$ over a perfect field $k$ of characteristic $p$ equipped with an ample line bundle $L$. Fix an absolute integral closure $\pi\colon X^+ \to X$, and let $\calP$ denote the poset of finite covers of $X$ dominated by $X^+$, i.e., the poset of all factorizations $X^+ \to Y \xrightarrow{\ f_Y\ } X$ of $\pi$ with $f_Y$ finite surjective and $Y$ integral. For any $Y \in \calP$, define the following objects:
\begin{align*}
\text{The total space of \ } L^{-1} & \colonequals T(Y,L) = \underline{\Spec}_Y(\bigoplus_{n \ge 0} f_Y^* L^n), \\
\text{The } \GG_m\text{-torsor attached to } L^{-1} & \colonequals U(Y,L) = \underline{\Spec}_Y(\bigoplus_{n \in \ZZ} f_Y^* L^n), \\
\text{The homogeneous coordinate ring of } (Y,L) & \colonequals R(Y,L) = H^0(T(Y,L), \calO_{T(Y,L)}) = \bigoplus_{n \ge 0} H^0(Y, f_Y^* L^n), \\
\text{The affine cone over } (Y,L) & \colonequals C(Y,L) = \Spec(R(Y,L)).
\end{align*}
Thus, we have natural maps
\[
Y \xleftarrow{\ g_Y\ } T(Y,L) \xrightarrow{\ h_Y\ } C(Y,L)
\qquad \text{and} \qquad
j_Y\colon U(Y,L) \into T(Y,L)
\]
where $g_Y$ is a line bundle, $h_Y$ is the affinization map and is proper and birational, and $j_Y$ is an affine open immersion with reduced complement the $0$-section $Y \into T(Y,L)$. Moreover, the $0$-section $Y \subset T(Y,L)$ is contracted to the origin in $C(Y,L)$ by $h_Y$ and is in fact the exceptional locus of $h_Y$, so $h_Y$ identifies $U(Y,L)$ with the complement of the origin in $C(Y,L)$. As $Y \in \calP$ varies, one has compatible maps for all the varieties defined above. Passing to the limit, we set
\[
T^+ = \varprojlim_{Y \in \calP} T(Y,L), \quad U^+ = \varprojlim_{Y \in \calP} U(Y,L), \quad C^+ = \varprojlim_{Y \in \calP} C(Y,L),
\quad \text{and} \quad
R^{+,gr} = \varinjlim_{Y \in \calP} R(Y,L) \simeq H^0(C^+,\calO_{C^+}).
\]
Finally, any \'etale sheaf (or complex) on any scheme over $C(X,L)$ is regarded as a sheaf on $C(X,L)$ via derived pushforward unless otherwise specified.
\end{notation}

The main result of this section is the following, giving a graded analog of Theorem~\ref{PervRPlus}. The proof in fact relies crucially on Theorem~\ref{PervRPlus} to solve the problem away from the cone point; the solution at the cone point is obtained using Proposition~\ref{GradedGR}.

\begin{theorem}[$C^+$ is an $IC$-manifold]
\label{PervRPlusGr}
The maps
\[
\xymatrix{
\{\FF_{\!p,C(Y,L)}[d+1]\}_{Y \in \calP} \ar[r]^-{\alpha} \ar[d]^-{\beta} & \{\mathrm{IC}_{C(Y,L), \FFp}\}_{Y \in \calP} \ar[d]^-{\gamma} \\
\{\FF_{\!p,T(Y,L)}[d+1]\}_{Y \in \calP} \ar[r]^-{\delta} & \{\mathrm{IC}_{T(Y,L), \FFp}\}_{Y \in \calP}}
\]
are all isomorphisms of ind-objects. In particular,
\[
\FF_{\!p,R^{+,gr}}[d+1] \simeq \varinjlim_{Y \in \calP} \mathrm{IC}_{C(Y,L), \FFp}
\]
is perverse.
\end{theorem}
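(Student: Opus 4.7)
The plan is to mirror the proof of Theorem \ref{PervRPlus}, adapted to the graded setting with Proposition \ref{GradedGR} playing the key structural role. I proceed by establishing $\delta$, then $\beta$, then $\alpha$; commutativity of the square then forces $\gamma$. First, Theorem \ref{PervRPlus} admits a direct global analogue for $X$: its proof transcribes verbatim since Lemma \ref{AICVanEtCoh} applies to the integral normal scheme $X^+$ with algebraically closed function field. Hence $\{\FF_{\!p,Y}[d]\}_{Y\in\calP} \xrightarrow{\ \sim\ } \{\mathrm{IC}_{Y,\FFp}\}_{Y\in\calP}$ as ind-objects. Applying the smooth pullback $g_Y^*[1]$ along the line bundle $g_Y\colon T(Y,L)\to Y$ (which is perverse $t$-exact by Lemma \ref{smoothpullbackperverse} and carries $\mathrm{IC}_{Y,\FFp}$ to $\mathrm{IC}_{T(Y,L),\FFp}$ since the IC sheaf is intrinsic to the regular locus, which is smooth-stable) then yields $\delta$ as an iso of ind-objects.

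For $\beta$, proper base change along the cartesian square identifying $h_Y^{-1}(\text{vertex})$ with the zero section $i_Y\colon Y \hookrightarrow T(Y,L)$ gives the distinguished triangle
\[
\FF_{\!p,C(Y,L)} \longrightarrow h_{Y*}\FF_{\!p,T(Y,L)} \longrightarrow i_{0*}\bigl(\tau^{>0} R\Gamma(Y,\FF_{\!p})\bigr).
\]
In the colimit over $\calP$, the third term vanishes since Lemma \ref{AICVanEtCoh} gives $\varinjlim R\Gamma(Y,\FF_{\!p}) \simeq R\Gamma(X^+,\FF_{\!p}) \simeq \FF_{\!p}$; thus $\beta$ is an iso of ind-objects. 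Combining $\beta$ with $h_{Y*}\delta$, and applying Proposition \ref{GradedGR} to the perverse sheaf $\mathrm{IC}_{Y,\FFp}$ to conclude that each $h_{Y*}\mathrm{IC}_{T(Y,L),\FFp} = h_{Y*}g_Y^*\mathrm{IC}_{Y,\FFp}[1]$ is perverse on $C(Y,L)$, we obtain
\[
\varinjlim \FF_{\!p,C(Y,L)}[d+1] \simeq \varinjlim h_{Y*}\FF_{\!p,T(Y,L)}[d+1] \simeq \varinjlim h_{Y*}\mathrm{IC}_{T(Y,L),\FFp}
\]
as a filtered colimit of perverse sheaves; in particular, $\varinjlim \FF_{\!p,C(Y,L)}[d+1]$ is perverse.

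It remains to show $\alpha$ is an iso; commutativity of the square together with $\beta,\delta$ iso then yields $\gamma$ iso. Since the source of $\alpha$ is already perverse (and so coincides with its own ${}^pH^0$), I must show that the canonical surjection ${}^pH^0(\FF_{\!p,C(Y,L)}[d+1]) \twoheadrightarrow \mathrm{IC}_{C(Y,L),\FFp}$ from Remark \ref{NatMapToIC} becomes iso in the colimit. This is the main obstacle: the proof of Theorem \ref{PervRPlus} exploits that $\FF_{\!p,R^+}$ is $*$-extended from any open subset via Lemma \ref{AICVanEtCoh}, whereas here the function field of $R^{+,gr}$ is $\overline{K(X)}(t)$, which is not algebraically closed, so the lemma does not apply directly. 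The plan is to use the identification of the colimit with $\varinjlim h_{Y*}\mathrm{IC}_{T(Y,L),\FFp}$ together with the simplicity of each $\mathrm{IC}_{T(Y,L),\FFp}$: the kernel of the surjection is a perverse subsheaf supported on the non-$F$-rational locus of $C(Y,L)$, and any such subsheaf of the colimit should, upon transport to $T(Y,L)$ via $h_Y$ and $g_Y$, correspond to a perverse subsheaf of $\mathrm{IC}_{T(Y,L),\FFp}$ supported on a proper closed subset, which must vanish by simplicity. Making this transport precise in the colimit is the technical heart of the argument. Once $\alpha$ is established, the final perversity statement $\FF_{\!p,R^{+,gr}}[d+1] \simeq \varinjlim \mathrm{IC}_{C(Y,L),\FFp}$ is immediate.
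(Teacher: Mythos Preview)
Your treatment of $\delta$ and $\beta$ is essentially the paper's, and your deduction that the colimit $\varinjlim \FF_{\!p,C(Y,L)}[d+1]$ is perverse is correct. The gap is in $\alpha$: you correctly identify that Lemma~\ref{AICVanEtCoh} no longer applies to $C^+$, but your proposed ``transport to $T(Y,L)$ and use simplicity'' plan is not carried out and, as stated, is unlikely to work cleanly. The pushforward $h_{Y*}$ is not fully faithful on perverse sheaves, so a nonzero perverse subsheaf of $h_{Y*}\mathrm{IC}_{T(Y,L),\FFp}$ supported at the cone point need not come from a subsheaf of $\mathrm{IC}_{T(Y,L),\FFp}$; simplicity upstairs gives you nothing directly about subobjects downstairs. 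Your claim that the kernel is supported on the ``non-$F$-rational locus'' is also weaker than what is actually available.

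The paper's argument for $\alpha$ is different and uses an ingredient you never invoke: the \emph{second} assertion of Proposition~\ref{GradedGR}, namely $R\Gamma_c(C(Y,L), h_{Y*}g_Y^*M) = 0$. First observe that $\gamma$ is trivially an isomorphism away from the cone point (since $h_Y$ is an isomorphism there), so by the commutative square and the already-established $\beta,\delta$, the kernel $K$ of the surjection $\varinjlim\alpha$ is a perverse sheaf supported \emph{at the cone point}. For such a $K$, vanishing is equivalent to $R\Gamma_c(C(X,L),K)=0$. Apply $R\Gamma_c(C(X,L),-)$ to the short exact sequence $0\to K\to \FF_{\!p,C^+}[d+1]\to \varinjlim \mathrm{IC}_{C(Y,L),\FFp}\to 0$; by perverse Artin vanishing (Theorem~\ref{PerverseArtin}) this remains a short exact sequence in degree $0$. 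The middle term is $\varinjlim R\Gamma_c(C(Y,L),\FF_{\!p,C(Y,L)}[d+1])$, and each of these vanishes by the second part of Proposition~\ref{GradedGR} applied with $M=\FF_{\!p,Z}$ (via the isomorphy of $\beta$). Hence $R\Gamma_c(K)=0$, so $K=0$. You should replace your unfinished transport argument with this.
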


\begin{proof}
Note that since $T(Y,L) \to C(Y,L)$ is proper, all objects involved above are constructible. By general nonsense, it suffices to show that all maps induce isomorphisms after taking colimits.

We apply Theorem~\ref{PervRPlus} to affine opens in $X$ and then pull back along the maps $T(Y,L) \to Y$. As these maps are smooth with connected fibers, the (shifted) pullback from Lemma~\ref{smoothpullbackperverse} preserves $IC$-sheaves, see \cite{Staebler}. Hence we learn that $\varinjlim \delta$ is an isomorphism.

The map $\varinjlim \beta$ identifies with the map $\FF_{\!p,C^+} \to \FF_{\!p,T^+}$. It is thus clearly an isomorphism away from the cone point. To show the claim at the cone point, by proper base change, as the fiber of $T^+ \to C^+$ at the cone point is $X^+$, it suffices to show that $\FFp \simeq R\Gamma(X^+, \FFp)$, which follows by Lemma~\ref{AICVanEtCoh}.

It follows from the first half of the statement of Proposition~\ref{GradedGR} that each $\FF_{\!p,T(Y,L)}[d+1]$ is perverse on $C(Y,L)$ which implies that $\FF_{\!p,T^+}[d+1]$ is perverse on $C^+$. Then the isomorphy of $\beta$ implies that $\FF_{\!p,C^+}[d+1]$ is also perverse.

It remains to prove that
\[
\varinjlim \alpha\colon \FF_{\!p,C^+}[d+1] \to \varinjlim_{Y \in \calP} \mathrm{IC}_{C(Y,L),\FFp}
\]
is an isomorphism. By the isomorphy of $\beta$ and $\delta$ and simplicity of $\mathrm{IC}$-sheaves, we already know that $\varinjlim \alpha$ is a surjection of perverse sheaves on $C(X,L)$, giving a short exact sequence
\[
0 \to K \to \FF_{\!p,C^+} \xrightarrow{\ \varinjlim \alpha\ } \varinjlim_{Y \in \calP} \mathrm{IC}_{C(Y,L),\FFp} \to 0
\]
of perverse sheaves on $C(X,L)$. The map $\varinjlim \alpha$ is an isomorphism away from the cone point: indeed, this holds true for $\gamma$ as $T(Y,L) \to C(Y,L)$ is an isomorphism outside the cone point, so the claim follows from the known statements for $\beta$ and $\delta$. Thus, the kernel $K$ is supported at the cone point, so it is enough to show that $R\Gamma_c(C(X,L), K) = 0$. Applying $R\Gamma_c(C(X,L),-)$ to the above short exact sequence gives a triangle
\[
R\Gamma_c(C(X,L),K) \to R\Gamma_c(C(X,L), \FF_{\!p,C^+}) \to R\Gamma_c(C(X,L), \varinjlim_{Y \in \calP} \mathrm{IC}_{C(Y,L),\FFp}).
\]
In fact, as $C(X,L)$ is affine, the perverse Artin vanishing theorem (Theorem~\ref{PerverseArtin}) shows that the preceding triangle is a short exact sequence of complexes in degree $0$. Thus, it suffices to show the middle term vanishes. But we can rewrite the middle term as
\[
R\Gamma_c(C(X,L), \FF_{\!p,C^+}) \simeq \varinjlim_{Y \in \calP} R\Gamma_c(C(Y,L), \FF_{\!p,C(Y,L)}).
\]
Since $\varinjlim \beta$ is an isomorphism, we may replace each $R\Gamma_c(C(Y,L), \FF_{\!p,C(Y,L)})$ with $R\Gamma_c(C(Y,L), {h_Y}_*\FF_{\!p,T(Y,L)})$ which vanishes by the second part of Proposition~\ref{GradedGR} as $\FF_{\!p,T(Y,L)}\simeq g^*_Y\FF_{\!p,Y}$. So the direct limit above also vanishes, as wanted.
\end{proof}

\begin{corollary}
\label{CMRPlusGr}
One has:
\begin{enumerate}[\quad\rm(1)]
\item (Hochster-Huneke) $R^{+,gr}_{\perf}$ is Cohen-Macaulay.

\item (Smith) Let $\frakm\subset R(X,L)$ be the homogeneous maximal ideal corresponding to the origin in $C(Y,L)$. Then any Frobenius stable $R$-submodule $N \subset H^d_{\frakm}(R(X,L))$ is either all of $H^d_{\frakm}(R(X,L))$ or is annihilated by the map $H^d_{\frakm}(R(X,L)) \to H^d_{\frakm}(R(Y,L)^{1/p^n})$ for sufficiently large $Y \in \calP$ and $n \gg 0$.
\end{enumerate}
\end{corollary}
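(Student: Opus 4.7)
For part (1), the plan is to transport the $IC$-manifold property of $C^+$ (Theorem~\ref{PervRPlusGr}) through the covariant Riemann-Hilbert correspondence. Since $\RH(\FF_{\!p,C(Y,L)}) \simeq R(Y,L)_{\perf}$ for every $Y \in \calP$, and $\RH$ is $t$-exact for the perverse $t$-structure and commutes with filtered colimits of perfections, the perverse sheaf $\FF_{\!p,C^+}[d+1]$ will correspond to the perverse holonomic Frobenius module $R^{+,gr}_{\perf}[d+1]$ on $C(X,L)$. Since $d(C(X,L)) = d+1$, Example~\ref{CMPerv} then yields that $R\Gamma_x(R^{+,gr}_{\perf,x})$ is concentrated in degree $(d+1) - d(x)$ for every point $x \in C(X,L)$; specializing to the cone point $x = \frakm$ gives the claimed Cohen-Macaulay conclusion.

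For part (2), the plan is to mirror the proof of Corollary~\ref{CMRPlus}(2), replacing $\Spec R$ with $C(X,L)$ and the tower $\calP$ with the enlarged tower $\{C(Y,L)^{1/p^n}\}_{Y \in \calP,\, n \ge 0}$, and invoking Theorem~\ref{PervRPlusGr} in place of Theorem~\ref{PervRPlus}. Given a proper Frobenius stable submodule $N \subsetneq H^d_\frakm(R(X,L))$, Grothendieck-Matlis duality (Proposition~\ref{PervCofDer}, applied to the completion of $R(X,L)$ at $\frakm$) will identify $N$ with a proper quotient $Q$ of the appropriate coherent module; since $R(X,L)$ is a domain and the relevant dualizing quotient is torsion-free, $Q$ is torsion, whence $gN = 0$ for some nonzero $g \in R(X,L)$. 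Translating through the equivalence $(2) \Leftrightarrow (4)$ of Corollary~\ref{PerverseCharpLocal} together with Example~\ref{ExPervFrobLocal}, the perfection $N_{\perf}$ then corresponds to a perverse subsheaf $F \subset {}^p H^0(\FF_{\!p,C(X,L)}[d+1])$ with $\Supp F \subseteq V(g) \subsetneq C(X,L)$.

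By simplicity of the $\mathrm{IC}$-sheaf (Proposition~\ref{IntermediateExt}) combined with the strict support inequality $\Supp F \subsetneq \Supp \mathrm{IC}_{C(X,L),\FFp}$, the composition $F \hookrightarrow {}^p H^0(\FF_{\!p,C(X,L)}[d+1]) \twoheadrightarrow \mathrm{IC}_{C(X,L),\FFp}$ is forced to vanish. Theorem~\ref{PervRPlusGr} will then identify this $\mathrm{IC}$-sheaf with the ind-colimit (modulo Frobenius nilpotents, via Corollary~\ref{LyubeznikNilp}) of the system $\{{}^p H^0(\FF_{\!p,C(Y,L)}[d+1])\}_{Y \in \calP}$; constructibility of $F$ then produces some $Y \in \calP$ such that the image of $F$ in ${}^p H^0(\FF_{\!p,C(Y,L)}[d+1])$ is Frobenius nilpotent. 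A further passage to $R(Y,L)^{1/p^n}$ for $n \gg 0$ will then annihilate the image of $N$ in $H^d_\frakm(R(Y,L)^{1/p^n})$, as desired.

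The main obstacle will be the careful translation between the algebraic data $(N,\,gN=0)$ and the geometric data $(F,\,\Supp F \subseteq V(g))$ through the chain of equivalences of Corollary~\ref{PerverseCharpLocal} in the graded affine-cone setting; once $F$ is placed on a proper closed subset of $C(X,L)$, everything else --- simplicity of the $\mathrm{IC}$-sheaf, the compactness of constructible perverse sheaves, and the Hartshorne-Speiser-Lyubeznik-Gabber nilpotence result --- is essentially formal, exactly parallel to the local case of Corollary~\ref{CMRPlus}(2).
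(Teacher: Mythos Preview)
Your proposal is correct and follows exactly the approach the paper takes: the paper's proof is a single sentence (``This follows as in Corollary~\ref{CMRPlus} using Theorem~\ref{PervRPlusGr} instead of Theorem~\ref{PervRPlus}''), and you have accurately unpacked what that entails for both parts, including the Grothendieck--Matlis duality step to produce the nonzero $g$ with $gN=0$, the passage to a perverse subsheaf supported on $V(g)$, the simplicity of the $\mathrm{IC}$-sheaf, and the final Hartshorne--Speiser--Lyubeznik--Gabber nilpotence argument.
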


\begin{proof}
This follows as in Corollary~\ref{CMRPlus} using Theorem~\ref{PervRPlusGr} instead of Theorem~\ref{PervRPlus}.
\end{proof}

\begin{remark}
Some variants of Corollary~\ref{CMRPlusGr} that have been established in the literature can also be deduced by our topological methods. We discuss two instances:

(1) The papers \cite{HHRPlusCM, SmRPlusFRatGr} prove analogs of Corollary~\ref{CMRPlusGr} for a larger ring $R^{+,GR}$ in lieu of $R^{+,gr}$ obtained by allowing $\QQ$-gradings. Geometrically, this corresponds to choosing a compatible system $\{L^{1/n}\}$ of $n$-th roots of the pullback of $L$ to $X^+$, and working with the $\QQ$-graded object $C^+_\infty = \Spec(\bigoplus_{n \in \QQ_{\ge 0}} H^0(X^+, L^n))$ instead of the corresponding $\NN$-graded object $C^+$ as in Corollary~\ref{CMRPlusGr}. To deduce this generalization, one first proves the analog for $C^+_m = \Spec(\bigoplus_{n \in \ZZ[1/m]_{\ge 0}} H^0(X^+, L^n))$ by deducing it from Corollary~\ref{CMRPlusGr} applied to a finite cover of $X$ where $L$ has a chosen $m$-th root; taking a limit over $m$ then yields the result for $C^+_\infty$.

(2) The paper \cite{BhattDerSplinPos} proves an analog of Corollary~\ref{CMRPlusGr} (formulated geometrically) when $L$ is only assumed to be semiample and big, rather than ample. This version can be proved similarly to Corollary~\ref{CMRPlusGr} by replacing the category $\calP$ of all finite covers of $X$ dominated by $X^+$ with the category $\calP_{alt}$ of all alterations of $X$ dominated by a generic point of $X^+$. The key input in carrying out this argument is the following result, whose proof critically relies on the proper pushforward compatibility of the Riemann-Hilbert functor as well as Lemma~\ref{AICVanEtCoh}:
\end{remark}

\begin{proposition}[Annihilating cohomology of proper maps]
Let $f\colon X \to S$ be a proper map of noetherian schemes of characteristic $p$, with $S$ affine. Then there exists a finite surjective map $Y \to X$ such that $H^i(X, \calO_X) \to H^i(Y, \calO_Y)$ is the $0$ map for $i > 0$.
\end{proposition}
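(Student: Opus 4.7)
The plan is to pass to the absolute integral closure $X^+$ of $X$ and combine the $\FFp$-\'etale cohomology vanishing on $X^+$ (Lemma~\ref{AICVanEtCoh}) with the proper pushforward compatibility of the Riemann-Hilbert functor (Theorem~\ref{RHBigVer}(3)) to conclude that the higher $\calO$-cohomology of $X^+$ vanishes; the required finite cover is then found at a finite stage by finite generation.

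A standard d\'evissage first reduces to the case where $X$ is integral and normal. If $\bar{h}\colon\bar{Y} \to X_{red}$ is a finite surjective cover killing higher $\calO_{X_{red}}$-cohomology, then viewing $\bar{Y}$ as an $X$-scheme through $X_{red} \into X$ gives a finite surjective cover of $X$ whose pullback $\calO_X \to Rh_*\calO_{\bar{Y}}$ factors through $\calO_X \onto \calO_{X_{red}}$, and so inherits the vanishing; decomposing into irreducible components and normalizing then handles the reduction to integral normal.

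Fix an absolute integral closure $\pi\colon X^+ \to X$ and let $\calP$ be the filtered poset of finite integral $X$-subalgebras $g_Y\colon Y \to X$ inside $X^+$, so $X^+ = \varprojlim_{Y \in \calP} Y$. Write $f_Y = f \circ g_Y\colon Y \to S = \Spec A$. Proper pushforward compatibility of $\RH$ together with $\RH(\FF_{p,Y}) \simeq \calO_{Y,\perf}$ and the $t$-exactness of $\RH$ gives
\[
\RH_A(R^i f_{Y,*} \FF_{p,Y}) \simeq R^i f_{Y,*} \calO_{Y,\perf} \cong H^i(Y, \calO_Y)_{\perf}
\]
as $A[F]$-modules. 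The sheaf $\varinjlim_Y R^i f_{Y,*}\FF_{p,Y}$ on $S_{\et}$ vanishes for $i > 0$: its stalk at a geometric point $\bar{s}$ is $H^i(X^+ \times_S S^{sh}_{\bar{s}}, \FFp)$, which vanishes by Lemma~\ref{AICVanEtCoh} applied to each connected component (each being \'etale over $X^+$, hence integral normal with algebraically closed function field). Since $\RH$ is a left adjoint and so commutes with colimits, this translates to $\varinjlim_Y H^i(Y, \calO_Y)_{\perf} = 0$ for $i > 0$. Finally, $H^i(X^+, \calO_{X^+}) \simeq \varinjlim_Y H^i(Y, \calO_Y)$ is a \emph{perfect} $A[F]$-module (Frobenius acts as an isomorphism on it because $\calO_{X^+}$ is a perfect sheaf by construction), so it coincides with its own perfection $\varinjlim_Y H^i(Y, \calO_Y)_{\perf} = 0$; in particular $\varinjlim_Y H^i(Y, \calO_Y) = 0$.

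For the conclusion, each $H^i(X, \calO_X)$ is a finitely generated $A$-module and vanishes beyond the relative dimension of $f$. For each of the finitely many relevant $i$ and each generator $\alpha \in H^i(X, \calO_X)$, the image in the colimit vanishes, so it already vanishes in $H^i(Y_\alpha, \calO_{Y_\alpha})$ for some $Y_\alpha \in \calP$; filteredness of $\calP$ lets me dominate the finitely many $Y_\alpha$ by a single $Y$, giving the required finite surjective cover. The main obstacle I expect, and the delicate technical point, is the sheaf-level vanishing of $\varinjlim_Y R^i f_{Y,*}\FF_{p,Y}$: it relies critically on applying Lemma~\ref{AICVanEtCoh} stalkwise (after strict henselization), and on exploiting that $X^+$ is perfect in order to promote the perfected vanishing supplied by $\RH$ to honest vanishing.
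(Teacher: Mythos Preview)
Your overall strategy coincides with the paper's: both reduce (via finite generation of $H^i(X,\calO_X)$ over $A$) to showing $H^i(X^+,\calO_{X^+})=0$ for $i>0$, and both deduce this from the proper pushforward compatibility of $\RH$ together with Lemma~\ref{AICVanEtCoh}. The difference lies in the base over which the pushforward is analyzed. The paper passes to the absolute integral closure $S^+$ of $S$ inside $X^+$ and proves that $h\colon X^+\to S^+$ satisfies $\FFp\simeq Rh_*\FFp$; this is checked on Zariski opens $U\subset S^+$, where both $U$ and $U\times_{S^+}X^+$ are opens of absolutely integrally closed schemes, so Lemma~\ref{AICVanEtCoh} applies directly with no further bookkeeping. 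You instead remain over $S$ and check the stalkwise vanishing of $\varinjlim_Y R^if_{Y,*}\FFp$, which leads you to the scheme $X^+\times_S S^{sh}_{\bar s}$.

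Your justification at that step is imprecise. The projection $X^+\times_S S^{sh}_{\bar s}\to X^+$ is only \emph{pro}-\'etale, not \'etale, and the scheme need not be the scheme-theoretic coproduct of its connected components (there may be a profinite set of them), so applying Lemma~\ref{AICVanEtCoh} componentwise does not immediately yield the vanishing you want. The fix is easy and keeps your approach intact: write the stalk as $\varinjlim_U H^i(X^+\times_S U,\FFp)$ over affine \'etale neighborhoods $U$ of $\bar s$ in $S$. For each such $U$ the map $X^+\times_S U\to X^+$ is genuinely \'etale, and since $X^+$ has algebraically closed function field this forces $X^+\times_S U$ to be a \emph{finite} disjoint union of open subschemes of $X^+$; Lemma~\ref{AICVanEtCoh} then applies to each summand and the colimit vanishes. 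The paper's passage to $S^+$ simply sidesteps this extra step.
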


\begin{proof}
By standard reductions, we may assume $X$ and $S$ are integral. Choose an absolute integral closure $X^+ \to X$. As $X^+$ is a cofiltered limit of finite covers of $X$, a limit argument reduces us to showing that $H^i(X^+,\calO_{X^+}) = 0$ for $i > 0$. Let $S^+$ denote the integral closure of $S$ in the composition $X^+ \to X \to S$. Then $S^+$ is an absolutely integrally closed domain and we have an induced map $h\colon X^+ \to S^+$ which is pro-proper. We observe that $\FFp \simeq Rh_* \FFp$: As the \'etale and Zariski topologies of $S^+$ coincide (by Lemma~\ref{AICVanEtCoh}), this can be checked after taking sections over a non-empty affine open $U \subset S^+$, i.e., we must show that $R\Gamma(U, \FFp) \simeq R\Gamma(U \times_{S^+} X^+, \FFp)$. But both $U$ and $U \times_{S^+} X^+$ are integral normal schemes with algebraically closed function fields, so Lemma~\ref{AICVanEtCoh} implies both complexes are identified with $\FFp[0]$, and are thus isomorphic. Then the proper pushforward compatibility of the Riemann-Hilbert functor shows that
\begin{align*}
R\Gamma(X^+, \calO_{X+}) &= R\Gamma(X^+, \RH(F_p)) = R\Gamma(\Spec R^+, Rh_*\RH(F_p))= R\Gamma(\Spec R^+, \RH(Rh_*F_p)) \\ &= R\Gamma(\Spec R^+, \RH(F_p)) = R\Gamma(\Spec R^+, \calO_{R^+}) = R^+
\end{align*}
which shows that $H^i(X^+, \calO_{X^+})=0$ for $i > 0$ and completes the proof.
\end{proof}

The paper \cite{BhattRPlusCM} contains similar arguments in the more involved mixed characteristic setting; we refer the reader there for more details.

%%%%%%%%%%%%%%%%%%%%%%%%%%%%%%%%%%%%%%%%%%%%%%%%%%%%%%%%%%%%%%%%%%%%%%%%

\end{document}